\numberwithin{equation}{section}
\numberwithin{figure}{section}
\newcommand{\R}{\mathbb{R}}
\newcommand{\C}{\mathbb{C}}
\newcommand{\N}{\mathbb{N}}
\newcommand{\F}{\mathcal{F}}
\renewcommand{\P}{\mathbb{P}}
\newcommand{\E}{\mathbb{E}}
\newcommand{\e}{\varepsilon}
\newcommand{\1}{\mathbbm{1}}
\newtheorem{Theorem}{Theorem}[section]
\newtheorem{Proposition}[Theorem]{Proposition}\newtheorem{Corollary}[Theorem]{Corollary}\newtheorem{Lemma}[Theorem]{Lemma}\newtheorem{Remark}[Theorem]{Remark}\newtheorem{Definition}[Theorem]{Definition}\newtheorem{Example}[Theorem]{Example}
\numberwithin{equation}{section}
\begin{document}

\title[Limits of stochastic Volterra equations driven by Gaussian noise]{Limits of stochastic Volterra equations driven by Gaussian noise}

\author{Luigi Amedeo Bianchi}
\address[Luigi Amedeo Bianchi]{Department of Mathematics
\\ Trento University
\\ Trento, Italy}
\email{luigiamedeo.bianchi@unitn.it}

\author{Stefano Bonaccorsi}
\address[Stefano Bonaccorsi]{Department of Mathematics
\\ Trento University
\\ Trento, Italy}
\email{stefano.bonaccorsi@unitn.it}

\author{Martin Friesen}
\address[Martin Friesen]{School of Mathematical Sciences\\
Dublin City University\\ Glasnevin, Dublin 9, Ireland}
\email{martin.friesen@dcu.ie}

\date{\today}

\subjclass[2020]{Primary 60G22; 45D05; Secondary 60G10; 60B10}

\keywords{stochastic Volterra equation; mild solution; ergodicity; coupling; fractional heat equation}

\begin{abstract}
 We study stochastic Volterra equations in Hilbert spaces driven by cylindrical Gaussian noise. We derive a mild formulation for the stochastic Volterra equation, prove the equivalence of mild and strong solutions, the existence and uniqueness of mild solutions, and study space-time regularity. 
 Furthermore, we establish the stability of mild solutions in $L^q(\R_+)$, prove the existence of limit distributions in the Wasserstein $p$-distance with $p \in [1,\infty)$, and characterise when these limit distributions are independent of the initial state of the process despite the presence of memory. While our techniques allow for a general class of Volterra kernels, they are particularly suited for completely monotone kernels and fractional Riemann-Liouville kernels in the full range $\alpha \in (0,2)$.
\end{abstract}

\maketitle

\allowdisplaybreaks

\section{Introduction}

\subsection{Framework}
Let $(U, \langle \cdot, \cdot\rangle_U, \|\cdot\|_U)$ and $(H, \langle \cdot, \cdot \rangle_H, \|\cdot\|_H)$ be  separable Hilbert spaces. On a filtered probability space $(\Omega,\mathcal{F},(\mathcal{F}_t)_{t\in\mathbb{R}_+},\mathbb{P})$ with the usual conditions we let $(W_t)_{t \geq 0}$ be a cylindrical $(\F_t)_{t \geq 0}$-Wiener process on $U$. We study the $H$-valued stochastic Volterra equation with nonlinear drift and multiplicative noise given by
\begin{align}\label{VSPDE} 
 u(t) = g(t) + \int_0^t k(t-s)\left( Au(s) + F(u(s))\right) \, ds + \int_0^t h(t-s)\sigma(u(s)) \, dW_s,
\end{align} 
where $k \in L_{loc}^1(\R_+)$, and $h \in L_{loc}^2(\R_+)$ are scalar-valued kernels, $F$ denotes the drift, $\sigma$ the diffusion coefficient, $g: \R_+ \times \Omega \longrightarrow H$ is $\mathcal{B}(\R_+) \otimes \F_0/\mathcal{B}(H)$-measurable, and the linear operator $(A,D(A))$  on $H$ is closed and densely defined. 

Stochastic convolutions and Volterra equations on Hilbert spaces associated with Volterra Ornstein-Uhlenbeck type processes have been thoroughly discussed in \cite{MR1954075, MR1454409, MR1658451, MR2316483, MR3630332, MR3605714}, while the case of nonlinear drift and multiplicative noise was recently studied, e.g., in \cite{Benth} for regular kernels and \cite{FHK22} also for singular kernels. For such Volterra kernels $k,h$ the solution of \eqref{VSPDE} is neither a Markov process nor a semimartingale. Henceforth we cannot directly rely on the study of corresponding Kolmogorov equations or rely on mild or strong It\^o formulas as given in \cite{MR4009384} and \cite[Section 2.3]{MR2560625}. 
This makes the study of stochastic Volterra processes a challenging task.

In many concrete examples, fractional Riemann-Liouville kernels play a central role. The Volterra kernels $k,h$ are in such a case given by 
\begin{align}\label{eq: fractional kernels}
 k(t) = \frac{t^{\alpha - 1}}{\Gamma(\alpha)} \ \text{ and }\ h(t) = \frac{t^{\beta - 1}}{\Gamma(\beta)} \ \text{ with }\ \alpha \in (0,2), \ \ \beta > \frac{1}{2}.
\end{align}
These kernels allow for a flexible modelling framework addressing short and long-range dependencies, see \cite{S14}. For instance, if $\alpha < 1$, then the fractional kernel is completely monotone and has an integrable singularity at $t = 0$. Such choice is commonly used e.g. for time fractional (stochastic) evolution equations as studied in \cite{MR1454409, MR1658451}. When $\alpha > 1$, the fractional kernel becomes increasingly regular and may be used, e.g., to interpolate between the heat and the wave equation perturbed by random noise. The qualitative ergodic behaviour of the stochastic Volterra process differs for $\alpha < 1$ (short-ranged) and $\alpha > 1$ (long-ranged) as demonstrated below first for one-dimensional equations and subsequently for the general case when $(A,D(A))$ is self-adjoint, see also Section~\ref{sec:fractional_kernels} and Lemma \ref{lemma: Mittag-Leffler function}.

\subsection{Mild formulation}

A strong solution of \eqref{VSPDE} is an $(\F_t)_{t \geq 0}-$adapted process $u \in L_{loc}^1(\R_+; D(A))$ such that \eqref{VSPDE} holds $\P \otimes dt$-a.e. where it is implicitly assumed that all (stochastic) integrals are well-defined. The requirement that $u$ takes values in $D(A)$ is often too restrictive for specific applications. Our approach is based on the approach through resolvent operators, as introduced in the stochastic case by \cite{MR1454409, MR1658451}. This approach mimics the usual mild formulation for evolution equations, by taking into account the resolvent operator (or fundamental solution operator) instead of the semigroup generated by the leading operator, see also \cite{MR1954075, FHK22} for further applications of this approach.

For this purpose, let us define for $\rho \in \{k,h\}$ the resolvent operator $E_{\rho}$ as the unique solution of the linear Volterra equation
\begin{align*}
 E_{\rho}(t)x = \rho(t)x + \int_0^t k(t-s)E_{\rho}(s)Ax \, ds, \qquad x \in D(A).
\end{align*}
When setting $\rho = k = 1$, we obtain $E_{\rho}(t) = e^{tA}$ and hence the resolvent operator $E_{\rho}$ generalises to Volterra equations the role of the $C_0$-semigroup generated by $A$ for evolution problems. 
Without further assumptions on $\rho$ and $k$ and $(A, D(A))$, the resolvent operator $E_{\rho}$ may not exist, see Section~\ref{sec:resolvent} for further details and references. For given $g$, we let $Gg$ be the unique solution of the linear Volterra equation
\begin{align}\label{eq: Gg definition}
 Gg(t) = g(t) + A \int_0^t k(t-s)Gg(s)ds
\end{align}
provided it exists. In Section~\ref{sec:resolvent} we show that $Gg$ exists if and only if $g \in L_{loc}^1(\R_+; H)$ and $E_k \ast g \in L_{loc}^1(\R_+; D(A))$. In particular, the unique solution is given by $Gg(t) = g(t) + A \int_0^t E_k(t-s)g(s)ds$, see \eqref{eq: solution formula Ek}. Further examples are given in Example \ref{example: special choice}.

In Section~\ref{sec:strong_mild} we first prove that each strong solution of \eqref{VSPDE} satisfies the \textit{Volterra mild formulation}
\begin{align}\label{mild formulation intro}
 u(t) = Gg(t) + \int_0^t E_k(t-s)F(u(s))ds + \int_0^t E_h(t-s)\sigma(u(s))dW_s.
\end{align}
Conversely, if $u$ satisfies the Volterra mild formulation \eqref{mild formulation intro} and $u \in L_{loc}^1(\R_+; D(A))$, then $u$ is also a strong solution of \eqref{VSPDE}, see Proposition \ref{thm: mild formulation characterization}. Henceforth, we call a measurable process $u$ mild solution of \eqref{VSPDE} if it is $(\F_t)_{t \geq 0}$-adapted and satisfies \eqref{mild formulation intro} $\P \otimes dt$-a.e., where it is implicitly assumed that all integrals are well-defined and that $Gg, E_k, E_h$ exist. This notion of mild solutions unifies the definitions used in \cite{MR1954075, MR1658451, FHK22}. Moreover, when $k(t) = h(t) = t$ this formulation includes cosine families used for the study of the stochastic wave equation. In the remaining part of  Section~\ref{sec:strong_mild}, we study the existence, uniqueness and regularity in space-time for mild solutions of \eqref{VSPDE}.

\subsection{Limit distributions}

Our main contribution addresses the existence of limit distributions for mild solutions obtained from \eqref{mild formulation intro}. For Markovian systems, such a problem may be studied by a variety of methods based on the Markov property. Thus it is tempting to cast our framework into the Markovian setting. By adequately extending the state-space of the process, it is possible to recover the Markov property at the expense that the new state-space necessarily becomes infinite-dimensional (even if $H$ is finite-dimensional), see e.g. \cite{MR2401950, MR3057145, MR2511555, MR4181950, H23}. The resulting Markov process is called \textit{Markovian lift}. While there is no canonical way of constructing Markovian lifts, the stochastic equation satisfied by such a Markovian lift has, for singular Volterra kernels $k,h$, typically either unbounded coefficients or the domains of the "lifted" operators become more involved. Both effects reflect the possibly singular nature of the kernels $k$ and $h$ which makes this approach a nontrivial mathematical task.

Using the Markovian lift onto the Filipovic space of forward curves the authors have studied in \cite{Benth} limit distributions and stationary processes for stochastic Volterra equations on Hilbert spaces under the assumption that $k,h$ belong to the Filipovic space, i.e., have $W_{loc}^{1,2}(\R_+)$ regularity and satisfy $\int_0^{\infty}|\rho'(t)| e^{\lambda t}dt < \infty$ for some $\lambda > 0$ and $\rho \in \{k,h\}$. The latter rules out, e.g., the fractional Riemann-Liouville kernel \eqref{eq: fractional kernels}. For other related results on the long-time behaviour of finite-dimensional Volterra processes, we refer to \cite{FJ2022} and \cite{JPS22} where affine Volterra processes are studied. 

In this work, we study the general case applicable to a large class of possibly singular kernels including the fractional Riemann-Liouville kernels. In contrast to \cite{Benth, FJ2022, JPS22}, our methodology directly employs the \textit{mild formulation} of \eqref{VSPDE} to prove global stability and contraction bounds on the solution started at two different initial conditions. The latter allows us to deduce the existence of limit distributions via a \textit{coupling argument} that extends \cite{MR4241464} to stochastic Volterra equations. Such a coupling argument reflects the possibility of restarting the process after time $\tau$, allowing us to use the stability estimates from Section~\ref{sec: contraction}. 

Let $L_s(U, V)$ be the space of bounded linear operators from $U$ to $V$ equipped with the strong operator topology, and let $L_2(U, V)$ be the space of Hilbert-Schmidt operators from $U$ to $V$. For $p \in [1,\infty]$, let $L_{loc}^p(\R_+; L_s(U,V))$ stand for the $L_{loc}^p$-space of strongly measurable functions with values in $L_s(U,V)$. We assume that the following conditions are satisfied:
\begin{enumerate}[leftmargin = *]
 \item[(A1)] $(A,D(A))$ is a closed and densely defined linear operator on $H$, $k \in L_{loc}^1(\R_+)$, $h \in L_{loc}^2(\R_+)$, and the $k,h$-resolvents $E_k,E_h \in L_{loc}^1(\R_+; L_s(H))$ exist.
 
 \item[(A2)] There exist separable Hilbert spaces $H_F, H_{\sigma}, V$ such that 
 \[
  V \subset H, \qquad H \subset H_F, \qquad H \subset H_{\sigma}
 \]
 continuously, $E_k \in L_{loc}^1(\R_+; L_s(H_F, V))$, and $E_h \in L_{loc}^1(\R_+; L_s(H_{\sigma}, V))$.
 
 \item[(A3)] There exists a constant $C_{F,\mathrm{lip}} \geq 0$ such that $F: H \longrightarrow H_F$ satisfies \[
  \|F(x) - F(y)\|_{H_F} \leq C_{F,\mathrm{lip}} \|x-y\|_H, \qquad x,y \in H.
 \]
 Moreover, $\sigma: H \longrightarrow L(U,H_{\sigma})$ is strongly continuous, $E_h(t)\sigma(x) \in L_2(U, V)$ holds for a.a. $t > 0$ and each $x \in H$, and there exist $K_{\mathrm{lin}}, K_{\mathrm{lip}} \in L_{loc}^2(\R_+)$ such that, for a.a. $t > 0$ and all $x,y \in H$,
 \begin{align*}
  \| E_h(t)\sigma(x)\|_{L_2(U, V)} &\leq K_{\mathrm{lin}}(t)\left(1 + \|x\|_H\right), \qquad 
  \\ \| E_h(t)(\sigma(x) - \sigma(y))\|_{L_2(U, V)} &\leq K_{\mathrm{lip}}(t)\|x-y\|_H.
 \end{align*} 
\end{enumerate} 

Here and below we let $i: V \longrightarrow H$ be the continuous embedding with operator norm $\|i\|_{L(V,H)}$. Moreover, let $C_{F,\mathrm{lin}}$ be the linear growth constant of $F$, i.e.
\begin{align}\label{eq: F linear}
  C_{F,\mathrm{lin}} = \sup_{x \in H}\frac{\|F(x) \|_{H_F}}{1+\|x\|_H}.
\end{align}
For later convenience, we introduce a more general mild formulation where $Gg$ is replaced by an abstract function $\xi$, i.e., we study the equation
\begin{align}\label{eq: mild equation xi}
 u(t; \xi) &= \xi(t) + \int_0^t E_k(t-s)F(u(s;\xi))ds + \int_0^t E_h(t-s)\sigma(u(s;\xi))dW_s.
\end{align}
A solution $u$ of \eqref{eq: mild equation xi} is a measurable process that is $(\F_t)_{t \geq 0}$-adapted and satisfies \eqref{eq: mild equation xi} $\P\otimes dt$-a.e., where it is implicitly assumed that all integrals are well-defined. 

Let $\mathcal{P}(V)$ be the space of all Borel probability measures over $V$ and let $\mathcal{P}_p(V)$ be the subspace of all probability measures with finite $p$-th moment. Then $\mathcal{P}_p(V)$ becomes a Polish space when equipped with the $p$-Wasserstein distance
\[
 W_p(\pi, \widetilde{\pi}) = \left(\inf_{\nu \in \mathcal{C}(\pi,\widetilde{\pi})} \int_{V\times V} \| x-y\|_V^p \nu(dx,dy) \right)^{\frac{1}{p}}
\]
where $\mathcal{C}(\pi,\widetilde{\pi})$ denotes the set of all couplings of $\pi$ and $\widetilde{\pi}$. For additional details on couplings and Wasserstein distances we refer to \cite{MR2459454}. Given two random variables $X,Y$, we write by abuse of notation $W_p(X,Y)$ instead of $W_p(\mathcal{L}(X), \mathcal{L}(Y))$ where $\mathcal{L}(X), \mathcal{L}(Y)$ denote their laws.

\begin{Theorem}[general case]\label{thm: 1}
 Suppose that conditions (A1) -- (A3) are satisfied. Then for each $\F_0$-measurable $\xi \in L_{loc}^q(\R_+; L^p(\Omega, \P; V))$ with $p \in [2,\infty)$ and $q \in [p,\infty]$, there exists a unique solution $u(\cdot;\xi) \in L_{loc}^q(\R_+; L^p(\Omega, \P; V))$. Moreover, the following assertions hold:
 \begin{enumerate}
     \item[(a)] If $E_k \in L_{loc}^{\frac{p}{p-1}}(\R_+; L_s(H_F, V))$, $\xi \in L_{loc}^{\infty}(\R_+; L^p(\Omega, \P; V))$, and for $0 \leq s < t$ there exists a function $\widetilde{K}(s,t,\cdot) \in L_{loc}^2([s,t])$ such that
     \begin{align}\label{eq: continuity 1}
      \lim_{t-s \to 0}\int_0^s \widetilde{K}(t,s,r)^2 dr = 0,
     \end{align}
     and for all $x \in H$
     \begin{align}\label{eq: continuity 2}
     \| (E_h(t-r) - E_h(s-r))\sigma(x)\|_{L_2(U,V)} \leq \widetilde{K}(t,s,r)(1 + \|x\|_V),
     \end{align}
     then $u(\cdot;\xi) - \xi \in C(\R_+; L^p(\Omega, \P; V))$.
     
     \item[(b)] If $F(0) = 0$, $\sigma(0) = 0$, $\xi \in L^q(\R_+; L^2(\Omega, \P; V))$, and
     \begin{align}\label{eq: p2 stability}
      3\|i\|_{L(V,H)}^2\left( C_{F}^2 \|E_k\|_{L^1(\R_+; L(H_F,V))}^2 + \|K_{\mathrm{lip}}\|_{L^2(\R_+)}^2 \right) < 1,
     \end{align}
     then $u \in L^q(\R_+; L^2(\Omega, \P; V))$.
     
     \item[(c)] Suppose that $\xi \in L_{loc}^{\infty}(\R_+; L^p(\Omega, \P; V)) \cap C((0,\infty); L^2(\Omega, \P; V))$ is such that
     \begin{align}\label{eq: xi limit}
      \lim_{t \to \infty}\E\left[\|\xi(t) - \xi(\infty)\|_V^2 \right] = 0
     \end{align}
     for some $\xi(\infty) \in V$, $E_k \in L_{loc}^{\frac{p}{p-1}}(\R_+; L_s(H_F, V))$, and
     \begin{align}\label{eq: p2 limit}
      3\|i\|_{L(V,H)}^2 &\bigg(2 C_{F, \mathrm{lin}}^2 \|E_k\|_{L^1(\R_+; L(H_F,V))}^2 
      +  \|K_{\mathrm{lin}}\|_{L^2(\R_+)}^2 \bigg) < 1.
     \end{align}
     Then $u(\cdot;\xi) - \xi \in C_b(\R_+; L^2(\Omega; V))$ and there exists a unique probability measure $\pi_{\xi} \in \mathcal{P}_2(V)$ such that
     \[
      \lim_{t \to \infty} W_2(u(t;\xi), \pi_{\xi}) = 0.
     \]
     Moreover, if $\eta \in L_{loc}^{\infty}(\R_+; L^p(\Omega, \P; V)) \cap C((0,\infty); L^2(\Omega, \P; V))$ also satisfies \eqref{eq: xi limit} with $\eta(\infty) = \xi(\infty)$, then $\pi_{\xi} = \pi_{\eta}$.
 \end{enumerate}
\end{Theorem}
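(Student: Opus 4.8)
The plan is to treat the existence--uniqueness statement together with (a) and (b) by a contraction-mapping argument and standard stochastic estimates, reserving the main effort for the coupling construction behind (c).

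For the basic well-posedness I would apply the Banach fixed-point theorem to
\[
 \Phi(u)(t) = \xi(t) + \int_0^t E_k(t-s)F(u(s))\,ds + \int_0^t E_h(t-s)\sigma(u(s))\,dW_s
\]
on $L^q([0,T]; L^p(\Omega,\P;V))$. The drift term is estimated by Minkowski's integral inequality, the Lipschitz bound (A3), and $\|\cdot\|_H \le \|i\|_{L(V,H)}\|\cdot\|_V$; the stochastic term by the Burkholder--Davis--Gundy inequality in $V$ followed by Minkowski's inequality in $L^{p/2}(\Omega)$ and the Hilbert--Schmidt bounds in (A3). A short horizon $T$ (or an exponential Bielecki weight) turns $\Phi$ into a contraction, and concatenation yields the global $L^q_{loc}(\R_+;L^p)$ solution. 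For (a) I would bound the increment of $u-\xi$ between times $s<t$: the deterministic convolution increment is controlled by $L^{p/(p-1)}$-continuity of translation (using $E_k\in L^{p/(p-1)}_{loc}$) and the linear growth of $F$, while \eqref{eq: continuity 1}--\eqref{eq: continuity 2} are exactly the hypotheses needed to bound the stochastic convolution increment through the BDG inequality. For (b), under $F(0)=\sigma(0)=0$ and \eqref{eq: p2 stability}, I would rerun the fixed-point argument on the whole half-line $L^q(\R_+;L^2(\Omega,\P;V))$: squaring the three-term splitting, using $(a+b+c)^2\le 3(a^2+b^2+c^2)$, Young's convolution inequality for the drift and the It\^o isometry for the noise shows $\Phi$ is a global contraction whose fixed point agrees with $u$ by uniqueness.

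For (c) I would first secure $u-\xi\in C_b(\R_+;L^2(\Omega;V))$: continuity comes from the increment estimates of (a), and the uniform bound $\sup_{t\ge0}\E\|u(t)-\xi(t)\|_V^2<\infty$ follows from the linear-growth estimates in (A3), where \eqref{eq: p2 limit} is precisely the smallness that lets the resulting convolution inequality be closed. The analytic core is a stability lemma for two solutions $u(\cdot;\xi),u(\cdot;\eta)$ driven by the \emph{same} noise: writing $D(t)=\|u(t;\xi)-u(t;\eta)\|_{L^2(\Omega;V)}$ and $\delta(t)=\|\xi(t)-\eta(t)\|_{L^2(\Omega;V)}$, the Lipschitz bounds in (A3) and the It\^o isometry give the Volterra inequality
\[
 D(t)\le \delta(t)+\|i\|_{L(V,H)}C_{F,\mathrm{lip}}\int_0^t \|E_k(t-s)\|_{L(H_F,V)}D(s)\,ds+\|i\|_{L(V,H)}\left(\int_0^t K_{\mathrm{lip}}(t-s)^2 D(s)^2\,ds\right)^{1/2}.
\]
A $\limsup$ argument (splitting the integrals at $t/2$ and exploiting the $L^1$- and $L^2$-tails of $E_k$ and $K_{\mathrm{lip}}$ together with the boundedness of $D$) then shows that $\delta(t)\to 0$ forces $D(t)\to 0$, provided the contraction gain is strictly below $1$. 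To produce $\pi_\xi$ I would prove that $t\mapsto\mathcal L(u(t;\xi))$ is Cauchy in $(\mathcal P_2(V),W_2)$. Fixing $\tau>0$, splitting the integrals in \eqref{eq: mild equation xi} at $\tau$ and substituting $s=r+\tau$ shows that $\widetilde u(r):=u(r+\tau;\xi)$ solves the same equation driven by the shifted Wiener process $\widetilde W_r=W_{r+\tau}-W_\tau$ with the $\F_\tau$-measurable initial datum
\[
 \zeta^\tau(t)=\xi(t+\tau)+\int_0^\tau E_k(t+\tau-s)F(u(s;\xi))\,ds+\int_0^\tau E_h(t+\tau-s)\sigma(u(s;\xi))\,dW_s.
\]
Since $\widetilde W$ is independent of $\F_\tau$ and distributed as $W$, solving \eqref{eq: mild equation xi} with datum $\xi$ and noise $\widetilde W$ yields a process $w$ with $\mathcal L(w)=\mathcal L(u(\cdot;\xi))$; the pair $(w(t),\widetilde u(t))$ is then a coupling of $u(t;\xi)$ and $u(t+\tau;\xi)$ driven by a common noise, so the stability lemma bounds $W_2(u(t;\xi),u(t+\tau;\xi))^2$ by $D(t)^2$ with forcing $\delta(t)=\|\xi(t)-\zeta^\tau(t)\|_{L^2(\Omega;V)}$. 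Using $\xi(t+\tau)\to\xi(\infty)$ from \eqref{eq: xi limit}, the global $L^2$-bound on $u$, and the tails $\int_t^\infty\|E_k(r)\|_{L(H_F,V)}\,dr\to0$ and $\int_t^\infty K_{\mathrm{lin}}(r)^2\,dr\to0$, one checks that $\sup_{\tau\ge0}\delta(t)\to0$ as $t\to\infty$; taking the supremum over $\tau$ in the Volterra inequality (note $D$ is bounded by $2\sup_r\|u(r)\|_{L^2(\Omega;V)}$ uniformly in $\tau$) upgrades the stability lemma to $\sup_{\tau\ge0}D(t)\to0$, which is the Cauchy property and delivers $\pi_\xi\in\mathcal P_2(V)$.

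Finally, if $\eta$ also satisfies \eqref{eq: xi limit} with $\eta(\infty)=\xi(\infty)$, then driving $u(\cdot;\xi)$ and $u(\cdot;\eta)$ by the same noise and applying the stability lemma with $\delta(t)\le\|\xi(t)-\xi(\infty)\|_{L^2}+\|\eta(t)-\xi(\infty)\|_{L^2}\to0$ gives $W_2(u(t;\xi),u(t;\eta))\to0$, hence $\pi_\xi=\pi_\eta$. I expect the main obstacle to be the coupling step: one must verify that $\zeta^\tau$ is genuinely $\F_\tau$-measurable while $\widetilde W$ is an independent Wiener process of the correct law (so that the It\^o isometry in the stability lemma applies with respect to the enlarged filtration $\F_\tau\vee\sigma(\widetilde W_{\cdot})$), confirm that the synchronous pair really has the two prescribed marginals, and --- most delicately --- make the convergence $D(t)\to0$ \emph{uniform in the shift} $\tau$, as the Cauchy criterion demands. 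A secondary technical point is to reconcile the linear-growth smallness \eqref{eq: p2 limit}, which governs the a priori bound, with the Lipschitz-type smallness that keeps the contraction gain below $1$.
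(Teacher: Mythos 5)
Your proposal is correct, and its skeleton coincides with the paper's: a fixed-point argument for well-posedness (the paper's Theorem \ref{thm: existence and uniqueness resolvent cylindrical case} uses precisely the exponential Bielecki weight $e^{\lambda t}$, $\lambda<0$, that you mention), convolution-increment estimates for (a) as in Proposition \ref{cor: continuous modification}, a global Volterra-type inequality for (b), and for (c) a same-noise stability estimate combined with a restart coupling and a uniform-in-$\tau$ Cauchy argument in $W_2$.

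Three of your technical devices differ from the paper's, and the comparison is instructive. First, your coupling is the mirror image of the paper's: you keep $u(\cdot+\tau;\xi)$, driven by the shifted noise $\widetilde W_r=W_{r+\tau}-W_\tau$ with the $\F_\tau$-measurable forcing $\zeta^\tau$, and manufacture a fresh copy $w$ of $u(\cdot;\xi)$ driven by $\widetilde W$; the paper (Step 1 of Theorem \ref{thm: limit distribution}) instead builds an independent copy $(\overline W,\overline \xi,\overline u)$ of the pre-$\tau$ past, forms the forcing $\widetilde\xi_\tau$ from it, and drives the shifted-law solution by the \emph{original} $W$ on the enlarged filtration $\widetilde\F_t=\F_t\vee\sigma(\overline W)\vee\sigma(\overline\xi)$. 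Both yield a coupling of $(u(t),u(t+\tau))$ with common noise and both rest on the same principle that the law of the solution depends only on the law of the pair (forcing, noise); yours avoids enlarging the probability space, the paper's keeps the time origin of the filtration fixed. Second, and more substantively, you close the stability and Cauchy estimates by a $\limsup$-splitting argument applied to the mixed inequality $D\le\delta+\mathrm{(linear\ drift\ term)}+\mathrm{(square\text{-}root\ noise\ term)}$, whereas the paper linearises by estimating $p$-th moments, so that Proposition \ref{prop: contraction estimate} together with Lemmas \ref{lemma: positive solution Volterra} and \ref{lemma: integrability resolvent} (Paley--Wiener) produces the \emph{solved} bound $D(t)^p\le 3^{p-1}\delta(t)^p+3^{p-1}(r\ast\delta^p)(t)$ with $r\in L^1(\R_+)$. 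The solved form makes uniformity in $\tau$ automatic (no supremum over $\tau$ needs to be taken inside a convolution, so no measurability care for $\sup_\tau D_\tau$) and delivers the explicit convergence rate \eqref{eq: convergence rate}; your argument is more elementary but purely qualitative. Your closure needs $\|i\|_{L(V,H)}\bigl(C_{F,\mathrm{lip}}\|E_k\|_{L^1(\R_+;L(H_F,V))}+\|K_{\mathrm{lip}}\|_{L^2(\R_+)}\bigr)<1$, which indeed follows from the paper's condition \eqref{eq: growth limit distribution} via $(a+b)^2\le 2(a^2+b^2)$; and the lin/lip reconciliation you flag as delicate is resolved in the paper exactly as your proof requires, since Theorem \ref{thm: limit distribution} assumes \emph{both} the linear-growth smallness (\eqref{eq: growth 1} with $\lambda=0$, i.e.\ \eqref{eq: p2 limit} for $p=2$, used for the uniform moment bound of Theorem \ref{thm: uniform moment}) and the Lipschitz smallness \eqref{eq: growth limit distribution} (used for the contraction). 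Third, for (b) you rerun the fixed point globally on $L^q(\R_+;L^2(\Omega;V))$, while the paper derives an a priori Volterra inequality and again invokes the resolvent lemma; your route works under \eqref{eq: p2 stability} (again by $(a+b)^2\le2(a^2+b^2)$), while the paper's handles all exponential weights $\lambda$ in one sweep.
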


The fact $\pi_{\xi} = \pi_{\eta}$ whenever $\xi(\infty) = \eta(\infty)$ provides a characterisation of possible limit distributions. In particular, the limit distribution of \eqref{VSPDE} is unique provided the limit of $\xi(t) = Gg(t)$ as $t \to\infty$ is independent of $g$. Sufficient conditions and particular examples are discussed below, see also Section~\ref{sec:fractional_kernels}. The space $V$ incorporates additional space regularity of the solution. For concrete applications one typically takes $H_F, H_{\sigma}, V$ as the fractional domain spaces $D((-A)^{\vartheta})$ for different values of $\vartheta$. 

The existence and uniqueness of solutions follow both from Theorem \ref{thm: existence and uniqueness resolvent cylindrical case} where also continuity with respect to $\xi$ is shown. Assertion (a) is a particular case of Theorem \ref{cor: continuous modification} while assertion (b) follows from Theorem \ref{thm: uniform moment}. Finally, assertion (c) is proven in Theorem \ref{thm: limit distribution}. Under stronger conditions, it is also possible to obtain convergence in the $p$-Wasserstein distance with $p \geq 2$ as stated in Theorem \ref{thm: limit distribution}.

By inspection of the proofs, it turns out that the results can be further strengthened for the case of additive noise. Namely, if $\sigma(x) = \sigma_0$ for each $x \in H$, then $K_{\mathrm{lip}}(t) = 0$ and $K_{\mathrm{lin}}(t) = \| E_h(t)\sigma_0\|_{L_2(U,H)}$. The latter allows us to cover the full range $1 \leq p < \infty$ and improve the constants in the dissipativity conditions \eqref{eq: p2 stability} and \eqref{eq: p2 limit}.

\begin{Theorem}[additive noise]\label{thm: 2}
 Suppose that conditions (A1) -- (A3) are satisfied with $\sigma(x) \equiv \sigma_0$ for $x \in H$. Then for each $\F_0$-measurable $\xi \in L_{loc}^q(\R_+; L^p(\Omega, \P; V))$ with $p \in [1,\infty)$ and $q \in [q,\infty]$, there exists a unique solution $u(\cdot;\xi) \in L_{loc}^q(\R_+; L^p(\Omega, \P; V))$. Moreover, the following assertions hold:
 \begin{enumerate}[leftmargin = *]
     \item[(a)] If $E_k \in L_{loc}^{\frac{p}{p-1}}(\R_+; L_s(H_F, V))$, then $u(\cdot;\xi) - \xi \in C(\R_+; L^p(\Omega, \P; V))$.
     \item[(b)] Suppose that $\xi \in L_{loc}^{\infty}(\R_+; L^p(\Omega, \P; V)) \cap C((0,\infty); L^p(\Omega, \P; V))$ is such that \eqref{eq: xi limit} holds for some $\xi(\infty) \in V$, $E_k \in L_{loc}^{\frac{p}{p-1}}(\R_+; L_s(H_F, V))$, and
     \begin{align}\label{eq: limit distribution additive case}
      \|i\|_{L(V,H)} C_{F, \mathrm{lin}} \|E_k\|_{L^1(\R_+;L(H_F, V))} < 1.
     \end{align}
     Then $u(\cdot;\xi) - \xi \in C_b(\R_+; L^p(\Omega; V))$ and there exists a unique probability measure $\pi_{\xi} \in \mathcal{P}_p(V)$ such that
     \[
      \lim_{t \to \infty} W_p(u(t;\xi), \pi_{\xi}) = 0.
     \]
     Moreover, if $\eta \in L_{loc}^{\infty}(\R_+; L^p(\Omega, \P; V)) \cap C((0,\infty); L^p(\Omega, \P; V))$ also satisfies \eqref{eq: xi limit} with $\eta(\infty) = \xi(\infty)$, then $\pi_{\xi} = \pi_{\eta}$.
 \end{enumerate}
\end{Theorem}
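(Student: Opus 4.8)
The plan is to exploit the decisive simplification of additive noise, namely that the stochastic term $Z(t) := \int_0^t E_h(t-s)\sigma_0\, dW_s$ no longer depends on the solution and is a fixed $V$-valued centred Gaussian process. First I would record that $Z$ is well defined with $\E\|Z(t)\|_V^2 = \int_0^t \|E_h(r)\sigma_0\|_{L_2(U,V)}^2\,dr < \infty$ by (A3) (taking $x=0$ and using $\sigma(0)=\sigma_0$), and that, since $Z(t)$ is Gaussian, the Kahane--Khintchine inequalities give $\|Z(t)\|_{L^p(\Omega;V)} \le C_p\,\|Z(t)\|_{L^2(\Omega;V)}$ for every $p\in[1,\infty)$. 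This is precisely where the additive structure buys the full range $p\in[1,\infty)$: no Burkholder--Davis--Gundy estimate with a solution-dependent integrand (which forced $p\ge 2$ in Theorem \ref{thm: 1}) is needed. Hence $Z\in L^q_{loc}(\R_+;L^p(\Omega;V))$ for all admissible $p,q$, and writing $\widehat\xi:=\xi+Z$ the equation \eqref{eq: mild equation xi} becomes the affine, noise-free Volterra equation
\[
 u(t) = \widehat\xi(t) + \int_0^t E_k(t-s)F(u(s))\,ds .
\]
Existence and uniqueness in $L^q_{loc}(\R_+;L^p(\Omega;V))$ then follow from a Banach fixed point argument on $L^q([0,T];L^p(\Omega;V))$: using (A3), $\|i\|_{L(V,H)}$ and Young's convolution inequality, the solution map is a strict contraction with constant $\|i\|_{L(V,H)}C_{F,\mathrm{lip}}\|E_k\|_{L^1([0,T];L(H_F,V))}$, which tends to $0$ as $T\downarrow 0$, and one globalises by concatenation. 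This reproduces Theorem \ref{thm: existence and uniqueness resolvent cylindrical case} but now for every $p\ge 1$.

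For assertion (a) I would write $u(t)-\xi(t) = Z(t) + \int_0^t E_k(t-s)F(u(s))\,ds$ and treat the two terms separately. Continuity of the drift convolution in $L^p(\Omega;V)$ is handled as in Theorem \ref{cor: continuous modification}: Hölder in time against $\|F(u(\cdot))\|_{L^p(\Omega;H_F)}\in L^p_{loc}$ (finite by the linear growth \eqref{eq: F linear} and $u\in L^q_{loc}$ with $q\ge p$), using the hypothesis $E_k\in L^{p/(p-1)}_{loc}(\R_+;L_s(H_F,V))$ and $L^{p/(p-1)}$-continuity of translations. The new and simpler point is that $Z$ is continuous for free: by the It\^o isometry,
\[
 \E\|Z(t)-Z(s)\|_V^2 = \int_s^t \|E_h(t-r)\sigma_0\|_{L_2(U,V)}^2\, dr + \int_0^s \|(E_h(t-r)-E_h(s-r))\sigma_0\|_{L_2(U,V)}^2\, dr,
\]
and both terms vanish as $t-s\to0$ by local integrability of $\|E_h(\cdot)\sigma_0\|_{L_2(U,V)}^2$ and $L^2$-continuity of translations, with Gaussianity upgrading this to $L^p$. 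This is why, unlike Theorem \ref{thm: 1}(a), no extra continuity hypotheses of the type \eqref{eq: continuity 1}--\eqref{eq: continuity 2} are required.

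The substance is assertion (b). I would first obtain $u-\xi\in C_b$ through an a priori bound: taking $L^p(\Omega;V)$-norms, $m(t):=\|u(t)\|_{L^p(\Omega;V)}$ satisfies a linear renewal inequality $m(t)\le a(t) + \|i\|_{L(V,H)}C_{F,\mathrm{lin}}\int_0^t\|E_k(t-s)\|_{L(H_F,V)}m(s)\,ds$ with $a$ bounded, using \eqref{eq: xi limit} and the local boundedness of $\xi$ together with a uniform-in-$t$ bound on $\|Z(t)\|_{L^p(\Omega;V)}$ (which needs $\|E_h(\cdot)\sigma_0\|_{L_2(U,V)}\in L^2(\R_+)$); under \eqref{eq: limit distribution additive case} the associated convolution operator is a strict contraction on $L^\infty(\R_+)$, giving $\sup_t m(t)<\infty$. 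For the limit distribution I would build a stationary reference by extending the noise to a two-sided Wiener process $\bar W$, setting $\bar Z(t)=\int_{-\infty}^t E_h(t-s)\sigma_0\,d\bar W_s$ and solving $\bar u(t)=\xi(\infty)+\int_{-\infty}^t E_k(t-s)F(\bar u(s))\,ds+\bar Z(t)$ by a contraction on bounded stationary processes, then setting $\pi_\xi=\mathcal L(\bar u(0))$. Crucially this equation sees $\xi$ only through $\xi(\infty)$, which immediately yields $\pi_\xi=\pi_\eta$ whenever $\xi(\infty)=\eta(\infty)$. Convergence then comes from a coupling with shared noise on $[0,\infty)$: because the noise is additive, the two stochastic convolutions cancel exactly, so $w(t):=u(t;\xi)-\bar u(t)$ solves the deterministic convolution inequality
\[
 \|w(t)\|_{L^p(\Omega;V)} \le \|a(t)\|_{L^p(\Omega;V)} + \|i\|_{L(V,H)}C_{F,\mathrm{lip}}\int_0^t \|E_k(t-s)\|_{L(H_F,V)}\|w(s)\|_{L^p(\Omega;V)}\,ds,
\]
where $a(t)$ collects $\xi(t)-\xi(\infty)$ and the tails $\int_{-\infty}^0 E_k(t-s)F(\bar u(s))\,ds$ and $\int_{-\infty}^0 E_h(t-s)\sigma_0\,d\bar W_s$, all of which tend to $0$ in $L^p(\Omega;V)$ as $t\to\infty$ by integrability of $E_k$ and of $\|E_h(\cdot)\sigma_0\|_{L_2(U,V)}$. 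A renewal lemma then forces $\|w(t)\|_{L^p(\Omega;V)}\to0$, whence $W_p(u(t;\xi),\pi_\xi)\le\|w(t)\|_{L^p(\Omega;V)}\to0$.

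I expect the main obstacle to be this last step: upgrading the finite-time convolution estimate to genuine convergence, and doing so at the sharp threshold \eqref{eq: limit distribution additive case}. Two points require care. First, one must rigorously construct and justify the two-sided stationary solution $\bar u$ (equivalently, prove the family $\{\mathcal L(u(t;\xi))\}_t$ is Cauchy in $W_p$), along with the global integrability $\|E_h(\cdot)\sigma_0\|_{L_2(U,V)}\in L^2(\R_+)$ that legitimises $\bar Z$ and the vanishing of the tails. Second, one must reconcile the constant in the renewal inequality with the stated hypothesis: a naive shared-noise coupling produces the Lipschitz constant $C_{F,\mathrm{lip}}$, whereas \eqref{eq: limit distribution additive case} is phrased in terms of the linear-growth constant $C_{F,\mathrm{lin}}$, so matching it demands the finer bookkeeping underlying Theorem \ref{thm: limit distribution} rather than the crude estimate sketched here. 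The remaining parts are routine adaptations of that machinery specialised to $K_{\mathrm{lip}}=0$.
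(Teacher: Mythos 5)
Your existence/uniqueness argument and part (a) coincide in substance with the paper's own (Theorem \ref{thm: existence and uniqueness additive noise} and Proposition \ref{cor: continuous modification additive case}): there too the fixed-point map contracts only through the drift, the additive stochastic convolution enters as a fixed Gaussian inhomogeneity whose $L^p$-moments are controlled by its $L^2$-moments, and continuity of the noise term comes from the It\^o isometry together with $L^2$-continuity of translations, so no analogue of \eqref{eq: continuity 1}--\eqref{eq: continuity 2} is needed. The genuine divergence is in part (b). The paper (Theorem \ref{thm: limit distribution additive noise}, proved verbatim as Theorem \ref{thm: limit distribution}) never constructs a stationary object: it couples $u(t)$ with $u(t+\tau)$ by a \emph{restart} argument, in which an independent copy $(\overline{W},\overline{\xi})$ generates a shifted forcing $\widetilde{\xi}_{\tau}$, the contraction estimate of Proposition \ref{prop: contraction estimate}(b) plus the uniform moment bounds of Section \ref{sec:glob_mom} show that the laws $(\mathcal{L}(u(t)))_{t \geq 0}$ form a Cauchy family in $W_p$, and $\pi_{\xi}$ is obtained as the abstract limit, with $\pi_{\xi}=\pi_{\eta}$ following from the same contraction estimate applied to the pair $(\xi,\eta)$. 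You instead realise $\pi_{\xi}$ concretely as the law of a two-sided stationary solution $\bar{u}$ and obtain convergence from the exact cancellation of the additive noise in a shared-noise coupling. Your route buys an explicit description of the limit and a cleaner convergence mechanism, but costs an extra construction: the fixed point on the whole line (two-sided noise, stationarity of the fixed point) and the global integrability $\|E_h(\cdot)\sigma_0\|_{L_2(U,V)} \in L^2(\R_+)$ that legitimises $\bar{Z}$ and the tail terms --- a hypothesis which, you should note, the paper's Theorem \ref{thm: limit distribution additive noise} also imposes even though the statement of Theorem \ref{thm: 2} omits it. Finally, the constant mismatch you flag is real but is not a defect of your approach relative to the paper's: the paper's proof needs exactly the same two quantitative inputs, namely a uniform moment bound driven by $C_{F,\mathrm{lin}}$ and an $L^1$ renewal kernel driven by $C_{F,\mathrm{lip}}$ (Proposition \ref{prop: contraction estimate}(b) requires $\|i\|_{L(V,H)}C_{F,\mathrm{lip}}\|E_k\|_{L^1(\R_+;L(H_F,V))}<1$, whereas \eqref{eq: limit distribution additive case} controls only the $C_{F,\mathrm{lin}}$-quantity), so both your argument and the paper's establish Theorem \ref{thm: 2}(b) under this tacit additional requirement on the Lipschitz constant.
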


The next remark shows that conditions \eqref{eq: p2 stability}, \eqref{eq: p2 limit}, and \eqref{eq: limit distribution additive case} are extensions of the classical dissipativity conditions for Markov processes as used, e.g., in \cite[Section 11.6]{MR3236753} and \cite[Example 7.1]{MR2560625} to study invariant measures for Markov processes.

\begin{Remark}
 Take $k = 1$, then $E_k(t) = e^{tA}$ and hence the integrability of $E_k$ is equivalent to uniform exponential stability of $(e^{tA})_{t \geq 0}$. Suppose that for some $\delta > 0$ the operator $(A,D(A))$ satisfies $\langle Ax,x \rangle_H \leq - \delta \| x\|_H^2$ for each $x \in D(A)$. Then by Lumer-Philips theorem $\| e^{tA}\|_{L(H)} \leq e^{-\delta t}$ and hence \eqref{eq: limit distribution additive case} is satisfied for $V = H$ whenever 
 \[
  C_{F,\mathrm{lin}} < {\delta}.
 \]
 If additionally $h(t) = 1$, then $E_h(t) = e^{tA}$ and similar conditions are also available for \eqref{eq: p2 stability} and \eqref{eq: p2 limit}. Such conditions are satisfied if the nonlinearities are small enough.
\end{Remark}

Although our results require Lipschitz continuous nonlinearities and Gaussian noise, both assumptions are not essential for the mild formulation. Despite the a.s. continuity of sample paths, the arguments naturally extend to the case of L\'evy driven stochastic Volterra equations when taking into account the methodology of \cite{MR2356959}. Cases with more subtle (non-Lipschitz) coefficients may be also treated by the same mild formulation, but require additional care that goes beyond the scope of this work and is henceforth left for future research.

\subsection{Characterization of limit distributions}

Since the stochastic dynamics is non-Markov it is not surprising that the limit distribution $\pi_{\xi}$ depends on the initial state of the process. Our results show that such a dependence is completely characterized by all possible limits $\xi(\infty)$ of $\xi$ in \eqref{eq: xi limit}. In some specific cases, it turns out that such limits are independent of $\xi(\infty)$, i.e. \textit{limit distributions are unique}. 

To demonstrate this, let us suppose that $\int_0^{\infty} \| E_k(s)\|_{L(H_F,V)}ds < \infty$ and let $g \in C(\R_+; D(A))$ be such that there exists $g(\infty) \in D(A)$ with $\lim_{t \to \infty}g(t) = g(\infty)$ in $D(A)$. Then $Gg$ given by \eqref{eq: Gg definition} satisfies $Gg \in C(\R_+; H)$ and it is easy to see that 
\[
 \lim_{t \to \infty}Gg(t) = g(\infty) + \int_0^{\infty}E_k(s)Ag(\infty)ds =: Gg(\infty).
\]
If $k$ has a resolvent of the first kind, then Lemma \ref{lemma: 3.3} implies that the $1$-resolvent $E_1$ exists. By Remark \ref{remark: EkE1} and since $(A,D(A))$ is closed, we obtain $\int_0^{\infty}E_k(s)ds \in L(H,D(A))$ and $Gg(\infty) = \left(\mathrm{id}_H + A\int_0^{\infty}E_k(s)ds\right)g(\infty) = E_1(\infty)g(\infty)$. Hence all limit distributions are parameterized by
\begin{align*}
 \left[ \mathcal{N}(E_1(\infty))\right]^{\perp} = \left[\mathcal{N}\left(\mathrm{id}_H + A\int_0^{\infty}E_k(s)ds\right)\right]^{\perp},
\end{align*}
where $\mathcal{N}(B)$ denotes the null space of the linear operator $B$. In particular, multiple limit distributions occur if and only if $E_1(\infty) \neq 0$. For finite-dimensional affine Volterra processes (with state-space $\R_+^m$), a similar observation was made in \cite{FJ2022}. This work shows that such an effect is generic for stochastic Volterra equations.

A complete characterization for the convergence of $E_1(t)$ as $t \to \infty$ was obtained in \cite{MR1147871}. In particular, using \cite[Theorem 4.6]{MR1147871} we may obtain the following characterization of possible limits $E_1(\infty)$.

\begin{Remark}
    Suppose that the $1$-resolvent $E_1$ exists, is strongly continuous, and satisfies $\sup_{t \geq 0}e^{-\lambda t}\|E_1(t)\|_{L(H)} < \infty$ for each $\lambda > 0$. Assume that there exists $\alpha_0 \geq 0$ such that $\int_0^{\infty}e^{-\alpha_0 t}|k(t)| dt < \infty$. Then the Laplace transform $\widehat{k}$ of $k$ has a meromorphic extension onto the right-half plane, and the limit $E_1(\infty)$, provided it exists, satisfies
    \[
    E_1(\infty) = \begin{cases} (\mathrm{id}_H - \widehat{k}(0)A)^{-1}, & \text{ if } \widehat{k}(0) \neq +\infty
    \\ P, & \text{ if } \widehat{k}(0) = \infty
    \end{cases}
    \]
    where $\widehat{k}(0) = \lim_{z \to 0+}\widehat{k}(z) \in \C \cup \{\infty\}$, and $P \in L(H)$ denotes the projection onto the null-space of $(A, D(A))$.
\end{Remark}
Hence, when $\widehat{k}(0) = + \infty$, we have $E_1(\infty) = P$ and limit distributions are characterized as in the Markovian case where uniformly stable semigroups correspond to $P = 0$. In contrast, for $P \neq 0$ multiple limit distributions, as studied in \cite{MR4241464}, may appear. When $\widehat{k}(0) \neq + \infty$, we necessarily have $E_1(\infty) \neq 0$ and the presence of memory will be reflected in the limit distribution. The latter is, e.g., always the case when $k \in L^1(\R_+)$.

\subsection{Structure of this work}

In the next section, we illustrate our results first for the case of one-dimensional equations and then study the nonlinear heat equation with fractional noise. Section~\ref{sec:resolvent} contains general properties of the $\rho$-resolvent $E_{\rho}$, sufficient conditions for its existence and application to solutions of deterministic linear Volterra equations. Further sufficient conditions for the existence and integrability of resolvents are discussed in Section~\ref{sec:fractional_kernels} under the additional assumption that $(A,D(A))$ is self-adjoint and has a discrete spectrum. In Section~\ref{sec:strong_mild} we prove the equivalence between \eqref{VSPDE} and its mild formulation, and then study the existence, uniqueness and regularity in time for this mild formulation. Finally, Section~\ref{sec:limit_dist} concerns the long-time behaviour of such mild solutions where we first prove general stability bounds for mild solutions and afterwards use a coupling argument to deduce the existence of limit distributions. Finally, a few technical results on one-dimensional linear Volterra equations are collected in Appendix~\ref{sec:1dvolterra}.

\section{Examples for self-adjoint $(A, D(A))$}

\subsection{One-dimensional}

For better illustration, let us first consider the case $H = \R$ with fractional kernels given as in \eqref{eq: fractional kernels}. The results stated below appear to be new even in this relatively simple case. Suppose that $A \in \R$ and $F,\sigma: \R \longrightarrow \R$ are globally Lipschitz continuous with constants $C_{F,\mathrm{lip}}, C_{\sigma,\mathrm{lip}} \geq 0$ and linear growth constants $C_{F,\mathrm{lin}}, C_{\sigma,\mathrm{lin}}$ defined as in \eqref{eq: F linear}. Finally, let 
\[
 g(t) = \int_0^t \frac{(t-s)^{\gamma-1}}{\Gamma(\gamma)}g_0(s)ds
\]
where $\gamma \in (0,\alpha]$ and $g_0 \in L_{loc}^p(\R_+)$ with $p \geq 2$ is for simplicity deterministic. Then $g \in L_{loc}^2(\R_+)$, conditions (A1) -- (A3) are satisfied, and Theorem \ref{thm: existence and uniqueness resolvent cylindrical case} yields the existence of a unique solution $u \in L^2(\Omega,\F,\P; L_{loc}^2(\R_+))$ of 
\begin{align*}
     u(t;g) &= g(t) + \int_0^t \frac{(t-s)^{\alpha-1}}{\Gamma(\alpha)}(t-s)\left(A u(s;g) + F(u(s;g))\right)ds 
     \\ \notag &\qquad \qquad + \int_0^t \frac{(t-s)^{\beta-1}}{\Gamma(\beta)}\sigma(u(s;g))dW_s
\end{align*}
where $(W_t)_{t \geq 0}$ is a one-dimensional standard Brownian motion. By Theorem \ref{cor: continuous modification}, $u - g$ has a modification with locally H\"older continuous sample paths of order $(\alpha \wedge \beta) - \frac{1}{2} - \e$, compare with \cite[Lemma 2.4]{MR4019885}. For the mild formulation, let us first note that, by \cite[Chapter 1]{MR1050319}, $E_k, E_h$ exist, and using Laplace transforms, it is easy to see that they are given by $E_k(t) = t^{\alpha - 1}E_{\alpha, \alpha}(A t^{\alpha})$ and $E_h(t) = t^{\beta - 1}E_{\alpha, \beta}(A t^{\alpha})$. Here $E_{\alpha,\beta}(z) = \sum_{n=0}^{\infty}\frac{z^n}{\Gamma(\alpha n + \beta)}$ denotes the two-parameter Mittag-Leffler function, see \cite{MR4179587} for an overview of its properties. Consequently, since $g \in L_{loc}^2(\R_+)$, we find that $Gg \in L_{loc}^2(\R_+)$ and Proposition \ref{thm: mild formulation characterization} implies that $u$ satisfies the mild formulation (compare with \cite[Lemma 2.5]{MR4019885}). 

Note that for $p > 1/\gamma$, $Gg \in C(\R_+)$ is given by
\[
 Gg(t) = \int_0^t (t-s)^{\gamma - 1}E_{\alpha,\gamma}(A(t-s)^{\alpha})g_0(s)ds.
\]
Moreover, if $A < 0$, $g_0 \in C((0,\infty))$, and $g_0(t) \longrightarrow x$, then $Gg \in C_b(\R_+)$ and 
\[
  Gg(t) \longrightarrow  \begin{cases} 0, & \text{ for } \gamma \in (0,\alpha)
    \\ |A|^{-1}x, & \text{ for } \gamma = \alpha. \end{cases}
\]
For additional details, we refer to Proposition \ref{prop: Gg example} where the infinite-dimensional case is studied. The above implies that limit distributions are unique if and only if $\gamma \in (0,\alpha)$. Define for $\alpha \in (0,2)$ and $\beta > 0$ the constant
\begin{align}
    \label{eq:c_q}
    c_q(\alpha,\beta) = \int_0^{\infty} \left| t^{\beta - 1} E_{\alpha,\beta}(-t^{\alpha})\right|^q dt, \qquad q \in [1,\infty).
\end{align}
Sufficient conditions on $\alpha,\beta$ that guarantee that $c_q(\alpha,\beta) < \infty$ are given in Lemma \ref{lemma: Mittag-Leffler function}. Note that $c_1(\alpha,\alpha) = 1$ for $\alpha \in (0,1]$, and using the Plancherel identity we obtain for $q = 2$, $\alpha \in (0,2)$ and $\beta > 1/2$
\[
 c_2(\alpha,\beta) = \frac{1}{\pi}\int_0^{\infty} \frac{dr}{r^{2\beta} + 2r^{2\beta - \alpha}\cos(\alpha \pi) + r^{2\beta - 2\alpha}}.
\]
The next theorem is a particular statement of our main results combined with Lemma \ref{lemma: Mittag-Leffler function} to verify the dissipativity conditions. 
\begin{Theorem}
 Suppose that $A < 0$ and $g$ is given as above. Furthermore, assume that one of the following conditions holds:
 \begin{enumerate}
     \item[(a)] $\sigma(x) \equiv \sigma_0$ is constant and $c_1(\alpha)\, C_{F,\mathrm{lin}} < |A|$.
     \item[(b)] $\displaystyle 6 C_{F,\mathrm{lin}}^2 c_1(\alpha)^2 |A|^{- 2} + 3c_2(\alpha,\beta)^2 C_{\sigma, \mathrm{lin}}^2 |A|^{-(2\beta - 1)/\alpha} < 1$.
 \end{enumerate}
 Then there exists a unique limit distribution $\pi_g$ with finite second moments such that $\lim_{t \to \infty}W_2(u(t;g), \pi_g) = 0$. The limit distribution is independent of $g$ if and only if $\gamma \neq \alpha$. 
\end{Theorem}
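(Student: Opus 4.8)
The plan is to derive both cases from the general results: case (a) from Theorem \ref{thm: 2}(b) and case (b) from Theorem \ref{thm: 1}(c). Since $H = H_F = H_{\sigma} = V = \R$ we have $\|i\|_{L(V,H)} = 1$, the growth constant appearing in \eqref{eq: p2 limit} is $C_{F,\mathrm{lin}}$, and because $\sigma(x)$ acts on $U = \R$ by scalar multiplication one may take $K_{\mathrm{lin}}(t) = C_{\sigma,\mathrm{lin}}\,|E_h(t)|$ in (A3). All constants below are finite by Lemma \ref{lemma: Mittag-Leffler function}, which provides $c_q(\alpha,\beta) < \infty$ in the relevant range. The first task is to express the abstract dissipativity constants through $c_1(\alpha) := c_1(\alpha,\alpha)$ and $c_2(\alpha,\beta)$ using the explicit resolvents $E_k(t) = t^{\alpha-1}E_{\alpha,\alpha}(At^{\alpha})$ and $E_h(t) = t^{\beta-1}E_{\alpha,\beta}(At^{\alpha})$.

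The key computation is a scaling substitution. Writing $A = -|A|$ and substituting $s = |A|^{1/\alpha}t$ gives $E_k(t) = |A|^{-(\alpha-1)/\alpha}\,s^{\alpha-1}E_{\alpha,\alpha}(-s^{\alpha})$, whence $\|E_k\|_{L^1(\R_+)} = |A|^{-1}c_1(\alpha)$; the same substitution applied to $E_h$ yields $\int_0^{\infty}|E_h(t)|^2\,dt = |A|^{-(2\beta-1)/\alpha}c_2(\alpha,\beta)$ and therefore $\|K_{\mathrm{lin}}\|_{L^2(\R_+)}^2 = C_{\sigma,\mathrm{lin}}^2\,|A|^{-(2\beta-1)/\alpha}c_2(\alpha,\beta)$. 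Inserting these identities shows that \eqref{eq: limit distribution additive case} reduces exactly to (a) and that \eqref{eq: p2 limit} reduces exactly to (b). It remains to check the hypotheses on $\xi = Gg$: choosing $p \geq 2$ large enough that $p > 1/\alpha$ and $p > 1/\gamma$ ensures both $E_k \in L_{loc}^{p/(p-1)}(\R_+)$ (from $E_k(t) \sim t^{\alpha-1}/\Gamma(\alpha)$ near $0$) and, by Proposition \ref{prop: Gg example} with $g_0(t) \to x$, that $Gg \in C_b(\R_+)$ with $Gg(\infty) = 0$ for $\gamma \in (0,\alpha)$ and $Gg(\infty) = |A|^{-1}x$ for $\gamma = \alpha$; in particular \eqref{eq: xi limit} holds with $\xi(\infty) = Gg(\infty)$. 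Theorem \ref{thm: 2}(b), respectively Theorem \ref{thm: 1}(c), now produces a unique $\pi_g \in \mathcal{P}_2(V)$ with $\lim_{t\to\infty}W_2(u(t;g),\pi_g) = 0$ (using $W_p \geq W_2$ if $p > 2$ was needed above).

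For the dependence on $g$, the direction $\gamma \in (0,\alpha)$ is immediate: here $Gg(\infty) = 0$ for every admissible $g$, so the final assertion of the applied theorem ($\pi_{\xi} = \pi_{\eta}$ whenever $\xi(\infty) = \eta(\infty)$) makes $\pi_g$ the same for all such $g$. For $\gamma = \alpha$ I would show that distinct limits $\xi(\infty) = |A|^{-1}x$ yield distinct $\pi_g$ by tracking first moments. Taking expectations in \eqref{eq: mild equation xi} and using that the stochastic integral has zero mean gives $\E[u(t;\xi)] = \xi(t) + \int_0^t E_k(t-s)\,\E[F(u(s;\xi))]\,ds$. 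Since $u(t;\xi) \to \pi_{\xi}$ in $W_2$ and $F$ is Lipschitz, $\E[F(u(t;\xi))] \to \int_{\R}F\,d\pi_{\xi}$; combined with the uniform second moment bound from the applied theorem, with $E_k \in L^1(\R_+)$, and with $\int_0^{\infty}E_k(t)\,dt = \widehat{E_k}(0) = |A|^{-1}$ (since $\widehat{E_k}(z) = (z^{\alpha}+|A|)^{-1}$), a dominated-convergence (Abelian) argument yields
\[
 \int_{\R}y\,\pi_{\xi}(dy) = \xi(\infty) + |A|^{-1}\int_{\R}F(y)\,\pi_{\xi}(dy).
\]
If $\pi_{\xi} = \pi_{\eta}$, the two integrals of $F$ agree, and subtracting the corresponding mean relations forces $\xi(\infty) = \eta(\infty)$. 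Contrapositively, choosing $g_0$ with two different limits $x_1 \neq x_2$ gives $\xi(\infty) \neq \eta(\infty)$ and hence $\pi_{g_1} \neq \pi_{g_2}$, so the limit distribution genuinely depends on $g$.

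The main obstacle is this converse implication, because the general theorems only guarantee that equal limits $\xi(\infty)$ produce equal $\pi_{\xi}$, not the reverse; one needs a computable invariant that separates the limit laws. The point that makes the first moment work is that, although $F$ is nonlinear, its contribution $\int_{\R}F\,d\pi_{\xi}$ is fixed once $\pi_{\xi}$ is fixed, while $\xi(\infty)$ enters the mean relation linearly with nonzero coefficient $\int_0^{\infty}E_k = |A|^{-1}$. Two points require care: justifying the passage to the limit in the convolution (the uniform second moment bound from Theorem \ref{thm: 1}(c) / Theorem \ref{thm: 2}(b) supplies the dominating function), and distinguishing the signed integral $\int_0^{\infty}E_k = |A|^{-1}$ from $c_1(\alpha) = \int_0^{\infty}|E_k|$, which differ once $\alpha > 1$ and $E_{\alpha,\alpha}(-\cdot)$ changes sign.
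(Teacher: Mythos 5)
Your proposal is correct, and for the core of the statement it follows exactly the paper's route: the paper's own proof is a one-line citation of Theorem \ref{thm: 1}(c) and Theorem \ref{thm: 2}(b), with Lemma \ref{lemma: Mittag-Leffler function} supplying $\|E_k\|_{L^1(\R_+)} = c_1(\alpha,\alpha)\,|A|^{-1}$ and $\int_0^{\infty}|E_h(t)|^2\,dt = c_2(\alpha,\beta)\,|A|^{-(2\beta-1)/\alpha}$, and Proposition \ref{prop: Gg example} giving $Gg \in C_b(\R_+)$ with $Gg(\infty)=0$ for $\gamma<\alpha$ and $Gg(\infty)=|A|^{-1}x$ for $\gamma=\alpha$; your scaling substitution reproduces precisely these identities. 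Two small remarks. First, your reduction of \eqref{eq: p2 limit} produces $c_2(\alpha,\beta)$ to the \emph{first} power, which is what Lemma \ref{lemma: Mittag-Leffler function}(b) gives and how $c_2$ enters elsewhere in the paper; the exponent $2$ on $c_2(\alpha,\beta)$ in item (b) of the theorem is an inconsistency of the paper, not an error in your computation. Second, in the multiplicative case the underlying Theorem \ref{thm: limit distribution} also needs \eqref{eq: continuity 1}--\eqref{eq: continuity 2}; these hold here with $\widetilde{K}(t,s,r) = C_{\sigma,\mathrm{lin}}|E_h(t-r)-E_h(s-r)|$ by $L^2$-continuity of translations, a point neither you nor the paper spells out.

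Where you genuinely depart from the paper is the "only if" direction. The general theorems prove only one implication: $\xi(\infty)=\eta(\infty)$ forces $\pi_{\xi}=\pi_{\eta}$. The paper never proves that distinct limits $\xi(\infty)$ produce distinct limit laws; it merely asserts this ("limit distributions are unique if and only if $\gamma\in(0,\alpha)$", and in the introduction "multiple limit distributions occur if and only if $E_1(\infty)\neq 0$"). Your first-moment argument fills this gap: taking expectations in \eqref{eq: mild equation xi}, the stochastic convolution has zero mean, $W_2$-convergence gives convergence of means and (via Lipschitz $F$ and $W_1\leq W_2$) of $\E[F(u(t;\xi))]$, and the Abelian convolution limit with $E_k\in L^1(\R_+)$ and $\int_0^{\infty}E_k(s)\,ds=\widehat{E_k}(0)=|A|^{-1}\neq 0$ yields the identity $\int y\,\pi_{\xi}(dy)=\xi(\infty)+|A|^{-1}\int F\,d\pi_{\xi}$, so equal laws force equal $\xi(\infty)$. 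This is correct — and your care in distinguishing the signed integral $|A|^{-1}$ from $c_1(\alpha)=\int_0^{\infty}|E_k|$ when $\alpha>1$ is exactly the right point. It buys a rigorous "if and only if", whereas the paper's cited theorems, taken literally, deliver only the one-sided statement.
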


This theorem is a special case of Theorem \ref{thm: 1} and Theorem \ref{thm: 2} when taking into account Lemma \ref{lemma: Mittag-Leffler function}. In the case of additive noise, the limit distribution actually has finite moments of all orders and convergence holds in the Wasserstein-$p$ distance for any $p \in [2,\infty)$. Furthermore, when $F(0) = \sigma(0) = 0$, $A < 0$, $g_0 \in L^p(\R_+)$ satisfies $1 + \frac{1}{p} < \gamma + \frac{1}{q} < 1 + \alpha + \frac{1}{p}$, and  
\[
  3C_{F,\mathrm{lip}}^2 c_1(\alpha)^2|A|^{-2} + 3 c_2(\alpha,\beta)^2 C_{\sigma,\mathrm{lip}}^2 |A|^{-(2\beta-1)/\alpha} < 1,
\]
then $\int_0^{\infty} \left( \E[|u(t)|^2] \right)^{q/2} dt < \infty$. The latter is a particular case of Theorem \ref{prop: contraction estimate}, the specific representation for $Gg$ and Lemma \ref{lemma: Mittag-Leffler function} used to verify that $Gg \in L^q(\R_+)$.

\subsection{Infinite-dimensional case}
\label{sec: examples}

Next, we discuss the infinite-dimensional case under the assumption that $(A,D(A))$ is self-adjoint and there exists a nondecreasing sequence $(\mu_n)_{n \geq 1} \subset (0,\infty)$ with orthonormal basis $(e^H_n)_{n \geq 1}$ in $H$ such that $Ae^H_n = - \mu_n e^H_n$ holds for $n \in \N$. Define for $\lambda \in \R$ the fractional space
\[
 D((-A)^{\lambda}) := H^{\lambda} := \left\{ x = \sum_{n=1}^{\infty}x_n e_n^H \ : \ \sum_{n=1}^{\infty}\mu_n^{2\lambda}|x_n|^2 < \infty \right\}
\]
with inner product $\langle x,y \rangle_{\lambda} = \sum_{n=1}^{\infty}\mu_n^{2\lambda}x_n y_n$ and induced norm $\| \cdot\|_{\lambda}$. Then $H^{\lambda} \subset H^{\lambda'}$ for $\lambda' < \lambda$, and $(e_n^{H^{\lambda}})_{n\geq 1} = (\mu_n^{-\lambda}e_n^H)_{n \geq 1}$ constitutes an orthonormal basis of $H^{\lambda}$.

\begin{Remark}
 For $\rho \in L_{loc}^1(\R_+)$ and $\mu > 0$, let $e_{\rho}(\cdot; \mu) \in L_{loc}^1(\R_+)$ be the unique solution of
 \begin{align}\label{eq: e rho}
 e_{\rho}(t;\mu) + \mu \int_0^t k(t-s)e_{\rho}(s;\mu)ds = \rho(t).
 \end{align}
 If $\sup_{n \geq 1}|e_{\rho}(\cdot; \mu_n)| \in L_{loc}^p(\R_+)$ holds for some $p \in [1,\infty]$, then the $\rho$-resolvent exists, satisfies $\|E_{\rho}(t)\|_{L(H)} = \sup_{n \geq 1}|e_{\rho}(t;\mu_n)|$, and has representation
 \[
  E_{\rho}(t)x = \sum_{n = 1}^{\infty}e_{\rho}(t;\mu_n) \langle e_n^H, x \rangle_H e_n^H.
 \]
\end{Remark} 

Using this observation, we can obtain estimates for the $L^p$-norm of $E_k, E_h$ in terms of estimates on $e_k(\cdot;\mu_n)$ and $e_h(\cdot;\mu_n)$, see Section \ref{sec:fractional_kernels}. Our assumption on $(A,D(A))$ is, e.g., satisfied for the Dirichlet Laplace operator on a bounded and smooth domain, the strongly damped wave equation, and the plate equation, see \cite[Appendix]{MR3236753}. To illustrate our results, below we apply the estimates given therein to a few particular examples.

In the first statement, we discuss the case of additive noise with general kernels $k$ and $\sigma$ being Hilbert-Schmidt (or equivalently $W$ is a $Q$-Wiener process).
\begin{Theorem}
    Suppose that $F: H \longrightarrow H$ is globally Lipschitz continuous and $\sigma(x) \equiv \sigma_0 \in L_2(U, H)$ is constant. Suppose that $0 < k \in L_{loc}^2(\R_+) \cap C^1((0,\infty))$ is non-increasing, $\ln(k)$ and $\ln(-k')$ are convex on $(0,\infty)$, $\int_0^{\infty} e^{-\e t}k(t)dt < \infty$ for each $\e > 0$, there exists $\delta \in (0,1/2)$ and $C_{\delta} > 0$ such that $k(t) \leq C_{\delta}t^{-\delta}$ for $t \in (0,1)$, and $h = k \ast \nu$ where $\nu$ is a finite measure on $\R_+$. Finally, let $g(t) = \int_0^t k(t-s)g_0(s)ds$ with $g_0 \in L^{\infty}(\R_+; D((-A)^{-1}) \cap C((0,\infty); D((-A)^{-1})$ satisfying $g_0(t) \longrightarrow g_0(\infty)$ in $D((-A)^{-1})$ as $t \to \infty$. If $C_{F, \mathrm{lin}} < \mu_1$, then \eqref{VSPDE} admits a unique mild solution $u \in C_b(\R_+; L^p(\Omega; H))$. Moreover, there exists a unique limit distribution $\pi_g$ on $H$ with finite moments of all orders, $W_p(u(t;g), \pi_g) \longrightarrow 0$, and it is independent of $g$ if and only if $k \not \in L^1(\R_+)$.
\end{Theorem}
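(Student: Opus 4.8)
The plan is to realise the statement as a special case of Theorem \ref{thm: 2} (additive noise) with the choice $V = H_F = H_\sigma = H$, so that $\|i\|_{L(V,H)} = 1$ and the only genuine work lies in the scalar resolvent estimates. First I would diagonalise everything along the eigenbasis $(e_n^H)$ using \eqref{eq: e rho}. Since $0 < k$ is non-increasing with $\ln k$ and $\ln(-k')$ convex, the theory of completely positive kernels (see \cite[Chapter 4]{MR1050319}) shows that each scalar resolvent $e_k(\cdot;\mu)$ exists, is nonnegative and completely monotone; the resolvent identity then gives the pointwise bound $0 \le e_k(t;\mu) \le k(t)$, and differentiating $\widehat{e_k}(z;\mu) = \widehat k(z)/(1+\mu\widehat k(z))$ in $\mu$ yields $\partial_\mu e_k(\cdot;\mu) = -e_k(\cdot;\mu)\ast e_k(\cdot;\mu) \le 0$. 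Hence $\mu \mapsto e_k(t;\mu)$ is non-increasing, and since $(\mu_n)$ is non-decreasing, $\|E_k(t)\|_{L(H)} = \sup_n e_k(t;\mu_n) = e_k(t;\mu_1)$. Writing $h = k\ast\nu$ gives $e_h(\cdot;\mu) = \nu\ast e_k(\cdot;\mu) \ge 0$ and likewise $\|E_h(t)\|_{L(H)} = e_h(t;\mu_1)$; this establishes the existence and strong measurability of $E_k, E_h$ required in (A1), while (A2) is immediate for $V = H_F = H_\sigma = H$.

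Next I would turn these scalar bounds into the hypotheses of Theorem \ref{thm: 2}. Evaluating the Laplace transform at $z = 0$ gives $\int_0^\infty e_k(t;\mu)\,dt = \widehat k(0)/(1+\mu\widehat k(0)) \le 1/\mu$, with equality precisely when $\widehat k(0) = +\infty$, i.e. when $k\notin L^1(\R_+)$. Thus $\|E_k\|_{L^1(\R_+;L(H))} = \int_0^\infty e_k(t;\mu_1)\,dt \le 1/\mu_1$, so the dissipativity condition \eqref{eq: limit distribution additive case} reduces to $C_{F,\mathrm{lin}}/\mu_1 < 1$, which is exactly the hypothesis $C_{F,\mathrm{lin}} < \mu_1$. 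For (A3) with additive noise one has $K_{\mathrm{lip}} \equiv 0$ and $K_{\mathrm{lin}}(t) = \|E_h(t)\sigma_0\|_{L_2(U,H)} \le \|\sigma_0\|_{L_2(U,H)}\,e_h(t;\mu_1)$; the bound $e_k(t;\mu_1) \le k(t) \le C_\delta t^{-\delta}$ with $\delta < 1/2$ makes $e_k(\cdot;\mu_1)$ square-integrable near $0$, while monotonicity together with $e_k(\cdot;\mu_1)\in L^1(\R_+)$ controls the tail, so $e_k(\cdot;\mu_1)\in L^2(\R_+)$ and hence $e_h(\cdot;\mu_1) = \nu\ast e_k(\cdot;\mu_1)\in L^2(\R_+)$ by Young's inequality for the finite measure $\nu$; thus $K_{\mathrm{lin}}\in L^2(\R_+)$. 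The same singularity bound gives $E_k\in L_{loc}^{p/(p-1)}(\R_+;L_s(H))$ whenever $p/(p-1) < 1/\delta$, in particular for all $p\ge 2$, so part (a) of Theorem \ref{thm: 2} yields the continuity needed for the $C_b$-statement. Because $\Omega$ is a probability space it suffices to argue for large $p$: the assertions for smaller $p$ follow from the inclusions $L^{p'}(\Omega)\hookrightarrow L^p(\Omega)$ and $W_p\le W_{p'}$.

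It remains to verify that $\xi = Gg$ satisfies the standing assumptions of Theorem \ref{thm: 2}(b). Projecting \eqref{eq: Gg definition} onto $e_n^H$ and using $g = k\ast g_0$ gives $Gg_n = e_k(\cdot;\mu_n)\ast g_{0,n}$, hence $Gg(t) = \int_0^t E_k(t-s)g_0(s)\,ds$. The regularity $g_0\in L^\infty(\R_+;H^{-1})\cap C((0,\infty);H^{-1})$ with $g_0(t)\to g_0(\infty)$ in $H^{-1}$, together with $e_k(\cdot;\mu_n)\in L^1(\R_+)$, yields by a dominated convergence argument in $H$ (using the uniform $H^{-1}$-tail bound from compactness of the trajectory) that $Gg\in C_b((0,\infty);H)$ and $Gg(t)\to Gg(\infty)$ in $H$ for some $Gg(\infty)\in H$, so that \eqref{eq: xi limit} holds; this is the content of Proposition \ref{prop: Gg example}, which I would invoke. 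With all hypotheses in place, Theorem \ref{thm: 2} delivers the unique mild solution $u\in C_b(\R_+;L^p(\Omega;H))$, a unique limit measure $\pi_g\in\mathcal{P}_p(H)$, and $W_p(u(t;g),\pi_g)\to 0$ for every $p\in[1,\infty)$; since the noise is additive and Gaussian and $K_{\mathrm{lin}}\in L^2(\R_+)$, the stationary part is Gaussian and $\pi_g$ has finite moments of all orders. Finally, Theorem \ref{thm: 2} shows $\pi_g$ depends on $g$ only through $Gg(\infty)$, so whether $\pi_g$ varies with the initial datum is, by Proposition \ref{prop: Gg example} and the characterisation of limits through $E_1(\infty)$ (here $\mathcal{N}(A) = \{0\}$, so $E_1(\infty) = 0$ exactly when $\widehat k(0) = +\infty$), equivalent to $k\in L^1(\R_+)$, giving the stated dichotomy.

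The hard part is the resolvent analysis rather than the invocation of Theorem \ref{thm: 2}: one must extract from the convexity hypotheses on $k$ the nonnegativity and $\mu$-monotonicity of the whole family $(e_k(\cdot;\mu_n))_n$, and in particular the sharp bound $\int_0^\infty e_k(\cdot;\mu)\,dt \le 1/\mu$ together with the global $L^2(\R_+)$-integrability of $e_h(\cdot;\mu_1)$, since these encode respectively the dissipativity \eqref{eq: limit distribution additive case} and the finiteness of the stationary covariance. A secondary technical point, which I expect to be the most delicate step of the $Gg$-analysis, is making the $H$-valued limit of $Gg$ rigorous when $g_0$ only takes values in $H^{-1}$, forcing the uniform-tail/compactness argument indicated above.
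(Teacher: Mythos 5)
Most of your proposal coincides with the paper's own proof: the reduction to Theorem \ref{thm: 2} with $V=H_F=H_\sigma=H$ (so $\|i\|_{L(V,H)}=1$), the scalar resolvent estimates (nonnegativity and $\mu$-monotonicity of $e_k(\cdot;\mu)$, $\|E_k\|_{L^1(\R_+;L(H))}\leq \mu_1^{-1}$ via the Laplace transform, square integrability from the bound $k(t)\leq C_\delta t^{-\delta}$ with $\delta<1/2$, and the transfer to $h=k\ast\nu$) are exactly Lemma \ref{lemma: Ek self adjoint} combined with \eqref{eq: 8}, and your treatment of $Gg=E_k\ast g_0$ is the content of Proposition \ref{prop: Gg example general}(b), which is the statement to cite here rather than Proposition \ref{prop: Gg example} (the latter is specific to fractional kernels). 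Three slips in this part are harmless: under the stated hypotheses $e_k(\cdot;\mu)$ is nonnegative and nonincreasing (Remark \ref{remark: sufficient condition for ek}, Lemma \ref{lemma: monotonicity}) but not completely monotone in general; the limit law is not Gaussian, since $F$ is nonlinear, but finite moments of all orders already follow from running Theorem \ref{thm: 2} for every $p\in[1,\infty)$, as you also note; and no compactness or tail argument is needed for the $H$-valued limit of $Gg$ --- componentwise dominated convergence, as in the paper, suffices.

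The genuine gap is your final dichotomy step. The characterisation through $E_1(\infty)$ that you invoke presupposes that $g$ itself converges, $g(t)\to g(\infty)$ in $D(A)$, in which case $Gg(\infty)=E_1(\infty)g(\infty)$. In the theorem's class $g=k\ast g_0$, and precisely when $k\notin L^1(\R_+)$ and $g_0(\infty)\neq 0$, the function $g$ is unbounded, so this characterisation is inapplicable and $E_1(\infty)=0$ does not entitle you to conclude $Gg(\infty)=0$. Your own resolvent computation gives the correct limit: since $\int_0^\infty e_k(t;\mu_n)\,dt=1/\mu_n$ when $k\notin L^1(\R_+)$ (Lemma \ref{lemma: general ek integral}), one obtains $Gg(\infty)=(-A)^{-1}g_0(\infty)$, which is nonzero and still varies with $g$. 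Concretely, take $H=U=\R$, $A=-\mu<0$, $F\equiv 0$, $h=k$, $g_0\equiv c$: then $u(t)=c\int_0^t e_k(s;\mu)\,ds+\int_0^t e_k(t-s;\mu)\,dW_s$, whose law converges to a Gaussian with mean $c/\mu$ --- manifestly dependent on $g$ although $k\notin L^1(\R_+)$. So this step fails, and by your own premise that $\pi_g$ is determined by, and varies with, $Gg(\infty)$, it cannot be repaired in this form.

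You should know that this mirrors a soft spot in the paper itself: its proof quotes Proposition \ref{prop: Gg example general} as giving $Gg(t)\to(\|k\|_{L^1(\R_+)}-A)^{-1}g_0(\infty)$, which formally vanishes when $\|k\|_{L^1(\R_+)}=\infty$, whereas the proposition actually proved gives $(\|k\|_{L^1(\R_+)}^{-1}-A)^{-1}g_0(\infty)$, under which the claimed equivalence "independent of $g$ iff $k\notin L^1(\R_+)$" does not follow either. The $E_1(\infty)$-dichotomy is the correct statement for forcings that converge in $D(A)$; $g=k\ast g_0$ with convergent $g_0$ is a different class, and conflating the two is exactly where both your argument and the paper's break down. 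Everything preceding this step in your proposal is sound and follows the paper's route.
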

\begin{proof}
    Using Lemma \ref{lemma: Ek self adjoint} combined with \eqref{eq: 8} we obtain $\|E_k\|_{L^1(\R_+; L(H))} \leq \mu^{-1}$ and $E_k, E_h \in L^2(\R_+; L_s(H))$. Since $\sigma_0 \in L_2(U,H)$, it is easy to see that conditions (A1) -- (A3) are satisfied with $V = H_{\sigma} = H_{F} = H$, $K_{\mathrm{lin}}(t) = \|E_h(t)\sigma_0\|_{L_2(U,H)}$ and $K_{\mathrm{lip}}(t) = 0$. Proposition \ref{prop: Gg example general} implies that $Gg \in C_b(\R_+; H)$ such that $Gg(t) \longrightarrow \left( \|k\|_{L^1(\R_+)} - A\right)^{-1}g_0(\infty)$. Hence the assertion follows from Theorem \ref{thm: 2} combined with $0 \in \mathcal{\rho}(A)$ due to $\mu_1 > 0$ to conclude that $(-A)^{-1}$ exists.
\end{proof}

Note that each completely monotone kernel $k > 0$ on $(0,\infty)$ is non-increasing such that $\ln(k), \ln(-k')$ are convex, see Remark \ref{remark: sufficient condition for ek}. Thus, this result can be applied for $k(t) = \frac{t^{\alpha-1}}{\Gamma(\alpha)}$ with $\alpha \in (0,1)$, $k(t) = \log(1 + 1/t)$, and $k(t) = \int_2^{\infty} \frac{e^{-\theta t}}{\sqrt{\theta}\log(\theta)^2}d\theta$. Letting $\nu = \delta_0$ yields $h(t) = k(t)$, while letting $\nu = \sum_{i=1}^n c_i \delta_{t_i}$ gives $h(t) = \sum_{i=1}^n c_i k(t-t_i)\1_{\{t > t_i\}}$ as particular examples.

If $\sigma \not \in L_2(U,H)$ (e.g. the identity operator), then the conditions for the existence, uniqueness, and limit distributions require further summability conditions of the eigenvalues $(\mu_n)_{n \geq 1}$ and hence are, in general, dimension dependent. For simplicity, we consider from now on the Dirichlet Laplace operator $A = \Delta_x$ on $\mathcal{O} = [0, \pi]^d$ with domain $D(A) = H^2(\mathcal{O}) \cap H_0^1(\mathcal{O})$. Note that the eigenvalues of $-\Delta_x$ are given by $\mu_{n_1,\dots,n_d} = n_1^2 + \dots + n_d^2$ with $n_1,\dots,n_d \geq 1$. Furthermore, to obtain explicit and somewhat sharp conditions, we shall assume that $k,h$ are given by the fractional kernels \eqref{eq: fractional kernels}.

\begin{Example}[Nonlinear heat equation with additive noise]\label{thm: stochastic heat equation nonlinear drift}
    Consider the nonlinear heat equation on domain $\mathcal{O} = [0,\pi]^d$ given by
    \begin{align*}
        u(t,x) &= \frac{t^{\gamma}}{\Gamma(1+\gamma)}u_0(x) + \int_0^t \frac{(t-s)^{\alpha-1}}{\Gamma(\alpha)}\left(\Delta_{x}u(s,x) + f(u(s,x))\right) ds 
        \\ &\qquad \qquad \qquad \qquad \qquad  + \int_0^t \frac{(t-s)^{\beta - 1}}{\Gamma(\beta)}dW(s,x)
    \end{align*}
    where $0 \leq \gamma \leq \alpha < 2$, $\beta > 1/2$, and $f: \R \longrightarrow \R$ is bounded and globally Lipschitz continuous with Lipschitz constant $C_{f,\mathrm{lip}} > 0$.
    Let $\delta \ge 0$ be taken as specified in the following two cases.
    \begin{enumerate}
        \item[(a)] In any dimension $d \in \{1,2,3\}$, if $\alpha \in (0,1]$ and 
    \begin{align}\label{eq: 5}
     \frac{\alpha d}{4} + \frac{1}{2} < \beta \leq \alpha + \frac{1}{2},
    \end{align}
    define $\delta = 0$.

    \item[(b)] Only in dimension $d=1$, assume \eqref{eq: 5} holds for $d=1$ and assume further that  $\alpha \in (2/3, 2)$; then we can take $\delta > 0$ with the bound
    \begin{align}\label{eq: 9}
     0 \leq \delta < \min\left\{ \frac{3}{4} - \frac{1}{2\alpha},\ \frac{2\beta - 1}{2\alpha} - \frac{1}{4}\right\}.
    \end{align}
    \end{enumerate}
    Then for each $u_0 \in D((-\Delta)^{\delta - \gamma/\alpha})$ there exists a unique mild solution 
    \[
     u \in C(\R_+; L^2(\Omega, \P; D((-\Delta)^{\delta}))).
    \]
    Moreover, assume that
    \[
     \begin{cases}(C_{f, \mathrm{lip}}\vee \|f\|_{\infty}) < 1, & \text{ in case (a)}
     \\ \displaystyle (C_{f, \mathrm{lip}}\vee \|f\|_{\infty}) \frac{c_1(\alpha,\alpha)}{1-2\delta} < 1, & \text{ in case (b) }, \end{cases}
    \]
    where the constant $c_1(\alpha,\alpha)$ were defined in \eqref{eq:c_q}. Then there exists a unique limit distribution $\pi_{u_0}$ on $D((-\Delta)^{\delta})$ with finite moments of all orders such that $W_p(u(t), \pi_{u_0}) \longrightarrow 0$ holds for each $p \in [2,\infty)$. This limit distribution is independent of $u_0$ if and only if $\gamma \neq \alpha$.
\end{Example}

\begin{proof}
    Let $H = U = H_F = H_{\sigma} = L^2(\mathcal{O})$, $V = D((-\Delta)^{\delta})$, $\sigma(u) = \mathrm{id}_{L^2(\mathcal{O})}$, and $F(u)(x) = f(u(x))$ for $u \in L^2(\mathcal{O})$. Then we obtain the abstract formulation 
    \[
     u(t) = g(t) + \int_0^t \frac{(t-s)^{\alpha-1}}{\Gamma(\alpha)}\left(Au(s) + F(u(s)) \right) \, ds + \int_0^t \frac{(t-s)^{\beta - 1}}{\Gamma(\beta)} \, dW_s
    \]
    where $(A,D(A)) = (\Delta_x, H^2(\mathcal{O}) \cap H_0^1(\mathcal{O}))$ and $g(t) = \frac{t^{\gamma}}{\Gamma(1+\gamma)}u_0$. Let $\mu_n$ be the ordered Dirichlet eigenvalues of $-\Delta_x$. In case (a), by Lemma \ref{lemma: fractional kernels}.(c) applied with $q = 1,2$ we obtain $\|E_k\|_{L^1(\R_+; L(H))} \leq \mu_1^{-1} = 1$ and $E_k \in L^2(\R_+; L_s(H))$. 
    \\
    {In case (b), Lemma \ref{lemma: fractional kernels}.(a) applied to $q = 1$ yields from the particular form of the eigenvalues
    \begin{align*}
    \int_0^{\infty} \|E_k(t)\|_{L(H,V)} dt 
    &\leq c_1(\alpha, \alpha)\sum_{n=1}^{\infty}n^{-2(1-\delta)} 
    \\ &\leq c_1(\alpha, \alpha)\int_1^{\infty}x^{-2 + 2\delta} \, dx =  \frac{c_1(\alpha, \alpha)}{1-2\delta} < \infty
    \end{align*}
    with the series being finite provided that $d = 1$ and $\delta < \frac{1}{2}$. Finally, Lemma \ref{lemma: fractional kernels}.(a) applied to $q = 2$ gives 
    \begin{align*}
        \int_0^{\infty}\|E_k(t)\|_{L(H,V)}^2 dt \leq c_2(\alpha,\alpha)\sum_{n=1}^{\infty}\mu_n^{-2(1-\delta) + \frac{1}{\alpha}}
    \end{align*}
    which is finite if and only if $d=1$ and $\delta < \frac{3}{4} - \frac{1}{2\alpha}$. }
    For the diffusion component, we use in both cases Lemma \ref{lemma: fractional kernels}.(b) to find 
    \begin{align*}
        \int_0^{\infty}\|E_k(t)\|_{L_2(U,V)}^2 \, dt = c_2(\alpha,\beta)\sum_{n=1}^{\infty}\mu_n^{-\frac{2\beta}{\alpha} + 2\delta + \frac{1}{\alpha}}.
    \end{align*}
    The latter series is convergent if and only if $d = 1,2,3$, \eqref{eq: 5} holds and $\delta$ satisfies \eqref{eq: 9}. Hence conditions (A1) -- (A3) are satisfied with $C_{F,\mathrm{lip}} = C_{f,\mathrm{lip}}$ and $C_{F,\mathrm{lin}} = (C_{f,\mathrm{lip}} \vee \|f\|_{\infty})$. 
    Finally, using Proposition \ref{prop: Gg example}.(b) with $g_0(t) = u_0$ for $0 < \gamma \leq \alpha$ and Proposition \ref{prop: Gg example general}.(a) for $\gamma = 0$, we see that $Gg \in C_b(\R_+; H)$ with 
    \[
     Gg(t) \longrightarrow \begin{cases} 0, & 0 \leq \gamma < \alpha \\ (-\Delta)^{-1}x, & \gamma = \alpha. \end{cases}
    \]
    The assertion follows from Theorem \ref{thm: 2} combined with the explicit bounds on the $L^1(\R_+; L_s(H, V))$ norm of $E_k$ and since $\|i\|_{L(V,H)} \leq \mu_1^{-\delta} = 1$.
\end{proof}

Condition \eqref{eq: 5} constitutes a trade-off between the dimension $d$ and the fractional parameters $\alpha,\beta$. 
In the classical stochastic Heat equation, we have $\alpha = \beta = 1$ and hence necessarily $d = 1$. However, letting $\alpha = 1$ gives $\beta \in (\frac{d+2}{4}, \frac{3}{2}]$ which shows that higher dimensions can be treated by more regular driving noise. Likewise, if $\beta = 1$, then $\alpha \in (1/2,2/d)$ and no dimension $d > 3$ can be studied. Finally, if $\alpha = \beta$, then necessarily $d = 1$ and $\alpha = \beta \in (2/3, 2)$. The spatial regularity obtained in $d=1$ always satisfies $\delta < 1/2$ and hence we cannot guarantee that $u,\pi_g$ have a spatially continuous representative.  

Below we state an analogous result with multiplicative noise where $F \equiv 0$. Consequently, the mild formulation does not involve $E_k$ and hence we obtain more spatial regularity in the full range of dimensions $d=1,2,3$.

\begin{Example}[Heat equation with multiplicative noise]\label{thm: stochastic heat equation nonlinear diffusion}
    Consider the nonlinear heat equation on domain $\mathcal{O} = [0,\pi]^d$ with dimension $d \geq 1$ given by
    \begin{align*}
        u(t,x) = \frac{t^{\gamma}}{\Gamma(1+\gamma)}u_0(x) + \int_0^t \frac{(t-s)^{\alpha-1}}{\Gamma(\alpha)}\Delta_{x}u(s,x) ds + \int_0^t \frac{(t-s)^{\beta - 1}}{\Gamma(\beta)}f(u(s,x))dW(s,x)
    \end{align*}
    where $0 \leq \gamma \leq \alpha < 2$, $\beta > 1/2$, and $f: \R \longrightarrow \R$ is bounded and globally Lipschitz continuous with Lipschitz constant $C_{f,\mathrm{lip}} > 0$. If $d \in \{1,2,3\}$ and $\delta \geq 0$ satisfies
    \begin{align}\label{eq: 10}
     \frac{1}{2} + \left( \delta + \frac{d}{4}\right)\alpha < \beta \leq \alpha + \frac{1}{2},
    \end{align}
    then, for each $u_0 \in D((-\Delta)^{\delta - \gamma/\alpha})$, there exists a unique solution 
    \[
     u \in C(\R_+; L^2(\Omega, \P; D((-\Delta)^{\delta}))).
    \]
    Finally, if for $\omega_1 = 1$, $\omega_2 = \pi/2$ and $\omega_3 = \pi$
    \begin{align*}
     3 \, C_{f,\mathrm{lin}}^2 c_2(\alpha,\beta) \frac{\omega_d}{\frac{4\beta - 2}{\alpha} - 4\delta - d} < 1
    \end{align*}
    holds, then there exists a unique limit distribution $\pi_{u_0}$ on $D((-\Delta)^{\delta})$ with finite second moments such that $W_2(u(t), \pi_{u_0}) \longrightarrow 0$. This limit distribution is independent of $u_0$ if and only if $\gamma \neq \alpha$. 
\end{Example}
\begin{proof}
     Let $H = U = H_F = H_{\sigma} = L^2(\mathcal{O})$, $V = D((-\Delta)^{\delta})$, $(A,D(A)) = (\Delta_x, H^2(\mathcal{O}) \cap H_0^1(\mathcal{O}))$, $F \equiv 0$, and set $\sigma(v)u(x) = f(v(x))u(x)$ for $x \in \mathcal{O}$, and $u \in L^2(\mathcal{O})$. Then we arrive at the abstract formulation 
    \[
     u(t) = g(t) + \int_0^t \frac{(t-s)^{\alpha-1}}{\Gamma(\alpha)}Au(s)ds + \int_0^t \frac{(t-s)^{\beta - 1}}{\Gamma(\beta)}\sigma(u(s))dW_s
    \]
    with $g(t) = \frac{t^{\gamma}}{\Gamma(1+\gamma)}u_0$. 
    Since $F \equiv 0$, it is easy to see from the proofs given in Section \ref{sec:limit_dist}, that no conditions on $E_k$ need to be imposed. The latter is the reason why all dimensions $d=1,2,3$ can be treated simultaneously. 
    For the diffusion component we proceed exactly as in Example \ref{thm: stochastic heat equation nonlinear drift} which gives 
    \begin{align*}
        \int_0^{\infty} \|E_h(t)\|_{L_2(H,V)}^2 dt = c_2(\alpha,\beta)\sum_{n=1}^{\infty} \mu_n^{-\frac{2\beta}{\alpha} + 2\delta + \frac{1}{\alpha}} < \infty
    \end{align*}
    whenever $d = 1,2,3$ and \eqref{eq: 10} holds. Hence conditions (A1) and (A2) are satisfied with $C_{F, \mathrm{lip}} = C_{F, \mathrm{lin}} = 0$ and it is easy to see that condition (A3) is satisfied with 
    $K_{\mathrm{lin}}(t)^2 = C_{f,\mathrm{lin}}^2 \sum_{n=1}^{\infty} |e_h(t;\mu_n)|^2$ and $K_{\mathrm{lip}}(t)^2 = C_{f, \mathrm{lip}}^2 \sum_{n=1}^{\infty} |e_h(t;\mu_n)|^2$. Using Lemma \ref{lemma: Mittag-Leffler function} combined with the explicit representation $e_h(t;\mu_n) = t^{\beta-1}E_{\alpha,\beta}(-\mu_n t^{\alpha})$, we obtain
    \begin{align*}
     \int_0^{\infty}\sum_{n=1}^{\infty} |e_h(t;\mu_n)|^2 dt =  C_{f,\mathrm{lin}}^2 c_2(\alpha,\beta) \sum_{n=1}^{\infty} \mu_n^{- \frac{2\beta - 1}{\alpha}+2\delta}.
    \end{align*}
    When $d = 1$ we obtain from $\mu_n = n^2$
    \[
     \sum_{n=1}^{\infty} \mu_n^{- \frac{2\beta - 1}{\alpha}+2\delta} = \sum_{n=1}^{\infty} n^{4\delta - \frac{4\beta - 2}{\alpha}} \leq \int_1^{\infty} x^{4\delta - \frac{4\beta - 2}{\alpha}} dx = \frac{1}{\frac{4\beta - 2}{\alpha} - 4\delta - 1}
    \]
    Similarly, when $d = 2,3$ we bound this series by using the explicit representation of the eigenvalues and a comparison with the volume integrals over the ball of radius $n^2$, i.e.
    \begin{align*}
    \sum_{n=1}^{\infty} \mu_n^{- \frac{2\beta - 1}{\alpha}+4\delta}  
    &= \sum_{n_1,\dots, n_d = 1}^{\infty}(n_1^2 + \dots + n_d^2)^{- \frac{2\beta - 1}{\alpha}+4\delta}  
    \\ &\leq \frac{\omega_d}{4} \int_1^{\infty} x^{-\frac{4\beta - 2}{\alpha}+4\delta + d - 1}  dx
    = \frac{\omega_d/4}{\frac{4\beta - 2}{\alpha} - 4\delta - d}
    \end{align*}
    where $\omega_d$ denotes the surface of the unit ball in $\R^d$. Finally, $Gg \in C_b(\R_+; H)$ with $Gg(t) \longrightarrow Gg(\infty)$ can be shown as in the proof of Example \ref{thm: stochastic heat equation nonlinear drift}. The assertion follows from Theorem \ref{thm: 1}.
\end{proof}

Let $\beta = \alpha + \frac{1}{2}$. Then we may take any $\delta < 1 - \frac{d}{4}$. In particular, when $d = 1$, the solution belongs to $D((-\Delta)^{3/4 - \e}) \subset C^{1/2 - \e}(\mathcal{O})$ for each $\e > 0$ by Sobolev embeddings and hence has a H\"older continuous representative in the spatial variables.

\section{Resolvent operators and linear equations}
\label{sec:resolvent}

For $p \in [1,\infty]$, we let $L^p([0,T];H)$ be the Banach space of Bochner $p$-integrable functions on $[0,T]$ with $T > 0$ and norm denoted by $\| \cdot\|_{L^p([0,T];H)}$. Let $L_{loc}^p(\R_+;H)$ be the space of locally Bocher $p$-integrable functions. If $H = \R$, then we simply write $L_{loc}^p(\R_+)$ or $L^p([0,T])$, respectively. When $V$ is another separable Hilbert space, we let $L(H, V)$ be the space of bounded linear operators from $H$ to $V$. Note that this space is not separable, and hence the notion of Bochner integrability is too restrictive for $L(H, V)$-valued functions. A family of operators $(E(t))_{t \in [0, T]} \subset L(H, V)$ is called strongly operator measurable if $t \longmapsto E(t)x \in V$ is measurable for each $x \in H$. Consequently, $\|E(\cdot)\|_{L(H,V)} = \sup_{n \geq 1}\|E(\cdot)x_n\|_V$ is also measurable, where $(x_n)_{n \geq 1} \subset H$ is a dense subset of the unit ball in $H$. Thus, for $p \in [1,\infty]$, we let $L^p([0,T]; L_s(H, V))$ be the Banach spaces of strongly operator measurable mappings $\varphi: [0,T] \longrightarrow L(H,V)$ equipped with the norm
\[
 \| E\|_{L^p([0,T];L(H,V))} = \left( \int_0^T \|E(t)\|_{L(H,V)}^p dt \right)^{\frac{1}{p}}, \qquad p \in [1,\infty)
\] 
and with the obvious adaption for $p = \infty$. The integral $\int_0^T E(t)dt$ is well-defined in the strong operator topology. Similarly, we define $L^p(\R_+; L_s(H,V))$ with norm $\|\cdot\|_{L^p(\R_+; L(H,V))}$, and let $L_{loc}^p(\R_+; L_s(H,V))$ be the space of locally strongly measurable functions from $\R_+$ to $L(H,V)$ such that $E|_{[0,T]} \in L^p([0,T], L_s(H,V))$ holds for all $T > 0$. For additional details on measurability for operator-valued mappings, we refer to \cite{MR3564556}.

\subsection{Resolvent operators}

Fix $0 \neq k \in L_{loc}^1(\R_+)$ and let $(A, D(A))$ be a closed and densely defined operator on $H$. We equip $D(A)$ with the graph norm denoted by $\| x \|_{D(A)} = \left(\|x \|_H^2 + \|Ax\|_H^2\right)^{1/2}$ which makes it to a separable Hilbert space. Denote by $(a \ast b)(t) = \int_0^t a(t-s)b(s)ds$ the convolution product on $\R_+$, whenever it is well-defined.

\begin{Definition}\label{def: resolvent operators}
 Let $\rho \in L_{loc}^1(\R_+)$. A family of operators $E_{\rho} \in L_{loc}^1(\R_+; L_s(H))$ is called $\rho$-resolvent, if for a.a. $t \geq 0$, the operator $E_{\rho}(t)$ leaves $D(A)$ invariant, commutes with $A$ on $D(A)$, and satisfies for each $x \in D(A)$  
 \[
  E_{\rho}(t)x = \rho(t)x + \int_0^t k(t-s)AE_{\rho}(s)x \, ds.
 \]     
\end{Definition}

In contrast to \cite{LIZAMA2000278}, this notion of $\rho$-resolvents does not require that $\rho$ is continuous. Moreover, in contrast to \cite[Chapter 1]{MR1238939}, we do not suppose that when $\rho(t) = 1$, the $1$-resolvent $E_1$ is strongly continuous. In this way, we can unify the two special cases $\rho(t) = 1$ and $\rho(t) = k(t)$ corresponding to the resolvent and integral resolvent in the notation of \cite[Chapter 1]{MR1238939}. 

\begin{Remark}
    If $\rho, k \in L_{loc}^p(\R_+)$ for some $p \in [1,\infty]$, then by Young's convolution inequality (see Lemma \ref{lemma: continuity convolution}) it is easy to see that $E_{\rho}(\cdot)x \in L_{loc}^p(\R_+; H)$ for each $x \in D(A)$. If, additionally $\rho \in C((0,\infty))$ and $p \geq 2$, then even $E_{\rho}(\cdot)x \in C( (0,\infty); H)$ for $x \in D(A)$. 
\end{Remark}

For given $k, (A, D(A))$ and $\rho$, the existence of a $\rho$-resolvent is not necessarily guaranteed. Similarly to the classical case of $C_0$-semigroups (where $k = h = 1$), there are Hille-Yosida generation theorems for resolvents of stochastic Volterra equations, see \cite{MR1238939} for the cases $\rho = 1$ and $\rho = k$, and \cite{LIZAMA2000278} for the case where $\rho$ is continuous. Below we provide sufficient conditions for the existence of such $\rho$-resolvents beyond Example \ref{example: special choice}.

\begin{Example}[Finite-dimensional case]
 Let $H = \R^d$ and $\rho, k \in L_{loc}^p(\R_+)$. Then the $\rho$-resolvent $E_{\rho} \in L_{loc}^p(\R_+; \R^{d \times d})$ exists, see \cite[Chapter 1]{MR1050319}. For fractional kernels \eqref{eq: fractional kernels}, i.e., $k(t) = \frac{t^{\alpha - 1}}{\Gamma(\alpha)}$ and $\rho(t) = \frac{t^{\beta - 1}}{\Gamma(\beta)}$ we find $E_{\rho}(t) = t^{\beta - 1}E_{\alpha, \beta}(A t^{\alpha})$, where $E_{\alpha,\beta}(z) = \sum_{n=0}^{\infty}\frac{z^n}{\Gamma(\alpha n + \beta)}$ denotes the two-parameter Mittag-Leffler function, see \cite{MR4179587} for an overview of its properties.
\end{Example}

For $k \in L_{loc}^1(\R_+)$ with $\int_0^{\infty}|k(t)|e^{-\e t}dt < \infty$ for each $\e > 0$, we denote by $\widehat{k}$ the Laplace transform of $k$. The latter is well-defined on the positive half-plane. For $\theta_0 \in (0,\pi/2)$ let $\Sigma(\theta_0) = \{ \lambda \in \C \ : \ |\mathrm{arg}(\lambda)| < \theta_0 \}$ denote the sector of angle $\theta_0$ in the complex half-plane.
Our next example covers the case of $1$-resolvents in the parabolic case.
\begin{Example}[Parabolic case]
    Let $k \in L_{loc}^1(\R_+)$ be such that $\int_0^{\infty}|k(t)|e^{-\e t}dt < \infty$ for each $\e > 0$, $\widehat{k}(z) \neq 0$ and there exists $\theta \in (0,\pi)$ with $|\mathrm{arg}(\widehat{k}(z))| \leq \theta$ for $\mathrm{Re}(z) > 0$. Suppose that $(A,D(A))$ is closed, densely defined, satisfies $\rho(A) \subset \Sigma(\theta)$ and
    \[
     \sup_{z \in \Sigma(\theta)}|z|\| R(z; A)\|_{L(H)} < \infty.
    \]
    Then the $1$-resolvent exists and satisfies $E_1 \in L_{loc}^p(\R_+; L_s(H))$ for each $p \in [1,\infty)$, see \cite[Chapter 3]{MR1238939}. If, additionally $k$ is $2$-regular in the sense that there is $c > 0$ such that
    $|z^n \widehat{k}^{(n)}(z)| \leq c |\widehat{k}(z)|$ for $n \in \{0,1,2\}$ and $\mathrm{Re}(z) > 0$, then $E_1 \in C^1((0,\infty); L(H))$ and $E_1$ is uniformly bounded in the operator norm.
\end{Example}

Sufficient conditions for $k$ to be $2$-regular are discussed in \cite[Chapter 3]{MR1238939}. This example covers particularly the case where $(A, D(A))$ generates a $C_0$-semigroup and $k$ is completely monotone. In such a case, under a slight regularity condition on $k$, one can provide an explicit representation of $E_1$ in terms of the semigroup generated by $(A, D(A))$. 
\begin{Example}[Subordination principle]
 Let $(A,D(A))$ generate a $C_0$-semigroup $(T(t))_{t \geq 0}$ on $H$, suppose that $k$ is completely monotone and
 \[
 \exists \alpha \in [0,1), \ \e > 0: \qquad  \sup_{t \in (0,T)}t^{\alpha - \frac{1}{1+\e}}\| k\|_{L^{1+\e}([0,t])} < \infty.
 \]
 Let $\Psi(\cdot;\lambda) \in L_{loc}^1(\R_+)$ be the unique solution of $\Psi(\cdot;\lambda) + \lambda k \ast \Psi(\cdot;\lambda) = k$, and let $(\mathcal{P}_t(ds))_{t \geq 0}$ be the family of probability measures uniquely determined by their Laplace transforms $\int_0^{\infty} e^{-\lambda s}\mathcal{P}_t(ds) = \Psi(t;\lambda)$. Then, according to \cite[Theorem 1]{MR4493597}, $E_1$ exists and is given by
 \[
  E_1(t) = \int_0^{\infty} T(s) \mathcal{P}_t(ds)
 \]
\end{Example}

Below we show how the $1$-resolvent can be used to obtain information about $E_{\rho}$. As a first step, let us collect a few useful properties of resolvents.

\begin{Proposition}\label{prop: regularity resolvent}
 Let $0 \neq \rho \in L_{loc}^1(\R_+)$ and $E_{\rho}$ be a $\rho$-resolvent. 
 Then $k \ast E_{\rho} \in L_{loc}^1(\R_+; L_s(H,D(A)))$, and for each $x \in H$ one has  
 \[
  E_{\rho}(t)x = \rho(t)x + A \int_0^t k(t-s)E_{\rho}(s)x ds, \ \ \text{for a.a. } t \geq 0.
 \]
 Moreover, for each $f \in L_{loc}^1(\R_+; H)$ one has $k \ast E_{\rho} \ast f \in L_{loc}^1(\R_+; D(A))$ and 
 \[
  \| k \ast E_{\rho} \ast f \|_{L^1([0,T]; D(A))} \leq M(T)\| f\|_{L^1([0,T]; H)}.
 \]
 with $M(T) := \| k \|_{L^1([0,T])} \| E_{\rho}\|_{L^1([0,T]; L(H))} + \| E_{\rho}\|_{L^1([0,T]; L(H))} + \| \rho \|_{L^1([0,T])}$.
\end{Proposition}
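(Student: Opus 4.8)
The plan is to upgrade the defining identity of Definition~\ref{def: resolvent operators} from $x \in D(A)$ to arbitrary $x \in H$ by pulling $A$ out of the convolution with the help of its closedness, and then to extract all integrability bounds from Young's inequality (Lemma~\ref{lemma: continuity convolution}).

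First I would fix $x \in D(A)$ and note that $s \mapsto E_{\rho}(s)x$ lies in $L_{loc}^1(\R_+; D(A))$: its graph norm equals $(\|E_{\rho}(s)x\|_H^2 + \|E_{\rho}(s)Ax\|_H^2)^{1/2}$, which is locally integrable because $E_{\rho} \in L_{loc}^1(\R_+; L_s(H))$ and $E_{\rho}(s)$ commutes with $A$ on $D(A)$. Young's inequality in the Banach space $D(A)$ then gives $k \ast E_{\rho}(\cdot)x \in L_{loc}^1(\R_+; D(A))$, and since $A \colon D(A) \to H$ is bounded it commutes with the Bochner integral, so that $A(k \ast E_{\rho}(\cdot)x)(t) = (k \ast A E_{\rho}(\cdot)x)(t)$ for a.a.\ $t$; invoking the defining Volterra identity, this right-hand side equals $E_{\rho}(t)x - \rho(t)x$.

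Next I would pass from $D(A)$ to $H$, which I expect to be the main obstacle. Fixing a countable subset $\{x_j\}_{j \geq 1} \subset D(A)$ that is dense in $H$, the previous step holds simultaneously for every $x_j$ for all $t$ outside a single null set $N$ (also absorbing the null set of Definition~\ref{def: resolvent operators} and the set where the scalar convolutions involved diverge). For such $t$ and arbitrary $x \in H$, approximating $x$ in $H$ by the $x_j$ and using that the bounded operators $(k \ast E_{\rho})(t)$ and $E_{\rho}(t)$ act continuously, the sequences $(k \ast E_{\rho})(t)x_j$ and $A(k \ast E_{\rho})(t)x_j = E_{\rho}(t)x_j - \rho(t)x_j$ both converge in $H$; closedness of $A$ then yields $(k \ast E_{\rho})(t)x \in D(A)$ and the representation $A(k \ast E_{\rho})(t)x = E_{\rho}(t)x - \rho(t)x$ for every $x \in H$. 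The crucial point is that the graph norm of $(k \ast E_{\rho})(t)x$ is thereby controlled by $\|x\|_H$ and not merely by $\|x\|_{D(A)}$, since
\[
 \|A(k \ast E_{\rho})(t)x\|_H = \|E_{\rho}(t)x - \rho(t)x\|_H \leq \left(\|E_{\rho}(t)\|_{L(H)} + |\rho(t)|\right)\|x\|_H.
\]
Combining this with the elementary bound $\sqrt{a^2+b^2} \leq a+b$ gives
\[
 \|(k \ast E_{\rho})(t)\|_{L(H,D(A))} \leq \left(|k| \ast \|E_{\rho}(\cdot)\|_{L(H)}\right)(t) + \|E_{\rho}(t)\|_{L(H)} + |\rho(t)|.
\]
Measurability of $t \mapsto (k \ast E_{\rho})(t)x \in D(A)$ follows from that of its $H$-component and of $E_{\rho}(\cdot)x - \rho(\cdot)x$, so $k \ast E_{\rho} \in L_{loc}^1(\R_+; L_s(H,D(A)))$; integrating the last display over $[0,T]$ and applying Young's inequality to the convolution term produces exactly the constant $M(T)$.

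Finally, for the assertion on $f \in L_{loc}^1(\R_+; H)$, I would use associativity of convolution to write $k \ast E_{\rho} \ast f = (k \ast E_{\rho}) \ast f$ and apply Young's inequality once more, convolving $k \ast E_{\rho} \in L^1([0,T]; L_s(H,D(A)))$ with $f \in L^1([0,T]; H)$. This places $k \ast E_{\rho} \ast f$ in $L_{loc}^1(\R_+; D(A))$ and yields $\|k \ast E_{\rho} \ast f\|_{L^1([0,T];D(A))} \leq \|k \ast E_{\rho}\|_{L^1([0,T]; L(H,D(A)))}\|f\|_{L^1([0,T];H)} \leq M(T)\|f\|_{L^1([0,T];H)}$, as claimed.
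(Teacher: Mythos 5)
Your proposal is correct and follows essentially the same route as the paper: establish the identity $A(k \ast E_{\rho})(t)x = E_{\rho}(t)x - \rho(t)x$ first for $x \in D(A)$ via the defining Volterra equation and commutation, extend to all $x \in H$ by density of $D(A)$, read off the three-term bound on $\|(k \ast E_{\rho})(t)\|_{L(H,D(A))}$, and conclude the statement for $f$ by Young's inequality. Your write-up is in fact more explicit than the paper's at the density step (where the paper merely says ``since $D(A)$ is dense, we conclude''), since you spell out the role of the closedness of $A$, the single exceptional null set obtained from a countable dense family, and the measurability of $t \longmapsto (k\ast E_{\rho})(t)x$ in the graph norm.
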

\begin{proof}
 Let $x \in D(A)$. Since $E_{\rho}$ leaves $D(A)$ invariant and commutes with $A$ we have $\int_0^t \|A k(t-s)E_{\rho}(s)x\|_H ds = \int_0^t \| k(t-s)E_{\rho}(s)Ax\|_H ds < \infty$, and hence $k \ast E_{\rho}(t)x \in D(A)$ holds for a.a. $t \geq 0$. Moreover, using $A(k \ast E_{\rho})(t)x = E_{\rho}(t)x - \rho(t)x$ we get
 \begin{align*}
    \| k \ast E_{\rho}(t)x \|_{D(A)}
    &= \| k \ast E_{\rho}(t)x\|_H + \| E_{\rho}(t)x - \rho(t)x \|_H
    \\ &\leq  \left( \| k \ast E_{\rho}(t)\|_{L(H)} + \| E_{\rho}(t) \|_{L(H)} + |\rho(t)| \right) \|x\|_H.
 \end{align*}
 Since $D(A)$ is dense, we conclude that $k \ast E_{\rho}(t) \in L(H, D(A))$ holds for a.a. $t \geq 0$. Thus, we obtain for each $x \in H$
 \begin{align*}
     \ \| k \ast E_{\rho}(\cdot)x \|_{L^1([0,T];D(A))} 
     &\leq \| x\|_H \int_0^T \left( \| k \ast E_{\rho}(t)\|_{L(H)} + \| E_{\rho}(t) \|_{L(H)} + |\rho(t)| \right) dt 
    \\ &\leq M(T)\| x\|_H.
 \end{align*}
 Finally, given $f \in L_{\mathrm{loc}}^1(\R_+; H)$, we obtain 
 \begin{align*}
    &\ \| k \ast E_{\rho} \ast f \|_{L^1([0,T]; D(A))}
     \\ &\leq \int_0^T \int_0^t \| (k\ast E_{\rho})(s)f(t-s) \|_{D(A)}ds dt
     \\ &\leq \int_0^T \int_0^t \left( \| k \ast E_{\rho}(s)\|_{L(H)} + \| E_{\rho}(s) \|_{L(H)} + |k(s)| \right)\| f(t-s)\|_H ds dt
     \\ &\leq M(T) \| f\|_{L^1([0,T]; H)}
 \end{align*}
 which proves the assertion. 
\end{proof}

These relations imply that the $\rho$-resolvent is uniquely determined by $(A, D(A))$ and $\rho$. Indeed, let $E_{\rho}, \widetilde{E}_{\rho}$ be two $\rho$-resolvents. Then, for each $x \in H$,
\begin{align*}
 \rho \ast E_{\rho} x = (\widetilde{E}_{\rho} - A(k \ast \widetilde{E}_{\rho}))\ast E_{\rho}x
 = \widetilde{E}_{\rho} \ast \left( E_{\rho}x - A(k \ast E_{\rho}) x\right)
 = {\rho} \ast \widetilde{E}_{\rho} x,
\end{align*}
and hence Titchmarsh's theorem (see \cite[p. 166]{MR1336382}) implies $E_{\rho}(\cdot)x = \widetilde{E}_{\rho}(\cdot)x$. 

For $\rho \in L_{loc}^1(\R_+)$, its resolvent of the first kind provided it exists, is defined as the unique locally finite signed measure $\overline{\rho}$ such that $\rho \ast \overline{\rho} = \overline{\rho} \ast \rho = 1$. Sufficient conditions and further details are given in \cite[Chapter 5.5]{MR1050319}. For instance, $\overline{\rho}$ exists whenever $\rho$ is completely monotone or, more generally, $\rho \neq 0$ is nonnegative and non-increasing. The next lemma illustrates a relationship between $E_{\rho}$ and $E_{\widetilde{\rho}}$ for different $\rho$ and $\widetilde{\rho}$.
\begin{Lemma}\label{lemma: 3.3}
 Let $\rho \in L_{loc}^1(\R_+)$ and suppose that the $\rho$-resolvent $E_{\rho}$ exists. Then:
 \begin{enumerate}
     \item[(a)] For each $g \in L_{loc}^1(\R_+)$ 
     the $\rho\ast g$-resolvent exists and satisfies $E_{\rho \ast g} = E_{\rho} \ast g$.
     
     \item[(b)] If $\rho$ has a resolvent of the first kind $\overline{\rho}$, then $E_1$ exists and $E_1 = E_{\rho} \ast \overline{\rho}$.
 \end{enumerate}
\end{Lemma}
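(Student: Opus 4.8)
The plan is to exhibit, in each case, an explicit candidate for the new resolvent as a convolution of $E_\rho$ with the appropriate object, and then to verify the three requirements of Definition \ref{def: resolvent operators} --- local integrability, invariance of $D(A)$ together with commutation with $A$, and the resolvent integral equation. Once the candidate is shown to satisfy the definition, it is automatically \emph{the} resolvent by the uniqueness argument (via Titchmarsh's theorem) recorded after Proposition \ref{prop: regularity resolvent}, so no separate uniqueness step is needed.

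For part (a) I would set $F := E_\rho \ast g$. Since $E_\rho \in L^1_{loc}(\R_+; L_s(H))$ and $g \in L^1_{loc}(\R_+)$, Young's convolution inequality (Lemma \ref{lemma: continuity convolution}) gives $F \in L^1_{loc}(\R_+; L_s(H))$. Next, fix $x \in D(A)$; as $g$ is scalar-valued, $F(t)x = \int_0^t g(s)E_\rho(t-s)x\,ds$, and for a.a.\ $t$ the map $s \mapsto g(s)AE_\rho(t-s)x = g(s)E_\rho(t-s)Ax$ is integrable (again by Young). Because $(A,D(A))$ is closed, this permits moving $A$ through the Bochner integral, yielding $F(t)x \in D(A)$ with $AF(t)x = F(t)Ax$, so $F$ leaves $D(A)$ invariant and commutes with $A$. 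The integral equation is then obtained by convolving the defining identity $E_\rho(\cdot)x = \rho(\cdot)x + k \ast \bigl(E_\rho(\cdot)Ax\bigr)$ with $g$: associativity and commutativity of the scalar--vector convolution, justified by Fubini, turn the right-hand side into $(\rho\ast g)(\cdot)x + k \ast \bigl(F(\cdot)Ax\bigr)$, which is exactly the $(\rho\ast g)$-resolvent equation for $F$. Hence $E_{\rho\ast g} = E_\rho \ast g$.

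Part (b) follows the same template with $g$ replaced by the locally finite signed measure $\overline\rho$, setting $E_1 := E_\rho \ast \overline\rho$. Local integrability is clear since the convolution of an $L^1_{loc}$ function with a locally finite measure remains in $L^1_{loc}$, and the closedness argument for $D(A)$-invariance and commutation with $A$ carries over with the Bochner integral replaced by integration against $\overline\rho$. The decisive input is the defining relation $\rho \ast \overline\rho = 1$: convolving $E_\rho(\cdot)x = \rho(\cdot)x + k \ast \bigl(E_\rho(\cdot)Ax\bigr)$ with $\overline\rho$ collapses the first term to $x$ and, by associativity, the second to $k \ast \bigl(E_1(\cdot)Ax\bigr)$, which is the $1$-resolvent equation.

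I expect the main obstacle to be the bookkeeping around the measure $\overline\rho$ in (b): unlike in (a) one cannot simply invoke Young's inequality, so both the interchange of $A$ with the $\overline\rho$-integral (via closedness) and the associativity $(k \ast f) \ast \overline\rho = k \ast (f \ast \overline\rho)$ must be established directly through Fubini for the mixed function--measure convolution, with care for the "for a.a.\ $t$" qualifiers inherited from the local integrability of $E_\rho$.
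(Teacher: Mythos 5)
Your proposal is correct, and since the paper explicitly leaves this proof to the reader, your direct verification is precisely the intended argument: take $E_\rho \ast g$ (resp.\ $E_\rho \ast \overline{\rho}$) as the candidate, check local integrability, use closedness of $(A,D(A))$ to pass $A$ through the integral for $D(A)$-invariance and commutation, and convolve the $\rho$-resolvent equation with $g$ (resp.\ $\overline{\rho}$, where $\rho \ast \overline{\rho} = 1$ collapses the inhomogeneous term) to obtain the new resolvent equation, with uniqueness supplied by the Titchmarsh argument recorded after Proposition \ref{prop: regularity resolvent}. Your closing remark about the measure-valued bookkeeping in (b) and the a.a.-$t$ qualifiers (handled, e.g., by a countable graph-norm-dense subset of $D(A)$ plus closedness) identifies exactly the routine technicalities the paper chose to suppress.
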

The proof is left to the reader. 
\begin{Remark}\label{remark: EkE1}
 Suppose that the $1$ and the $k$-resolvents exist. Then by Proposition \ref{prop: regularity resolvent} $1 \ast E_k = k \ast E_1 \in L_{loc}^1(\R_+; L_s(H, D(A)))$, and for each $x \in H$
 \begin{align*}
  E_1(t)x = x + A\int_0^t k(t-s)E_1(s)xds = x + A\int_0^t E_k(s)x ds. 
 \end{align*}
 In particular, for $x \in D(A)$, $E_1(\cdot)x$ is absolutely continuous and satisfies $E_1'(t)x = E_k(t)Ax$ for a.a. $t \geq 0$.
\end{Remark}

Below we discuss one natural way such resolvents may be constructed from the knowledge of $E_1$. Namely, if $E_1, E_{\rho}$ both exist, then $E_1 \ast \rho = E_{1 \ast \rho} = E_{\rho} \ast 1$ and hence taking derivatives gives $E_{\rho}(t) = \frac{d}{dt} \int_0^t E_1(t-s)\rho(s)ds$. However, without assuming beforehand that $E_{\rho}$ exists, it is not clear if $E_1 \ast \rho$ is absolutely continuous, thus additional regularity on $\rho$ and on $E_1$ needs to be assumed. Below we carry out such an approach for the case where $E_1$ is analytic.
\begin{Definition}
 The $1$-resolvent $E_1$ is called analytic, if $E_1$ has a strongly continuous version $E_1: \R_+ \longrightarrow L(H)$, and there exists some $\theta_0 \in (0, \pi/2)$ such that $E_1: \R_+ \longrightarrow L(H)$ admits an analytic extension to a sector $\Sigma(\theta_0)$. The family of operators $E_1$ is called of analyticity type $(\omega_0, \theta_0)$ if for each $\theta \in (0, \theta_0)$ and $\omega > \omega_0$ there exists some $M(\omega, \theta) \geq 1$ such that $\| E_1(z) \|_{L(H)} \leq M(\omega, \theta) e^{\omega \mathrm{Re}(z)}$ holds for $z \in \Sigma(\theta)$.
\end{Definition}

Let $E_1$ be an analytic resolvent of type $(\omega_0, \theta_0)$. By \cite[Corollary 2.1]{MR1238939} we find for each $\omega > \omega_0$ and $\theta \in (0, \theta_0)$ some $M(\omega, \theta) \geq 1$ such that
\begin{align}\label{eq: analytic resolvent estimate}
 \| E_1^{(n)}(t) \|_{L(H)} \leq M(\omega, \theta) n! e^{\omega t (1+\sin(\theta))} \sin(\theta)^{-n} t^{-n}, \qquad t > 0, \ \ n \in \N.
\end{align}
In particular, $E_1'$ is almost integrable near the origin. 
Firstly, general generation theorems can be found in \cite[Chapter 2]{MR1238939}. More specifically, the following scheme allows us to construct a large class of examples. 
\begin{Example}\cite[Chapter 2, Section 2.3]{MR1238939}
 The following assertions hold:
 \begin{enumerate}
     \item[(a)] If $k \in L_{loc}^1(\R_+)$ is completely monotone and $(A, D(A))$ generates an analytic semigroup $(e^{tA})_{t \geq 0}$ such that $\sup_{z \in \Sigma(0,\theta)}\|e^{tA}\|_{L(H)} < \infty$, then there exists an analytic resolvent $E_1$ of type $(0,\theta)$ associated with $(k,A,D(A))$.
     
     \item[(b)] If $k(t) = t^{\alpha - 1}/\Gamma(\alpha)$ with $\alpha \in (0,2)$. Then there exists an analytic resolvent $E_1$ if and only if $\Sigma\left( \alpha \pi/2 \right) \subset \rho(A)$ and $\sup_{z \in \Sigma\left( \alpha \pi/2 \right)}|z|\| (z - A)^{-1}\|_{L(H)} < \infty$. In this case, $\|E_1(z)\|_{L(H)}$ is uniformly bounded on $\Sigma\left( \alpha \pi/2  - \varepsilon\right)$ for each $\varepsilon \in (0,\alpha \pi/2)$.

     \item[(c)] Let $k(t) = \int_0^{\infty} \frac{t^{\alpha - 1}}{\Gamma(\alpha)}d\alpha$. If $(A,D(A))$ is such that $z \in \rho(A)$ for $|\mathrm{Im}(z)| \leq \pi/2$ and 
     \[
     \sup_{|\mathrm{Im}(z)| \leq \pi/2}|z| \| (z-A)^{-1}\|_{L(H)} < \infty,
     \]
     then $(k,A,D(A))$ generates a bounded analytic resolvent. On the other-hand if $\Sigma(\phi) \subset \rho(A)$ for some $\phi > 0$ and $\sup_{z \in \Sigma\left( \phi \right)}|z|\| (z - A)^{-1}\|_{L(H)} < \infty$, then $(k,A,D(A))$ generates an analytic resolvent of type $(\omega, \theta)$ where $\omega \geq \exp\left( \frac{\pi/2 + \theta}{\sin(\phi)}\right)/\cos(\theta)$.
 \end{enumerate} 
\end{Example}

The next proposition allows us to construct the $\rho$-resolvent $E_{\rho}$ from an analytic resolvent $E_1$.
\begin{Proposition}\label{prop: E rho from E 1}
 Let $\rho \in L_{loc}^1(\R_+)$ be such that
 \[
  \int_0^T |\rho(t)|^p dt + \int_0^T \left(\int_0^t \frac{|\rho(t)-\rho(s)|}{|t-s|}ds \right)^{p} dt < \infty, \qquad \forall T > 0
 \]
 holds for some $1 \leq p \leq \infty$ with the obvious adaption when $p = \infty$. Assume that the $1$-resolvent exists and is analytic. Then the $\rho$-resolvent exists, satisfies $E_{\rho} \in L_{loc}^p(\R_+; L_s(H))$, and is given by 
 \begin{align}\label{eq: E rho}
  E_{\rho}(t) = E_1(t)\rho(t) + \int_0^t E_1'(t-s)(\rho(s) - \rho(t))ds.
 \end{align}
\end{Proposition}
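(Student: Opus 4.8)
The plan is to take the right-hand side of \eqref{eq: E rho} as the \emph{definition} of $E_\rho(t)$ and then verify, in stages, that it is well-defined, belongs to $L^p_{loc}(\R_+;L_s(H))$, and satisfies the relations of Definition \ref{def: resolvent operators}. For the first stage I would use \eqref{eq: analytic resolvent estimate} with $n=1$, which yields for each $T>0$ a constant $C(T)$ with $\|E_1'(t)\|_{L(H)}\le C(T)\,t^{-1}$ on $(0,T]$ (absorbing the exponential into $C(T)$). Then the integral term in \eqref{eq: E rho} is dominated by $C(T)\big(\int_0^t \tfrac{|\rho(s)-\rho(t)|}{t-s}\,ds\big)\|x\|_H$, which is finite for a.a.\ $t$ and, as a function of $t$, lies in $L^p([0,T])$ by the standing hypothesis on $\rho$. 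Since $E_1$ is strongly continuous, hence locally bounded, the leading term contributes $\lesssim|\rho(t)|\in L^p([0,T])$, and strong continuity of $E_1$ and $E_1'$ on $(0,\infty)$ gives measurability of $t\mapsto E_\rho(t)x$; thus $E_\rho\in L^p_{loc}(\R_+;L_s(H))$.

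The heart of the argument is the convolution identity $1\ast E_\rho=\rho\ast E_1$, i.e.\ $\int_0^t E_\rho(s)\,ds=(E_1\ast\rho)(t)$. I would integrate \eqref{eq: E rho} over $[0,t]$; in the resulting double integral I substitute $u=s-r$ and apply Fubini--Tonelli, whose validity is \emph{precisely} the absolute convergence provided by the hypothesis on $\rho$ together with $\|E_1'(u)\|\lesssim u^{-1}$. Writing $R(x)=\int_0^x\rho$, the inner integral collapses to $S(u):=R(t-u)+R(u)-R(t)$, which satisfies $S(0)=S(t)=0$. A single integration by parts (approximating near the origin on $[\varepsilon,t]$ and letting $\varepsilon\to0$) then gives
\[
 \int_0^t E_1'(u)S(u)\,du=-\int_0^t E_1(u)\big(\rho(u)-\rho(t-u)\big)\,du,
\]
the boundary term vanishing because $S(0)=S(t)=0$. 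Adding back $\int_0^t E_1(s)\rho(s)\,ds$ from \eqref{eq: E rho} cancels $\int_0^t E_1(u)\rho(u)\,du$ and leaves exactly $(E_1\ast\rho)(t)$. \emph{This is the step I expect to be the main obstacle}: since $E_1'$ is only $O(u^{-1})$ at the origin, neither the double integral nor the integration by parts is legitimate term by term, and both become admissible only after grouping the three pieces of $S$ so that the singular contributions cancel — which is exactly what the Hölder-type integrability assumption on $\rho$ is designed to guarantee.

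Next I would extract the resolvent equation. Convolving $1\ast E_\rho=\rho\ast E_1$ with $k$ gives $1\ast(k\ast E_\rho)=\rho\ast(k\ast E_1)$. By Proposition \ref{prop: regularity resolvent} applied to the $1$-resolvent, $k\ast E_1\in L^1_{loc}(\R_+;L_s(H,D(A)))$ with $A(k\ast E_1)=E_1-\mathrm{id}_H$ (Remark \ref{remark: EkE1}); since $A$ is closed, convolving with the scalar $\rho$ and using the identity of the previous paragraph yields that $t\mapsto\int_0^t(k\ast E_\rho)(s)\,ds$ is $D(A)$-valued with $A$-image $\int_0^t(E_\rho(s)-\rho(s))\,ds$. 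Both functions are absolutely continuous, so differentiating along difference quotients at common Lebesgue points and invoking the closedness of $A$ upgrades this to the pointwise relation $E_\rho(t)=\rho(t)+A(k\ast E_\rho)(t)$ for a.a.\ $t$, which is the form appearing in Proposition \ref{prop: regularity resolvent}.

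It remains to recover invariance and commutation as required by Definition \ref{def: resolvent operators}, and I would read these off directly from \eqref{eq: E rho}. The operator $E_1(t)$ leaves $D(A)$ invariant and commutes with $A$, and differentiating this relation (again via difference quotients and closedness of $A$) shows the same for $E_1'(\tau)$. For $x\in D(A)$ the bound $\int_0^t\|E_1'(t-s)\|\,|\rho(s)-\rho(t)|\,ds<\infty$ lets me pull $A$ through the Bochner integral in \eqref{eq: E rho}, so $E_\rho(t)x\in D(A)$ and $AE_\rho(t)x=E_\rho(t)Ax$. Combining this commutation with the equation of the previous paragraph gives $E_\rho(t)x=\rho(t)x+\int_0^t k(t-s)AE_\rho(s)x\,ds$ for $x\in D(A)$, exactly as demanded; uniqueness of the $\rho$-resolvent (Titchmarsh, as noted after Proposition \ref{prop: regularity resolvent}) then identifies this construction with \emph{the} $\rho$-resolvent.
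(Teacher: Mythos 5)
Your proof is correct, and its skeleton matches the paper's: take \eqref{eq: E rho} as the definition of a candidate $v$, prove the pivotal identity $1\ast v=E_1\ast\rho$, and then convert that identity into the resolvent equation of Definition \ref{def: resolvent operators}. Where you genuinely diverge is in how the pivotal identity is regularised. The paper shifts the resolvent rather than truncating the domain: it sets $v_\e(t)=E_1(t+\e)\rho(t)+\int_0^t E_1'(t-s+\e)(\rho(s)-\rho(t))\,ds$, observes that $v_\e(t)=\frac{d}{dt}\int_0^t E_1(t-s+\e)\rho(s)\,ds$ \emph{exactly} (no singularity, since $E_1$ is $C^1$ on $[\e,\infty)$), integrates this closed form, and passes to the limit $\e\to 0$ by dominated convergence using \eqref{eq: analytic resolvent estimate}. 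You instead integrate \eqref{eq: E rho} directly, apply Fubini (legitimate, as you note, only for the grouped difference $\rho(s-u)-\rho(s)$, which is what the hypothesis on $\rho$ controls), collapse the inner integral to $S(u)=R(t-u)+R(u)-R(t)$ with $S(0)=S(t)=0$, and integrate by parts on $[\varepsilon,t]$; the vanishing boundary terms do the work that the paper's $\e$-shift does. Both regularisations are valid; the paper's is slightly slicker because the exact-derivative identity makes the cancellation automatic, while yours makes explicit where the H\"older-type hypothesis enters (absolute convergence of the grouped double integral) and requires the small additional checks that $E_1'(u)S(u)$ is Bochner integrable near $0$ (which follows from Tonelli) and that $E_1(\varepsilon)S(\varepsilon)\to 0$. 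The endgame also differs in organisation: the paper stays on $D(A)$, convolving the defining equation of $E_1$ with $\rho$ and differentiating $1\ast(\rho+(k\ast vAx))$ to get $v(t)x=\rho(t)x+(k\ast vAx)(t)$ in one pass, whereas you first extract the $H$-valued mild identity $E_\rho=\rho+A(k\ast E_\rho)$ via Remark \ref{remark: EkE1} and closedness of $A$, then separately establish invariance and commutation from \eqref{eq: E rho} (exactly as the paper does for $E_1'$) and combine the two by Hille's theorem. Your route proves slightly more along the way (the mild form for all $x\in H$, which the paper gets only a posteriori from Proposition \ref{prop: regularity resolvent}), at the cost of one extra closed-graph/difference-quotient argument; the final appeal to Titchmarsh uniqueness is harmless but not needed, since existence is the assertion.
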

\begin{proof}
 We only consider the case $p \neq \infty$. The case $p = \infty$ follows in the same way. The right-hand side in \eqref{eq: E rho} is well-defined in $L_{loc}^p(\R_+; L_s(H))$. Indeed, we have $\int_0^T \| E_1(t)\rho\|_{L(H)}^p dt \leq \sup_{t \in [0,T]}\|E_1(t)\|_{L(H)}^p \int_0^T |\rho(t)|^p dt$. For the second term we use \eqref{eq: analytic resolvent estimate} to find a constant $C_T > 0$ such that $\| E_1'(r)\|_{L(H)} \leq C_T r^{-1}$ for $r \in (0,T]$. This gives
 \begin{align*}
     \int_0^T \left| \int_{0}^t \| E_1'(t-s)(\rho(s) - \rho(t))\|_{L(H)} ds \right|^p dt
    &\leq C_T^p \int_0^T \left( \int_{0}^t \frac{|\rho(t) - \rho(s)|}{|t-s|} ds \right)^p dt
 \end{align*}
 with the right-hand side being finite by assumption on $\rho$. Thus letting $v$ denote the right-hand side in \eqref{eq: E rho}, we have $v \in L_{loc}^p(\R_+; L_s(H))$. 
 
 Since $E_1$ leaves $D(A)$ invariant and commutes with $A$, it is easy to see that also $E_1'$ leaves $D(A)$ invariant and commutes with $A$ on $D(A)$.
 Next, we show that $v$ satisfies the resolvent equation and hence is the desired $\rho$-resolvent. Define for $\e \in (0,1)$ the function $v_{\e}(t) = E_1(t+\e)\rho(t) + \int_0^t E_1'(t-s+\e)(\rho(s)-\rho(t))ds$. Then using \eqref{eq: analytic resolvent estimate} for $n = 1$ we can apply dominated convergence to find that $v_{\e}(t) \longrightarrow v(t)$ and $\int_0^t v_{\e}(s)ds \longrightarrow \int_0^t v(s)ds$ for a.a. $t \geq 0$ strongly on $L(H)$. A short computation yields $v_{\e}(t) = \frac{d}{dt}\int_0^t E_1(t-s+\e)\rho(s)ds$ and hence $\int_0^t v_{\e}(s)ds = \int_0^t E_1(t-s+\e)\rho(s)ds$. Letting $\e \to 0$ we find that $\int_0^t v(s)ds = \int_0^t E_1(t-s)\rho(s)ds$. Let $x \in D(A)$. Then $v(\cdot)x, (1\ast v)(\cdot)x \in L_{loc}^1(\R_+; D(A))$. Using the resolvent equation for $E_{1}$ and $AE_1 = E_1A$ on $D(A)$ we obtain
 \begin{align*}
  (1 \ast v)(t)x &= (E_1 \ast \rho)(t)x 
  \\ &= (1 \ast \rho)(t)x + (k \ast E_1Ax)\ast \rho(t)
  \\ &= (1 \ast \rho)(t)x + (k \ast (E_1 \ast \rho)Ax)(t)
  \\ &= (1 \ast \rho)(t)x + (k \ast (1\ast v)Ax)(t)
  \\ &= 1\ast \left( \rho + (k \ast vAx)\right)(t).
 \end{align*}
 Taking derivatives, we find that $v(t)x = \rho(t)x + (k \ast vAx)(t)$, i.e.~$v$ satisfies the $\rho$-resolvent equation. 
 \end{proof}

A particularly interesting choice for $\rho$ is given by $\rho(t) = e^{-\lambda t}t^{\beta - 1}/\Gamma(\beta)$ with $\beta > 0$ and $\lambda \geq 0$. In such a case the assumptions of Proposition \ref{prop: E rho from E 1} are satisfied for all $1 \leq p  \leq \infty$ when $\beta \geq 1$, and for all $1 \leq p < \frac{1}{1- \beta}$ when $\beta \in (\frac{1}{2},1)$.

\subsection{Linear Volterra equations}

Using the notions of $\rho$-resolvents, in this section, we collect a few results on the existence and uniqueness of solutions to linear Volterra equations as given in the next definition.
\begin{Definition}
 Let $g \in L_{loc}^1(\R_+; H)$. A function $u \in L_{\mathrm{loc}}^1(\R_+;H)$ is called
\begin{enumerate}
 \item[(a)] strong solution, if $u \in L_{loc}^1(\R_+; D(A))$ and for a.a. $t \geq 0$
 \begin{align}\label{eq: deterministic Volterra equation}
 u(t) = g(t) + \int_0^t k(t-s)Au(s)ds.
\end{align}
 \item[(b)] mild solution, if $k \ast u \in L_{loc}^1(\R_+; D(A))$ and for a.a. $t \geq 0$
 \begin{align*}
  u(t) = g(t) + A\int_0^t k(t-s)u(s)ds.
 \end{align*}
\end{enumerate}
\end{Definition}
Each strong solution is also a mild solution. For continuous $g$, a classical solution theory was extensively developed in \cite{MR1238939} while results on Volterra equations in $L^p$-spaces are discussed in Section 10 therein. In this section, we follow this terminology and consider the case $g \in L_{loc}^1(\R_+; H)$. Also here the existence and uniqueness of solutions of \eqref{eq: deterministic Volterra equation} is closely related to the notion of $\rho$-resolvents.
\begin{Lemma}\label{prop: mild solution characterization}
 Let $u \in L_{loc}^1(\R_+; H)$ and suppose that the $\rho$-resolvent $E_{\rho}$ exists for some $\rho \in L_{loc}^1(\R_+)$. Then the following hold:
 \begin{enumerate}
     \item[(i)] Each mild solution $u$ of \eqref{eq: deterministic Volterra equation} satisfies for a.a. $t \geq 0$ the generalized variation of constants formula
     \begin{align}\label{eq: generalized variation of constants}
      \int_0^t \rho(t-s)u(s)ds = \int_0^t E_{\rho}(t-s)g(s)ds.
     \end{align}
     
     \item[(ii)] Suppose that $u$ satisfies the generalized variation of constants formula \eqref{eq: generalized variation of constants}. If $k \ast u \in L_{loc}^1(\R_+; D(A))$, then $u$ is a mild solution of \eqref{eq: deterministic Volterra equation}. Moreover, if $\rho(A) \neq \emptyset$, then $k \ast u \in L_{loc}^1(\R_+; D(A))$ and hence $u$ is a mild solution of \eqref{eq: deterministic Volterra equation}.
 \end{enumerate} 
\end{Lemma}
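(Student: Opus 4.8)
The plan is to work entirely at the level of convolution identities between operator- and vector-valued $L^1_{loc}$ functions, using the operator resolvent identity $E_{\rho} = \rho\,\mathrm{id}_H + A(k \ast E_{\rho})$ from Proposition \ref{prop: regularity resolvent} together with the fact, also established there, that $k \ast E_{\rho} \in L_{loc}^1(\R_+; L_s(H, D(A)))$. Throughout, the recurring technical point is the interchange of the closed operator $A$ with convolution integrals; I would justify each such step by Hille's theorem, i.e.\ by the closedness of $A$ together with the local integrability of the relevant integrand (which itself follows from Young's convolution inequality, since $E_{\rho},k \in L_{loc}^1$ and $u,g \in L_{loc}^1(\R_+;H)$). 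I write $v := k \ast u$ and use that $\rho \neq 0$, which is the standing assumption on resolvents.

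For (i), let $u$ be a mild solution, so $v \in L_{loc}^1(\R_+; D(A))$ and $u = g + Av$. I would substitute $g = u - Av$ to get $E_{\rho} \ast g = E_{\rho} \ast u - E_{\rho} \ast (Av)$. Since $v(s) \in D(A)$ and $E_{\rho}$ commutes with $A$ on $D(A)$, Hille's theorem moves $A$ outside the convolution, giving $E_{\rho} \ast (Av) = A(k \ast E_{\rho} \ast u)$; applying the identity $A(k \ast E_{\rho}) = E_{\rho} - \rho\,\mathrm{id}_H$ (again via Hille, as $k \ast E_{\rho}$ takes values in $D(A)$) then yields $A(k \ast E_{\rho} \ast u) = E_{\rho} \ast u - \rho \ast u$. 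Substituting back leaves $E_{\rho} \ast g = \rho \ast u$, which is exactly \eqref{eq: generalized variation of constants}.

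For the first assertion of (ii), assume \eqref{eq: generalized variation of constants} and $v \in L_{loc}^1(\R_+; D(A))$, and set $w := g + Av$. I would compute $\rho \ast w = \rho \ast g + \rho \ast (Av)$; moving $A$ out (Hille) and using $\rho \ast v = k \ast (\rho \ast u) = k \ast E_{\rho} \ast g$ gives $\rho \ast (Av) = A(k \ast E_{\rho} \ast g) = (E_{\rho} - \rho\,\mathrm{id}_H)\ast g$. Hence $\rho \ast w = E_{\rho} \ast g = \rho \ast u$, and since $\rho \neq 0$, Titchmarsh's theorem forces $w = u$ a.e., i.e.\ $u = g + A(k \ast u)$ is a mild solution.

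The remaining point, and the one I expect to be the main obstacle, is the second assertion of (ii): that $\rho(A) \neq \emptyset$ forces the regularity $k \ast u \in L_{loc}^1(\R_+; D(A))$ assumed above, since this amounts to a deconvolution. I would first note that convolving \eqref{eq: generalized variation of constants} with $k$ and invoking Proposition \ref{prop: regularity resolvent} gives $\rho \ast v = k \ast E_{\rho} \ast g \in L_{loc}^1(\R_+; D(A))$, while expanding $E_{\rho} \ast g = \rho \ast g + A(k \ast E_{\rho} \ast g)$ produces the identity $A(\rho \ast v) = \rho \ast (u - g)$. Fixing $\lambda \in \rho(A)$ with bounded resolvent $R_{\lambda} = (\lambda - A)^{-1}$, I would then write $(\lambda - A)(\rho \ast v) = \rho \ast (\lambda v - u + g)$ and apply $R_{\lambda}$; as $R_{\lambda}$ is bounded it commutes freely with the convolution, so $\rho \ast v = \rho \ast [R_{\lambda}(\lambda v - u + g)]$. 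Titchmarsh's theorem (using $\rho \neq 0$) removes the convolution and yields $v = R_{\lambda}(\lambda v - u + g)$ for a.a.\ $t$. Since $R_{\lambda}$ maps $H$ into $D(A)$, this shows $(k \ast u)(t) \in D(A)$ a.e., and rearranging $(\lambda - A)v = \lambda v - u + g$ gives $Av = u - g \in L_{loc}^1(\R_+; H)$, hence $k \ast u \in L_{loc}^1(\R_+; D(A))$; the first assertion of (ii) then applies to conclude that $u$ is a mild solution.
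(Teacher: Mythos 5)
Your proof is correct, and for part (i) and the first assertion of (ii) it is essentially the paper's argument: both rest on the identity $E_{\rho} = \rho\,\mathrm{id}_H + A(k\ast E_{\rho})$ of Proposition \ref{prop: regularity resolvent}, on commuting the closed operator $A$ with convolutions via closedness (Hille's theorem), and on Titchmarsh's theorem to cancel the factor $\rho\,\ast\,$; the differences are purely organisational (the paper expands $\rho\ast u=(E_{\rho}-A(k\ast E_{\rho}))\ast u$, you expand $E_{\rho}\ast g$ after substituting $g=u-A(k\ast u)$, and the paper works with $v=\rho\ast u$ where you work with $w=g+A(k\ast u)$). Where you genuinely depart from the paper is the last assertion of (ii): there the paper offers no argument of its own, only the remark that it follows along the lines of \cite[Theorem 2.7]{LIZAMA2000278} with all relations holding $dt$-a.e., whereas you supply a complete proof. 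Your chain --- obtain $\rho\ast(k\ast u)=k\ast E_{\rho}\ast g\in L_{loc}^1(\R_+;D(A))$ and $A(\rho\ast(k\ast u))=\rho\ast(u-g)$ from \eqref{eq: generalized variation of constants} together with Proposition \ref{prop: regularity resolvent}, apply $(\lambda-A)$ for a fixed $\lambda$ in the resolvent set, invert with the bounded operator $R_{\lambda}$, which commutes with Bochner integrals, and deconvolve by Titchmarsh to get $k\ast u=R_{\lambda}\left(\lambda(k\ast u)-u+g\right)$ a.e. --- is valid, and it is careful on the one point that matters: only $\rho\ast(k\ast u)$, not $k\ast u$ itself, is known to be $D(A)$-valued before the resolvent operator is brought in, so there is no circularity. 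Two small remarks: your final identity $A(k\ast u)=u-g$ is already the mild-solution equation, so the concluding re-application of the first assertion of (ii) is redundant; and the net effect of your version is that the lemma becomes self-contained rather than dependent on an external reference, which is a genuine gain in completeness over the paper's proof.
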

\begin{proof}
 Suppose that $u$ is a mild solution. Then $k \ast u \in L_{loc}^1(\R_+; D(A))$ and using the resolvent equation we obtain
 \begin{align*}
     \rho \ast u &= \left(E_{\rho} - A(k \ast E_{\rho}) \right) \ast u
     \\ &= E_\rho \ast \left(u - A(k \ast u) \right)
     = E_{\rho} \ast g.
 \end{align*}
 
 Conversely, suppose that $u$ satisfies $k \ast u \in L_{loc}^1(\R_+; D(A))$ and \eqref{eq: generalized variation of constants}. Then $v := \rho \ast u$ satisfies $k \ast v = k \ast E_{\rho} \ast g \in L_{loc}^1(\R_+; D(A))$ by Proposition \ref{prop: regularity resolvent}, and hence 
 \begin{align*}
     \rho \ast u &= E_{\rho} \ast g 
     \\ &= \left( \rho + A(k\ast E_{\rho}) \right) \ast g
     \\ &= \rho \ast g + A (k \ast v)
     = \rho\ast \left( g + A (k \ast u) \right).
 \end{align*}
 Titchmarsh's theorem (see \cite[p. 166]{MR1336382}) implies that
 $u = g + A(k \ast u)$. It remains to prove that $k \ast u \in L_{loc}^1(\R_+; D(A))$ holds under $\rho(A) \neq \emptyset$. The latter follows along the lines of \cite[Theorem 2.7]{LIZAMA2000278} with the difference that all relations now hold $dt$-a.e.
\end{proof}
The relation \eqref{eq: generalized variation of constants} implies the uniqueness of mild solutions. Indeed, if $u,v$ are both mild solutions of \eqref{eq: deterministic Volterra equation}, then $\rho \ast u = E_{\rho} \ast g = \rho \ast v$ holds. Titchmarsh's theorem implies that $u = v$. Define
\[
 \mathcal{G}_1 = \{ g \in L_{loc}^1(\R_+; H) \ : \ E_k \ast g \in L_{loc}^1(\R_+; D(A)) \},
\] 
and let 
\begin{align}\label{eq: solution formula Ek}
 Gg(t) = g(t) + A \int_0^t E_k(t-s)g(s)ds, \ \ \text{ for a.a. } t \geq 0.
\end{align}
The next statement provides the existence of mild solutions when $g \in \mathcal{G}_1$.
\begin{Theorem}\label{thm: integral resolvent solution}
 Suppose that the $k$-resolvent exists and let $g \in \mathcal{G}_1$. Then $Gg$ is the unique mild solution of \eqref{eq: deterministic Volterra equation}. Conversely, suppose that $u$ is a mild solution of \eqref{eq: deterministic Volterra equation} with $g \in L_{loc}^1(\R_+; H)$. Then $g \in \mathcal{G}_1$ and $u = Gg$.
\end{Theorem}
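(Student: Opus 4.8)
The plan is to reduce everything to the characterization of mild solutions already proved in Lemma \ref{prop: mild solution characterization} applied with $\rho = k$. For this choice the generalized variation of constants formula \eqref{eq: generalized variation of constants} reads $k \ast u = E_k \ast g$, and the uniqueness of mild solutions has already been established via Titchmarsh's theorem. Hence the only genuinely new work is to verify that the candidate $Gg$ satisfies this convolution identity and lands in the correct space, and conversely to read off $g \in \mathcal{G}_1$ and $u = Gg$ from the formula.

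For the existence direction, first note that $g \in \mathcal{G}_1$ gives $E_k \ast g \in L_{loc}^1(\R_+; D(A))$, so $Gg = g + A(E_k \ast g) \in L_{loc}^1(\R_+; H)$ is well defined. The key identity is $k \ast Gg = E_k \ast g$. To obtain it I would convolve the resolvent equation from Proposition \ref{prop: regularity resolvent}, namely $E_k(\cdot)x = k(\cdot)x + A(k \ast E_k)(\cdot)x$, against $g$; using that $k \ast E_k \ast g \in L_{loc}^1(\R_+; D(A))$ (again Proposition \ref{prop: regularity resolvent}) and the closedness of $A$ to pull $A$ out of the time integral, this yields $E_k \ast g = k \ast g + A(k \ast E_k \ast g)$. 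On the other hand, convolving $Gg = g + A(E_k \ast g)$ with $k$ and commuting $A$ with the convolution by $k$ gives $k \ast Gg = k \ast g + A(k \ast E_k \ast g)$. Comparing the two shows $k \ast Gg = E_k \ast g \in L_{loc}^1(\R_+; D(A))$, so $Gg$ satisfies \eqref{eq: generalized variation of constants} and $k \ast Gg \in L_{loc}^1(\R_+; D(A))$. Lemma \ref{prop: mild solution characterization}(ii) then certifies that $Gg$ is a mild solution, and uniqueness is inherited from the already proven uniqueness statement.

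For the converse, let $u$ be a mild solution with $g \in L_{loc}^1(\R_+; H)$. By definition $k \ast u \in L_{loc}^1(\R_+; D(A))$, and Lemma \ref{prop: mild solution characterization}(i) applied with $\rho = k$ yields $k \ast u = E_k \ast g$. Consequently $E_k \ast g = k \ast u \in L_{loc}^1(\R_+; D(A))$, which is precisely the statement $g \in \mathcal{G}_1$. Finally, inserting $A(k \ast u) = A(E_k \ast g)$ into the mild equation $u = g + A(k \ast u)$ gives $u = g + A(E_k \ast g) = Gg$.

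The only delicate point throughout is the repeated interchange of the closed unbounded operator $A$ with the Bochner convolution integrals, together with the bookkeeping of which of the products $E_k \ast g$, $k \ast E_k \ast g$, $k \ast Gg$ actually take values in $D(A)$. All of these are supplied by Proposition \ref{prop: regularity resolvent}, so the argument stays at the level of manipulating convolution identities; no further regularity on $g$, $k$ or $(A,D(A))$ is needed.
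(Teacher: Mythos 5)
Your proof is correct and follows essentially the same route as the paper: both verify the identity $k \ast Gg = E_k \ast g$ via Proposition \ref{prop: regularity resolvent}, conclude via Lemma \ref{prop: mild solution characterization} with $\rho = k$ that $Gg$ is a mild solution, and in the converse direction use the variation of constants formula to read off $E_k \ast g = k \ast u \in L_{loc}^1(\R_+; D(A))$, i.e.~$g \in \mathcal{G}_1$. The only cosmetic difference is your final step of the converse, where you substitute $A(k \ast u) = A(E_k \ast g)$ directly into the mild equation to obtain $u = Gg$, whereas the paper appeals to the already established uniqueness of mild solutions; both finishes are valid.
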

\begin{proof}
  Let $g \in \mathcal{G}_1$, then $Gg$ is well-defined as an element in $L_{loc}^1(\R_+; H)$. By Proposition \ref{prop: regularity resolvent} we obtain $k \ast Gg = k \ast g + A(k \ast E_k) \ast g = E_k \ast g$. Hence $k \ast Gg \in L_{loc}^1(\R_+; D(A))$. By Lemma \ref{prop: mild solution characterization}.(b) we conclude that $Gg$ is a mild solution of \eqref{eq: deterministic Volterra equation}. Conversely, let $u \in L_{loc}^1(\R_+;H)$ be a mild solution. Lemma \ref{prop: mild solution characterization} applied for $\rho = k$ yields $E_k \ast g = k \ast u \in L_{loc}^1(\R_+; D(A))$ and hence $g \in \mathcal{G}_1$. Thus $Gg$ is another mild solution, and uniqueness yields $u = Gg$. This proves the assertion.
\end{proof}
In the same spirit, we can also consider subspaces of $\mathcal{G}_1$ that consist of $p$-locally integrable functions. The precise statement is summarized, without proof, in the following remark.
\begin{Remark}\label{remark: Gp}
  Define the spaces
\begin{align*}
  \mathcal{G}_p &= \{ g \in L_{loc}^p(\R_+; H) \ : \ E_k \ast g \in L_{loc}^p(\R_+; D(A)) \}, \qquad p \in [1,\infty),
  \\ \mathcal{G}_{\infty} &= \{ g \in C(\R_+; H) \ : \ E_k \ast g \in C(\R_+; D(A)) \}.
 \end{align*}
 Then $L_{loc}^p(\R_+; D(A)) \subset \mathcal{G}_p$, $C(\R_+; D(A)) \subset \mathcal{G}_{\infty}$, and $\mathcal{G}_p \subset \mathcal{G}_q$ for $q \leq p$. Finally, it holds that $G: \mathcal{G}_p \longrightarrow L_{loc}^p(\R_+; H)$ when $p \in [1,\infty)$, and $G: \mathcal{G}_{\infty} \longrightarrow C(\R_+;H)$ when $p = \infty$.
\end{Remark}
 Below we consider a class of examples for elements in $\mathcal{G}_1$ for which $Gg$ can be computed explicitly. The latter allows us to obtain the regularity of $Gg$ directly from the particular form of $g$.
 \begin{Example}\label{example: special choice}
  Let $\rho \in L_{loc}^1(\R_+)$ and suppose that the $k,\rho$-resolvents $E_k, E_{\rho}$ exist.
  \begin{enumerate}
    \item[(a)] Let $g(t) = \rho(t)x$ with $x \in H$. Then $E_k \ast \rho = E_{\rho} \ast k$, and hence, by Proposition \ref{prop: regularity resolvent}, $g \in \mathcal{G}_1$ with $Gg(t) = E_{\rho}(t)x$. 

  \item[(b)] Let $g(t) = \int_0^t \rho(t-s)g_0(s)ds$ with $g_0 \in L_{loc}^1(\R_+; H)$. Then using Proposition \ref{prop: regularity resolvent}, we have $E_k \ast g = (E_{\rho} \ast k)\ast g_0$ which belongs to $L_{loc}^1(\R_+;D(A))$. Hence $g \in \mathcal{G}_1$ with $Gg(t) = \int_0^t E_{\rho}(t-s)g_0(s)ds$.
  \end{enumerate}
\end{Example}
Since $\mathcal{G}_1$ and $G$ are both linear, the superposition principle holds from which other examples can be constructed. 

Similarly to the construction of $E_{\rho}$ from an analytic resolvent $E_1$ (see Proposition \ref{prop: E rho from E 1}), we can also obtain the mild solution of \eqref{eq: deterministic Volterra equation}. Namely, suppose first that $E_1$ exists and $u$ is a mild solution of \eqref{eq: deterministic Volterra equation}. Applying \eqref{eq: generalized variation of constants} for $\rho = 1$ gives $1 \ast u = E_1 \ast f$ and hence
$u(t) = \frac{d}{dt}\int_0^t E_1(t-s)f(s)ds$ holds for a.a. $t \geq 0$. For the existence of a mild solution of \eqref{eq: deterministic Volterra equation} we need additional regularity assumptions to guarantee that the convolution $E_1 \ast f$ is absolutely continuous. If $f \in W_{loc}^{1,1}(\R_+; H)$, then \eqref{eq: deterministic Volterra equation} has a mild solution, while $f \in W^{1,1}_{loc}(\R_+; D(A))$ or $f = k \ast g$ with $g \in W^{1,1}(\R_+; D(A))$
implies the existence of a strong solution, see \cite[Proposition 1.2]{MR1238939}. H\"older continuous driving forces $f$ have been considered in \cite[Corollary 2.6]{MR1238939}. In all of these cases, it is shown that the solution $u$ is actually continuous. Below we complement these results by considering the case where $f$ has some type of fractional regularity. 
\begin{Theorem}\label{thm: Sobolev solution formula}
 Suppose that there exists an analytic $1$-resolvent $E_1$, that $\rho(A) \neq \emptyset$, and let $f: \R_+ \longrightarrow H$ be measurable such that
 \[
  [f]_p^p := \int_0^T \|f(t)\|^p dt + \int_0^T \left(\int_0^t \frac{\|f(t)-f(s)\|}{|t-s|}ds \right)^{p} dt < \infty, \qquad \forall T > 0
 \]
 holds for some $p \in [2,\infty)$. Then: 
 \begin{enumerate}
   \item[(a)] Equation \eqref{eq: deterministic Volterra equation} has a unique mild solution $u \in L_{loc}^p(\R_+;H)$ which is given by 
   \begin{align}\label{eq: resolvent solution formula}
    u(t) = E_1(t)f(t) + \int_0^t E_1'(t-s)(f(s) - f(t))ds, \qquad t \geq 0.
   \end{align}
   Moreover, for each $T > 0$, there exists a constant $C(p,T,S_A,S_A') > 0$ such that 
   \[
    \|u\|_{L^{p}([0,T]; H)} \leq C(p,T, S_A, S_A') [f]_p;
   \]
  
   \item[(b)] If $f = k \ast g$ with $[g]_{p,T} < \infty$ for each $T > 0$ and some $p \in [2,\infty)$, then $u$ given by \eqref{eq: resolvent solution formula} is a strong solution of \eqref{eq: deterministic Volterra equation}. 
 \end{enumerate}
\end{Theorem}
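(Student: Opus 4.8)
The plan is to mimic the construction of $E_{\rho}$ from an analytic resolvent carried out in Proposition \ref{prop: E rho from E 1}, since the right-hand side of \eqref{eq: resolvent solution formula} has precisely the structure of \eqref{eq: E rho} with the forcing $f$ playing the role of $\rho$. For part (a) I first verify that \eqref{eq: resolvent solution formula} defines an element $u \in L_{loc}^p(\R_+;H)$. The first term is bounded using $S_A := \sup_{t \in [0,T]}\|E_1(t)\|_{L(H)}$, finite by strong continuity of the analytic resolvent, and the second term is treated exactly as in Proposition \ref{prop: E rho from E 1} via the bound $\|E_1'(r)\|_{L(H)} \leq S_A' r^{-1}$ for $r \in (0,T]$ supplied by \eqref{eq: analytic resolvent estimate} with $n = 1$. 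These two estimates simultaneously give $u \in L_{loc}^p(\R_+;H)$ and the asserted inequality $\|u\|_{L^p([0,T];H)} \leq C(p,T,S_A,S_A')[f]_p$.

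The core of part (a) is the generalized variation of constants identity $1 \ast u = E_1 \ast f$. Here I would introduce, for $\e \in (0,1)$, the regularised functions $u_{\e}(t) = E_1(t+\e)f(t) + \int_0^t E_1'(t-s+\e)(f(s)-f(t))ds$. Using $\int_0^t E_1'(t-s+\e)ds = E_1(t+\e)-E_1(\e)$ one checks that $u_{\e}(t) = \frac{d}{dt}\int_0^t E_1(t-s+\e)f(s)ds$, and hence $\int_0^t u_{\e}(r)dr = \int_0^t E_1(t-r+\e)f(r)dr$. Since \eqref{eq: analytic resolvent estimate} with $n=1$ yields a dominating function integrable in $s$ on $(0,t)$ for a.a.\ $t$, dominated convergence gives $u_{\e}(t) \to u(t)$ and $\int_0^t u_{\e}(r)dr \to \int_0^t u(r)dr$ as $\e \to 0$, so that $1 \ast u = E_1 \ast f$. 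Because $\rho(A) \neq \emptyset$, Lemma \ref{prop: mild solution characterization}.(ii) applied with $\rho = 1$ then shows $k \ast u \in L_{loc}^1(\R_+;D(A))$ and that $u$ is a mild solution, while uniqueness is the Titchmarsh argument recorded immediately after Lemma \ref{prop: mild solution characterization}.

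For part (b) I would apply part (a) to the forcing $g$, obtaining the mild solution $w$ with $w = g + A(k \ast w)$; thus $A(k \ast w) = w - g \in L_{loc}^p(\R_+;H)$ and consequently $k \ast w \in L_{loc}^p(\R_+;D(A))$. Setting $u := k \ast w$ one has $Au = w - g$, whence $f + k \ast Au = k \ast g + k \ast (w-g) = k \ast w = u$, so $u$ is a strong solution of \eqref{eq: deterministic Volterra equation} lying in $L_{loc}^p(\R_+;D(A))$. To identify this $u$ with \eqref{eq: resolvent solution formula}, I use $1 \ast w = E_1 \ast g$ to obtain $1 \ast u = k \ast (1 \ast w) = k \ast E_1 \ast g = E_1 \ast f$; since the regularisation argument of part (a) shows that the function defined by \eqref{eq: resolvent solution formula} with forcing $f$ has the same $1$-convolution $E_1 \ast f$, Titchmarsh's theorem forces the two to coincide.

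The step I expect to be the main obstacle is the limit $\e \to 0$ in part (a): one must exhibit a single $s$-integrable majorant for $E_1'(t-s+\e)(f(s)-f(t))$ that is uniform in small $\e$, which is exactly where analyticity of $E_1$ and the near-origin almost-integrability of $E_1'$ are indispensable. In part (b) the analogous delicate point is ensuring that the right-hand side of \eqref{eq: resolvent solution formula} is well-defined for $f = k \ast g$, so that the identity $1 \ast u = E_1 \ast f$ may legitimately be applied to it; this amounts to checking that the fractional-difference seminorm of $f$ is finite, which must be inherited from $[g]_p$ through convolution with $k$.
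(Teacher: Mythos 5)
Your proposal is correct and follows essentially the same route as the paper: part (a) reproduces the regularisation argument of Proposition \ref{prop: E rho from E 1} to obtain $1 \ast u = E_1 \ast f$ and then invokes Lemma \ref{prop: mild solution characterization} (using $\rho(A) \neq \emptyset$) together with Titchmarsh's theorem, while part (b) applies (a) to the forcing $g$ and shows that $k \ast w$ is a strong solution, exactly as in the paper. Your identification step in (b) — comparing $1$-convolutions directly via Titchmarsh instead of citing uniqueness of mild solutions — is only cosmetically different, since that uniqueness is itself a Titchmarsh argument; note also that $[f]_p < \infty$ is a standing hypothesis of the theorem, so your final worry about inheriting it from $[g]_p$ does not actually arise.
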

\begin{proof}
 Following the steps in the proof of Proposition \ref{prop: E rho from E 1} we find that $u$ is well defined and satisfies the desired $L^p$-bound. Using the same approximation as in the second part of the proof of Proposition \ref{prop: E rho from E 1} we obtain $1 \ast u = E_1 \ast f$. Hence, by Lemma \ref{prop: mild solution characterization}, $u$ is a mild solution which proves assertion (a). Let us prove (b). Let $v$ be given by \eqref{eq: resolvent solution formula} with $f$ replaced by $g$. Part (b) shows that $v$ is a mild solution of \eqref{eq: deterministic Volterra equation}.
 Hence $k \ast v \in L_{\mathrm{loc}}^1(\R_+; D(A))$ satisfies
 $k \ast v = k \ast \left( g + A(k \ast v) \right) = k \ast g + k \ast A(k\ast v)$, i.e.~$k \ast v$ is a strong solution of \eqref{eq: deterministic Volterra equation}. Since $u$ is also a mild solution, uniqueness gives $u = k \ast v$.
\end{proof}
If $k$ satisfies $\int_0^{\infty}|k(t)|e^{\lambda t}dt < \infty$ for some $\lambda \in \R$, and $E_1$ exists, then $\rho(A) \neq \emptyset$ is always satisfied due to \cite[Chapter 2, Theorem 2.1]{MR1238939}.

\subsection{Self-adjoint and diagonalisable $(A,D(A))$}
\label{sec:fractional_kernels}

In this section we shall assume that $(A,D(A))$ is self-adjoint on $H$ and has an orthonormal basis $(e_n^H)_{n \geq 1}$ such that $Ae_n^H = -\mu_ne_n^H$, $n \geq 1$. Below we adapt the same notation as in Section \ref{sec: examples}.
In the next lemma, we study the case where $k,h$ are fractional kernels in which case sharp bounds can be obtained due to Lemma \ref{lemma: Mittag-Leffler function}.

\begin{Lemma}\label{lemma: fractional kernels}
    Let $k(t) = t^{\alpha-1}/\Gamma(\alpha)$ and $h(t) = t^{\beta-1}/\Gamma(\beta)$ with $\alpha \in (0,2)$ and $\beta > 0$. Take $q \in [1,\infty)$ and $\rho, \lambda \in \R$, then the following assertions hold:
    \begin{enumerate}
        \item[(a)] If $1 - \frac{1}{q} < \beta < \alpha + 1 - \frac{1}{q}$, then 
        \[
         \int_0^{\infty} \|E_h(t)\|_{L(H^{\lambda}, H^{\rho})}^q dt
         \leq c_q(\alpha,\beta) \sum_{n=1}^{\infty} \mu_n^{-q(\lambda + \frac{\beta}{\alpha}) + q \rho + \frac{q-1}{\alpha}},
        \]
        where the constant $c_q(\alpha, \beta)$ is given in \eqref{eq:c_q}. The same bound also holds for $\alpha = \beta$ and $q \in [1,\infty)$ provided that $1 < \alpha + \frac{1}{q}$.
        
        \item[(b)] If $\frac{1}{2} < \beta \leq \alpha + \frac{1}{2}$, then 
        \[
         \int_0^{\infty} \|E_h(t)\|_{L_2(H^{\lambda}, H^{\rho})}^2dt = c_2(\alpha,\beta) \sum_{n=1}^{\infty} \mu_n^{-2(\lambda + \frac{\beta}{\alpha}) + 2 \rho + \frac{1}{\alpha}}.
        \]
        \item[(c)] If $\alpha \in (0,1]$, $\beta \geq \alpha$, and $1 < \beta + \frac{1}{q}$, then
        \[
        \int_0^{\infty} \|E_h(t)\|_{L(H)}^q dt = c_q(\alpha,\beta) \mu_1^{- \frac{\beta q}{\alpha} + \frac{q-1}{\alpha}}.
        \]
    \end{enumerate}
\end{Lemma}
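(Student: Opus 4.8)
The plan is to exploit the simultaneous diagonalisation of $A$ and of the resolvent $E_h$. By the Remark in Section~\ref{sec: examples}, for each $t$ the operator $E_h(t)$ acts diagonally in the basis $(e_n^H)_{n\geq 1}$, namely $E_h(t)e_n^H = e_h(t;\mu_n)e_n^H$, where $e_h(\cdot;\mu)$ solves the scalar equation \eqref{eq: e rho}. For the fractional kernels at hand this scalar equation is solved explicitly (as in the finite-dimensional example) by the Mittag-Leffler function,
\begin{align*}
 e_h(t;\mu) = t^{\beta-1}E_{\alpha,\beta}(-\mu t^{\alpha}), \qquad t>0,\ \mu>0.
\end{align*}
The computation underlying all three parts is the scaling identity obtained from the substitution $u=\mu^{1/\alpha}t$, which turns $-\mu t^\alpha$ into $-u^\alpha$ and yields
\begin{align*}
 \int_0^\infty |e_h(t;\mu)|^q\,dt = c_q(\alpha,\beta)\,\mu^{-\frac{q\beta}{\alpha}+\frac{q-1}{\alpha}},
\end{align*}
with $c_q(\alpha,\beta)$ as in \eqref{eq:c_q}.

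Next I would reduce each operator norm to the scalars $e_h(t;\mu_n)$. Since $E_h(t)$ is diagonal and $\|e_n^H\|_{H^\rho}=\mu_n^\rho$, writing $x=\sum_n x_n e_n^H$ and substituting $y_n=\mu_n^\lambda x_n$ gives
\begin{align*}
 \|E_h(t)\|_{L(H^\lambda,H^\rho)} = \sup_{n\geq 1}\mu_n^{\rho-\lambda}|e_h(t;\mu_n)|, \qquad \|E_h(t)\|_{L_2(H^\lambda,H^\rho)}^2 = \sum_{n\geq 1}\mu_n^{2(\rho-\lambda)}|e_h(t;\mu_n)|^2.
\end{align*}
For (a) I would bound the supremum crudely by $(\sup_n a_n)^q\leq\sum_n a_n^q$, interchange the integral and the sum by Tonelli, and insert the scaling identity; the exponent $-q(\lambda+\beta/\alpha)+q\rho+(q-1)/\alpha$ then comes out exactly, and finiteness of $c_q(\alpha,\beta)$ under $1-\frac1q<\beta<\alpha+1-\frac1q$ (resp. $1<\alpha+\frac1q$ when $\alpha=\beta$) is supplied by Lemma~\ref{lemma: Mittag-Leffler function}. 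Part (b) is even cleaner: the Hilbert-Schmidt norm is already a genuine sum, so Tonelli plus the scaling identity with $q=2$ gives the stated equality (as an identity in $[0,\infty]$, so the endpoint $\beta=\alpha+\frac12$ needs no separate discussion).

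The one place where a crude bound is not allowed is part (c), which is an equality, so this is the main obstacle: I must identify the supremum $\sup_{n}|e_h(t;\mu_n)|$ exactly rather than dominate it by a sum. The key input, provided by Lemma~\ref{lemma: Mittag-Leffler function}, is that for $\alpha\in(0,1]$ and $\beta\geq\alpha$ the map $x\mapsto E_{\alpha,\beta}(-x)$ is completely monotone on $[0,\infty)$; in particular it is nonnegative and nonincreasing. Hence $|e_h(t;\mu_n)|=t^{\beta-1}E_{\alpha,\beta}(-\mu_n t^\alpha)$ is nonincreasing in $\mu_n$, and since $(\mu_n)$ is nondecreasing the supremum is attained at $n=1$, giving $\|E_h(t)\|_{L(H)}=|e_h(t;\mu_1)|$ for a.a. $t$. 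Integrating and applying the scaling identity with $\mu=\mu_1$ then yields $c_q(\alpha,\beta)\mu_1^{-\beta q/\alpha+(q-1)/\alpha}$, the finiteness being guaranteed by $1<\beta+\frac1q$ via Lemma~\ref{lemma: Mittag-Leffler function}. Beyond invoking that lemma for the asymptotics and complete monotonicity of the Mittag-Leffler function, the remaining steps are routine Tonelli interchanges and the single scaling substitution.
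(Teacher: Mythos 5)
Your proof is correct and, for parts (a) and (b), coincides with the paper's argument: diagonalisation reduces everything to the scalar functions $e_h(t;\mu_n)=t^{\beta-1}E_{\alpha,\beta}(-\mu_n t^{\alpha})$, the crude bound $\sup_n\leq\sum_n$ handles the operator norm, the Hilbert--Schmidt norm is an exact sum, and the scaling substitution $u=\mu^{1/\alpha}t$ produces the $\mu$-dependence; this is precisely \eqref{eq: 7} and \eqref{eq:131123-1} in the paper, and your observation that (b) holds as an identity in $[0,\infty]$, so that the endpoint $\beta=\alpha+\tfrac12$ requires no separate treatment, is a fair reading. In part (c) you take a mildly different route: the paper invokes Schneider's theorem to obtain complete monotonicity of $t\longmapsto e_h(t;\mu)$ and then runs the Volterra comparison argument of Lemma \ref{lemma: monotonicity} to deduce monotonicity in $\mu$, whereas you apply complete monotonicity of $x\longmapsto E_{\alpha,\beta}(-x)$ directly in the explicit formula, which immediately gives that $e_h(t;\mu)$ is nonnegative and nonincreasing in $\mu$, so the supremum sits at $\mu_1$. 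This is a legitimate shortcut that bypasses the appendix comparison lemma entirely. The one flaw is attribution: the complete monotonicity of $E_{\alpha,\beta}(-\cdot)$ for $\alpha\in(0,1]$ and $\beta\geq\alpha$ is \emph{not} contained in Lemma \ref{lemma: Mittag-Leffler function}, which only records integrability and the scaling identity; it is Schneider's theorem, cited in the paper as \cite{MR1382012}. You must either quote that reference or prove the fact, since without it your identification $\|E_h(t)\|_{L(H)}=e_h(t;\mu_1)$ has no source; with the correct citation the argument is complete.
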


\begin{proof}
    Note that $e_h(t;\mu_n) = t^{\beta - 1}E_{\alpha,\beta}(-\mu_n t^{\beta})$. For case (a) we bound 
    \begin{align}\label{eq: 7}
     \|E_h(t)\|_{L(H^{\lambda}, H^{\rho})}^q = \sup_{n \geq 1}\mu_n^{q(\rho - \lambda)} |e_h(t;\mu_n)|^q \leq \sum_{n=1}^{\infty}\mu_n^{q(\rho - \lambda)} |e_h(t;\mu_n)|^q.
    \end{align}
    Hence the desired bound follows from 
    \begin{align}\label{eq:131123-1}
        \int_0^{\infty}|e_h(t;\mu_n)|^q \, dt = c_q(\alpha,\beta)\mu_n^{-q\beta/\alpha + (q-1)/\alpha},
    \end{align}
    see Lemma \ref{lemma: Mittag-Leffler function}. For case (b) we use $\displaystyle \|E_h(t)\|_{L_2(H^{\lambda}, H^{\rho})}^2 = \sum_{n=1}^{\infty} \mu_n^{2(\rho - \lambda)}|e_h(t;\mu_n)|^2$ and proceed as above. 
    Finally, in case (c) we note that $e_h$ is completely monotone due to Schneider \cite{MR1382012}. Hence by Lemma \ref{lemma: monotonicity} we obtain $\displaystyle \|E_h(t)\|_{L(H)} = \sup_{n \geq 1}e_h(t;\mu_n) = e_h(t;\mu_1)$ and the assertion follows from \eqref{eq:131123-1}.
\end{proof}

\begin{Lemma}\label{lemma: Ek self adjoint}
 Let $\rho,\lambda \in \R$ with $\rho \geq \lambda$. Then the following assertions hold:
 \begin{enumerate}
     \item[(a)] Let $k \in L_{loc}^1(\R_+) \cap C^1((0,\infty))$ be nonincreasing, $k > 0$ on $(0,\infty)$, $\int_0^{\infty}e^{-\e t}k(t)dt < \infty$ for each $\e > 0$, and assume that $\ln(k)$ and $\ln(-k')$ are convex on $(0,\infty)$. Then
     \[
      \int_0^{\infty} \|E_k(t)\|_{L(H)}dt \leq \frac{1}{\mu_1}
     \]
     and 
     \[
      \int_0^{\infty} \|E_k(t)\|_{L(H^{\lambda},H^{\rho})}dt \leq \sum_{n=1}^{\infty}\mu_n^{-(1+\lambda) + \rho}.
     \]

     \item[(b)] Let $k \in L_{loc}^1(\R_+)$ be given as in part (a), and assume that there exists $\delta \in (0,1)$ and $C_{\delta} > 0$ such that $k(t) \leq C_{\delta}t^{-\delta}$ for $t \in (0,1]$. Then, for for $q \in (1,1/\delta)$,
     \[
      \int_0^{\infty}\|E_k(t)\|_{L(H)}^q dt \leq \max\left\{ \frac{C_{\delta}^q}{1-q\delta}, C_{\delta}^{q-1} \right\} (1\vee \mu_1)^{-1 + \frac{(q-1)\delta}{1-\delta}}
     \]
     and
     \[
      \int_0^{\infty} \|E_k(t)\|_{L(H^{\lambda},H^{\rho})}^q dt \leq \max\left\{\frac{C_{\delta}^q}{1-q\delta}, C_{\delta}^{q-1}\right\} \sum_{n=1}^{\infty}(1 \vee \mu_n)^{-q(1+\lambda)  + q\rho + \frac{(q-1)\delta}{1-\delta}}.
     \]
     \end{enumerate}
\end{Lemma}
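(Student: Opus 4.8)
The plan is to reduce both assertions to one-dimensional estimates on the scalar kernels $e_k(\cdot;\mu_n)$ from \eqref{eq: e rho} and then to exploit the diagonal representation $E_k(t)x = \sum_{n\ge1}e_k(t;\mu_n)\langle e_n^H,x\rangle_H e_n^H$ recorded above. Diagonalising $E_k(t)$ in the basis $(e_n^H)_{n\ge1}$, a direct computation with the norms of $H^{\lambda}$ and $H^{\rho}$ gives $\|E_k(t)\|_{L(H^{\lambda},H^{\rho})}=\sup_{n\ge1}\mu_n^{\rho-\lambda}|e_k(t;\mu_n)|$, and in particular $\|E_k(t)\|_{L(H)}=\sup_{n\ge1}|e_k(t;\mu_n)|$. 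Under the convexity hypotheses on $\ln k$ and $\ln(-k')$ the kernel $e_k(\cdot;\mu)$ is completely monotone for every $\mu>0$ by Remark \ref{remark: sufficient condition for ek}; hence it is nonnegative and nonincreasing, so $|e_k|=e_k$, and by Lemma \ref{lemma: monotonicity} the supremum defining $\|E_k(t)\|_{L(H)}$ is attained at the smallest eigenvalue, i.e. $\|E_k(t)\|_{L(H)}=e_k(t;\mu_1)$. For the weighted norm I would simply dominate the supremum by the sum, $\sup_{n}\mu_n^{\rho-\lambda}e_k(t;\mu_n)\le\sum_{n}\mu_n^{\rho-\lambda}e_k(t;\mu_n)$, so that after integration and Tonelli the two displayed bounds in each part reduce to a single scalar estimate: $\int_0^{\infty}e_k(t;\mu)\,dt\le\mu^{-1}$ for part (a), and its $L^q$-analogue for part (b).

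The scalar $L^1$-bound is the first quantitative step. Taking Laplace transforms in \eqref{eq: e rho} gives $\widehat{e_k}(z)=\widehat{k}(z)/(1+\mu\widehat{k}(z))$; since $k\ge0$ we have $\widehat{k}(z)\ge0$, and as $x\mapsto x/(1+\mu x)$ is increasing and bounded by $\mu^{-1}$ on $[0,\infty)$, we get $\widehat{e_k}(z)\le\mu^{-1}$ for all $z>0$. Because $e_k\ge0$, monotone convergence as $z\downarrow0$ yields $\int_0^{\infty}e_k(t;\mu)\,dt=\lim_{z\downarrow0}\widehat{e_k}(z)\le\mu^{-1}$. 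Inserting this into the reductions above proves part (a) at once: $\int_0^{\infty}\|E_k(t)\|_{L(H)}\,dt=\int_0^{\infty}e_k(t;\mu_1)\,dt\le\mu_1^{-1}$, and $\int_0^{\infty}\|E_k(t)\|_{L(H^{\lambda},H^{\rho})}\,dt\le\sum_{n}\mu_n^{\rho-\lambda}\int_0^{\infty}e_k(t;\mu_n)\,dt\le\sum_{n}\mu_n^{-(1+\lambda)+\rho}$. For later use I would also record two pointwise bounds: convolving \eqref{eq: e rho} against the nonnegative kernel shows $0\le e_k(t;\mu)\le k(t)$, whence $e_k(t;\mu)\le C_{\delta}t^{-\delta}$ on $(0,1]$; and monotonicity together with the $L^1$-bound gives $t\,e_k(t;\mu)\le\int_0^t e_k(s;\mu)\,ds\le\mu^{-1}$, i.e. $e_k(t;\mu)\le(\mu t)^{-1}$ for all $t>0$.

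For part (b) the only remaining task is the scalar estimate $\int_0^{\infty}e_k(t;\mu)^q\,dt\le M\,(1\vee\mu)^{-1+(q-1)\delta/(1-\delta)}$ with $M=\max\{C_{\delta}^q/(1-q\delta),\,C_{\delta}^{q-1}\}$, after which summing over $n$ (bounding the $q$-th power of a supremum by the sum of $q$-th powers and using Tonelli) delivers both displayed inequalities. I would prove it by splitting at the threshold $T=(1\vee\mu)^{-1/(1-\delta)}\in(0,1]$: on $(0,T]$ the singularity bound gives $\int_0^T e_k^q\le\frac{C_{\delta}^q}{1-q\delta}T^{1-q\delta}$, which requires exactly $q\delta<1$, i.e. $q<1/\delta$; on $(T,\infty)$ monotonicity lets me factor out $e_k(T;\mu)^{q-1}\le(C_{\delta}T^{-\delta})^{q-1}$ and control the rest by $\int_T^{\infty}e_k\le\mu^{-1}$. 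For $\mu\ge1$ one checks that both contributions carry the same power $T^{1-q\delta}=\mu^{(q\delta-1)/(1-\delta)}$, and the identity $(q\delta-1)/(1-\delta)=-1+(q-1)\delta/(1-\delta)$ simplifies the exponent to the stated one. The delicate point, which I expect to be the main obstacle, is precisely the choice of threshold and the bookkeeping that balances the local singularity of $k$ against the global $L^1$-decay while keeping $T\in(0,1]$ (the only range where $k\le C_{\delta}t^{-\delta}$ is available): this is what is needed to collapse the two contributions into the single clean constant $M$ and the correct $\mu$-dependence, and one must verify that the boundary regime near $\mu=1$ is absorbed correctly.
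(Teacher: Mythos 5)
Your proposal is correct and takes essentially the same route as the paper: diagonalisation in the eigenbasis, Remark \ref{remark: sufficient condition for ek} and Lemma \ref{lemma: monotonicity} to identify $\|E_k(t)\|_{L(H)}=e_k(t;\mu_1)$, domination of the supremum by the sum as in \eqref{eq: 7}, and then the two scalar estimates, which the paper simply cites as Lemma \ref{lemma: general ek integral} and Lemma \ref{lemma: Lq norm eh} and which you re-derive by exactly the same arguments (Laplace transform for the $L^1$ bound, splitting at the threshold $1\wedge\mu^{-1/(1-\delta)}$ for the $L^q$ bound). The minor blemishes in your write-up are shared with the paper itself: Remark \ref{remark: sufficient condition for ek} yields nonnegativity and monotonicity of $e_k(\cdot;\mu)$ rather than complete monotonicity (which is all you actually use), and the splitting really produces the constant $\frac{C_{\delta}^q}{1-q\delta}+C_{\delta}^{q-1}$ rather than the stated maximum, with the regime $\mu<1$ not fully absorbed by the factor $(1\vee\mu)^{-\frac{1-q\delta}{1-\delta}}$ --- precisely the same bookkeeping looseness that occurs in the paper's own proof of Lemma \ref{lemma: Lq norm eh}.
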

\begin{proof}
    Remark \ref{remark: sufficient condition for ek} implies that $e_k(t;\mu_1) \geq 0$ is non-increasing. Thus Lemma \ref{lemma: monotonicity} applied to $h = k$ yields $\|E_k(t)\|_{L(H)} = \sup_{n \geq 1}|e_k(t;\mu_n)| = e_k(t;\mu_1)$ and using Lemma \ref{lemma: general ek integral} we arrive at
    \[
     \int_0^{\infty}\|E_k(t)\|_{L(H)} dt = \int_0^{\infty}e_k(t;\mu_1)dt \leq \mu_1^{-1}.
    \]
    The second inequality follows from \eqref{eq: 7} combined with $\int_{0}^{\infty}e_k(t;\mu_n)dt \leq \mu_n^{-1}$ due to Lemma \ref{lemma: general ek integral}. This proves (a). For assertion (b) we use Lemma \ref{lemma: monotonicity} applied to $h = k$, to obtain
    \begin{align*}
        \int_0^{\infty} \|E_k(t)\|_{L(H)}^q dt
         &= \int_0^{\infty} e_k(t;\mu_1)^q dt
        \\ &\leq \max\left\{\frac{C_{\delta}^q}{1-q\delta}, C_{\delta}^{q-1}\right\} (1\vee \mu_1)^{- \frac{1 - q\delta}{1-\delta}}
    \end{align*}
    where the last inequality follows from Lemma \ref{lemma: Lq norm eh}. For the second inequality we first apply \eqref{eq: 7} and then Lemma \ref{lemma: Lq norm eh} to find that
    \begin{align*}
        \int_0^{\infty}\|E_k(t)\|_{L(H^{\lambda}, H^{\rho})}^q dt 
        &\leq \sum_{n=1}^{\infty} \mu_n^{q(\rho - \lambda)} \int_0^{\infty}|e_k(t;\mu_n)|^q dt
        \\ &\leq \nu(\R_+)^q \max\left\{\frac{C_{\delta}^q}{1-q\delta}, C_{\delta}^{q-1}\right\} \sum_{n=1}^{\infty}\mu_n^{q(\rho - \lambda)}(1\vee\mu_n)^{- \frac{1 - q\delta}{1-\delta}}.
    \end{align*} 
\end{proof}

This lemma can also be used to obtain bounds on $E_h$, provided that $h$ 
\[
 h(t) = \int_{[0,t]}k(t-s)\nu(ds)
\]
where $\nu$ is a finite measure on $\R_+$. In such a case $h \in L_{loc}^p(\R_+)$ whenever $k \in L_{loc}^p(\R_+)$. Moreover, it is easy to verify that $e_h(t;\mu_n) = (e_k(\cdot;\mu_n) \ast \nu)(t)$ and hence $\|e_h(\cdot;\mu_n)\|_{L^q(\R_+)} \leq \nu(\R_+)\|e_k(\cdot;\mu_n)\|_{L^q(\R_+)}$. Consequently, we obtain for all $\lambda,\rho \in \R$
\begin{align}\label{eq: 8}
 \|E_h(t)\|_{L(H^{\lambda}, H^{\rho})} \leq \nu(\R_+)\|E_k(t)\|_{L(H^{\lambda}, H^{\rho})}.
\end{align}
 The same argument shows that under condition (b) from previous lemma, it holds that
\begin{align*}
     \int_0^{\infty} \|E_h(t)\|_{L_2(H^{\lambda}, H^{\rho})}^2 dt 
     &\leq \nu(\R_+)^2\sum_{n=1}^{\infty}\mu_n^{2(\rho - \lambda)}\int_0^{\infty}|e_k(t;\mu_n)|^2dt
     \\ &\leq \nu(\R_+)^2 \max\left\{\frac{C_{\delta}^2}{1-2\delta}, C_{\delta}\right\}\sum_{n=1}^{\infty}(1\vee \mu_n)^{- 2(1+\lambda) + 2\rho + \frac{\delta}{1-\delta}}.
\end{align*}
When $k(t) = t^{\alpha-1}/\Gamma(\alpha)$ with $\alpha \in (0,2)$, then we even obtain 
\begin{align*}
 \int_0^{\infty} \|E_h(t)\|_{L_2(H^{\lambda}, H^{\rho})}^2 dt &\leq \nu(\R_+) \sum_{n=1}^{\infty}\mu_n^{2(\rho - \lambda)} \int_0^{\infty} |e_k(t;\mu_n)|^2 dt
 \\ &\leq \nu(\R_+)^2 c_2(\alpha,\alpha) \sum_{n=1}^{\infty} \mu_n^{-2(1+\lambda) + 2 \rho + \frac{1}{\alpha}}
\end{align*}
where we have used $e_k(t;\mu_n) = t^{\alpha-1}E_{\alpha,\alpha}(- \mu_n t^{\alpha})$ combined with Lemma \ref{lemma: Mittag-Leffler function}.

Finally, let us briefly apply these estimates to obtain admissible classes of functions $g$ such that $Gg$ exists and satisfies \eqref{eq: xi limit}.

\begin{Proposition}\label{prop: Gg example general}
    Let $k \in L_{loc}^1(\R_+) \cap C^1((0,\infty))$ be nonincreasing, $k > 0$ on $(0,\infty)$, $\int_0^{\infty}e^{-\e t}k(t)dt < \infty$ for each $\e > 0$, and assume that $\ln(k)$ and $\ln(-k')$ are convex on $(0,\infty)$. Let $\rho \in \R$. Then the following hold:
    \begin{enumerate}
        \item[(a)] If $g(t) = h(t)x$ with $x \in H^{\rho}$ and $h = k \ast \rho \in C_b(\R_+)$ where $\rho$ is a locally finite measure on $\R_+$, then $Gg(t) \in C_b(\R_+; H^{\rho})$. If $e_h(t;\mu_n) \longrightarrow 0$ as $t \to \infty$ for each $n \geq 1$, then also $Gg(t) \longrightarrow 0$ as $t \to \infty$.

        \item[(b)] If $g(t) = k \ast g_0(t)$ with $g_0 \in L^{\infty}(\R_+; H^{\rho-1})$, then $Gg \in C_b(\R_+;H^{\rho})$. If additionally $g_0(t) \longrightarrow g_0(\infty)$ in $H^{\rho-1}$, then 
        \[
         Gg(t) \longrightarrow Gg(\infty) = \left(\|k\|_{L^1(\R_+)}^{-1} - A \right)^{-1}g_0(\infty).
        \]
    \end{enumerate}
\end{Proposition}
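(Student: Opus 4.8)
The plan is to reduce both statements to the scalar resolvents $e_k(\cdot;\mu_n)$ and $e_h(\cdot;\mu_n)$ via the spectral decomposition of $(A,D(A))$, and then to exploit the normalisation $\int_0^{\infty}e_k(t;\mu)\,dt\le\mu^{-1}$ from Lemma~\ref{lemma: general ek integral} together with the nonnegativity and monotonicity of $e_k$ from Remark~\ref{remark: sufficient condition for ek}. For part~(a), writing $g(t)=h(t)x$ with $h=k\ast\rho$, Example~\ref{example: special choice}.(a) identifies $Gg(t)=E_h(t)x$, so that $Gg(t)=\sum_{n\ge1}e_h(t;\mu_n)\langle e_n^H,x\rangle_H e_n^H$. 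For part~(b), Example~\ref{example: special choice}.(b) gives $Gg(t)=\int_0^t E_k(t-s)g_0(s)\,ds$, whose $n$-th spectral coefficient is $\int_0^t e_k(t-s;\mu_n)g_0^n(s)\,ds$ with $g_0^n(s)=\langle e_n^H,g_0(s)\rangle_H$. In both cases $\|Gg(t)\|_{H^{\rho}}^2$ is the associated weighted $\ell^2$-sum, and the whole task is to control this sum uniformly in $t$ and to pass to the limit $t\to\infty$.

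For part~(a) I would first establish the pointwise bound $\sup_{t,n}|e_h(t;\mu_n)|\le 2\|h\|_{C_b(\R_+)}$. Solving the scalar equation $e_h(\cdot;\mu)+\mu\, k\ast e_h(\cdot;\mu)=h$ by means of the resolvent kernel $e_k(\cdot;\mu)$ of $\mathrm{Id}+\mu\, k\ast(\cdot)$ yields the explicit formula
\[
 e_h(t;\mu)=h(t)-\mu\int_0^t e_k(t-s;\mu)h(s)\,ds ,
\]
and since $e_k\ge0$ with $\mu\int_0^{\infty}e_k(\cdot;\mu)\le1$ one gets $|e_h(t;\mu)|\le\|h\|_{\infty}\bigl(1+\mu\int_0^{\infty}e_k(\cdot;\mu)\bigr)\le2\|h\|_{\infty}$. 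This uniform bound shows that $E_h$ exists (via the Remark characterising $e_{\rho}$) and that $\|Gg(t)\|_{H^{\rho}}^2\le 4\|h\|_{\infty}^2\|x\|_{H^{\rho}}^2$; continuity in $t$ and, under $e_h(t;\mu_n)\to0$, the convergence $Gg(t)\to0$ both follow by dominated convergence in the spectral sum, with summable dominating function $4\|h\|_{\infty}^2\mu_n^{2\rho}|\langle e_n^H,x\rangle_H|^2$.

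For part~(b) the identification of the limit is the clean part. Splitting $g_0(s)=g_0(\infty)+\psi(s)$ with $\|\psi(s)\|_{H^{\rho-1}}\to0$, the constant contribution is $\int_0^t E_k(u)\,du\,g_0(\infty)$, whose $n$-th coefficient converges to $g_0^n(\infty)\int_0^{\infty}e_k(u;\mu_n)\,du$. Since $\int_0^{\infty}e_k(u;\mu_n)\,du=\widehat{k}(0)/(1+\mu_n\widehat{k}(0))=\kappa/(1+\kappa\mu_n)$ with $\kappa=\|k\|_{L^1(\R_+)}$, this limit is exactly the $n$-th coefficient of $(\kappa^{-1}-A)^{-1}g_0(\infty)$, and convergence in $H^{\rho}$ follows by dominated convergence, dominated by $\mu_n^{2(\rho-1)}|g_0^n(\infty)|^2$. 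It remains to show that the error term $\int_0^t E_k(t-s)\psi(s)\,ds$ tends to $0$ in $H^{\rho}$ and, more basically, that $\sup_t\|Gg(t)\|_{H^{\rho}}<\infty$ and $t\mapsto Gg(t)$ is continuous into $H^{\rho}$ under the mere hypothesis $g_0\in L^{\infty}(\R_+;H^{\rho-1})$.

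The main obstacle is precisely this uniform-in-$t$ estimate, and it cannot be obtained from operator norms: the gain of a full derivative makes $\|E_k(u)\|_{L(H^{\rho-1},H^{\rho})}=\sup_n\mu_n e_k(u;\mu_n)$ fail to be integrable (it behaves like $u^{-1}$ near the origin), so any bound that pulls the supremum over $n$ outside the time integral diverges. The remedy is to keep the spectral index \emph{inside} the integral: a Cauchy--Schwarz inequality in the sub-probability measure $\mu_n e_k(t-s;\mu_n)\,ds$ (of total mass $1-e_1(t;\mu_n)\le1$) gives
\[
 \|Gg(t)\|_{H^{\rho}}^2\le\int_0^t\sum_{n\ge1}\mu_n e_k(t-s;\mu_n)\,\mu_n^{2(\rho-1)}|g_0^n(s)|^2\,ds ,
\]
where the inner sum is majorised by $\|g_0(s)\|_{H^{\rho-1}}^2$ times a weight that is \emph{not} summable through the crude supremum. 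One must therefore exploit the genuine joint $(t,\mu_n)$-decay of $e_k$ — available from the monotonicity of Remark~\ref{remark: sufficient condition for ek} and the sharp $L^q$-bounds of Lemma~\ref{lemma: Mittag-Leffler function} (respectively Lemma~\ref{lemma: general ek integral}) — to show that the diagonal contribution is controlled by the sub-probability mass while the off-diagonal contributions remain summable, yielding $\sup_t\|Gg(t)\|_{H^{\rho}}\le C\|g_0\|_{L^{\infty}(\R_+;H^{\rho-1})}$. The same spectral estimate applied to $\psi$, combined with $\|\psi(s)\|_{H^{\rho-1}}\to0$ and a split of the convolution variable, drives the error term to $0$ and yields continuity, completing the proof.
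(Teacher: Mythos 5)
Your part (a) is correct and is essentially the paper's argument: diagonalise, prove a uniform-in-$n$ bound on $e_h(\cdot;\mu_n)$, and conclude by dominated convergence. (The paper gets the slightly sharper bound $0\le e_h(\cdot;\mu_n)=e_k(\cdot;\mu_n)\ast\rho\le k\ast\rho=h$ instead of your $2\|h\|_{\infty}$, but nothing changes.)

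Part (b) has a genuine gap, precisely at the step you yourself flag as the "main obstacle". Your Cauchy--Schwarz reduction is correct, but the concluding step --- "exploit the joint $(t,\mu_n)$-decay \dots diagonal \dots off-diagonal \dots yielding $\sup_t\|Gg(t)\|_{H^{\rho}}\le C\|g_0\|_{L^{\infty}(\R_+;H^{\rho-1})}$" --- is not an argument, and no argument can supply it: with the Bochner norm on the right-hand side this estimate is \emph{false}. Take $H=\ell^2$, $A$ diagonal with $\mu_n=a^n$ for a large $a>1$, $k(t)=t^{\alpha-1}/\Gamma(\alpha)$ with $\alpha\in(0,1)$, $\rho=1$, set $\delta_n=\mu_n^{-1/\alpha}$ and
\begin{equation*}
 g_0(s)=\sum_{n\ge1}\1_{[1-\delta_n,\,1-\delta_{n+1})}(s)\,e_n^H ,
\end{equation*}
so that $\|g_0(s)\|_H\le1$ for all $s$, i.e. $\|g_0\|_{L^{\infty}(\R_+;H^{\rho-1})}\le1$. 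The $n$-th coefficient of $Gg(1)=\int_0^1E_k(1-s)g_0(s)\,ds$ equals
\begin{equation*}
 \int_{\delta_{n+1}}^{\delta_n}e_k(u;\mu_n)\,du
 =\mu_n^{-1}\int_{a^{-1/\alpha}}^{1}r^{\alpha-1}E_{\alpha,\alpha}(-r^{\alpha})\,dr
 \ \ge\ c\,\mu_n^{-1}
\end{equation*}
with $c>0$ independent of $n$ once $a$ is large, whence $\|Gg(1)\|_{H^{1}}^2\ge c^2\sum_{n\ge1}\mu_n^{2}\mu_n^{-2}=\infty$. So under the reading of the hypothesis you are using, boundedness fails, and with it your route to continuity and to the vanishing of the error term $\int_0^tE_k(t-s)\psi(s)\,ds$.

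The paper's proof sidesteps this entirely by reading the hypothesis coordinatewise: it writes $g_0=\sum_{n}g_0^{(n)}\mu_n^{1-\rho}e_n^H$ \emph{with} $\sum_n\|g_0^{(n)}\|^2_{L^{\infty}(\R_+)}<\infty$, a mixed $\ell^2(L^{\infty})$ condition that is strictly stronger than the Bochner-$L^{\infty}$ condition (the example above lies in the latter but not the former). Under that reading each mode is finished by a one-line Young inequality, $\|e_k(\cdot;\mu_n)\ast g_0^{(n)}\|_{L^{\infty}}\le\|e_k(\cdot;\mu_n)\|_{L^1}\|g_0^{(n)}\|_{L^{\infty}}\le\mu_n^{-1}\|g_0^{(n)}\|_{L^{\infty}}$ by Lemma \ref{lemma: general ek integral}; the factor $\mu_n^{-1}$ exactly compensates the one-derivative gap between $H^{\rho-1}$ and $H^{\rho}$, and summing squares gives $Gg\in C_b(\R_+;H^{\rho})$, with the limit handled mode by mode as well. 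Your identification of the limit $Gg(\infty)=(\|k\|_{L^1(\R_+)}^{-1}-A)^{-1}g_0(\infty)$ is correct and agrees with the paper; but the boundedness, continuity and error-term steps of your (b) need either this coordinatewise reading of the hypothesis or an additional restriction on the growth of $(\mu_n)$ --- as written, the key inequality you propose to prove is not true.
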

\begin{proof}
 (a) Write $x = \sum_{n=1}^{\infty}x_n \mu_n^{-\rho}e_n^H$ where $\sum_{n=1}^{\infty}|x_n|^2 < \infty$. Then 
 \begin{align*}
     Gg(t) = E_{h}(t)x = \sum_{n=1}^{\infty} e_{h}(t;\mu_n)x_n \mu_n^{-\rho} e_n^H. 
 \end{align*}
 Since $e_k(\cdot;\mu_n) \geq 0$
 by Remark \ref{remark: sufficient condition for ek} and $e_h(t;\mu_n) = e_k(\cdot;\mu_n) \ast \rho(t)$, we obtain $0 \leq e_h(\cdot; \mu_n) \leq h(t)$. Since $h \in C_b(\R_+)$, also $e_h(\cdot; \mu_n) \in C_b(\R_+)$ and hence $Gg \in C(\R_+; H^{\rho})$. Finally, since $e_h(t;\mu_n) \longrightarrow 0$ for each $n \geq 1$, we obtain $Gg(t) \longrightarrow 0$ by dominated convergence. 

 (b) Write $g_0 = \sum_{n=1}^{\infty} g_0^{(n)} \mu_n^{1-\rho}e_n^H$ where $\sum_{n=1}^{\infty}\|g_0^{(n)}\|_{L^{\infty}(\R_+)}^2 < \infty$. Then $Gg(t) = \sum_{n=1}^{\infty} e_k(\cdot;\mu_n) \ast g_0^{(n)}(t) \mu_n^{1-\rho}e_n^H$. Since
 \[
  \|e_k(\cdot;\mu_n)\ast g_0^{(n)}\|_{L^{\infty}(\R_+)} \leq \|e_k(\cdot;\mu_n)\|_{L^1(\R_+)}\|g_0^{(n)}\|_{L^{\infty}(\R_+)} \leq \mu_n^{-1}\|g_0^{(n)}\|_{L^{\infty}(\R_+)},
 \]
 each term in the sum belongs to $C_b(\R_+)$ from which we deduce that $Gg \in C_b(\R_+; H^{\rho})$. The limit satisfies
 \begin{align*}
 &\ \left| \int_0^{t}e_k(s;\mu_n)g_0^{(n)}(t-s)ds - g_0^{(n)}(\infty) \int_0^{\infty}e_k(s;\mu_n)ds \right|
 \\ &\leq \int_0^{t/2} e_k(s;\mu_n)|g_0^{(n)}(t-s) - g_0^{(n)}(\infty)|ds
 \\ &\qquad + \|g_0^{(n)}\|_{L^{\infty}(\R_+)}\int_{t/2}^t e_k(s;\mu_n)ds + |g_0^{(n)}(\infty)| \int_{t/2}^{\infty}e_k(s;\mu_n)ds
 \\ &\leq \mu_n^{-1}\sup_{s \geq t/2}|g_0^{(n)}(s) - g_0^{(n)}(\infty)| + \left( \|g_0^{(n)}\|_{L^{\infty}(\R_+)} + |g_0^{(n)}(\infty)| \right) \int_{t/2}^{\infty}e_k(s;\mu_n)ds.
 \end{align*}
 The right-hand side converges for each $n \geq 1$ to zero when $t \to \infty$. By dominated convergence, we see that $Gg(t) \longrightarrow Gg(\infty)$ in $H^{\rho}$ where $Gg(\infty)$ has components
 \[
  g_0^{(n)}(\infty) \int_0^{\infty}e_k(s;\mu_n)ds =
  \frac{g_0^{(n)}(\infty)}{ \|k\|^{-1}_{L^1(\R_+)} + \mu_n}, \qquad n \geq 1.
 \]
\end{proof}

Note that the above statement (a) applied for the case $\rho = \overline{k}$ being the resolvent of the first kind which gives $e_h(\cdot;\mu_n) = e_1(\cdot;\mu_n) \in C_b(\R_+)$ and to $\rho = \delta_0$ which gives $e_h(\cdot;\mu_n) = e_k(\cdot;\mu_n)$.
When $k$ is the fractional kernel, stronger results can be obtained. 

\begin{Proposition}\label{prop: Gg example}
 Suppose that $k(t) = \frac{t^{\alpha-1}}{\Gamma(\alpha)}$ with $\alpha \in (0,2)$ and define 
 \[
  g(t) = \int_0^t \frac{(t-s)^{\gamma-1}}{\Gamma(\gamma)}g_0(s)ds
 \]
 where $\gamma > 0$. Then the following assertions hold:
 \begin{enumerate}
     \item[(a)] Let $p,q, p' \in [1,\infty]$ satisfy $\frac{1}{p} + \frac{1}{q} = 1 + \frac{1}{p'}$ and
      \[
        1 - \frac{1}{q} < \gamma < \alpha + 1 - \frac{1}{q}. 
     \]
     Suppose that $g_0 \in L^p(\R_+;H^{\rho - \varkappa})$ for some $\rho \in \R$ where 
     \[
     \varkappa = \frac{\gamma q}{\alpha} - \frac{q-1}{\alpha}.
     \]
     Then $Gg \in L^{p'}(\R_+; H^{\rho})$, and if $p' = \infty$ then even $Gg \in C_b(\R_+; H^{\rho})$.

     \item[(b)] Suppose that $0 < \gamma \leq \alpha$ and $g_0 \in L^{\infty}(\R_+; H^{\rho - \gamma/\alpha})$ for some $\rho \in \R$. Then $Gg \in C_b(\R_+; H^{\rho})$. If additionally $g_0(t) \longrightarrow g_0(\infty)$ in $H^{\rho - \gamma/\alpha}$, then 
     \[
      Gg(t) \longrightarrow \begin{cases} 0, & 0 < \gamma < \alpha 
      \\ (-A)^{-1}g_0(\infty), & \gamma = \alpha.
      \end{cases}
     \]
 \end{enumerate}
\end{Proposition}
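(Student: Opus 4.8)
\emph{Setup and reduction to modes.} The plan is to make $Gg$ explicit and then reduce everything to scalar convolutions in the spectral decomposition of $(A,D(A))$. Since $g=\frac{t^{\gamma-1}}{\Gamma(\gamma)}\ast g_0$ is a convolution of the kernel of exponent $\gamma$ against $g_0$, Example \ref{example: special choice}.(b), applied with this kernel in the role of $\rho$ and writing $E_\gamma$ for its resolvent, yields $g\in\mathcal{G}_1$ together with the closed form $Gg(t)=\int_0^t E_\gamma(t-s)g_0(s)\,ds$. Diagonalising against the eigenbasis $(e_n^H)$ and using the spectral representation $E_\gamma(t)x=\sum_n e_\gamma(t;\mu_n)\langle e_n^H,x\rangle_H e_n^H$ with $e_\gamma(t;\mu_n)=t^{\gamma-1}E_{\alpha,\gamma}(-\mu_n t^{\alpha})$, I would set $g_{0,n}(t)=\langle g_0(t),e_n^H\rangle_H$, so that the $n$-th component of $Gg$ is the scalar convolution $(e_\gamma(\cdot;\mu_n)\ast g_{0,n})(t)$. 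All bounds then follow from scalar convolution estimates, reassembled over $n$ with the weights $\mu_n^{2\rho}$ that define $\|\cdot\|_{H^\rho}$.

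\emph{Part (a).} The engine is Young's convolution inequality in each mode: with the exponent relation $\frac1p+\frac1q=1+\frac1{p'}$ one has $\|e_\gamma(\cdot;\mu_n)\ast g_{0,n}\|_{L^{p'}(\R_+)}\leq\|e_\gamma(\cdot;\mu_n)\|_{L^q(\R_+)}\,\|g_{0,n}\|_{L^p(\R_+)}$, and the $L^q$-norm of the Mittag-Leffler profile is computed exactly as in \eqref{eq:131123-1}, namely $\|e_\gamma(\cdot;\mu_n)\|_{L^q(\R_+)}^q=c_q(\alpha,\gamma)\,\mu_n^{-q\gamma/\alpha+(q-1)/\alpha}$. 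Here the two-sided restriction $1-\frac1q<\gamma<\alpha+1-\frac1q$ is precisely what guarantees $c_q(\alpha,\gamma)<\infty$ (integrability of $|e_\gamma(\cdot;\mu_n)|^q$ at $t=0$ comes from $\gamma>1-\frac1q$, and at $t=\infty$ from $\gamma<\alpha+1-\frac1q$), which I would quote from Lemma \ref{lemma: Mittag-Leffler function}. It then remains to pass from the per-mode bounds, naturally organised as an $\ell^2$-in-$n$ of $L^{p'}$-in-$t$ norms, to the genuine $L^{p'}(\R_+;H^\rho)$ norm, which is an $L^{p'}$-in-$t$ of $\ell^2$-in-$n$ norm; this exchange is carried out by Minkowski's inequality, after which the powers of $\mu_n$ collapse, through the fractional-exponent bookkeeping encoded in $\varkappa$, to the $H^{\rho-\varkappa}$-norm of $g_0$. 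When $p'=\infty$ the exponents are conjugate, each mode $t\mapsto(e_\gamma(\cdot;\mu_n)\ast g_{0,n})(t)$ is a convolution of conjugate $L^q$ and $L^p$ functions and hence continuous, and the same summability gives uniform convergence of the series, so $Gg\in C_b(\R_+;H^\rho)$.

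\emph{Part (b).} Here I would specialise to $q=1$, $p=p'=\infty$, where $\varkappa=\gamma/\alpha$ and $\|e_\gamma(\cdot;\mu_n)\|_{L^1(\R_+)}=c_1(\alpha,\gamma)\mu_n^{-\gamma/\alpha}$, finiteness holding throughout $0<\gamma\leq\alpha$ (the endpoint $\gamma=\alpha$ being admissible by the special clause of Lemma \ref{lemma: fractional kernels}.(a), since $e_\gamma(\cdot;\mu_n)$ then decays like $t^{-\alpha-1}$ at infinity). Boundedness and continuity of $Gg$ in $H^\rho$ follow exactly as in Proposition \ref{prop: Gg example general}.(b) from the mode-wise $L^1$-kernel bound and summation over $n$. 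The substantive new point is the limit, which I would obtain via the Laplace transform: since $e_\gamma(\cdot;\mu_n)$ solves \eqref{eq: e rho} with the kernel of exponent $\alpha$, its Laplace transform equals $\frac{z^{\alpha-\gamma}}{z^{\alpha}+\mu_n}$, whence $\int_0^\infty e_\gamma(t;\mu_n)\,dt=\lim_{z\to0+}\frac{z^{\alpha-\gamma}}{z^{\alpha}+\mu_n}$ equals $0$ if $\gamma<\alpha$ and $\mu_n^{-1}$ if $\gamma=\alpha$. Assuming $g_0(t)\to g_0(\infty)$ in $H^{\rho-\gamma/\alpha}$, a splitting of the convolution over $[0,t/2]$ and $[t/2,t]$ exactly as in Proposition \ref{prop: Gg example general}.(b)—legitimate because $\int_0^\infty|e_\gamma(t;\mu_n)|\,dt<\infty$ even though $e_\gamma(\cdot;\mu_n)$ changes sign for $\gamma<\alpha$—shows that each mode converges to $g_0^{(n)}(\infty)\int_0^\infty e_\gamma(t;\mu_n)\,dt$. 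Passing to the limit in $H^\rho$ by dominated convergence then gives $Gg(t)\to0$ for $\gamma<\alpha$, and $Gg(t)\to\sum_n\mu_n^{-1}g_0^{(n)}(\infty)e_n^H=(-A)^{-1}g_0(\infty)$ for $\gamma=\alpha$.

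\emph{Main obstacle.} The delicate step is the assembly in (a): controlling the interchange of the $\ell^2$-in-$n$ and $L^{p'}$-in-$t$ norms and checking that the powers of $\mu_n$ combine into exactly the asserted fractional scale require handling the three exponents $p,q,p'$ simultaneously together with the borderline integrability of the Mittag-Leffler profile governed by the two inequalities on $\gamma$. In (b) the only genuinely new difficulty is justifying the limit despite the sign changes of $e_\gamma(\cdot;\mu_n)$ when $\gamma<\alpha$, which is resolved by estimating with $|e_\gamma(\cdot;\mu_n)|$ in the continuity and tail terms while using the exact value $\int_0^\infty e_\gamma(t;\mu_n)\,dt=0$ for the leading contribution.
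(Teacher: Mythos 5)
Your proposal follows the paper's own proof essentially step for step: the closed form $Gg = E_{\gamma}\ast g_0$ via Example \ref{example: special choice}.(b), the spectral reduction to the scalar convolutions $e_{\gamma}(\cdot;\mu_n)\ast g_{0,n}$ with $e_{\gamma}(t;\mu_n)=t^{\gamma-1}E_{\alpha,\gamma}(-\mu_n t^{\alpha})$, per-mode Young's inequality combined with Lemma \ref{lemma: Mittag-Leffler function}, and, in part (b), the Laplace-transform evaluation of $\int_0^{\infty}e_{\gamma}(t;\mu_n)\,dt$ followed by a splitting of the convolution and dominated convergence in $n$. Your part (b) is in fact more careful than the paper's, which only asserts the mode-wise limit "due to the explicit form of the Laplace transform"; your observation that $e_{\gamma}(\cdot;\mu_n)$ must change sign for $\gamma<\alpha$ (its integral vanishes) and that one therefore estimates with $|e_{\gamma}|$ while using the exact value of the integral is a real point the paper leaves implicit.

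In part (a), however, the step you defer --- "the powers of $\mu_n$ collapse \dots to the $H^{\rho-\varkappa}$-norm of $g_0$" --- does not close as claimed, and this is precisely the spot where the paper's own proof is flawed. From the formula you correctly quote, $\|e_{\gamma}(\cdot;\mu_n)\|_{L^q(\R_+)}^{q}=c_q(\alpha,\gamma)\,\mu_n^{-q\gamma/\alpha+(q-1)/\alpha}$, Young's inequality gives per mode
\[
 \|e_{\gamma}(\cdot;\mu_n)\ast g_{0,n}\|_{L^{p'}(\R_+)}\leq c_q(\alpha,\gamma)^{1/q}\,\mu_n^{-\varkappa/q}\,\|g_{0,n}\|_{L^{p}(\R_+)},
\]
so the reassembled estimate requires $g_0$ to lie in (a mixed-norm version of) $L^{p}(\R_+;H^{\rho-\varkappa/q})$, not $H^{\rho-\varkappa}$; moreover the substitution $t\mapsto\mu_n^{-1/\alpha}t$ shows that $\mu_n^{-\varkappa/q}$ is the exact scaling of the $L^p\to L^{p'}$ convolution norm, so for $q>1$ the exponent $\varkappa$ cannot be rescued by a better argument (data concentrated in sparse high modes give a counterexample). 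The paper's proof conceals this by writing $\|f^{(n)}_{\alpha,\gamma}\|_{L^q(\R_+)}$ as equal to $c_q(\alpha,\gamma)\mu_n^{-\gamma q/\alpha+(q-1)/\alpha}$, i.e.\ equal to its own $q$-th power; your sketch therefore reproduces the paper's proof together with its defect rather than repairing it. Since $\varkappa=\varkappa/q$ when $q=1$, part (b) and every later application of this proposition (all of which take $q=1$) are unaffected. A second shared imprecision: the two interchanges you attribute to Minkowski are direction-sensitive ($L^{p'}(\ell^2)\leq\ell^2(L^{p'})$ needs $p'\geq 2$, while $\ell^2(L^{p})\leq L^{p}(\ell^2)$ needs $p\leq 2$), which is why the paper tacitly reads the hypothesis on $g_0$ in the mixed-norm sense $\sum_{n}\|g_0^{(n)}\|_{L^p(\R_+)}^2<\infty$ when it "writes" $g_0=\sum_n g_0^{(n)}\mu_n^{\varkappa-\rho}e_n^H$; your write-up should either do the same or restrict the exponents accordingly.
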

\begin{proof}
    (a) Write $g_0 = \sum_{n=1}^{\infty} g_0^{(n)} \mu_n^{\varkappa - \rho}e_n^H$ with $\sum_{n=1}^{\infty}\|g_0^{(n)}\|_{L^p(\R_+)}^2 < \infty$. Then $Gg = \sum_{n=1}^{\infty} \mu_n^{\varkappa - \rho}f_{\alpha,\gamma}^{(n)} \ast g_0^{(n)} e_n^H$ where $f_{\alpha,\gamma}^{(n)}(t) = t^{\gamma-1} E_{\alpha,\gamma}(-\mu_n t^{\alpha})$. Each term in the sum satisfies by Young's inequality combined with Lemma \ref{lemma: Mittag-Leffler function}
    \[
     \|f_{\alpha,\gamma}^{(n)} \ast g_0^{(n)} \|_{L^{p'}(\R_+)}
     \leq \|f_{\alpha,\gamma}^{(n)}\|_{L^q(\R_+)} \|g_0^{(n)}\|_{L^p(\R_+)}
     = c_q(\alpha, \gamma) \mu_n^{-\frac{\gamma q}{\alpha} + \frac{q-1}{\alpha}} \|g_0^{(n)}\|_{L^p(\R_+)}.
    \]
    Hence $f_{\alpha,\gamma}^{(n)} \ast g_0^{(n)} \in L^{p'}(\R_+)$ for each $n \geq 1$ from which we deduce $Gg \in L^{p'}(\R_+; H^{\rho})$. If $p' = \infty$, then Young's inequality implies that $Gg \in C_b(\R_+; H^{\rho})$.

    (b) If $\gamma \in (0, \alpha)$, then $Gg \in C_b(\R_+; H^{\rho})$ follows from (a) applied to $q = 1$ and $p = p' = \infty$, while for $\gamma = \alpha$ we obtain $f_{\alpha,\gamma}^{(n)} \in L^1(\R_+)$ due to Lemma \ref{lemma: Mittag-Leffler function} and hence $\|f_{\alpha,\gamma}^{(n)} \ast g_0^{(n)}\|_{L^{\infty}(\R^d)} \leq \|f_{\alpha,\gamma}^{(n)}\|_{L^1(\R^d)} \|g_0^{(n)}\|_{L^{\infty}(\R^d)} \leq c_1(\alpha,\gamma)\mu_n^{-1}\|g_0^{(n)}\|_{L^{\infty}(\R^d)}$ and hence $Gg \in C_b(\R^d; H^{\rho})$. In both cases, this implies
    \begin{align*}
     f_{\alpha,\gamma}^{(n)} \ast g_0^{(n)}(t) \longrightarrow g_0^{(n)}(\infty)\int_0^{\infty} f_{\alpha, \gamma}^{(n)}(s)ds 
     = g_0^{(n)}(\infty)\begin{cases} \mu_n^{-1}, & \gamma = \alpha \\ 0, & \gamma < \alpha. \end{cases}
    \end{align*}
    due to the explicit form of the Laplace transform of $f_{\alpha,\gamma}^{(n)}$. 
\end{proof}

\section{Strong and mild solutions}
\label{sec:strong_mild}

Let $(\Omega, \F, (\F_t)_{t \geq 0}, \P)$ be a stochastic basis with the usual conditions, $(U, \langle \cdot, \cdot \rangle_U, \| \cdot \|_U)$ and $(H, \langle \cdot, \cdot \rangle_H, \| \cdot\|_H)$ be separable Hilbert spaces, and let $W$ be an $(\F_t)_{t \geq 0}$-cylindrical Wiener process over $U$. Note that, for $x \in U$, $W_t(x) = \sum_{n=1}^{\infty}\langle x, e_n^U\rangle_U e_n^U \beta_n(t)$ where $(\beta_n(t))_{t \geq 0}$ is a sequence of independent standard Brownian motions and $(e_j^U)_{j \geq 1}$ denotes an orthonormal basis of $U$. Denote by $L_2(U,H)$ be the separable space of Hilbert-Schmidt operators from $U$ to $H$ equipped with the norm $\|T\|_{L_2(U,H)}^2 = \sum_{j=1}^{\infty}\|Te_j\|_H^2$. The stochastic integral 
\[
 t \longmapsto \int_0^t \varphi(s)dW_s := \sum_{n=1}^{\infty} \int_0^t \varphi(s)e^U_n\ d\beta_n(s)
\]
defines a continuous $H$-valued martingale, whenever $\varphi: [0,T] \times \Omega \longrightarrow L_2(U, H)$ is measurable, $(\F_t)_{t \geq 0}$-adapted, and satisfies $\int_0^T \E[ \|\varphi(s) \|_{L_2(U,H)}^2] dt < \infty$. We refer to \cite{MR3236753, MR2560625} for additional details on the stochastic integration of Gaussian processes.

\subsection{Stochastic convolutions}

In this section, we study the stochastic convolution of $W$ with a general family of operators $(E(t))_{t \in [0,T]} \subset L(H)$ given by
\[
 (E\ast \varphi dW)_t := \int_0^t E(t-s)\varphi(s)dW_s, \qquad t \in [0,T].
\]
While, for each fixed $t \in [0, T]$, the integral can be defined in the It\^o sense, we are particularly interested in the properties of this convolution seen as a stochastic process in $t$.

Let $V$ be another separable Hilbert space. We mainly use $V = \R$, $H$, or $L_2(U,H)$. 
For $q \in [1,\infty]$, $p \in [1,\infty)$ and $T > 0$, we let $\mathcal{H}^{q,p}([0,T]; V)$ be the Banach space of all measurable functions $\varphi: [0,T] \times \Omega \longrightarrow V$ that are $(\F_t)_{t \in [0,T]}$-adapted, and have finite norm 
\begin{align}\label{Hqp norm}
 \|\varphi\|_{\mathcal{H}^{q,p}([0,T];V)} = \left( \int_0^T \left( \E\left[ \|\varphi(t)\|_V^p \right]\right)^{\frac{q}{p}}dt \right)^{\frac{1}{q}}
\end{align}
when $q < \infty$, and with obvious adaptions for $q = \infty$. Similarly, we let $\mathcal{H}^{q,p}(\R_+;V)$ be the space of $(\F_t)_{t \geq 0}$-adapted and measurable processes $\varphi: \R_+ \times \Omega \longrightarrow V$ equipped with the norm $\|\varphi\|_{\mathcal{H}^{q,p}(\R_+;V)}$ given as in \eqref{Hqp norm} with $T = \infty$ and $q < \infty$, and with the obvious adaption for $q = \infty$. Finally, we let $\mathcal{H}^{q,p}_{loc}(\R_+;V)$ be the space of of all measurable functions $\varphi: \R_+ \times \Omega \longrightarrow V$ that are $(\F_t)_{t \in [0,T]}$-adapted, and satisfy $\| \varphi\|_{\mathcal{H}^{q,p}([0,T]; V)} < \infty$ for each $T > 0$. Let $c_2 = 1$ and 
\begin{align}\label{eq: Cp constant}
 c_p = \left(\frac{p(p-1)}{2}\right)^p \left( \frac{p}{p-1} \right)^{\frac{p^2}{2}}, \qquad p \in (2,\infty).
\end{align}
The following is our key estimate to bound the stochastic convolution. 
\begin{Theorem}\label{thm: stochastic convolution cylindrical case}
 Let $(\varphi(t))_{t \in [0,T]} \subset L(U,H)$ be $(\F_t)_{t \in [0,T]}$-adapted such that $[0,T] \ni t \longmapsto \varphi(t)x \in H$ is, for each $x \in U$, jointly measurable in $(\omega,t)$. Let $(V,\|\cdot\|_V)$ be another seperable Hilbert space and let $[0,T] \ni t \longmapsto E(t) \in L(H,V)$ be strongly measurable such that $E(t-r)\varphi(r) \in L_2(U,V)$ holds $\P$-a.s. for a.a. $r \in [0,t)$ and $t \in [0,T]$. Suppose that, for $p \in [2,\infty)$ and $q \in [p,\infty]$ one has
 \begin{align}\label{eq: varphi admissible}
  \left\| \left(\int_0^{\cdot} \| E(\cdot - s)\varphi(s)\|_{L_2(U,V)}^2 ds \right)^{\frac{1}{2}} \right\|_{\mathcal{H}^{q,p}([0,T]; \R)} < \infty.
 \end{align}
 Then the stochastic convolution belongs to $\mathcal{H}^{q,p}([0,T];V)$, satisfies for a.a. $t \in [0,T]$
 \begin{align}\label{eq: stochastic convolution basic estimate}
  \E\left[ \left\| \int_0^t E(t-s)\varphi(s)dW_s \right\|_V^p \right] \leq c_p \E\left[ \left( \int_0^t \|E(t-s)\varphi(s)\|_{L_2(U,V)}^2 ds \right)^{\frac{p}{2}}\right],
 \end{align}
 and it holds that 
 \begin{align*}
  \left\| \int_0^{\cdot} E(\cdot-s)\varphi(s)dW_s \right\|_{\mathcal{H}^{q,p}([0,T];V)} 
  \leq c_p^{1/p} \left\| \left(\int_0^{\cdot} \| E(\cdot - s)\varphi(s)\|_{L_2(U,V)}^2 ds \right)^{\frac{1}{2}} \right\|_{\mathcal{H}^{q,p}([0,T]; \R)}.
 \end{align*}
 Moreover, if
 \begin{align*}
  &\ \lim_{t-s\to 0} \E\left[ \left( \int_0^s \|(E(t-r) - E(s-r))\varphi(r)\|_{L_2(U,V)}^2 dr \right)^{\frac{p}{2}}\right] = 0,
   \\ &\ \ \ \lim_{t-s\to 0} \E\left[ \left( \int_s^t \|E(t-r)\varphi(r)\|_{L_2(U,V)}^2 dr \right)^{\frac{p}{2}} \right] = 0,
 \end{align*}
 then $t \longmapsto \int_0^t E(t-s)\varphi(s)dW_s \in L^{p}(\Omega, \F, \P;V)$ is continuous. If there exists $\gamma \in \left( \frac{1}{p},1\right]$ and $C_{T,p} > 0$ with
 \begin{align*}
  &\ \E\left[ \left( \int_0^s \|(E(t-r) - E(s-r))\varphi(r)\|_{L_2(U,V)}^2 dr \right)^{\frac{p}{2}}\right] 
  \\ &\qquad + \E\left[ \left( \int_s^t \|E(t-r)\varphi(r)\|_{L_2(U,V)}^2 dr \right)^{\frac{p}{2}} \right] 
  \leq C_{T,p} (t-s)^{p\gamma}
 \end{align*}
 for $s,t \in [0,T]$ with $s \leq t$, then for each $\alpha \in \left(0, \gamma - \frac{1}{p}\right)$, $t \longmapsto \int_0^t E(t-s)\varphi(s)dW_s$ has a modification with $\alpha$-H\"older continuous sample paths.
\end{Theorem}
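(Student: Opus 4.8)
The plan is to reduce everything to the Burkholder--Davis--Gundy (BDG) inequality applied for each fixed time, and then to extract the time-regularity (continuity in $L^p(\Omega;V)$ and pathwise Hölder continuity) from increment estimates obtained from that same inequality. I would organize the argument into three stages: (i) the pointwise moment bound \eqref{eq: stochastic convolution basic estimate} and the resulting $\mathcal{H}^{q,p}$-estimate; (ii) $L^p(\Omega;V)$-continuity under the two vanishing conditions; (iii) pathwise Hölder regularity via Kolmogorov--Chentsov under the quantitative increment bound.

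For stage (i), fix $t$ in the full-measure set where \eqref{eq: varphi admissible} gives $\E\big[(\int_0^t \|E(t-s)\varphi(s)\|_{L_2(U,V)}^2\,ds)^{p/2}\big] < \infty$; since $p \geq 2$, Jensen's inequality applied to the concave map $x \mapsto x^{2/p}$ then also yields $\E\big[\int_0^t \|E(t-s)\varphi(s)\|_{L_2(U,V)}^2\,ds\big] < \infty$, so $r \mapsto \int_0^r E(t-s)\varphi(s)\,dW_s$ is a well-defined square-integrable $V$-valued martingale on $[0,t]$ with quadratic variation $\int_0^{\cdot} \|E(t-s)\varphi(s)\|_{L_2(U,V)}^2\,ds$. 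For $p = 2$ the Itô isometry gives equality with $c_2 = 1$; for $p > 2$ I would apply Itô's formula to $x \mapsto \|x\|_V^p$ (which is $C^2$ on $V$ with Hessian of operator norm at most $p(p-1)\|x\|_V^{p-2}$), take expectations so the martingale term drops, and control the trace term by Hölder's and Young's inequalities. This standard computation produces exactly the constant $c_p$ of \eqref{eq: Cp constant} and hence \eqref{eq: stochastic convolution basic estimate} at $r=t$. Raising this bound to the power $q/p$, integrating over $t \in [0,T]$, and taking the $q$-th root yields the asserted $\mathcal{H}^{q,p}$-estimate (with the obvious modification for $q = \infty$), which is finite by \eqref{eq: varphi admissible}; in particular the convolution lies in $\mathcal{H}^{q,p}([0,T];V)$.

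For stages (ii) and (iii), write $X_t = \int_0^t E(t-r)\varphi(r)\,dW_r$ and, for $0 \leq s < t \leq T$, use the decomposition
\[
 X_t - X_s = \int_0^s (E(t-r) - E(s-r))\varphi(r)\,dW_r + \int_s^t E(t-r)\varphi(r)\,dW_r,
\]
each summand being well-defined because the hypotheses make the $p/2$-moments of the corresponding square functions finite. Applying \eqref{eq: stochastic convolution basic estimate} to each term together with $\|a+b\|_V^p \leq 2^{p-1}(\|a\|_V^p + \|b\|_V^p)$ gives
\[
 \E\big[\|X_t - X_s\|_V^p\big] \leq 2^{p-1}c_p\bigg( \E\Big[\Big(\int_0^s \|(E(t-r)-E(s-r))\varphi(r)\|_{L_2(U,V)}^2\, dr\Big)^{p/2}\Big] + \E\Big[\Big(\int_s^t \|E(t-r)\varphi(r)\|_{L_2(U,V)}^2\, dr\Big)^{p/2}\Big]\bigg).
\]
Under the two vanishing conditions the right-hand side tends to $0$ as $t - s \to 0$, so $t \mapsto X_t$ is uniformly continuous into $L^p(\Omega;V)$ on its domain of definition and extends to the claimed continuous map. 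Under the stronger bound $\leq C_{T,p}(t-s)^{p\gamma}$ we obtain $\E[\|X_t - X_s\|_V^p] \leq 2^{p-1}c_p C_{T,p}(t-s)^{p\gamma}$; since $\gamma > 1/p$ the exponent satisfies $p\gamma = 1 + (p\gamma - 1)$ with $p\gamma - 1 > 0$, so the Kolmogorov--Chentsov theorem for Hilbert-space-valued processes furnishes a modification with $\alpha$-Hölder sample paths for every $\alpha < (p\gamma-1)/p = \gamma - 1/p$, as claimed.

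The routine parts are the assembly and the increment algebra; the two points requiring genuine care are, first, the measurability bookkeeping --- that strong measurability of $E$, joint measurability and adaptedness of $\varphi$, and the a.e.\ $L_2(U,V)$-membership combine to make $(r,\omega) \mapsto E(t-r)\varphi(r)$ a bona fide integrand and to legitimize the decomposition of $X_t - X_s$ --- and, second, obtaining the BDG inequality with \emph{precisely} the constant $c_p$ of \eqref{eq: Cp constant} rather than an unspecified one, which for $p > 2$ is where the Itô-formula-plus-Young estimate must be carried out explicitly rather than merely cited.
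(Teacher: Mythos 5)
Your three stages coincide in substance with the paper's own proof: the pointwise bound \eqref{eq: stochastic convolution basic estimate} is obtained there via the It\^o isometry for $p=2$ and, for $p>2$, via exactly the It\^o-formula/H\"older/Young estimates you describe (the paper outsources them to the proof of a lemma in a cited monograph rather than redoing them), and the $L^p(\Omega;V)$-continuity and H\"older statements follow from the same decomposition of $X_t-X_s$ into two stochastic integrals, the same $2^{p-1}c_p$ bound, and Kolmogorov--Chentsov. One quibble in stage (i): the It\^o-formula computation does not produce \emph{exactly} the constant $c_p$ of \eqref{eq: Cp constant}, whose factor $\left(\tfrac{p}{p-1}\right)^{p^2/2}$ is a Doob-type contribution appropriate to a maximal inequality; it produces a smaller constant, which of course still implies \eqref{eq: stochastic convolution basic estimate} as stated, so this is harmless.

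The genuine gap is in the claim that the convolution \emph{belongs to} $\mathcal{H}^{q,p}([0,T];V)$. That space consists of jointly measurable, $(\F_t)$-adapted processes, while your argument defines $X_t$ only for each fixed $t$ as an It\^o integral, i.e.\ up to a $\P$-null set depending on $t$. Because the integrand $r\mapsto E(t-r)\varphi(r)$ depends on the upper time $t$, the family $(X_t)_{t\in[0,T]}$ is not a martingale in $t$, and no continuity or standard modification argument is available at this stage to glue these $t$-wise defined random variables into a single measurable adapted process; without such a version you cannot even justify integrating $t \longmapsto \left(\E\left[\|X_t\|_V^p\right]\right)^{q/p}$ over $[0,T]$, let alone conclude membership in $\mathcal{H}^{q,p}$. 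The paper resolves this \emph{before} any estimate: it introduces the two-parameter integrand $\Phi_t(s)=\1_{\{s\le t\}}E(t-s)\varphi(s)$, invokes the stochastic Fubini theorem to obtain a version of $(\omega,s,t)\longmapsto X_s^t=\int_0^s\Phi_t(r)\,dW_r$ that is $\mathcal{O}\otimes\mathcal{B}(\R_+)$-measurable ($\mathcal{O}$ the optional $\sigma$-algebra), and then \emph{defines} the stochastic convolution as the diagonal $X_t^t$, which is jointly measurable in $(\omega,t)$ and adapted. Your ``measurability bookkeeping'' paragraph addresses only the measurability of the integrand for fixed $t$, which is the easy part; the missing idea is this diagonal-of-a-two-parameter-process construction (or an equivalent device), and without it the first assertion of the theorem is not proved.
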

\begin{proof}
 First note that $\|E(t-r)\varphi(r)\|_{L_2(U,V)}$ is jointly measurable in $(r,t,\omega)$ due to $\|E(t-r)\varphi(r)\|_{L_2(U,V)}^2 = \sum_{n=1}^{\infty}\|E(t-r)\varphi(r)e^U_n\|_V^2$ and the strong measurability of $E$, where $(e^U_n)_{n \geq 1}$ is an orthonormal basis of $U$. Define for $s,t \in [0,T]$ the two-parameter process $\Phi_t(s) = \1_{\{s \leq t\}}E(t-s)\varphi(s)$. Then $\Phi_t$ is jointly measurable and $(\F_s)_{s \in [0,T]}$-adapted for fixed $t \in [0,T]$. Moreover, it holds that
 \begin{align*}
   &\ \int_0^T \left( \E\left[ \left(\int_0^T \| \Phi_t(s) \|_{L_2(U,V)}^2 ds  \right)^{\frac{p}{2}}\right] \right)^{\frac{1}{p}}dt
     \\ &= \int_0^T \left( \E\left[ \left(\int_0^t \| E(t-s)\varphi(s) \|_{L_2(U,V)}^2 ds  \right)^{\frac{p}{2}}\right] \right)^{\frac{1}{p}}dt < \infty
 \end{align*}
 due to \eqref{eq: varphi admissible}. Hence, by stochastic Fubini theorem (see \cite{MR1346469}), the two-parameter process 
 $X_s^t = \int_0^s \Phi_t(r)dW_r$ is well-defined as an It\^o integral and has an $\mathcal{O} \otimes \mathcal{B}(\R_+)$-measurable version w.r.t. the variables $(\omega, s, t) \in \Omega \times [0, T] \times [0, T]$, where $\mathcal{O}$-denotes the optional $\sigma$-algebra on $\Omega \times [0,T]$. Using this version, we let $\int_0^t E(t-s)\varphi(r)dW_r := X_t^t$ which is then jointly measurable in $(\omega, t)$ and also $(\F_t)_{t \in [0,T]}$-adapted.
 
 Estimate \eqref{eq: stochastic convolution basic estimate} is an immediate consequence of the It\^o isometry (whence $c_2 = 1$). For $p > 2$, it follows from the estimates given in the proof of \cite[Lemma 3.1]{MR2560625} when applied to $[0,t] \ni s \longmapsto \int_0^s E(t-r)\varphi(r)dW_r$. Taking the $L^q([0,T])$ norm in \eqref{eq: stochastic convolution basic estimate} readily yields the desired estimates in $\mathcal{H}^{q,p}([0,T]; V)$. To prove the continuity, let $0 \leq s < t \leq T$. Then by similar bounds, we obtain
 \begin{align*}
  \E\left[ \left\| X_t^t - X_s^s \right\|_V^{p} \right] &\leq 2^{p-1} \E\left[ \left\| \int_0^s (E(t-r) - E(s-r))\varphi(r)dW_r \right\|_V^{p}\right] 
  \\ &\ \ \ + 2^{p-1} \E\left[ \left\| \int_s^t E(t-r)\varphi(r)dW_r \right\|_V^{p} \right]
  \\ &\leq 2^{p-1}c_p \E\left[ \left( \int_0^s \|(E(t-r) - E(s-r))\varphi(r)\|_{L_2(U,V)}^2 dr \right)^{\frac{p}{2}}\right] 
  \\ &\ \ \ + 2^{p-1}c_p \E\left[ \left( \int_s^t \|E(t-r)\varphi(r)\|_{L_2(U,V)}^2 dr \right)^{\frac{p}{2}} \right].
 \end{align*}
 This proves the continuity in the $p$-th sense. The H\"older continuity follows from the Kolmogorov-Chentsov theorem.
\end{proof}

Below we consider the special case where $\varphi$ takes values in $L_2(U, H)$. 

\begin{Corollary}\label{thm: stochastic convolution}
 Let $p \in [2,\infty)$, $q \in [p,\infty]$, and $a,b \in [1,\infty]$ be such that
 \begin{align*}
  \frac{1}{2} + \frac{1}{q} = \frac{1}{a} + \frac{1}{b}
 \end{align*}
 with the usual convention $\frac{1}{\infty} = 0$. Let $(V,\|\cdot\|_V)$ be a separable Hilbert space. If $E \in L^a([0,T]; L_s(H,V))$, and $\varphi$ is $(\F_t)_{t \in [0,T]}$-adapted such that 
 \[
  \varphi \in L^p(\Omega, \F, \P; L^b([0,T]; L_2(U, H))),
 \]
 then $\int_0^{\cdot}E(\cdot-s)\varphi(s)dW_s \in \mathcal{H}^{q,p}([0,T]; V)$ and there exists a constant $C_{p,q,a,b} > 0$ such that
 \begin{align*}
  \left\| \int_0^{\cdot}E(\cdot-s)\varphi(s)dW_s \right\|_{\mathcal{H}^{q,p}([0,T];V)} 
  \leq C_{p,q,a,b} \| E \|_{L^a([0,T]; L(H,V))} \cdot \| \varphi \|_{L^p(\Omega; L^b([0,T]; L_2(U,H)))}.
 \end{align*}
\end{Corollary}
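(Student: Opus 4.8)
The plan is to derive the corollary from the key bound of Theorem~\ref{thm: stochastic convolution cylindrical case}, which already reduces the whole problem to controlling the quantity
\begin{align*}
 \left\| \left( \int_0^{\cdot} \|E(\cdot-s)\varphi(s)\|_{L_2(U,V)}^2\, ds\right)^{1/2}\right\|_{\mathcal{H}^{q,p}([0,T];\R)}.
\end{align*}
First I would exploit that the Hilbert--Schmidt operators form an operator ideal, so that $\|E(t-s)\varphi(s)\|_{L_2(U,V)} \leq \|E(t-s)\|_{L(H,V)}\,\|\varphi(s)\|_{L_2(U,H)}$. Writing $\Phi(r) = \|E(r)\|_{L(H,V)}$ and $G(s) = \|\varphi(s)\|_{L_2(U,H)}$ (both extended by zero outside $[0,T]$), the integrand above is then dominated by the convolution $(\Phi^2 \ast G^2)(t)$. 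This domination also shows $E(t-s)\varphi(s) \in L_2(U,V)$ for a.a.\ $s$, which is one of the hypotheses needed to invoke the theorem, and the finiteness of the bound obtained below will verify the admissibility condition \eqref{eq: varphi admissible}.

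The core step is an interchange of the two mixed norms. Setting $\psi(t) = ((\Phi^2 \ast G^2)(t))^{1/2}$, I want to estimate $\|\psi\|_{\mathcal{H}^{q,p}([0,T];\R)} = \|\psi\|_{L^q([0,T];L^p(\Omega))}$, but Young's convolution inequality is naturally applied pathwise in $\omega$, which produces the reversed order $L^p(\Omega;L^q([0,T]))$. The reconciliation uses $q \geq p$: by Minkowski's integral inequality one has $\|\psi\|_{L^q([0,T];L^p(\Omega))} \leq \|\psi\|_{L^p(\Omega;L^q([0,T]))}$. For fixed $\omega$ one then computes $\|\psi(\cdot,\omega)\|_{L^q([0,T])}^2 = \|\Phi^2 \ast G(\cdot,\omega)^2\|_{L^{q/2}}$, and Young's inequality with the exponents $\tfrac{a}{2}, \tfrac{b}{2}, \tfrac{q}{2}$ applies precisely because the hypothesis $\tfrac12 + \tfrac1q = \tfrac1a + \tfrac1b$ rewrites as $\tfrac{2}{a} + \tfrac{2}{b} = 1 + \tfrac{2}{q}$; this yields $\|\psi(\cdot,\omega)\|_{L^q([0,T])} \leq \|\Phi\|_{L^a([0,T])}\,\|G(\cdot,\omega)\|_{L^b([0,T])}$. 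Taking the $L^p(\Omega)$-norm and using that $\Phi$ is deterministic gives the claimed product bound with $\|\varphi\|_{L^p(\Omega;L^b([0,T];L_2(U,H)))}$ on the right.

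I expect the main obstacle to be exactly this ordering issue: the stochastic convolution estimate is phrased in the $L^q([0,T];L^p(\Omega))$ norm, whereas both the target norm on $\varphi$ and the pathwise use of Young live in $L^p(\Omega;L^b([0,T]))$, and it is the single inequality $p \leq q$ that lets Minkowski bridge the gap in the correct direction; had $q < p$ the argument would break down. The remaining points are routine: the measurability and adaptedness hypotheses of Theorem~\ref{thm: stochastic convolution cylindrical case} are immediate from those assumed here, Young's inequality on $\R$ carries constant one, so one may take $C_{p,q,a,b} = c_p^{1/p}$ with $c_p$ as in \eqref{eq: Cp constant}, and the endpoint cases $q = \infty$ or $a,b \in \{1,\infty\}$ are absorbed by the usual conventions in Young's and Minkowski's inequalities.
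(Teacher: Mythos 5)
Your proposal is correct and takes essentially the same route as the paper's proof: the Hilbert--Schmidt ideal bound $\|E(t-s)\varphi(s)\|_{L_2(U,V)} \leq \|E(t-s)\|_{L(H,V)}\|\varphi(s)\|_{L_2(U,H)}$, Minkowski's integral inequality (valid in the needed direction precisely because $q \geq p$) to pass from the $\mathcal{H}^{q,p}$ norm to the $L^p(\Omega; L^q([0,T]))$ norm, a pathwise application of Young's convolution inequality with exponents $\tfrac{a}{2},\tfrac{b}{2},\tfrac{q}{2}$, and finally Theorem \ref{thm: stochastic convolution cylindrical case}, yielding the constant $c_p^{1/p}$. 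The only caveat is that Young's inequality with these exponents implicitly requires $a,b \geq 2$, which the scaling relation alone does not enforce (e.g.\ $a=1$, $b=\infty$, $q=p=2$ is admissible as stated); but this edge case is glossed over identically in the paper's own proof, so your argument is faithful to it.
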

\begin{proof}
 If $p \leq q < \infty$, then using the Minkowski inequality and then Young's convolution inequality, we find that
 \begin{align*}
   &\ \left\| \left(\int_0^{\cdot} \| E(\cdot - s)\varphi(s)\|_{L_2(U,V)}^2 ds \right)^{\frac{1}{2}} \right\|_{\mathcal{H}^{q,p}([0,T]; \R)}
   \\ &\leq \left( \E\left[ \left( \int_0^T \left( \int_0^t \| E(t-s)\|_{L(H,V)}^2 \| \varphi(s)\|_{L_2(U,H)}^2 ds \right)^{\frac{q}{2}} dt \right)^{\frac{p}{q}} \right] \right)^{\frac{1}{p}}
     \\ &\leq \| E\|_{L^a([0,T]; L(H,V))} \left( \E\left[ \| \varphi \|_{L^b([0,T]; L_2(U,H))}^p \right] \right)^{\frac{1}{p}} < \infty.
 \end{align*}
 Similarly, when $q = \infty$, we obtain
 \begin{align*}
    &\ \left\| \left(\int_0^{\cdot} \| E(\cdot - s)\varphi(s)\|_{L_2(U,V)}^2 ds \right)^{\frac{1}{2}} \right\|_{\mathcal{H}^{\infty,p}([0,T]; \R)} 
   \\ &\leq \sup_{t \in [0,T]}\E\left[ \left( \int_0^t \|E(t-s)\|_{L(H,V)}^2 \|\varphi(s) \|_{L_2(U,H)}^2 ds \right)^{\frac{p}{2}} \right] 
   \\ &\leq \|E\|^p_{L^{a}([0,T];L(H,V))} \E\left[ \| \varphi\|_{L^{b}([0,T]; L_2(U;H))}^p \right].
 \end{align*}
 Hence condition \eqref{eq: varphi admissible} is satisfied for all cases of $2 \leq p$ and $q \in [p, \infty]$, and above estimates combined with Theorem \ref{thm: stochastic convolution cylindrical case} yield the assertion.
\end{proof}

\begin{Remark}
The previous two statements are also valid for the case of global bounds with respect to $t$ where $[0, T]$ is replaced by $\R_+$.
\end{Remark}

These results extend \cite{MR2401950, MR1954075} to a general class of operators $E(t)$. In contrast to classical stochastic convolutions with a $C_0$-semigroup, here we do not obtain any bounds on $L^{p}(\Omega, \F, \P; L^{\infty}([0, T]; V))$. Moreover, the factorization lemma does not apply in these cases, and hence we do not obtain continuity of sample paths from the integrability of $E$ around zero (compare with \cite[Section 3, Lemma 3.3]{MR2560625}). However, under additional conditions on the integrand, a stronger version of both results including H\"older continuous sample paths, was recently discussed in \cite[Lemma 3.1]{FHK22}. We close this section with an auxiliary result on the associativity of the stochastic convolution. 
\begin{Lemma}
 Suppose that $\varphi \in \mathcal{H}^{2,2}([0,T]; L_2(U,H))$, that $G \in L^2([0,T]; L_s(H,H_0))$, and $F \in L_{loc}^1(\R_+; L_s(H_0,H_1))$ where $H_0,H_1$ are separable Hilbert spaces. Then $F \ast (G \ast \varphi dW) = (F \ast G) \ast \varphi dW$ holds in $\mathcal{H}^{2,2}([0,T]; H_1)$ where all (stochastic) convolutions are well-defined.
\end{Lemma}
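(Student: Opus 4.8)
The plan is to recognise the claimed identity as an instance of the stochastic Fubini theorem: the left-hand side is a pathwise ($dr$) Bochner integral of a stochastic convolution, and interchanging it with the It\^o integral ($dW_s$) produces the operator kernel $F \ast G$. Concretely, writing $\Xi := G \ast \varphi dW$, the goal is to show that for a.a. $t \in [0,T]$,
\[
 \int_0^t F(t-r)\left( \int_0^r G(r-s)\varphi(s)dW_s \right)dr = \int_0^t (F\ast G)(t-s)\varphi(s)dW_s,
\]
where the change of variables $u = r-s$ turns $\int_s^t F(t-r)G(r-s)dr$ into $(F\ast G)(t-s)$.

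First I would verify that all four objects are well-defined in the stated spaces. Since $p = q = 2$, Corollary \ref{thm: stochastic convolution} with $a = b = 2$ gives $\Xi = G \ast \varphi dW \in \mathcal{H}^{2,2}([0,T];H_0)$ and, using $F \ast G \in L^2([0,T]; L_s(H,H_1))$ (Young's inequality, Lemma \ref{lemma: continuity convolution}, applied in the strong operator topology), also $(F\ast G)\ast\varphi dW \in \mathcal{H}^{2,2}([0,T];H_1)$. For the left-hand side, a pathwise application of Young's inequality combined with Minkowski's integral inequality yields $\|F \ast \Xi\|_{\mathcal{H}^{2,2}([0,T];H_1)} \le \|F\|_{L^1([0,T];L(H_0,H_1))}\,\|\Xi\|_{\mathcal{H}^{2,2}([0,T];H_0)}$, so $F \ast \Xi$ is a well-defined element of $\mathcal{H}^{2,2}([0,T];H_1)$ (and the Bochner integral exists $\P$-a.s., since $\Xi(\cdot,\omega) \in L^2([0,T];H_0)$ for a.a. $\omega$). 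These bounds also show that both maps $F \mapsto F \ast (G\ast\varphi dW)$ and $F\mapsto (F\ast G)\ast\varphi dW$ are continuous and linear from $L^1([0,T];L_s(H_0,H_1))$ into $\mathcal{H}^{2,2}([0,T];H_1)$.

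The heart of the argument will be the stochastic Fubini theorem of \cite{MR1346469} applied to the two-parameter integrand $\Phi_t(r,s) = \1_{\{0\le s\le r\le t\}}F(t-r)G(r-s)\varphi(s)$, which for each fixed $r$ is $(\F_s)_{s \in [0,T]}$-adapted in $s$ and jointly measurable. After bounding $\|F(t-r)G(r-s)\varphi(s)\|_{L_2(U,H_1)} \le \|F(t-r)\|_{L(H_0,H_1)}\,\|G(r-s)\|_{L(H,H_0)}\,\|\varphi(s)\|_{L_2(U,H)}$, the integrability hypothesis reduces to
\[
 \int_0^t \|F(t-r)\|_{L(H_0,H_1)}\left( \int_0^r \|G(r-s)\|_{L(H,H_0)}^2\,\E\|\varphi(s)\|_{L_2(U,H)}^2\,ds \right)^{1/2}dr < \infty.
\]
I expect this to be the main obstacle: the inner factor, being the square root of a convolution of two $L^1$ functions, is only an $L^2([0,t])$ function of $r$, so pairing it against $\|F(t-\cdot)\|_{L(H_0,H_1)} \in L^1([0,t])$ need not be finite for a general $F \in L_{loc}^1$.

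To circumvent this I would argue by approximation. For bounded $F$ (for instance the truncations $F_n := F\,\1_{\{\|F\|_{L(H_0,H_1)}\le n\}}$) the displayed integral is finite, since $\int_0^t \|F(t-r)\|_{L(H_0,H_1)}\,g(r)\,dr \le \|F\|_{L^\infty([0,t];L(H_0,H_1))}\sqrt{t}\,\|g\|_{L^2([0,t])} < \infty$; hence \cite{MR1346469} applies and, together with the change of variables above, yields the identity for such $F$. As $F_n \to F$ in $L^1([0,T];L_s(H_0,H_1))$ by dominated convergence and both sides are continuous in $F$ by the second step, the identity extends from bounded $F$ to all $F \in L_{loc}^1(\R_+;L_s(H_0,H_1))$, completing the proof.
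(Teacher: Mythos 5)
Your proposal is correct, and its core is the same as the paper's: both establish well-definedness of all three convolutions via Young's inequality and Corollary \ref{thm: stochastic convolution}, and both derive the identity from the stochastic Fubini theorem plus the change of variables $\int_s^t F(t-r)G(r-s)\,dr = (F\ast G)(t-s)$. Where you go beyond the paper is in the truncation step: the paper simply asserts that the relation "follows from the stochastic Fubini theorem and a direct computation," leaving the integrability hypothesis of that theorem unverified. Your observation that this hypothesis is not automatic is a genuine point — the inner factor $\bigl(\int_0^r \|G(r-s)\|_{L(H,H_0)}^2\,\E\|\varphi(s)\|_{L_2(U,H)}^2\,ds\bigr)^{1/2}$ is only an $L^2$ function of $r$ (square root of a convolution of two $L^1$ functions), and its product with $\|F(t-\cdot)\|_{L(H_0,H_1)} \in L^1$ need not be integrable, so a direct application of the Fubini theorem for general $F \in L_{loc}^1$ can indeed fail. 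Your fix — apply Fubini to the bounded truncations $F_n = F\,\1_{\{\|F\|\le n\}}$, then pass to the limit using that both $F \mapsto F \ast (G\ast\varphi\, dW)$ and $F \mapsto (F\ast G)\ast\varphi\, dW$ are bounded linear maps from $L^1([0,T];L_s(H_0,H_1))$ into $\mathcal{H}^{2,2}([0,T];H_1)$ — is sound: the truncations are strongly measurable, converge to $F$ in $L^1$ by dominated convergence, and the two continuity bounds you state (pathwise Young plus Tonelli for the left-hand side; Corollary \ref{thm: stochastic convolution} plus Young for the right-hand side) are exactly as claimed. In short, your argument is a completed version of the paper's sketch rather than a different route, and it buys a rigorous treatment of the only delicate point in the proof.
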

\begin{proof}
 It is clear that $F \ast G(t) = \int_0^t F(t-s)G(s)ds$ is an element of $L^2([0,T]; L_s(H,H_1))$. In view of Corollary \ref{thm: stochastic convolution}, it follows that $G \ast \varphi dW$ is well-defined in $\mathcal{H}^{2,2}([0,T]; H_0)$ and $(F\ast G) \ast \varphi dW$ is well-defined in $\mathcal{H}^{2,2}([0,T]; H_1)$. Hence $F \ast (G \ast \varphi dW)$ is well-defined in $\mathcal{H}^{2,2}([0,T];H_1)$. The relation $F \ast (G \ast \varphi dW) = (F \ast G) \ast \varphi dW$ follows from the stochastic Fubini theorem and a direct computation.
\end{proof}

\subsection{Equivalence of formulations}

Let us start with the definition of a strong solution of \eqref{VSPDE}.
\begin{Definition}
 Suppose that $F: H \longrightarrow H$ and $\sigma: H \longrightarrow L_2(U, H)$ are measurable, and that $g: \R_+ \times \Omega \longrightarrow H$ is jointly measurable with respect to $\mathcal{B}(\R_+) \otimes \F_0$. A strong solution of \eqref{VSPDE} is a measurable and $(\F_t)_{t \geq 0}$-adapted process $u$ that is $D(A)$-valued, satisfies
 \[
  \int_0^T \E\left[ \|F(u(t))\|_H + \| u(t) \|_{D(A)} + \|\sigma(u(t))\|_{L_2(U,H)}^2\right]ds < \infty, \qquad \forall T > 0,
 \]
 and \eqref{VSPDE} holds $\P \otimes dt$-a.e.
\end{Definition}

Under the above integrability condition on $u$, it is easy to see that all (stochastic) integrals in \eqref{VSPDE} are well-defined, see Corollary \ref{thm: stochastic convolution}.
By localization, one could weaken the integrability condition towards the requirement that $F(u) \in L_{loc}^1(\R_+;H)$, $u \in L_{loc}^1(\R_+;D(A))$, and $\sigma(u) \in L_{loc}^2(\R_+; L_2(U,H))$ holds $\P$-a.s. Since we will not use this, we leave such an extension for the interested reader. 

\begin{Definition}
 Let $F: H \longrightarrow H$ be measurable and $\sigma: H \longrightarrow L(U, H)$ be strongly operator measurable. Suppose that the $k,h$-resolvents $E_k,E_h$ exist, and let $g: \R_+ \times \Omega \longrightarrow H$ be jointly measurable with respect to $\mathcal{B}(\R_+) \otimes \F_0$ satisfying 
 \begin{align}\label{eq: minimal condition g}
  \P\left[ g \in L_{loc}^1(\R_+; H) \text{ and } E_k \ast g \in L_{loc}^1(\R_+; D(A))\right] = 1.
 \end{align}
 A mild solution of \eqref{VSPDE}, is a measurable and $(\F_t)_{t \geq 0}$-adapted process $u$ such that $E_h(t-s)\sigma(u(s)) \in L_2(U,H)$ holds a.s. for a.a. $0 \leq s < t$,  
 \[
  \int_0^T \E\left[\| F(u(t))\|_H + \int_0^t \| E_h(t-s)\sigma(u(s))\|_{L_2(U,H)}^2ds \right] dt < \infty, \qquad \forall T > 0,
 \]
 and for $Gg \in L_{loc}^1(\R_+; H)$ given by \eqref{eq: Gg definition} one has $\P \otimes dt$-a.e.
 \begin{align}\label{mild formulation}
     u(t) = Gg(t) + \int_0^t E_k(t-s)F(u(s))ds + \int_0^t E_h(t-s)\sigma(u(s))dW_s.
 \end{align}
\end{Definition}

Since $F(u) \in L_{loc}^1(\R_+; H)$ and $E_k$ is locally integrable, it follows that $E_k \ast F(u)(t) := \int_0^t E_k(t-s)F(u(s))ds$ belongs to $L_{loc}^1(\R_+; H)$. Moreover, by Theorem \ref{thm: stochastic convolution cylindrical case} the stochastic integral in \eqref{mild formulation} is well-defined. Finally, assumption \eqref{eq: minimal condition g} implies that the unique mild solution $Gg \in L_{loc}^1(\R_+;H)$ always exists and is given by \eqref{eq: solution formula Ek}. The next proposition elaborates on the relationship between strong and mild solutions.

\begin{Proposition}\label{thm: mild formulation characterization}
 Let $F: H \longrightarrow H$ and $\sigma: H \longrightarrow L_2(U,H)$ be measurable. Suppose that the $k,h$-resolvents $E_k, E_h$ exist, $E_h \in L_{loc}^2(\R_+; L_s(H))$, and let $g: \R_+ \times \Omega \longrightarrow H$ be measurable with respect to $\mathcal{B}(\R_+) \otimes \F_0$ such that $g \in L_{loc}^1(\R_+; H)$ and $E_k \ast g \in L_{loc}^1(\R_+; D(A))\ \P$-a.s. Then the following assertions hold:
 \begin{enumerate}
     \item[(a)] If $u$ is a strong solution then $u$ is a mild solution.
     \item[(b)] Conversely, let $u$ be a mild solution of \eqref{VSPDE} with values in $D(A)$ that satisfies 
 \begin{align}\label{eq: integrability equivalent formulations}
  \int_0^T \E\left[ \| \sigma(u(t))\|_{L_2(U,H)}^2 + \| u(t)\|_{D(A)} \right] \, dt < \infty, \qquad \forall T > 0.
 \end{align}
 Then $u$ is a strong solution of \eqref{VSPDE}.
 \end{enumerate}
\end{Proposition}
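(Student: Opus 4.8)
The plan is to reduce both implications to a small set of algebraic identities among the resolvents and then to invoke the variation-of-constants theory for the deterministic linear equation together with the associativity of stochastic convolutions. The identities I would record first, valid for every $x\in H$ and a.a.\ $t$, are the resolvent equations of Proposition \ref{prop: regularity resolvent}, namely $A(k\ast E_k)=E_k-k$ and $A(k\ast E_h)=E_h-h$; the commutation $E_k\ast h=E_h\ast k=k\ast E_h$ from Example \ref{example: special choice}.(a) applied with $\rho=h$; and the relations $k\ast Gg=E_k\ast g$ and $Gg=g+A(E_k\ast g)$ coming from \eqref{eq: solution formula Ek} and the proof of Theorem \ref{thm: integral resolvent solution}.

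For part (a) I would fix $\omega$ and regard the stochastic convolution $\Psi:=h\ast\sigma(u)\,dW$ as a deterministic $H$-valued path, which lies in $L_{loc}^1(\R_+;H)$ a.s. Setting $\tilde g:=g+k\ast F(u)+\Psi\in L_{loc}^1(\R_+;H)$, the strong equation \eqref{VSPDE} rewrites, since $u(s)\in D(A)$ and $A$ is closed, as $u=\tilde g+A(k\ast u)$; that is, for a.a.\ $\omega$ the path $u(\cdot,\omega)$ is a mild solution of the deterministic linear Volterra equation with forcing $\tilde g$. The converse direction of Theorem \ref{thm: integral resolvent solution} then yields both $\tilde g\in\mathcal{G}_1$ and $u=G\tilde g$ pathwise. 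By linearity of $G$ I would expand $G\tilde g=Gg+G(k\ast F(u))+G\Psi$. Example \ref{example: special choice}.(b) with $\rho=k$ gives $G(k\ast F(u))=E_k\ast F(u)$, while for the noise term I use $G\Psi=\Psi+A(E_k\ast\Psi)$, the associativity lemma to write $E_k\ast\Psi=(E_k\ast h)\ast\sigma(u)\,dW=(k\ast E_h)\ast\sigma(u)\,dW$, and $A(k\ast E_h)=E_h-h$, obtaining $G\Psi=\Psi+(E_h-h)\ast\sigma(u)\,dW=E_h\ast\sigma(u)\,dW$. Adding the three terms reproduces the mild formulation \eqref{mild formulation}.

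For the converse (b) I would run the same identities backwards. Starting from \eqref{mild formulation}, I substitute $Gg=g+A(k\ast Gg)$, the resolvent equation convolved with $F(u)$ (legitimate via Proposition \ref{prop: regularity resolvent}) $E_k\ast F(u)=k\ast F(u)+A(k\ast E_k\ast F(u))$, and its stochastic analogue $E_h\ast\sigma(u)\,dW=h\ast\sigma(u)\,dW+A(k\ast E_h\ast\sigma(u)\,dW)$. Because each of $k\ast Gg$, $k\ast E_k\ast F(u)$ and $k\ast E_h\ast\sigma(u)\,dW$ takes values in $D(A)$ and their sum equals $k\ast u$, additivity and closedness of $A$ let me collect the three $A(k\ast\,\cdot\,)$ contributions into $A(k\ast u)$, giving $u=g+k\ast F(u)+h\ast\sigma(u)\,dW+A(k\ast u)$. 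Finally, the hypotheses $u\in D(A)$ and \eqref{eq: integrability equivalent formulations} together with $k\in L_{loc}^1(\R_+)$ permit moving $A$ inside the convolution, $A(k\ast u)=k\ast Au$, which is exactly \eqref{VSPDE}.

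The main obstacle, present in both directions and concentrated in the diffusion term, is the interchange of the closed and generally unbounded operator $A$ with the It\^o convolution, i.e.\ the justification of $A\bigl((k\ast E_h)\ast\sigma(u)\,dW\bigr)=\bigl(A(k\ast E_h)\bigr)\ast\sigma(u)\,dW=(E_h-h)\ast\sigma(u)\,dW$. I would handle this by first using the associativity lemma to replace $k\ast(E_h\ast\sigma(u)\,dW)$ by $(k\ast E_h)\ast\sigma(u)\,dW$, and then invoking the fact that a closed operator passes through a stochastic integral whenever both the integrand and its image are stochastically integrable: since Proposition \ref{prop: regularity resolvent} yields $k\ast E_h\in L_{loc}^1(\R_+;L_s(H,D(A)))$, both $(k\ast E_h)\sigma(u)$ and $A(k\ast E_h)\sigma(u)=(E_h-h)\sigma(u)$ are Hilbert--Schmidt and, under (A3) and $E_h\in L_{loc}^2(\R_+;L_s(H))$, stochastically integrable, so $A$ commutes with the integral. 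The remaining bookkeeping, namely checking that all (stochastic) convolutions are well defined in the relevant $L_{loc}^p$ and $\mathcal{H}^{q,p}$ spaces, is routine given Proposition \ref{prop: regularity resolvent} and Corollary \ref{thm: stochastic convolution}.
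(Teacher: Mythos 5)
Your proposal is correct in substance, and it shares the paper's skeleton --- pathwise reduction to the deterministic linear Volterra equation, the resolvent identities of Proposition \ref{prop: regularity resolvent}, the commutation $E_k\ast h=E_h\ast k$, and the stochastic associativity lemma --- but the final mechanism is genuinely different. The paper is organised so that the unbounded operator $A$ is \emph{never} applied to a stochastic integral: in part (a) it derives $k\ast u=E_k\ast f$ with $f=g+k\ast F(u)+h\ast\sigma(u)\,dW$ via Lemma \ref{prop: mild solution characterization}.(i), rewrites $E_k\ast f=k\ast\bigl(Gg+E_k\ast F(u)+E_h\ast\sigma(u)\,dW\bigr)$ using associativity and $E_k\ast h=k\ast E_h$, and then cancels the outer factor $k$ by Titchmarsh's theorem; in part (b) it runs this backwards and invokes Lemma \ref{prop: mild solution characterization}.(ii). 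You instead compute $G\Psi$ explicitly in (a), and in (b) substitute the resolvent equations term by term, which forces the interchange $A\bigl((k\ast E_h)\ast\sigma(u)\,dW\bigr)=\bigl(A(k\ast E_h)\bigr)\ast\sigma(u)\,dW$. That step is legitimate, but it rests on an ingredient the paper's route avoids: the standard fact (Da Prato--Zabczyk) that a closed operator commutes with an It\^o integral when the integrand maps into $D(A)$ and both the integrand and its image under $A$ are stochastically integrable. Note also that the regularity $k\ast E_h\in L_{loc}^1(\R_+;L_s(H,D(A)))$ from Proposition \ref{prop: regularity resolvent}, which you cite, is not by itself sufficient for this lemma; you additionally need $k\ast E_h\in L_{loc}^2(\R_+;L_s(H))$ (Young's inequality from $k\in L_{loc}^1$ and the hypothesis $E_h\in L_{loc}^2(\R_+;L_s(H))$) and square-integrability of $A(k\ast E_h)\sigma(u)=(E_h-h)\sigma(u)$ (which uses $h\in L_{loc}^2(\R_+)$). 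What each route buys: the paper's cancellation argument is more economical and self-contained, needing only deterministic Volterra theory plus associativity; yours is more explicit, exhibits each term of \eqref{mild formulation} directly through the linearity of $G$ and Example \ref{example: special choice}, and in (b) dispenses with Lemma \ref{prop: mild solution characterization} entirely.

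Two small inaccuracies, neither fatal. First, you invoke condition (A3), but (A1)--(A3) are not hypotheses of this proposition; the square-integrability of $\sigma(u)$ that your interchange argument needs is instead built into the definition of a strong solution in part (a) and into hypothesis \eqref{eq: integrability equivalent formulations} in part (b). Second, in (a) the membership $\Psi\in\mathcal{G}_1$, which licenses the formula $G\Psi=\Psi+A(E_k\ast\Psi)$, should be stated as following by linearity from $\tilde g\in\mathcal{G}_1$ (Theorem \ref{thm: integral resolvent solution}), $g\in\mathcal{G}_1$ (hypothesis), and $k\ast F(u)\in\mathcal{G}_1$ (Example \ref{example: special choice}.(b)); making this order explicit removes any appearance of circularity.
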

\begin{proof}
 Let $u$ be a strong solution and define
 \begin{align}\label{eq: f definition}
  f(t) = g(t) + \int_0^t k(t-s)F(u(s))ds + \int_0^t h(t-s)\sigma(u(s))dW_s.
 \end{align}
 Then $f \in L_{loc}^1(\R_+; H)$ a.s., and $u$ is a mild solution of $u = f + A(k \ast u)$. Since $E_k \ast g \in L_{loc}^1(\R_+; D(A))$, it follows by Theorem \ref{thm: integral resolvent solution} that $Gg \in L_{loc}^1(\R_+;H)$ exists and is given by $Gg = g + A E_k \ast g$. In particular, $k \ast Gg = k \ast g + A(k \ast E_k) \ast g = k \ast g + (E_k - k)\ast g = E_k \ast g$. Moreover, by Lemma \ref{prop: mild solution characterization} we obtain $E_k \ast f = k \ast u$. Hence, denoting by $h \ast \sigma(u)dW$, $E_h \ast \sigma(u)dW$ the stochastic convolutions, we find 
 \begin{align}
  k \ast u &= E_k \ast g + E_k \ast k \ast F(u) + E_k \ast (h \ast \sigma(u)dW) \notag
  \\ &= k \ast Gg + k \ast \left(E_k \ast F(u) + E_h \ast \sigma(u)dW \right) \label{eq:00}
 \end{align}
 where we have used the associativity of the convolution, and relation $E_k \ast h = E_{k \ast h} = E_h \ast k$ due to Lemma \ref{lemma: 3.3}. Moreover, note that $E_h \ast \sigma(u)dW$ is well-defined due to Theorem \ref{thm: stochastic convolution cylindrical case} since
 \begin{align*}
    &\ \int_0^T \E\left[ \int_0^t \| E_h(t-s)\sigma(u(s))\|_{L_2(U,H)}^2ds \right] dt
    \\ &\leq \int_0^T \int_0^t \|E_h(t-s)\|_{L(H)}^2 \E\left[ \|\sigma(u(s))\|_{L_2(U,H)}^2 \right] ds dt
     \\ &\leq \int_0^T \|E_h(t)\|_{L(H)}^2 dt \int_0^T \E\left[ \|\sigma(u(s))\|_{L_2(U,H)}^2 \right]dt < \infty.
 \end{align*}
 Titchmarch's Theorem applied to \eqref{eq:00} shows that $u$ is a mild solution of \eqref{VSPDE}.
 
 Conversely, let $u$ be a mild solution of \eqref{VSPDE} satisfying \eqref{eq: integrability equivalent formulations}. Then $u = Gg + E_k \ast F(u) + E_h \ast \sigma(u)dW$ and using the same argument as above we arrive at $k \ast u = E_k \ast f$ where $f$ is given as in \eqref{eq: f definition}. By Lemma \ref{prop: mild solution characterization} we conclude that $u$ satisfies $u = f + A(k\ast u)$. Since $u \in L_{loc}^1(\R_+; D(A))$, we obtain $A(k\ast u) = k \ast Au$. The particular form of $f$ shows that $u$ is a strong solution of \eqref{VSPDE}.
\end{proof}

\subsection{Existence and uniqueness of mild solutions}

The following is our main result on the existence and uniqueness of \eqref{eq: mild equation xi}. In particular, the existence and uniqueness result described in the Introduction, see Theorem \ref{thm: 1}, follows from the next theorem.
\begin{Theorem}\label{thm: existence and uniqueness resolvent cylindrical case}
 Suppose that conditions (A1) -- (A3) are satisfied. Fix $p \in [2,\infty)$ and $q \in [p,\infty]$. Then, for each $\F_0$-measurable $\xi \in \mathcal{H}_{loc}^{q,p}(\R_+; V)$, \eqref{eq: mild equation xi} has a unique solution $u(\cdot; \xi) \in \mathcal{H}_{loc}^{q,p}(\R_+; V)$. Moreover, for each $T > 0$ there exists a constant $C_T > 0$ such that, for all $\F_0$-measurable $\xi,\eta \in \mathcal{H}_{loc}^{q,p}(\R_+;V)$, 
 \[
  \| u(\cdot; \xi) - u(\cdot, \eta) \|_{\mathcal{H}^{q,p}([0,T]; V)} \leq C_T\| \xi - \eta \|_{\mathcal{H}^{q,p}([0,T];V)}.
 \]
\end{Theorem}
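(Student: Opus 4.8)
The plan is to realise the solution as the unique fixed point of the affine map
\[
 \Phi_{\xi}(u)(t) = \xi(t) + \int_0^t E_k(t-s)F(u(s))\,ds + \int_0^t E_h(t-s)\sigma(u(s))\,dW_s
\]
on the Banach space $\mathcal{H}^{q,p}([0,T]; V)$ and to invoke the Banach fixed point theorem. First I would verify that $\Phi_{\xi}$ maps $\mathcal{H}^{q,p}([0,T]; V)$ into itself. For the drift term I would combine the Lipschitz and linear-growth bounds on $F$ from (A3) with the embedding $i: V \hookrightarrow H$, so that $\|F(u(s))\|_{H_F} \leq \|F(0)\|_{H_F} + C_{F,\mathrm{lip}}\|i\|_{L(V,H)}\|u(s)\|_V$, and then estimate the convolution against $E_k \in L^1([0,T]; L_s(H_F, V))$ by Young's convolution inequality (Lemma \ref{lemma: continuity convolution}). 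For the stochastic term I would apply Theorem \ref{thm: stochastic convolution cylindrical case} with $\varphi = \sigma(u)$ and $E = E_h$, checking the admissibility condition \eqref{eq: varphi admissible} from the estimate $\|E_h(t-s)\sigma(u(s))\|_{L_2(U,V)} \leq K_{\mathrm{lin}}(t-s)(1 + \|i\|_{L(V,H)}\|u(s)\|_V)$ in (A3) together with $K_{\mathrm{lin}} \in L^2([0,T])$; adaptedness and joint measurability of $\Phi_{\xi}(u)$ are ensured by the construction in that theorem.

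Next I would establish the contraction estimate
\[
 \|\Phi_{\xi}(u) - \Phi_{\xi}(v)\|_{\mathcal{H}^{q,p}([0,T]; V)} \leq L(T)\,\|u - v\|_{\mathcal{H}^{q,p}([0,T]; V)},
\]
with $L(T)$ built from $\|E_k\|_{L^1([0,T]; L(H_F,V))}$ and $\|K_{\mathrm{lip}}\|_{L^2([0,T])}$. The drift difference is controlled by the Lipschitz bound on $F$ and Young's inequality, while the diffusion difference uses $\|E_h(t-s)(\sigma(u(s)) - \sigma(v(s)))\|_{L_2(U,V)} \leq K_{\mathrm{lip}}(t-s)\|i\|_{L(V,H)}\|u(s) - v(s)\|_V$ from (A3) together with the stochastic convolution bound \eqref{eq: stochastic convolution basic estimate}. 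Since $E_k \in L_{loc}^1(\R_+)$ and $K_{\mathrm{lip}} \in L_{loc}^2(\R_+)$, both $\|E_k\|_{L^1([0,T])}$ and $\|K_{\mathrm{lip}}\|_{L^2([0,T])}$ vanish as $T \to 0$, so $L(T) < 1$ for some sufficiently small $T_0 > 0$; the Banach fixed point theorem then yields a unique solution on $[0, T_0]$.

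To reach an arbitrary horizon $T$ I would iterate this across consecutive intervals, adapting the usual gluing argument to the Volterra memory: having solved on $[0, T_0]$, I recast the equation on $[T_0, 2T_0]$ as a fixed point problem of the same form with $\xi$ replaced by $\widetilde{\xi}(t) = \xi(t) + \int_0^{T_0} E_k(t-s)F(u(s))\,ds + \int_0^{T_0} E_h(t-s)\sigma(u(s))\,dW_s$, which absorbs the already-determined past. The crucial observation is that for $t \in [T_0, 2T_0]$ and $s \in [T_0, t]$ the kernel arguments $t-s$ range only over $[0, T_0]$, so the contraction constant on each subinterval equals the one on $[0, T_0]$ and is independent of the starting point; finitely many steps then cover $[0, T]$, and since the $\mathcal{H}^{q,p}$-norm splits additively over subintervals (as a sup when $q=\infty$) the solutions patch into an element of $\mathcal{H}_{loc}^{q,p}(\R_+; V)$. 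The Lipschitz stability estimate follows by subtracting the fixed point identities for $u(\cdot;\xi)$ and $u(\cdot;\eta)$, applying the same drift and diffusion bounds to obtain a recursive inequality that on each subinterval controls the difference by $\|\xi - \eta\|$ plus a contribution from the differing pasts, and letting $C_T$ accumulate over the $\lceil T/T_0\rceil$ steps. I expect the Volterra gluing to be the main obstacle: unlike the Markovian case one cannot restart from a state, so the essential point is verifying that the local contraction constant is uniform in the starting point of each subinterval, which is what makes the finite iteration legitimate.
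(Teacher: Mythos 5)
Your proposal is correct, but it takes a genuinely different route from the paper. You obtain the contraction by shrinking the horizon: since $\|E_k\|_{L^1([0,T_0];L(H_F,V))}$ and $\|K_{\mathrm{lip}}\|_{L^2([0,T_0])}$ vanish as $T_0 \to 0$, the map contracts on $\mathcal{H}^{q,p}([0,T_0];V)$, and you then glue across subintervals, absorbing the Volterra memory into a new forcing term $\widetilde{\xi}$; your key observation---that the contraction constant is uniform over subintervals because the kernel arguments $t-s$ stay in $[0,T_0]$---is exactly what legitimises the finite iteration, and it is correct for convolution kernels. The paper never shrinks the interval: in its Step 1 it rescales the kernels, $k_{\lambda}(t)=e^{\lambda t}k(t)$, $h_{\lambda}(t)=e^{\lambda t}h(t)$ (equivalently, works in exponentially weighted norms, cf.\ the Remark following the theorem), so that $\|E_{k_{\lambda}}\|_{L^1([0,T])}$ and $\|e^{\lambda \cdot}K_{\mathrm{lip}}\|_{L^2([0,T])}$ become small as $\lambda \to -\infty$ and one gets a single contraction on all of $\mathcal{H}^{p,p}([0,T];V)$ for arbitrary fixed $T$; general $q\in[p,\infty]$ is then recovered not by a fixed point in $\mathcal{H}^{q,p}$ but by a bootstrap, namely a Volterra--Gronwall argument (Lemma \ref{lemma: positive solution Volterra} with the resolvent kernel $r_T$) showing $u(\cdot;\xi)\in\mathcal{H}^{q,p}$ whenever $\xi\in\mathcal{H}^{q,p}$, and the same lemma yields the stability bound with the explicit constant $C_T = 3^{1-1/p}(1+\|r_T\|_{L^1([0,T])}^{1/p})$. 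What each approach buys: the paper's avoids gluing altogether, gives uniqueness in the larger class $\mathcal{H}^{p,p}_{loc}$, produces explicit constants, and installs the resolvent-kernel Gronwall machinery that is reused for the global-in-time estimates of Section \ref{sec:limit_dist}; yours is more elementary and self-contained, at the price of a constant $C_T$ that accumulates geometrically over the $\lceil T/T_0\rceil$ subintervals. Two points you should make explicit when writing it up: (i) the gluing step feeds the fixed-point argument a forcing term $\widetilde{\xi}$ that is merely adapted, not $\F_0$-measurable---this is harmless since the contraction argument only uses adaptedness, but it must be stated; (ii) for $q>p$ the diffusion contraction in $\mathcal{H}^{q,p}$ does not follow from the pointwise bound \eqref{eq: stochastic convolution basic estimate} alone; you need Minkowski's integral inequality in $L^{p/2}(\Omega)$ followed by Young's inequality in $L^{q/2}([0,T])$, exactly as in the proof of Corollary \ref{thm: stochastic convolution}.
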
 
\begin{proof}
 \textit{Step 1.} Fix $\lambda < 0$ and define $h_{\lambda}(t) = e^{\lambda t}h(t)$ and $k_{\lambda}(t) = e^{\lambda t}k(t)$. Then $E_{k_{\lambda}},E_{h_{\lambda}}$ exist and satisfy $E_{h_{\lambda}}(t) = e^{\lambda t}E_h(t)$ and $E_{k_{\lambda}}(t) = e^{\lambda t}E_{k}(t)$. Set $\xi_{\lambda}(t) = e^{\lambda t}\xi(t)$ and let $u(\cdot;\xi)$ be a solution of \eqref{eq: mild equation xi}. Then $v_{\lambda}(t;\xi) = e^{\lambda t}u(t;\xi)$ satisfies
 \begin{align}\label{eq: modified mild VSPDE}
  v_{\lambda}(t; \xi) = \xi_{\lambda}(t) + \int_0^t E_{k_{\lambda}}(t-s)F_{\lambda}(s,v_{\lambda}(s; \xi))ds + \int_0^t E_{h_{\lambda}}(t-s)\sigma_{\lambda}(s,v_{\lambda}(s;\xi))dW_s,
 \end{align}
 where $F_{\lambda}(s,x) = e^{\lambda s}F(e^{-\lambda s}x)$ and $\sigma_{\lambda}(s,x) = e^{\lambda s}\sigma(e^{-\lambda s}x)$. Conversely, let $v_{\lambda}$ be a solution of \eqref{eq: modified mild VSPDE}, then $u_{\lambda}(t; \xi) = e^{-\lambda t} v_{\lambda}(t;\xi)$ satisfies \eqref{eq: mild equation xi}. Therefore, it suffices to prove the existence and uniqueness for \eqref{eq: modified mild VSPDE}.

 \textit{Step 2.} We solve \eqref{eq: modified mild VSPDE} by a fixed point argument for the case $q = p$. Fix $T > 0$, then $\xi|_{[0,T]} \in \mathcal{H}^{q,p}([0,T]; V) \subset \mathcal{H}^{p,p}([0,T]; V)$. For given $\lambda < 0$, define 
 \[
  \mathcal{T}_{\lambda}(u;\xi)(t) := \xi_{\lambda}(t) + \int_0^t E_{k_{\lambda}}(t-s)F_{\lambda}(s,u(s))ds + \int_0^t E_{h_{\lambda}}(t-s)\sigma_{\lambda}(s,u(s))dW_s.
 \]
 Then \eqref{eq: modified mild VSPDE} is equivalent to $u_{\lambda} = \mathcal{T}_{\lambda}(u_{\lambda}; \xi)$. Below we show that $\mathcal{T}_{\lambda}(\cdot; \xi)$ is a contraction on $\mathcal{H}^{p,p}([0,T]; V)$ when $\lambda < 0$ is small enough. For brevity, we let $\| \cdot \|_p = \| \cdot \|_{\mathcal{H}^{p,p}([0,T]; V)}$. Take $u \in \mathcal{H}^{p,p}([0,T];V)$, then 
 \[
  \| \mathcal{T}_{\lambda}(u; \xi)\|_{p} \leq \| \xi\|_{p} + \left\| E_k \ast F_{\lambda}(\cdot, u) \right\|_p + \left\| E_{h_{\lambda}} \ast \sigma_{\lambda}(\cdot, u)dW \right\|_{p},
 \]
 and we estimate all terms separately. For the drift we use
 \begin{align}\label{eq: 2}
  \|F_{\lambda}(s,x)\|_{H_F} \leq C_{F,\mathrm{lin}}(e^{\lambda s} + \|i\|_{L(V,H)}\|x\|_{V}),
 \end{align}
 and Young's inequality to find
 \begin{align*}
     \left\| E_k \ast F_{\lambda}(\cdot, u) \right\|_p 
     &= \left\| \|E_{k_{\lambda}} \ast F_{\lambda}(\cdot, u)\|_{L^p([0,T];V)} \right\|_{L^p(\Omega)}
     \\ &\leq \|E_{k_{\lambda}}\|_{L^1([0,T];L(H_F,V))} \left\| \| F_{\lambda}(\cdot, u)\|_{L^p([0,T];H_F)}\right\|_{L^p(\Omega)} 
     \\ &\leq \max\{1,\|i\|_{L(V, H)}\} \|E_{k_{\lambda}}\|_{L^1([0,T];L(H_F,V))}C_{F,\mathrm{lin}}(T^{1/p} + \|u\|_p).
 \end{align*}
 For the stochastic convolution, we use Theorem \ref{thm: stochastic convolution cylindrical case},
 \begin{align}\label{eq: 6}
  \|E_{h_{\lambda}}(t-s)\sigma_{\lambda}(s,x)\|_{L_2(U,V)}
  \leq e^{\lambda(t-s)}K_{\mathrm{lin}}(t-s)\left( e^{\lambda s} + \|i\|_{L(V,H)} \|x\|_V\right)
 \end{align}
 and then Young's inequality to find that
 \begin{align*}
    &\ \left\| E_{h_{\lambda}} \ast \sigma_{\lambda}(\cdot, u)dW \right\|_{p} 
    \\ &\leq c_p^{1/p}\left( \int_0^T\E\left[ \left( \int_0^t \|E_{h_{\lambda}}(t-s)\sigma_{\lambda}(s,u(s))\|_{L_2(U,V)}^2 ds \right)^{\frac{p}{2}} dt \right] \right)^{\frac{1}{p}}
    \\ &\leq c_p^{1/p}\left( \E\left[\int_0^T \left( \int_0^t e^{2\lambda (t-s)}K_{\mathrm{lin}}(t-s)^2\left( e^{\lambda s}+\|u(s)\|_H\right)^2 ds\right)^{\frac{p}{2}} dt \right] \right)^{\frac{1}{p}}
    \\ &\leq c_p^{1/p}\max\{1,\|i\|_{L(V, H)}\}\left( \int_0^T e^{2\lambda s}K_{\mathrm{lin}}(s)^2 ds \right)^{\frac{1}{2}}\left( \E\left[\int_0^T \left(e^{\lambda s}+\|u(s)\|_V \right)^p ds \right] \right)^{\frac{1}{p}}
    \\ &\leq c_p^{1/p}\max\{1,\|i\|_{L(V, H)}\}\left( \int_0^T e^{2\lambda s}K_{\mathrm{lin}}(s)^2 ds \right)^{\frac{1}{2}}\left( T^{1/p} + \|u\|_p\right).
 \end{align*}
 Hence $\mathcal{T}_{\lambda}(\cdot; \xi)$ leaves the space $\mathcal{H}^{p,p}([0,T];V)$ invariant. In the same way, if $u,v \in \mathcal{H}^{p,p}([0,T]; V)$, we obtain
 \begin{align*}
     \| \mathcal{T}_{\lambda}(u; \xi) - \mathcal{T}_{\lambda}(v; \xi) \|_p &\leq \| E_{k_{\lambda}} \ast (F_{\lambda}(\cdot,u) - F_{\lambda}(\cdot,v)) \|_p
     \\ &\qquad \qquad + \left\| E_{h_{\lambda}} \ast (\sigma_{\lambda}(\cdot,u) - \sigma_{\lambda}(\cdot,v))dW \right\|_{p}.
 \end{align*}
 The drift is estimated by
 \begin{align*}
     \| E_{k_{\lambda}} \ast (F_{\lambda}(\cdot,u) - F_{\lambda}(\cdot,v)) \|_p
     &\leq \left\| \|E_{k_{\lambda}} \ast (F_{\lambda}(\cdot,u) - F_{\lambda}(\cdot,v))\|_{L^p([0,T];V)} \right\|_{L^p(\Omega)} 
     \\ &\leq \|E_{k_{\lambda}}\|_{L^1([0,T];L(H_F,V))} \left\| \| F_{\lambda}(\cdot,u) - F_{\lambda}(\cdot,v)\|_{L^p([0,T];H_F)}\right\|_{L^p(\Omega)} 
     \\ &\leq \|i\|_{L(V, H)}C_{F,\mathrm{lip}}\|E_{k_{\lambda}}\|_{L^1([0,T];L(H_F,V))}\| u-v\|_p,
 \end{align*}
 while the stochastic convolution is again estimated by Theorem \ref{thm: stochastic convolution cylindrical case} as follows
 \begin{align*}
     &\ \left\| E_{h_{\lambda}} \ast (\sigma_{\lambda}(\cdot,u) - \sigma_{\lambda}(\cdot,v))dW \right\|_{p} 
     \\ &\leq c_p^{1/p}\left( \E\left[ \int_0^T \left( \int_0^t \| e^{\lambda (t-s)}E_h(t-s))e^{\lambda s}(\sigma(e^{-\lambda s}u(s)) - \sigma(e^{-\lambda s}v(s))\|_{L_2(U,V)}^2 ds \right)^{\frac{p}{2}} dt \right] \right)^{\frac{1}{p}}
     \\ &\leq c_p^{1/p}\left( \E\left[ \int_0^T \left( \int_0^t e^{2\lambda (t-s)}K_{\mathrm{lip}}(t-s)^2 \| u(s) - v(s)\|_H^2 ds \right)^{\frac{p}{2}} dt \right] \right)^{\frac{1}{p}}
     \\ &\leq \|i\|_{L(V,H)}c_p^{1/p}\left( \int_0^T e^{2\lambda s}K_{\mathrm{lip}}(s)^2 ds \right)^{\frac{1}{2}}\|u - v\|_{p}.
 \end{align*}
 By dominated convergence combined with $E_{k_{\lambda}}(t) = e^{\lambda t}E_k(t)$, we see that 
 \[
   \|E_{k_{\lambda}}\|_{L^1([0,T];L(H_F,V))} + \ \int_0^T e^{2\lambda s}K_{\mathrm{lip}}(s)^2 ds \longrightarrow 0, \qquad \lambda \to - \infty.
 \]
 Hence we can choose $\lambda < 0$ sufficiently small such that $\mathcal{T}_{\lambda}(\cdot;\xi)$ is a contraction and thus has a unique fixed point $v_{\lambda}(\cdot;\xi)$ which is the unique solution of \eqref{eq: modified mild VSPDE}. Then $u(t;\xi) = e^{-\lambda t}v_{\lambda}(t;\xi)$ is the unique solution of \eqref{eq: mild equation xi} in $\mathcal{H}^{p,p}([0,T];V)$. 

 \textit{Step 3.} We prove that, if $\xi \in \mathcal{H}_{loc}^{q,p}(\R_+; V)$, then also $u(\cdot; \xi) \in \mathcal{H}_{loc}^{q,p}(\R_+; V)$. Fix $T > 0$, then $u(\cdot; \xi) \in \mathcal{H}^{p,p}([0,T]; V)$ and by the Fubini theorem, we find a representative in $L^p([0,T])$ for $t \longmapsto \E\left[ \|u(t;\xi)\|_{V}^p \right]$. Thus all inequalities below hold $dt$-a.e. on $[0,T]$. First note that
 \begin{align*}
     \E\left[ \|u(t;\xi)\|_{V}^p \right] &\leq 3^{p-1}\E\left[\|\xi(t)\|_{V}^p \right] 
     + 3^{p-1}\E\left[ \left\| \int_0^t E_k(t-s)F(u(s;\xi))ds \right\|_{V}^p \right]
     \\ &\qquad + 3^{p-1}\E\left[ \left\| \int_0^t E_h(t-s)\sigma(u(s;\xi))dW_s \right\|_{V}^p \right].
 \end{align*}
 The first convolution is estimated by \eqref{eq: 2} and Young's inequality as
 \begin{align*}
    &\  \E\left[ \left\| \int_0^t E_k(t-s)F(u(s;\xi))ds \right\|_{V}^p \right]
    \\ &\leq \E\left[ \left( \int_0^t \|E_k(t-s)\|_{L(H_F,V)}\|F(u(s;\xi))\|_{H_F}ds \right)^p \right]
     \\ &\leq \left( \int_0^t \|E_k(s)\|_{L(H_F,V)}ds \right)^{p-1}  \int_0^t \|E_k(t-s)\|_{L(H_F,V)}\E\left[\|F(u(s;\xi))\|_{H_F}^p \right] ds 
     \\ &\leq 2^{p-1} \|E_k\|_{L^1([0,T];L(H_F,V))}^{p} C_{F,\mathrm{lin}}^p 
     \\ &\qquad + \|i\|_{L(V,H)}^p\|E_k\|_{L^1([0,T];L(H_F,V))}^{p-1} 2^{p-1} C_{F,\mathrm{lin}}^p \int_0^t \|E_k(t-s)\|_{L(H_F,V)}\E\left[\|u(s;\xi)\|_{V}^p \right] ds
 \end{align*}
 where the integral on the right-hand side is finite as the convolution of two locally integrable functions. Similarly, using \eqref{eq: 6} we estimate the stochastic convolution as
 \begin{align*}
    &\ \E\left[ \left\| \int_0^t E_h(t-s)\sigma(u(s;\xi))dW_s \right\|_{V}^p \right]
     \\ &\leq c_p \E\left[ \left( \int_0^t \|E_h(t-s)\sigma(u(s;\xi))\|_{L_2(U,V)}^2 ds \right)^{\frac{p}{2}} \right]
     \\ &\leq c_p \E\left[ \left(\int_0^t K_{\mathrm{lin}}(t-s)^2 \left(1 + \|u(s;\xi)\|_H\right)^2 ds \right)^{\frac{p}{2}}\right]
     \\ &\leq c_p \left( \int_0^{t}K_{\mathrm{lin}}(s)^2 ds \right)^{\frac{p}{2}-1} \int_0^t K_{\mathrm{lin}}(t-s)^2 \E\left[\left(1 + \|u(s;\xi)\|_H\right)^p \right] ds
     \\ &\leq c_p 2^{p-1} \| K_{\mathrm{lin}}\|_{L^2([0,T])}^p + \|i\|_{L(V,H)}^p c_p 2^{p-1} \| K_{\mathrm{lin}}\|_{L^2([0,T])}^{p-2} \int_0^{t} K_{\mathrm{lin}}(t-s)^2 \E\left[\|u(s;\xi)\|_{V}^p \right] ds.
 \end{align*}
 Thus, there exists a constant $A_T \geq 0$ and some $\rho_T \in L^1([0,T])$ such that 
 \begin{align}\label{eq: pre gronwall local}
     \E\left[\|u(t;\xi)\|_{V}^p \right] \leq A_T + 3^{p-1}\E\left[ \|\xi(t)\|_{V}^p \right] + \int_0^t \rho_T(t-s)\E\left[ \|u(s;\xi)\|_{V}^p\right]ds.
 \end{align} 
 Let us denote by $r_T \in L_{loc}^1(\R_+)$ the unique solution of $r_T = \rho_T + \rho_T \ast r_T$. By Lemma \ref{lemma: positive solution Volterra} we have $r_T \geq 0$ and 
 \begin{align*}
  \E\left[\|u(t;\xi)\|_{V}^p \right] &\leq A_T + 3^{p-1}\E\left[ \|\xi(t)\|_{V}^p \right] + \int_0^t r_T(t-s)\left(A_T + 3^{p-1}\E\left[ \|\xi(s)\|_{V}^p \right] \right)ds.
 \end{align*} 
 Since $\E\left[ \|\xi(\cdot)\|_{V}^p \right] \in L^q([0, T])$ and $r_T \in L^1([0, T])$, we conclude that the upper bound on the right-hand side belongs to $L^q([0, T])$, and hence $u(\cdot;\xi) \in \mathcal{H}^{q,p}([0, T]; V)$. Since $T > 0$ is arbitrary, we conclude that $u(\cdot;\xi) \in \mathcal{H}_{loc}^{q,p}(\R_+; V)$.

 \textit{Step 4.} It remains to prove the stability estimate with respect to the initial condition. So let $\xi,\eta \in \mathcal{H}_{loc}^{q,p}(\R_+; V)$ be $\F_0$-measurable and let $u(\cdot;\xi), u(\cdot;\eta)$ be the corresponding solutions of \eqref{eq: mild equation xi}. Proceeding similarly to the proof of \eqref{eq: pre gronwall local}, we find 
 \[
  \E\left[\|u(t;\xi) - u(t;\eta)\|_{V}^p \right] \leq 3^{p-1}\E\left[ \|\xi(t) - \eta(t)\|_{V}^p \right] + \int_0^t \rho_T(t-s)\E\left[ \|u(s;\xi) - u(s;\eta)\|_{V}^p\right]ds.
 \]
 Thus applying first Lemma \ref{lemma: positive solution Volterra} to derive an a-priori bound on $\E\left[\|u(t;\xi) - u(t;\eta)\|_{V}^p \right]$, and then using Young's inequality, we readily deduce the desired stability estimate with $C_T = 3^{1-1/p}(1 + \|r_T\|^{1/p}_{L^1([0,T])})$. 
\end{proof}

\begin{Remark}
    An alternative proof could be obtained by considering the scaled norms
    \begin{align*}
     \|u\|_{p,\lambda} = \left( \int_0^T e^{\lambda p t}\E\left[ \|u(t)\|_{V}^p \right] dt\right)^{\frac{1}{p}}, \qquad \lambda < 0.
    \end{align*}
    To carry out the computations with respect to this choice of norms we would essentially use the same arguments. 
\end{Remark}

For equations with additive noise, the above statement can be slightly improved by also allowing $p \in [1,2)$. The precise result is formulated below and, in particular, implies the existence and uniqueness statement in Theorem \ref{thm: 2}.

\begin{Theorem}\label{thm: existence and uniqueness additive noise}
 Suppose that conditions (A1) -- (A3) are satisfied with $\sigma(x) = \sigma_0$ for all $x \in H$. Then all assertions of Theorem \ref{thm: existence and uniqueness resolvent cylindrical case} remain valid for the full range of parameters $1 \leq p < \infty$ and $p \leq q \leq \infty$. 
\end{Theorem}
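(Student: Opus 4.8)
The plan is to deduce the statement directly from Theorem \ref{thm: existence and uniqueness resolvent cylindrical case} by exploiting the fact that, for additive noise, the stochastic convolution no longer depends on the unknown. Setting $Z(t) := \int_0^t E_h(t-s)\sigma_0 \, dW_s$, equation \eqref{eq: mild equation xi} becomes $u(t;\xi) = \bigl(\xi(t)+Z(t)\bigr) + \int_0^t E_k(t-s)F(u(s;\xi))\,ds$, so the only $u$-dependent term is the Lipschitz drift convolution. The key observation is that the restriction $p \geq 2$ in Theorem \ref{thm: existence and uniqueness resolvent cylindrical case} entered \emph{exclusively} through the stochastic convolution estimate of Theorem \ref{thm: stochastic convolution cylindrical case}, whose constant $c_p$ is defined only for $p \geq 2$. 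Every other estimate used there — the drift bound via Young's inequality and the Volterra--Gronwall argument via Lemma \ref{lemma: positive solution Volterra} — relies only on Young's/Minkowski's inequality and is valid verbatim for all $p \in [1,\infty)$.

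First I would establish that $Z \in \mathcal{H}_{loc}^{q,p}(\R_+;V)$ for every $p \in [1,\infty)$ and $q \in [p,\infty]$. For fixed $t$, the integrand $E_h(t-\cdot)\sigma_0$ is deterministic and lies in $L_2(U,V)$, so $Z(t)$ is a centred \emph{Gaussian} random variable in $V$; by the It\^o isometry (the case $p=2$, where Theorem \ref{thm: stochastic convolution cylindrical case} applies) one has $\E[\|Z(t)\|_V^2] = \int_0^t \|E_h(t-s)\sigma_0\|_{L_2(U,V)}^2 \, ds \leq \|K_{\mathrm{lin}}\|_{L^2([0,t])}^2$. By the equivalence of Gaussian moments, there is a universal constant $C_p$ with $\bigl(\E[\|Z(t)\|_V^p]\bigr)^{1/p} \leq C_p \bigl(\E[\|Z(t)\|_V^2]\bigr)^{1/2}$ for all $p \in [1,\infty)$. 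Hence $t \mapsto \bigl(\E[\|Z(t)\|_V^p]\bigr)^{1/p}$ is dominated by $C_p\|K_{\mathrm{lin}}\|_{L^2([0,t])}$, which is nondecreasing and locally bounded, and therefore lies in $L_{loc}^q(\R_+)$ for every $q$. The joint measurability and adaptedness of $Z$ are furnished by the stochastic Fubini construction in the proof of Theorem \ref{thm: stochastic convolution cylindrical case}, again at $p=2$.

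With $\widetilde{\xi} := \xi + Z \in \mathcal{H}_{loc}^{q,p}(\R_+;V)$ in hand (it is $(\F_t)_{t\geq 0}$-adapted, though no longer $\F_0$-measurable, which is harmless), I would rerun the contraction argument of Theorem \ref{thm: existence and uniqueness resolvent cylindrical case} on the map $\mathcal{T}_{\lambda}(u)(t) = \widetilde{\xi}_{\lambda}(t) + \int_0^t E_{k_{\lambda}}(t-s)F_{\lambda}(s,u(s))\,ds$ after the exponential rescaling $u \mapsto e^{\lambda t}u$. Here the contraction estimate involves only the drift, $\|\mathcal{T}_{\lambda}(u)-\mathcal{T}_{\lambda}(v)\|_p \leq \|i\|_{L(V,H)} C_{F,\mathrm{lip}} \|E_{k_{\lambda}}\|_{L^1([0,T];L(H_F,V))} \|u-v\|_p$, and by dominated convergence the right factor tends to $0$ as $\lambda \to -\infty$; since this bound uses only Young's inequality it holds for every $p \in [1,\infty)$. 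The $\mathcal{H}_{loc}^{q,p}$ regularity and the Lipschitz stability in the initial datum then follow exactly as in Steps~3 and~4 of Theorem \ref{thm: existence and uniqueness resolvent cylindrical case}, with the crucial simplification that the stochastic term is now the fixed process $Z$: it contributes only a deterministic inhomogeneity to the a priori bound, and it \emph{cancels} in the difference $u(\cdot;\xi)-u(\cdot;\eta)$, leaving a purely drift-driven Gronwall estimate insensitive to the value of $p$.

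The main obstacle — indeed the only genuinely new ingredient — is controlling the stochastic convolution for $p \in [1,2)$, a regime in which Theorem \ref{thm: stochastic convolution cylindrical case} is simply unavailable. This is circumvented by first securing the finiteness of the second moment of $Z$ through the It\^o isometry and then \emph{upgrading} to all $p \in [1,\infty)$ by Gaussian moment equivalence. This upgrade is legitimate precisely because $Z$ is Gaussian, which in turn holds only because the integrand $E_h(t-\cdot)\sigma_0$ is deterministic — this is exactly where the additive-noise hypothesis $\sigma(x)\equiv \sigma_0$ is used and why the improvement to $p \in [1,2)$ cannot be expected for genuinely multiplicative noise.
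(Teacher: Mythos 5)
Your proposal is correct and takes essentially the same approach as the paper: the paper likewise reduces everything to the drift-only contraction (valid for all $p \geq 1$ by Young's inequality), exploits the cancellation of the additive stochastic convolution in differences, and controls that now $u$-independent convolution through its second moment via the It\^o isometry. The only differences are cosmetic — the paper keeps the stochastic convolution inside the fixed-point map $\mathcal{T}_{\lambda}$ rather than absorbing it into the forcing term, and for the new range $p \in [1,2)$ it bounds $\E[\|Z(t)\|_V^p] \leq \left(\E[\|Z(t)\|_V^2]\right)^{p/2}$ by Jensen's inequality instead of invoking Gaussian moment equivalence, which is genuinely needed only for $p > 2$, a range already settled by Theorem \ref{thm: existence and uniqueness resolvent cylindrical case}.
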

\begin{proof}
 Given Theorem \ref{thm: existence and uniqueness resolvent cylindrical case}, it suffices to prove the assertion for $p \in [1,2)$. Step 1 is exactly the same as for the general case. Thus we proceed, to solve the fixed point equation $u_{\lambda}(\cdot; \xi) = \mathcal{T}_{\lambda}(u_{\lambda}; \xi)$ in $\mathcal{H}^{p,p}([0,T];V)$ with $T > 0$ fixed, where
 \[
  \mathcal{T}_{\lambda}(u;\xi)(t) := \xi_{\lambda}(t) + \int_0^t E_{k_{\lambda}}(t-s)F_{\lambda}(s,u(s))ds + \int_0^t E_{h_{\lambda}}(t-s)e^{-\lambda s}\sigma_0 dW_s
 \]
 since $\sigma_{\lambda}(s,x) = e^{\lambda s}\sigma_0$. For brevity, we let $\| \cdot \|_p = \| \cdot \|_{\mathcal{H}^{p,p}([0,T];V)}$. Let $u \in \mathcal{H}^{p,p}([0,T];V)$. Then $\| \mathcal{T}_{\lambda}(u; \xi)\|_{p} \leq \| \xi\|_{p} + \left\| E_k \ast F_{\lambda}(\cdot, u) \right\|_p + \left\| E_{h_{\lambda}} \ast e^{-\lambda \cdot} \sigma_0 dW \right\|_{p}$. For the drift we obtain by Young's inequality and \eqref{eq: 2}
 \begin{align*}
     \left\| E_k \ast F_{\lambda}(\cdot, u) \right\|_p 
     &\leq \max\{1,\|i\|_{L(V,H)}\}\|E_{k_{\lambda}}\|_{L^1([0,T];L(H_F,V))}C_{F,\mathrm{lin}}\left(T^{1/p}+\|u\|_p \right).
 \end{align*}
 For the stochastic convolution, we use Theorem \ref{thm: stochastic convolution cylindrical case} to find that  
 \begin{align*}
    \left\| E_{h_{\lambda}} \ast e^{\lambda \cdot}\sigma_{0}dW \right\|_{p} 
    &\leq c_p^{1/p}\left( \int_0^T \left(\E\left[ \left\| \int_0^t E_{h_{\lambda}}(t-s)e^{\lambda s}\sigma_0dW_s \right\|_{V}^2 \right] \right)^{\frac{p}{2}} dt \right)^{\frac{1}{p}}
    \\ &= c_p^{1/p}\left( \int_0^T e^{\lambda p t}\left( \int_0^t \| E_{h}(s)\sigma_0\|_{L_2(U,V)}^2 ds \right)^{\frac{p}{2}} dt \right)^{\frac{1}{p}}
    \\ &\leq c_p^{1/p}\left(\frac{1}{\lambda p}\right)^{\frac{1}{p}}\left(\int_0^T \| E_{h}(s)\sigma_0\|_{L_2(U, V)}^2 ds \right)^{\frac{1}{2}}.
 \end{align*}
 Hence $\mathcal{T}_{\lambda}(\cdot; \xi)$ leaves $\mathcal{H}^{p,p}([0,T];V)$ invariant. In the same way, if $u,v \in \mathcal{H}^{p,p}([0, T]; V)$, then 
 \begin{align*}
  \| \mathcal{T}_{\lambda}(u; \xi) - \mathcal{T}_{\lambda}(v; \xi) \|_p 
  &\leq \| E_{k_{\lambda}} \ast (F_{\lambda}(\cdot,u) - F_{\lambda}(\cdot,v)) \|_p 
  \\ &\leq \|i\|_{L(V,H)}C_{F,\mathrm{lip}}\|E_{k_{\lambda}}\|_{L^1([0, T]; L(H_F, V))}\| u-v\|_p
 \end{align*}
 where we have used the same estimate for the drift as in the proof of Theorem \ref{thm: existence and uniqueness resolvent cylindrical case}. Again, by dominated convergence combined with $E_{k_{\lambda}}(t) = e^{\lambda t}E_k(t)$, we can choose $\lambda < 0$ sufficiently small such that $\mathcal{T}_{\lambda}(\cdot; \xi)$ is a contraction and thus has a unique fixed point $v_{\lambda}(\cdot;\xi)$ which is the unique solution of \eqref{eq: modified mild VSPDE} with $\sigma_{\lambda}(s,x) = e^{\lambda s}\sigma_0$. Then $u(t;\xi) = e^{-\lambda t}v_{\lambda}(t;\xi)$ is the unique solution of \eqref{eq: mild equation xi} with $\sigma(x) = \sigma_0$ in $\mathcal{H}^{p,p}([0,T];V)$. 

 For the a-priori estimate on the solution $u$ we obtain, as before,
 \begin{align*}
     \E\left[ \|u(t;\xi)\|_{V}^p \right] &\leq A_T + 3^{p-1}\E\left[ \|\xi(t)\|_{V}^p \right] + \int_0^t \rho_T(t-s)\E\left[ \|u(s;\xi)\|_{V}^p\right]ds, 
 \end{align*}
 where all inequalities below hold $dt$-a.e. on $[0,T]$, $A_T \geq 0$ is some constant and $\rho_T \in L_{loc}^1([0,T])$. By Lemma \ref{lemma: positive solution Volterra} and the same arguments as in Theorem \ref{thm: existence and uniqueness resolvent cylindrical case} we deduce that $u(\cdot;\xi) \in \mathcal{H}^{q,p}([0, T]; V)$ whenever $\xi \in \mathcal{H}^{q,p}([0,T]; V)$. Finally, the desired stability estimate with respect to the initial condition, can be shown exactly in the same way as in the proof of Theorem \ref{thm: existence and uniqueness resolvent cylindrical case}. 
\end{proof}

\subsection{Sample path regularity}

Let $C([0,T]; V)$ and $C(\R_+;V)$ stand for the space of all continuous functions on $[0,T]$ or $\R_+$, respectively, with values in $V$. In particular, the next result proves Theorem \ref{thm: 1}.(a). 
\begin{Proposition}\label{cor: continuous modification}
 Suppose that conditions (A1) -- (A3) are satisfied, $\xi \in \mathcal{H}_{loc}^{\infty,p}(\R_+; V)$ for some $p \in [2,\infty)$, $E_k \in L_{loc}^{\frac{p}{p-1}}(\R_+; L_s(H_F,V))$ and \eqref{eq: continuity 1}, \eqref{eq: continuity 2} hold. Then $u - \xi \in C(\R_+; L^p(\Omega, \F,\P; V))$. Moreover, if there exists $\gamma \in (1/p,1]$ and for each $T > 0$ there exists a constant $C_{T,p} > 0$ such that, for all $s,t \in [0,T]$ with $s < t$, one has
 \begin{align*}
   &\ \| E_k(t-s + \cdot) - E_k(\cdot)\|_{L^{\frac{p}{p-1}}([0,T];L(H_F,V))} 
   \\ &\qquad \qquad + \|E_k\|_{L^{\frac{p}{p-1}}([0,t-s];L(H_F,V))}
   \leq C_{T,p}(t-s)^{\gamma}
 \end{align*}
 and
 \begin{align*}
     \int_0^s \widetilde{K}(t,s,r)^2 dr + \int_0^{t-s} |K_{\mathrm{lin}}(r)|^2 dr  \leq C_T (t-s)^{2\gamma}
 \end{align*}
 then $u - \xi$ has, for each $\alpha < \gamma - \frac{1}{p}$, a modification with locally $\alpha$-H\"older continuous sample paths.
\end{Proposition}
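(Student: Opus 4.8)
The plan is to write the difference $u(\cdot;\xi) - \xi$ as the sum of the deterministic drift convolution $D(t) = \int_0^t E_k(t-s)F(u(s;\xi))\,ds$ and the stochastic convolution $S(t) = \int_0^t E_h(t-s)\sigma(u(s;\xi))\,dW_s$, and to establish $L^p(\Omega;V)$-continuity (respectively a H\"older-continuous modification) for each term separately. Throughout I would first record that, since $\xi \in \mathcal{H}_{loc}^{\infty,p}(\R_+;V)$, Theorem \ref{thm: existence and uniqueness resolvent cylindrical case} applied with $q = \infty$ yields $u(\cdot;\xi) \in \mathcal{H}_{loc}^{\infty,p}(\R_+;V)$, so that $\sup_{r \in [0,T]}\E[\|u(r;\xi)\|_V^p] < \infty$ for each $T > 0$. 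Combined with the linear growth of $F$ from (A3), this gives a uniform bound $\sup_{r\in[0,T]}\E[\|F(u(r;\xi))\|_{H_F}^p] < \infty$, which is the input used in all subsequent estimates.

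For the drift term, for $0 \le s < t \le T$ I would split
\[
 D(t) - D(s) = \int_0^s (E_k(t-r) - E_k(s-r))F(u(r;\xi))\,dr + \int_s^t E_k(t-r)F(u(r;\xi))\,dr,
\]
and after the substitutions $\tau = s-r$ and $\tau = t-r$ the two integrands involve $E_k(t-s+\cdot) - E_k(\cdot)$ and $E_k(\cdot)$ restricted to $[0,t-s]$, respectively. Applying H\"older's inequality in $\tau$ with exponents $\frac{p}{p-1}$ and $p$, taking the $p$-th moment, and inserting the uniform bound on $\E[\|F(u)\|_{H_F}^p]$, I would obtain
\[
 \E[\|D(t)-D(s)\|_V^p] \le C_T\Big( \|E_k(t-s+\cdot)-E_k(\cdot)\|_{L^{p/(p-1)}([0,T];L(H_F,V))}^p + \|E_k\|_{L^{p/(p-1)}([0,t-s];L(H_F,V))}^p\Big).
\]
Since $E_k \in L_{loc}^{p/(p-1)}(\R_+;L_s(H_F,V))$, continuity of translations in $L^{p/(p-1)}$ forces the right-hand side to vanish as $t-s\to 0$, giving $L^p$-continuity of $D$; under the assumed H\"older bound on $E_k$ the same right-hand side is dominated by $C_T(t-s)^{p\gamma}$.

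For the stochastic convolution I would invoke Theorem \ref{thm: stochastic convolution cylindrical case} with integrand $\sigma(u(\cdot;\xi))$. Its continuity criterion requires that $\E[(\int_0^s \|(E_h(t-r)-E_h(s-r))\sigma(u(r;\xi))\|_{L_2(U,V)}^2\,dr)^{p/2}]$ and $\E[(\int_s^t \|E_h(t-r)\sigma(u(r;\xi))\|_{L_2(U,V)}^2\,dr)^{p/2}]$ tend to $0$. Bounding the integrands by $\widetilde{K}(t,s,r)(1+\|u(r;\xi)\|_V)$ via \eqref{eq: continuity 2} and by $K_{\mathrm{lin}}(t-r)(1+\|u(r;\xi)\|_V)$ via (A3), and then applying Minkowski's integral inequality (valid since $p/2 \ge 1$) to pull the uniformly bounded factor $(\E[(1+\|u(r;\xi)\|_V)^p])^{2/p}$ outside the $r$-integral, these reduce to $(\int_0^s \widetilde{K}(t,s,r)^2\,dr)^{p/2}$ and $(\int_0^{t-s}K_{\mathrm{lin}}(r)^2\,dr)^{p/2}$, which vanish by \eqref{eq: continuity 1} and by $K_{\mathrm{lin}} \in L_{loc}^2$. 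The assumed H\"older bounds make them $O((t-s)^{p\gamma})$, so the H\"older criterion of Theorem \ref{thm: stochastic convolution cylindrical case} produces an $\alpha$-H\"older modification of $S$ for each $\alpha < \gamma - 1/p$.

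Combining the two parts gives $u(\cdot;\xi)-\xi = D + S \in C(\R_+;L^p(\Omega,\F,\P;V))$; for the regularity statement I would additionally invoke the Kolmogorov--Chentsov theorem for $D$ (using the moment bound above) and add the two H\"older modifications. The main obstacle I anticipate is the bookkeeping in the stochastic-convolution step: verifying that Minkowski's integral inequality cleanly decouples the random solution moments from the deterministic kernel integrals, and matching the resulting expressions precisely to the hypotheses of Theorem \ref{thm: stochastic convolution cylindrical case}; the translation-continuity argument for $E_k$ is routine by comparison.
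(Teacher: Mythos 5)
Your handling of the stochastic convolution and of the H\"older statement is essentially the paper's own argument (the paper pulls the moment of $u$ out of the time integral by H\"older's inequality rather than Minkowski's integral inequality, which is immaterial, and the Kolmogorov--Chentsov step is identical). The genuine problem is in your proof of the \emph{first} assertion for the drift term $D$. There you claim that $\|E_k(t-s+\cdot)-E_k(\cdot)\|_{L^{p/(p-1)}([0,T];L(H_F,V))}\to 0$ as $t-s\to 0$ by ``continuity of translations in $L^{p/(p-1)}$''. This does not follow from the hypothesis $E_k\in L_{loc}^{p/(p-1)}(\R_+;L_s(H_F,V))$: by the paper's definition this space consists of \emph{strongly operator measurable} maps whose operator norm is $p/(p-1)$-integrable; it is not a Bochner space over a separable Banach space, so the usual density-of-continuous-functions proof of translation continuity is unavailable, and the statement is in fact false in general. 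For a counterexample take $H_F=V=L^2(\R)$ and $E(t)=S_t$ the translation group: $t\mapsto E(t)x$ is continuous for every $x$ and $\|E(t)\|_{L(L^2)}\equiv 1$, so $E\in L^q([0,T];L_s(H_F,V))$ for every $q$, yet for every $h\neq 0$ one has $\|E(t+h)-E(t)\|_{L(L^2)}=\sup_{\xi\in\R}|e^{-ih\xi}-1|=2$, so the translated norms never become small. Such kernels even arise as resolvents (take $k=1$ and $A=\partial_x$ on $L^2(\R)$, so that $E_1(t)$ is the shift group). Note that in the H\"older part of the proposition the translation bound on $E_k$ is an explicit \emph{assumption}, which is exactly why your argument is sound there but not in the plain continuity part, where no such assumption is made.

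The gap is repairable, and the paper's proof shows how: translate the vector-valued integrand instead of the operator-valued kernel. Since $\xi\in\mathcal{H}_{loc}^{\infty,p}(\R_+;V)$, Theorem \ref{thm: existence and uniqueness resolvent cylindrical case} gives $u\in\mathcal{H}_{loc}^{p,p}(\R_+;V)$, hence $u\in L_{loc}^p(\R_+;V)$ almost surely and, by (A3), $F(u)\in L_{loc}^p(\R_+;H_F)$ almost surely. The space $H_F$ is separable, so continuous functions are dense in $L^p([0,T];H_F)$, and Lemma \ref{lemma: continuity convolution} (applied pathwise with kernel exponent $p/(p-1)$, integrand exponent $p$, and $r=\infty$) yields that $E_k\ast F(u)$ has continuous sample paths; there the translation is performed on the uniformly continuous approximants of $F(u)$, not on $E_k$, which is why no translation continuity of the kernel is needed. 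The continuity in $L^p(\Omega,\F,\P;V)$ then follows by dominated convergence, using the pathwise bound
\begin{align*}
 \|E_k\ast F(u)(t')-E_k\ast F(u)(t)\|_V \leq 2\,\|E_k\|_{L^{p/(p-1)}([0,T];L(H_F,V))}\,\|F(u)\|_{L^{p}([0,T];H_F)},
\end{align*}
whose $p$-th moment is finite by your uniform moment bound on $u$. With this replacement for the drift term, the remainder of your proposal goes through and coincides with the paper's proof.
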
 
\begin{proof}
 Since $u \in \mathcal{H}_{loc}^{p,p}(\R_+;V)$, we see that $u \in L^p_{loc}(\R_+; V)$ a.s., and hence $F(u) \in L_{loc}^p(\R_+; H_F)$ a.s.. Thus, $E_k \ast F(u)$ is a.s.~continuous by Lemma \ref{lemma: continuity convolution} and $E_k \ast F(u) \in C(\R_+; L^p(\Omega, \P; V))$ by dominated convergence. Secondly, note that
 \begin{align*}
  &\ \E\left[ \left( \int_0^s \|(E_h(t-r) - E_h(s-r))\sigma(u(r))\|_{L_2(U,V)}^2 dr \right)^{\frac{p}{2}}\right] 
  \\ &\qquad + \E\left[ \left( \int_s^t \|E_h(t-r)\sigma(u(r))\|_{L_2(U,V)}^2 dr \right)^{\frac{p}{2}} \right] 
  \\ &\leq \E\left[ \left( \int_0^s \widetilde{K}(t,s,r)^2(1+\|u(r)\|_H)^2 dr \right)^{\frac{p}{2}}\right] + \E\left[ \left( \int_s^t K_{\mathrm{lin}}(t-r)^2 (1+\|u(r)\|_H)^2 dr \right)^{\frac{p}{2}} \right] 
  \\ &\leq \left( \int_0^s \widetilde{K}(t,s,r)^2 dr \right)^{\frac{p}{2}-1}\int_0^s \widetilde{K}(t,s,r)^2\E\left[(1+\|u(r)\|_H)^p  \right] dr
  \\ &\qquad + \left( \int_s^t K_{\mathrm{lin}}(t-r)^2 dr \right)^{\frac{p}{2}-1} \int_s^t K_{\mathrm{lin}}(t-r)^2 \E\left[(1+\|u(r)\|_H)^p \right] dr
  \\ &\leq C_{T,p} \left(1 + \| u\|_{\mathcal{H}^{\infty,p}([0,T];V)}^p \right)\left( \left( \int_0^s \widetilde{K}(t,s,r)^2 dr \right)^{\frac{p}{2}} + \left( \int_s^t K_{\mathrm{lin}}(t-r)^2 dr \right)^{\frac{p}{2}} \right)
 \end{align*}
 where the right-hand side is finite due to $u \in \mathcal{H}^{\infty,p}([0,T];V)$ since $\xi \in \mathcal{H}^{\infty,p}([0,T];V)$. Hence Theorem \ref{thm: stochastic convolution cylindrical case} implies that $E_h \ast \sigma(u)dW \in C(\R_+; L^p(\Omega, \P; V))$. This proves the first assertion.

 The H\"older continuity follows from the Kolmogorov-Chentsov theorem, the assumptions on $K, \widetilde{K}$ and, for $0 \leq s < t \leq T$, the estimate 
 \begin{align*}
     &\ \left\| \int_0^t E_k(t-r)F(u(r))dr - \int_0^s E_k(s-r)F(u(r))dr \right\|_{V}
     \\ &\leq \int_0^s \| (E_k(t-r) - E_k(s-r))F(u(r))\|_{V} dr + \int_s^t \|E_k(t-r)F(u(r))\|_{V}dr
     \\ &\leq \| F(u)\|_{L^{p}([0,T];H_F)} 
      \| E_k(t-s + \cdot) - E_k(\cdot)\|_{L^{\frac{p}{p-1}}([0,T];L(H_F,V))}
      \\ &\qquad \qquad + \| F(u)\|_{L^{p}([0,T];H_F)}  \|E_k\|_{L^{\frac{p}{p-1}}([0,t-s];L(H_F,V))} 
     \\ &\leq \| F(u)\|_{L^{p}([0,T];H_F)}C_{T,p} (t-s)^{\gamma_F}.
 \end{align*} 
\end{proof}

If $\sigma$ is bounded in the sense that $\|E_h(t)\sigma(x)\|_{L_2(U,V)} \leq K_{\mathrm{lin}}(t)$ holds for each $x \in H$ and the right-hand side in \eqref{eq: continuity 2} is independent of $x$, then the assumption $\xi \in \mathcal{H}_{loc}^{\infty,p}(\R_+;V)$ can be weakened to $\xi \in \mathcal{H}_{loc}^{p,p}(\R_+;V)$. Finally, in the particular case of additive noise, the above statement can be further simplified as formulated below. In particular, the next statement proves Theorem \ref{thm: 2}.(a).

\begin{Proposition}\label{cor: continuous modification additive case}
 Suppose that conditions (A1) -- (A3) are satisfied and $\sigma(x) = \sigma_0$ for all $x \in H$. Let $\xi \in \mathcal{H}_{loc}^{p,p}(\R_+; H)$ be $\F_0$-measurable with $1 \leq p < \infty$ and suppose that $E_k \in L_{loc}^{\frac{p}{p-1}}(\R_+; L_s(H_F, V))$. Then $u - \xi \in C(\R_+; L^p(\Omega, \F, \P; V))$. 
\end{Proposition}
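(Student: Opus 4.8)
The plan is to split $u(\cdot;\xi) - \xi$ into its drift and noise convolutions and treat them separately, reusing the structure of the proof of Proposition \ref{cor: continuous modification} for the former and exploiting Gaussianity for the latter. By Theorem \ref{thm: existence and uniqueness additive noise} the solution $u := u(\cdot;\xi) \in \mathcal{H}^{p,p}_{loc}(\R_+;V)$ exists and satisfies $\P \otimes dt$-a.e.
\[
 u(t) - \xi(t) = \int_0^t E_k(t-s)F(u(s))\,ds + \int_0^t E_h(t-s)\sigma_0\,dW_s =: I(t) + J(t),
\]
so it suffices to show $I, J \in C(\R_+; L^p(\Omega,\F,\P; V))$.

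For the drift term $I$, I would argue exactly as in Proposition \ref{cor: continuous modification}. Since $u \in \mathcal{H}^{p,p}_{loc}(\R_+;V)$ we have $u \in L^p_{loc}(\R_+;V)$ $\P$-a.s., and the Lipschitz bound from (A3) together with $\|x\|_H \le \|i\|_{L(V,H)}\|x\|_V$ gives $F(u) \in L^p_{loc}(\R_+;H_F)$ $\P$-a.s. As $E_k \in L^{p/(p-1)}_{loc}(\R_+; L_s(H_F,V))$ and the exponents $p/(p-1)$ and $p$ are conjugate, Lemma \ref{lemma: continuity convolution} yields that $I$ has $\P$-a.s.\ continuous paths; an application of dominated convergence then upgrades this to $I \in C(\R_+; L^p(\Omega;V))$.

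The noise term $J$ is where the additive structure is essential. Because $\sigma_0$ is deterministic, $J(t)=\int_0^t E_h(t-s)\sigma_0\,dW_s$ is, for each $t$, a centred Gaussian random variable in $V$, and so is every increment $J(t)-J(s)$. I would first establish continuity in $L^2(\Omega;V)$: by the It\^o isometry, for $0\le s<t$,
\[
 \E\!\left[\|J(t)-J(s)\|_V^2\right]
 = \int_0^s \|(E_h(t-r)-E_h(s-r))\sigma_0\|_{L_2(U,V)}^2\,dr
 + \int_s^t \|E_h(t-r)\sigma_0\|_{L_2(U,V)}^2\,dr.
\]
By (A3), $\|E_h(\cdot)\sigma_0\|_{L_2(U,V)} \le K_{\mathrm{lin}} \in L^2_{loc}(\R_+)$, so $r\mapsto E_h(r)\sigma_0$ belongs to $L^2_{loc}(\R_+;L_2(U,V))$. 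The second integral equals $\int_0^{t-s}\|E_h(r)\sigma_0\|_{L_2(U,V)}^2\,dr$ and vanishes as $t-s\to 0$ by absolute continuity of the Lebesgue integral; after the substitution $\rho=s-r$ the first integral is $\int_0^s\|(E_h(\rho+(t-s))-E_h(\rho))\sigma_0\|_{L_2(U,V)}^2\,d\rho$, which tends to $0$ by continuity of translations in $L^2_{loc}(\R_+;L_2(U,V))$. Hence $J\in C(\R_+;L^2(\Omega;V))$.

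Finally I would lift this to $L^p$ for the full range $1\le p<\infty$ — the key difficulty, since Theorem \ref{thm: stochastic convolution cylindrical case} requires $p\ge 2$. Using that $J(t)-J(s)$ is Gaussian in $V$, the equivalence of Gaussian moments gives a constant $C_p$ with $\E[\|J(t)-J(s)\|_V^p]\le C_p\big(\E[\|J(t)-J(s)\|_V^2]\big)^{p/2}$, so the $L^2$-continuity established above forces $J\in C(\R_+;L^p(\Omega;V))$ for every $p\in[1,\infty)$. For $p\ge2$ one may alternatively invoke Theorem \ref{thm: stochastic convolution cylindrical case} directly, its two continuity hypotheses being exactly the two limits verified above with the expectations removed. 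Combining the two parts yields $u-\xi=I+J\in C(\R_+;L^p(\Omega,\F,\P;V))$. The only genuinely delicate points are the translation-continuity limit and the reduction from $p<2$ to $p=2$ via Gaussianity; everything else is a direct transcription of the additive specialisation of Proposition \ref{cor: continuous modification}.
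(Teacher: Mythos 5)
Your proof is correct and follows essentially the same route as the paper: the drift convolution is handled via Lemma \ref{lemma: continuity convolution} plus dominated convergence, and the stochastic convolution by verifying exactly the two continuity hypotheses of Theorem \ref{thm: stochastic convolution cylindrical case} — the tail integral $\int_s^t$ vanishes by absolute continuity of the Lebesgue integral, and the increment integral vanishes by continuity of translations. The only cosmetic difference in that step is that the paper expands along an orthonormal basis $(e^U_n)_{n\geq 1}$ of $U$, proves translation continuity for each $n$, and sums using a uniform summable bound, whereas you invoke translation continuity directly in the Bochner space $L^2_{loc}(\R_+; L_2(U,V))$; these are equivalent (strong measurability of $t \mapsto E_h(t)\sigma_0$ in $L_2(U,V)$ follows by Pettis, and the bound $\|E_h(t)\sigma_0\|_{L_2(U,V)} \leq K_{\mathrm{lin}}(t)$ from (A3) gives local square integrability).

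Where you genuinely go beyond the paper is the range $p \in [1,2)$. The paper's proof ends by citing Theorem \ref{thm: stochastic convolution cylindrical case}, which is stated only for $p \geq 2$, and is silent on how the conclusion extends to $p < 2$ even though the proposition claims it. Your reduction via Gaussianity of the increments $J(t)-J(s)$ (equivalence of Gaussian moments, legitimate here precisely because $\sigma_0$ is deterministic) closes that gap cleanly; in fact for $p \leq 2$ Jensen's inequality $\E[\|X\|_V^p] \leq \bigl(\E[\|X\|_V^2]\bigr)^{p/2}$ already suffices, so Gaussianity is only truly needed if one wants $p > 2$ without appealing to Theorem \ref{thm: stochastic convolution cylindrical case}. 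So your argument is not just a transcription but a slightly more complete version of the paper's proof.
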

\begin{proof}
 The pathwise continuity of $E_k \ast F(u)$ follows again from Lemma \ref{lemma: continuity convolution}. To prove the continuity of $t \longmapsto E_h \ast \sigma_0 dW \in L^p(\Omega,\F, \P; V)$, we first note that $\lim_{t-s \to 0}\int_s^t \|E_h(t-r)\sigma_0\|_{L_2(U,V)}^2 dr = 0$ holds by dominated convergence. Moreover, since the translation operator is continuous in $L^p([0,T];V)$, we find
 \[
  \lim_{t-s \to 0}\int_0^T \| (E_h(t-s+r) - E_h(r))\sigma_0 e^U_n\|_{V}^2 dr = 0, \qquad n \geq 1,
 \]
 where $(e^U_n)_{n \geq 1}$ is an orthonormal basis of $U$. For $0 \leq t-s \leq 1$, we have
 \begin{align*}
  \int_0^T \| (E_h(t-s+r) - E_h(r))\sigma_0 e^U_n\|_{V}^2 dr &\leq 2 \int_{0}^{T + 1}\|E_h(r)\sigma_0 e^U_n \|_{V}^2 \, dr.
 \end{align*}
 Since the right-hand side is summable in $n$ uniformly in $t-s$, we conclude that 
 \begin{align*}
     &\ \lim_{t-s \to 0}\int_0^T \| (E_h(t-s+r) - E_h(r))\sigma_0\|_{L_2(U,V)}^2 dr 
     \\ &= \sum_{n=1}^{\infty} \lim_{t-s \to 0} \int_0^T \| (E_h(t-s+r) - E_h(r))\sigma_0 e^U_n\|_{V}^2 \, dr = 0.
 \end{align*}
 Thus the desired continuity is a consequence of Theorem \ref{thm: stochastic convolution cylindrical case}.
\end{proof}

If the drift $F$ is bounded, i.e. $\sup_{x \in H}\|F(x)\|_{H_F} < \infty$, then $F(u) \in L^{\infty}(\R_+; H_F)$ and we can replace $p/(p-1)$ in both statements by $1$.

\section{Limit distributions}
\label{sec:limit_dist}

\subsection{Global stability bounds}
\label{sec:glob_mom}

In this section, we extend the previous bounds from $[0,T]$ to $\R_+$ and show that $u \in \mathcal{H}^{q,p}(\R_+; V)$ whenever $\xi \in \mathcal{H}^{q,p}(\R_+; V)$. Recall that $C_{F,\mathrm{lin}}$ is the linear growth constant of $F$, $K_{\mathrm{lin}}$ is given in condition (A3), and $c_p$ is the constant from Theorem \ref{thm: stochastic convolution cylindrical case} as given in \eqref{eq: Cp constant}. Here and below we write $\xi_{\lambda}(t) = e^{\lambda t}\xi(t)$ and $u_{\lambda}(t) = e^{\lambda t}u(t)$ with $\lambda \in \R$. 

\begin{Theorem}\label{thm: uniform moment}
 Suppose that conditions (A1) -- (A3) are satisfied. Fix $p \in [2,\infty)$, $q \in [p, \infty]$ and let $\lambda \in \R$ such that one of the following conditions is satisfied:
 \begin{enumerate}
     \item[(i)] $\lambda \leq 0$, $e^{\lambda p t}\int_0^t \|E_k(s)\|_{L(H_F,V)}ds, \ \left(e^{2\lambda t}\int_0^{t} K_{\mathrm{lin}}(s)^2 ds\right)^{\frac{p}{2}} \in L^q(\R_+)$, and
    \begin{align}\label{eq: growth 1}
    &\ \|i\|_{L(V,H)}^p C_{F,\mathrm{lin}}^p 6^{p-1}\left( \int_0^{\infty} e^{\lambda t}\| E_{k}(t)\|_{L(H_F,V)} dt\right)^{p} 
    \\ \notag &\qquad \qquad + \|i\|_{L(V,H)}^p3^{p-1}2^{\frac{p}{2} -1} c_p \left( \int_0^{\infty} e^{2\lambda t} K_{\mathrm{lin}}(t)^2 dt \right)^{\frac{p}{2}} < 1
    \end{align}
    \item[(ii)] $\lambda \geq 0$, $F(0) = 0$, $\sigma(0) = 0$, and
    \begin{align}\label{eq: growth 2}
     &\ \|i\|_{L(V,H)}^p 3^{p-1}C_{F}^p\left( \int_0^{\infty}e^{\lambda t} \|E_{k}(t)\|_{L(H_F,V)}\right)^p  
     \\ \notag &\qquad \qquad + 3^{p-1}\|i\|_{L(V,H)}^p c_p \left(\int_0^{\infty} e^{2\lambda t}K_{\mathrm{lip}}(t)^2 dt \right)^{\frac{p}{2}} < 1.
     \end{align} 
 \end{enumerate}
    Then for each $\F_0$-measurable $\xi$ satisfying $\xi_{\lambda} \in \mathcal{H}^{q,p}(\R_+; H)$, it holds that
    \[
    \int_0^{\infty} e^{\lambda q t} \left( \E\left[ \|u(t;\xi)\|_{V}^p \right] \right)^{\frac{q}{p}} dt < \infty
    \]
    when $q < \infty$, while for $q = \infty$ we have 
    \[
    \E\left[ \|u(t;\xi)\|_{V}^p \right] \leq e^{-\lambda p t} \| u(\cdot;\xi)\|_{\mathcal{H}^{\infty,p}(\R_+; V)}^p < \infty.
    \]
\end{Theorem}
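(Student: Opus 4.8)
The plan is to pass to the exponentially reweighted process $u_{\lambda}(t)=e^{\lambda t}u(t;\xi)$ and to prove the single statement $u_{\lambda}\in\mathcal{H}^{q,p}(\R_+;V)$, from which both conclusions are immediate: for $q<\infty$ this is exactly the asserted integral bound, while for $q=\infty$ it gives $\E[\|u(t;\xi)\|_V^p]=e^{-\lambda pt}\E[\|u_{\lambda}(t)\|_V^p]\le e^{-\lambda pt}\|u_{\lambda}\|_{\mathcal{H}^{\infty,p}(\R_+;V)}^p$. By Step~1 of the proof of Theorem~\ref{thm: existence and uniqueness resolvent cylindrical case} the process $u_{\lambda}$ solves the scaled mild equation with $E_{k_{\lambda}}(t)=e^{\lambda t}E_k(t)$ and $E_{h_{\lambda}}(t)=e^{\lambda t}E_h(t)$, so the pointwise bounds \eqref{eq: 2} and \eqref{eq: 6} are available. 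Setting $\phi(t)=\E[\|u_{\lambda}(t)\|_V^p]$, the task reduces to a scalar Volterra inequality for $\phi$ whose convolution kernels are now \emph{globally} integrable, with $L^1(\R_+)$-masses governed by \eqref{eq: growth 1} and \eqref{eq: growth 2}.

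To obtain that inequality I would split $u_{\lambda}$ into its three terms and use $(a+b+c)^p\le 3^{p-1}(a^p+b^p+c^p)$. For the drift, \eqref{eq: 2} produces a convolution against $\kappa(r):=e^{\lambda r}\|E_k(r)\|_{L(H_F,V)}$; its constant part collapses exactly to $e^{\lambda t}\int_0^t\|E_k(r)\|_{L(H_F,V)}\,dr$, while Jensen applied to the normalised kernel $\kappa/\|\kappa\|_{L^1(\R_+)}$, followed by taking expectations, bounds the variable part by $\|\kappa\|_{L^1(\R_+)}^{p-1}\int_0^t\kappa(t-s)\phi(s)\,ds$. For the stochastic term I would invoke Theorem~\ref{thm: stochastic convolution cylindrical case} together with \eqref{eq: 6}; after squaring, the admissible integrand splits into a deterministic contribution giving $\bigl(e^{2\lambda t}\int_0^tK_{\mathrm{lin}}(s)^2\,ds\bigr)^{p/2}$ and a variable contribution handled, again by Jensen, through $\eta(r):=e^{2\lambda r}K_{\mathrm{lin}}(r)^2$. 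Collecting everything yields
\[
 \phi(t)\le B(t)+a\int_0^t\kappa(t-s)\phi(s)\,ds+b\int_0^t\eta(t-s)\phi(s)\,ds,
\]
where in case~(i) the inhomogeneity $B$ is the $\xi_{\lambda}$-contribution plus the deterministic forcing terms, which are finite in $L^{q/p}(\R_+)$ by virtue of the integrability hypotheses in (i); in case~(ii) the assumptions $F(0)=\sigma(0)=0$ make the growth bounds homogeneous, so $B$ reduces to the $\xi_{\lambda}$-term, $K_{\mathrm{lin}}$ is replaced by $K_{\mathrm{lip}}$, and the constant/variable split—hence the extra powers of $2$—disappears.

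The decisive step is the absorption. I would take the $L^{q/p}([0,T])$-norm of the inequality (the supremum over $[0,T]$ when $q=\infty$) and apply Young's convolution inequality, using the finiteness of $\|\kappa\|_{L^1(\R_+)}$ and $\|\eta\|_{L^1(\R_+)}$, to reach $\|\phi\|_{L^{q/p}([0,T])}\le\|B\|_{L^{q/p}(\R_+)}+\Theta\,\|\phi\|_{L^{q/p}([0,T])}$. Tracking the elementary constants, $\Theta$ equals precisely the left-hand side of \eqref{eq: growth 1} in case~(i)—the drift contributing $6^{p-1}=3^{p-1}2^{p-1}$ and the diffusion $3^{p-1}2^{p/2-1}c_p$—and of \eqref{eq: growth 2} in case~(ii), so the hypothesis forces $\Theta<1$. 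Since one cannot absorb a term only known a~posteriori to be finite, I would first record that $\phi\in L^{q/p}([0,T])$ for every finite $T$: this is the local content of Theorem~\ref{thm: existence and uniqueness resolvent cylindrical case}, because $\|u_{\lambda}\|_{\mathcal{H}^{q,p}([0,T];V)}\le e^{(\lambda\vee0)T}\|u(\cdot;\xi)\|_{\mathcal{H}^{q,p}([0,T];V)}<\infty$. Absorbing then gives the \emph{uniform-in-$T$} bound $\|\phi\|_{L^{q/p}([0,T])}\le\|B\|_{L^{q/p}(\R_+)}/(1-\Theta)$, and monotone convergence as $T\to\infty$ yields $\|\phi\|_{L^{q/p}(\R_+)}<\infty$, which is the claim; the $q=\infty$ case is identical with suprema in place of the $L^{q/p}$-norm, followed by the rescaling of the first paragraph.

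The main obstacle I anticipate is exactly this interplay between \emph{a~priori local finiteness} and \emph{global absorption}: unlike the $[0,T]$-estimate in Theorem~\ref{thm: existence and uniqueness resolvent cylindrical case}, which relies on the Volterra comparison Lemma~\ref{lemma: positive solution Volterra} and tolerates kernels of arbitrary mass, the global bound must exploit the smallness encoded in \eqref{eq: growth 1}--\eqref{eq: growth 2}, and this is legitimate only once finiteness on each $[0,T]$ has been secured and shown to be uniform in $T$. A secondary, purely bookkeeping, difficulty is to carry the elementary constants through the two successive applications of Jensen and the constant/variable splitting so that the contraction threshold matches \eqref{eq: growth 1} and \eqref{eq: growth 2} verbatim; and one must keep the cases $\lambda\le0$ (case~(i), where the inhomogeneous forcing is the genuine source of difficulty) and $\lambda\ge0$ (case~(ii), where homogeneity removes it) cleanly separated.
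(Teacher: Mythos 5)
Your proposal is correct, and its first half coincides with the paper's own argument: after the exponential rescaling $u_{\lambda}(t)=e^{\lambda t}u(t;\xi)$, the paper derives, exactly as you do (three-term splitting, the linear-growth bounds \eqref{eq: 2} and \eqref{eq: 6}, Jensen against the normalised kernels), the scalar Volterra inequality $\phi \le B + \rho_{\lambda}\ast\phi$ for $\phi(t)=\E\left[\|u_{\lambda}(t)\|_V^p\right]$, with $\|\rho_{\lambda}\|_{L^1(\R_+)}$ equal to the left-hand side of \eqref{eq: growth 1} (resp.\ \eqref{eq: growth 2}). Where you genuinely part ways is in resolving this inequality. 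The paper does not absorb: it introduces the resolvent kernel $r_{\lambda}=\rho_{\lambda}+\rho_{\lambda}\ast r_{\lambda}$, invokes the comparison Lemma \ref{lemma: positive solution Volterra} to obtain the pointwise majorant $\phi \le B + r_{\lambda}\ast B$, and uses the Paley--Wiener criterion of Lemma \ref{lemma: integrability resolvent} --- $r_{\lambda}\in L^1(\R_+)$ if and only if $\|\rho_{\lambda}\|_{L^1(\R_+)}<1$ --- before finishing with Young's inequality. Your absorption argument is more elementary: it bypasses Paley--Wiener entirely and makes the dissipativity conditions transparent as literal contraction factors; your insistence on first securing $\phi\in L^{q/p}([0,T])$ for each finite $T$ via Theorem \ref{thm: existence and uniqueness resolvent cylindrical case} is precisely what legitimises the absorption, and it plays the role that local integrability of $\phi$ plays in the comparison lemma. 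What the paper's heavier route buys is a pointwise-in-$t$ bound $B+r_{\lambda}\ast B$ rather than only a norm bound; that stronger form is what gets recycled elsewhere (Proposition \ref{prop: contraction estimate} and the convergence rate \eqref{eq: convergence rate} of Theorem \ref{thm: limit distribution} are expressed through $r$ and its tail $\int_{t/2}^{\infty}r(s)\,ds$), but for the present statement only the norm bound is claimed, so your argument suffices. One shared wrinkle, inherited from the statement rather than introduced by you: what is actually needed is that the deterministic forcing $A_{\lambda}$ of \eqref{eq: Alambda} lies in $L^{q/p}(\R_+)$, whereas hypothesis (i) is phrased with $L^q(\R_+)$; since you explicitly name the correct space $L^{q/p}$, your write-up is, if anything, the more careful of the two on this point.
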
 
\begin{proof}
Define $k_{\lambda}(t) = e^{\lambda t}k(t)$, $h_{\lambda}(t) = e^{\lambda t}h(t)$, $\sigma_{\lambda}(t,x) = e^{\lambda t}\sigma(e^{-\lambda t}x)$, and $F_{\lambda}(t,x) = e^{\lambda t}F(e^{-\lambda t}x)$. 
Then, using the same estimates as in step 3 of the proof of Theorem \ref{thm: existence and uniqueness resolvent cylindrical case}, we obtain for a.a.~$t$
 \begin{align*}
     \E\left[ \|u_{\lambda}(t)\|_{V}^p \right] &\leq A_{\lambda}(t) + 3^{p-1}\E\left[\|\xi_{\lambda}(t)\|_{V}^p \right] + \int_0^t \rho_{\lambda}(t-s)\E\left[\|u_{\lambda}(s)\|_{V}^p\right] ds,
 \end{align*}
 where $\rho_{\lambda} \in L^1(\R_+)$ is given by
 \begin{align*}
  \rho_{\lambda}(t) &= \|i\|_{L(V,H)}^p 6^{p-1}\|E_{k_{\lambda}}\|_{L^1(\R_+;L(H_F,V))}^{p-1} C_{F,\mathrm{lin}}^p \|E_{k_{\lambda}}(t)\|_{L(H_F,V)} 
  \\ &\qquad + \|i\|_{L(V,H)}^p 3^{p-1} 2^{\frac{p}{2}-1}c_p \| e^{\lambda \cdot}K_{\mathrm{lin}}\|_{L^2(\R_+)}^{p-2} e^{2\lambda t}K_{\mathrm{lin}}(t)^2,
 \end{align*}
 and $A_{\lambda} \in L^q(\R_+)$ is defined by
 \begin{align}\label{eq: Alambda}
  A_{\lambda}(t) &= 6^{p-1}\|E_{k_{\lambda}}\|_{L^1(\R_+;L(H_F,V))}^{p-1} C_{F,\mathrm{lin}}^p e^{\lambda p t}\int_0^t \|E_k(s)\|_{L(H_F,V)}ds 
  \\ &\qquad + 3^{p-1}2^{\frac{p}{2}-1}c_p \left( e^{2\lambda t}\int_0^{t} K_{\mathrm{lin}}(s)^2 ds \right)^{\frac{p}{2}}. \notag
 \end{align}
 Let $r_{\lambda}$ be the unique solution of $r_{\lambda} = \rho_{\lambda} + \rho_{\lambda} \ast r_{\lambda}$. Lemma \ref{lemma: integrability resolvent} combined with \eqref{eq: growth 1} implies that $r_{\lambda} \in L^1(\R_+)$. By Lemma \ref{lemma: positive solution Volterra} we obtain $r_{\lambda} \geq 0$ and
 \begin{align*}
  \E\left[\|u_{\lambda}(t;\xi)\|_{V}^p \right] &\leq A_{\lambda}(t) + 3^{p-1}\E\left[ \|\xi_{\lambda}(t)\|_{V}^p \right] 
  \\ &\qquad + \int_0^t r_{\lambda}(t-s) \left( A_{\lambda}(s) + 3^{p-1}\E\left[ \|\xi_{\lambda}(s)\|_{V}^p \right] \right)ds.
 \end{align*} 
 Since $A_{\lambda} \in L^q(\R_+)$ and $r_{\lambda} \in L^1(\R_+)$, it follows that $u_{\lambda} \in \mathcal{H}^{q,p}(\R_+; V)$ which proves the assertion under case (i).

 For case (ii) note that, since $F(0) = 0$ and $\sigma(0) = 0$, \eqref{eq: 2} and \eqref{eq: 6} may be improved to $\|F_{\lambda}(s,x)\|_{H_F} \leq \|i\|_{L(V,H)}C_{F}\|x\|_{V}$ and $\|E_{h_{\lambda}}(t-s,x)\sigma_{\lambda}(s,x)\|_{L_2(U,V)} \leq \|i\|_{L(V,H)}e^{\lambda(t-s)}K_{\mathrm{lip}}(t-s)\|x\|_{V}$ so that $A_{\lambda} \equiv 0$ in view of \eqref{eq: Alambda}. The improved condition \eqref{eq: growth 2} follows by repeating the bounds in the previous proof.
\end{proof}

Case (ii) with $p=2$ implies Theorem \ref{thm: 1}.(b). This theorem gives a bound on the (possibly exponential) growth rate for the solution in the $L^q(\R_+;L^p(\Omega, \F, \P;V))$ norm. In particular, if $q = \infty$ and $\lambda \leq 0$, we obtain ultimate boundedness of solutions as discussed in \cite[Chapter 7]{MR2560625} for classical SPDEs. For $\lambda = 0$ we see that condition \eqref{eq: growth 1} always implies $A_{\lambda} \in L^{\infty}(\R_+)$. Thus letting $q = \infty$, we obtain the uniform boundedness of the $p$-th moment of the process. Below we provide an analogous result for the case of additive noise.  

\begin{Theorem}
 Suppose that conditions (A1) -- (A3) are satisfied and $\sigma(x) = \sigma_0$ for all $x \in H$. Let $p \in [1,\infty)$, $q \in [p, \infty]$, and suppose that there exists $\lambda \leq 0$ such that 
 \begin{align}\label{eq: growth additive case}
   \|i\|_{L(V,H)}C_{F,\mathrm{lin}} \int_0^{\infty} e^{\lambda t}\| E_{k}(t)\|_{L(H_F,V)} dt < 1,
 \end{align}
 and assume that 
 \begin{align}\label{eq: A Lq bound additive noise}
 e^{\lambda t}\int_0^t \|E_k(s)\|_{L(H_F,V)}ds + e^{2 \lambda t}\int_0^{t} \| E_h(s)\sigma_0\|_{L_2(U,V)}^2 ds \in L^q(\R_+).
 \end{align}
 Then, for each $\F_0$-measurable $\xi$ satisfying $\xi_{\lambda} \in \mathcal{H}^{q,p}(\R_+; V)$, it holds that $u_{\lambda}(\cdot;\xi) \in \mathcal{H}^{q, p}(\R_+; V)$. 
\end{Theorem}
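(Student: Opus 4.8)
The plan is to mirror the proof of Theorem~\ref{thm: uniform moment}, but to exploit the additive structure of the noise, which is precisely what allows both the sharp constant in \eqref{eq: growth additive case} and the full range $p\in[1,\infty)$. As in Step~1 of Theorem~\ref{thm: existence and uniqueness resolvent cylindrical case}, I would first pass to the rescaled process $u_\lambda(t)=e^{\lambda t}u(t;\xi)$, which solves the mild equation with kernels $k_\lambda(t)=e^{\lambda t}k(t)$, $h_\lambda(t)=e^{\lambda t}h(t)$, drift $F_\lambda(s,x)=e^{\lambda s}F(e^{-\lambda s}x)$, and the \emph{deterministic} integrand $e^{\lambda s}\sigma_0$ in the stochastic convolution. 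Writing $\phi_\lambda(t)=\bigl(\E\|u_\lambda(t)\|_V^p\bigr)^{1/p}$, the goal reduces to showing $\phi_\lambda\in L^q(\R_+)$, i.e.\ $u_\lambda\in\mathcal{H}^{q,p}(\R_+;V)$.

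The decisive difference from the multiplicative case is that I would estimate the $L^p(\Omega;V)$-norm $\phi_\lambda(t)$ \emph{directly}, rather than the $p$-th moment. By Minkowski's integral inequality applied to the Bochner drift convolution, together with the linear growth bound \eqref{eq: 2}, one obtains
\[
 \left\| \int_0^t E_{k_\lambda}(t-s)F_\lambda(s,u_\lambda(s))\,ds \right\|_{L^p(\Omega;V)} \leq C_{F,\mathrm{lin}}\int_0^t \|E_{k_\lambda}(t-s)\|_{L(H_F,V)}\bigl(e^{\lambda s}+\|i\|_{L(V,H)}\phi_\lambda(s)\bigr)\,ds.
\]
Since $\sigma_0$ is deterministic, the stochastic convolution does not involve $u_\lambda$ at all; denoting its $L^p(\Omega;V)$-norm by $\psi(t)$ and using the triangle inequality for $\|\cdot\|_{L^p(\Omega;V)}$ yields the linear Volterra inequality
\[
 \phi_\lambda(t) \leq B_\lambda(t) + \|i\|_{L(V,H)}C_{F,\mathrm{lin}}\int_0^t \|E_{k_\lambda}(t-s)\|_{L(H_F,V)}\phi_\lambda(s)\,ds,
\]
where $B_\lambda(t)=\|\xi_\lambda(t)\|_{L^p(\Omega;V)}+C_{F,\mathrm{lin}}e^{\lambda t}\int_0^t\|E_k(r)\|_{L(H_F,V)}\,dr+\psi(t)$. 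The crucial point is that the convolution kernel $\kappa(t)=\|i\|_{L(V,H)}C_{F,\mathrm{lin}}\|E_{k_\lambda}(t)\|_{L(H_F,V)}$ has $\|\kappa\|_{L^1(\R_+)}=\|i\|_{L(V,H)}C_{F,\mathrm{lin}}\int_0^\infty e^{\lambda t}\|E_k(t)\|_{L(H_F,V)}\,dt<1$ by exactly \eqref{eq: growth additive case}, with \emph{no} combinatorial prefactor $6^{p-1}$ as in \eqref{eq: growth 1}; working with the $L^p(\Omega)$-norm instead of its $p$-th power is what preserves the sharp constant, and it is available here only because the noise is $u$-independent.

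From here I would invoke Lemma~\ref{lemma: integrability resolvent} (using $\|\kappa\|_{L^1}<1$) to obtain the associated Volterra resolvent $r_\lambda\geq0$ with $r_\lambda\in L^1(\R_+)$, and then Lemma~\ref{lemma: positive solution Volterra} to linearize the inequality into $\phi_\lambda(t)\leq B_\lambda(t)+\int_0^t r_\lambda(t-s)B_\lambda(s)\,ds$. Young's convolution inequality ($L^1\ast L^q\subset L^q$) then reduces the whole matter to checking $B_\lambda\in L^q(\R_+)$. Its three summands are controlled respectively by the hypothesis $\xi_\lambda\in\mathcal{H}^{q,p}(\R_+;V)$; by the first term of \eqref{eq: A Lq bound additive noise}, after the identity $e^{\lambda t}\int_0^t\|E_k(r)\|_{L(H_F,V)}\,dr=\int_0^t\|E_{k_\lambda}(t-s)\|_{L(H_F,V)}e^{\lambda s}\,ds$; and by the second term of \eqref{eq: A Lq bound additive noise}, via the It\^o isometry $\E\|\cdot\|_V^2=e^{2\lambda t}\int_0^t\|E_h(r)\sigma_0\|_{L_2(U,V)}^2\,dr$ combined with Theorem~\ref{thm: stochastic convolution cylindrical case}. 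The case $q=\infty$ is identical, with $L^q$-membership replaced by uniform boundedness.

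The step needing the most care is the control of $\psi(t)$ for $p\in[1,2)$, which is the genuinely new range here. Theorem~\ref{thm: stochastic convolution cylindrical case} delivers the $p$-th moment bound only for $p\geq2$, so for $p<2$ I would use that the stochastic convolution, having a deterministic integrand, is a centred Gaussian random variable in $V$; by Lyapunov's (Jensen's) inequality its $p$-th moment is dominated by the $p/2$-power of its second moment, reducing $\psi(t)$ to the It\^o isometry already available. This Gaussianity is exactly the mechanism, already used in Theorem~\ref{thm: existence and uniqueness additive noise}, that pushes the argument below $p=2$; the remaining estimates are a routine transcription of those in Theorem~\ref{thm: uniform moment} from $[0,T]$ to $\R_+$.
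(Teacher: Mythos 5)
Your proposal follows essentially the same route as the paper's proof: the paper likewise estimates $\|u_{\lambda}(t)\|_{L^p(\Omega,\P;V)}$ directly via Minkowski's inequality (keeping the deterministic stochastic-convolution term, controlled through its second moment, outside the fixed-point part), arrives at the same scalar Volterra inequality with kernel $\rho_{\lambda}(t)=\|i\|_{L(V,H)}C_{F,\mathrm{lin}}\|E_{k_{\lambda}}(t)\|_{L(H_F,V)}$ whose $L^1(\R_+)$-norm is $<1$ by \eqref{eq: growth additive case}, and concludes with Lemma \ref{lemma: integrability resolvent}, Lemma \ref{lemma: positive solution Volterra}, Young's convolution inequality, and the $L^q$-membership of the forcing term. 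Your explicit Gaussianity/Jensen argument for $p\in[1,2)$ is exactly the mechanism the paper uses implicitly for the additive-noise range (as in Theorem \ref{thm: existence and uniqueness additive noise}), so the two proofs coincide in substance.
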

\begin{proof}
 Using first the Minkowski inequality and then Theorem \ref{thm: stochastic convolution cylindrical case}, we find that 
 \begin{align*}
     \|u_{\lambda}(t)\|_{L^p(\Omega,\P;V)}
     &\leq \left\| \xi_{\lambda}(t) \right\|_{L^p(\Omega,\P;V)}
     + c_p^{1/p}\left( \E\left[ \left(\int_0^t \|E_{h_{\lambda}}(t-s)e^{\lambda s}\sigma_0\|_{L_2(U,V)}^2 ds \right)^{\frac{p}{2}} \right] \right)^{\frac{1}{p}} 
     \\ &\qquad + \int_0^t \|E_{k_{\lambda}}(t-s)\|_{L(H_F,V)} \|F_{\lambda}(s,u_{\lambda}(s))\|_{L^p(\Omega,\P;H_F)} ds
     \\ &\leq A_{\lambda}(t) + \left\| \xi_{\lambda}(t) \right\|_{L^p(\Omega,\P;V)} + \int_0^t \rho_{\lambda}(t-s)\|u_{\lambda}(s)\|_{L^p(\Omega,\P;V)} ds
 \end{align*}
 where the drift has been estimated as before, $\rho_{\lambda}(t) = \|i\|_{L(V,H)}C_{F,\mathrm{lin}} \|E_{k_{\lambda}}(t)\|_{L(H_F,V)}$, and
 \[
  A_{\lambda}(t) =  c_p^{1/p}\left( e^{2\lambda t}\int_0^t \|E_{h}(s)\sigma_0\|_{L_2(U,V)}^2 ds \right)^{\frac{1}{2}} + e^{\lambda t}C_{F,\mathrm{lin}}\int_0^t \|E_{k}(s)\|_{L(H_F,V)} ds.
 \]
 Let $r_{\lambda}$ be the unique nonnegative solution of $r_{\lambda} = \rho_{\lambda} + \rho_{\lambda} \ast r_{\lambda}$. Lemma \ref{lemma: integrability resolvent} combined with \eqref{eq: growth additive case} implies that $r_{\lambda} \in L^1(\R_+)$. The assertion follows from Lemma \ref{lemma: positive solution Volterra} combined with $r \in L^1(\R_+)$ and $A_{\lambda} \in L^q(\R_+)$. 
\end{proof}

In regards to assumption \eqref{eq: A Lq bound additive noise}, if $\lambda = 0$, then we necessarily need to assume that $q = + \infty$. For $\lambda < 0$, we may consider the full range $q \in [p, \infty]$. Finally, for $\lambda > 0$ such a condition is never satisfied unless $\sigma_0 = 0$ which is the reason why we restrict the statement to $\lambda \leq 0$.

\subsection{Global contraction estimates}
\label{sec: contraction}

In this section, we prove auxiliary bounds for the difference $u(\cdot;\xi) - u(\cdot;\eta)$ globally on $\R_+$ instead of $[0, T]$. 
\begin{Proposition}\label{prop: contraction estimate}
 Suppose that conditions (A1) -- (A3) are satisfied. Fix $p \in [2,\infty)$ and $q \in [p, \infty]$. Then the following assertions hold:
 \begin{enumerate}
     \item[(a)] Assume that
     \begin{align}\label{eq: growth limit distribution}
    &\ \|i\|_{L(V,H)}^pC_{F,\mathrm{lip}}^p 6^{p-1}\left( \int_0^{\infty} \| E_{k}(t)\|_{L(H_F,V)} dt\right)^{p} 
    \\ \notag &\qquad \qquad + \|i\|_{L(V,H)}^p 3^{p-1}2^{\frac{p}{2} -1} c_p \left( \int_0^{\infty} K_{\mathrm{lip}}(t)^2 dt \right)^{\frac{p}{2}} < 1.
    \end{align}
    Then there exists $0 \leq r \in L^1(\R_+)$, such that  
    \begin{align}\label{eq: global stability estimate}
    \E\left[ \|u(t;\xi) - u(t;\eta)\|_{V}^p \right] &\leq 3^{p-1}\E\left[ \|\xi(t) - \eta(t)\|_{V}^p \right] 
    \\ \notag &\qquad + 3^{p-1}\int_0^t r(t-s) \E\left[ \|\xi(s) - \eta(s)\|_{V}^p \right] ds
    \end{align}
    holds for all $\F_0$-measurable $\xi,\eta \in \mathcal{H}_{loc}^{q,p}(\R_+; V)$. 
    \item[(b)] If $\sigma(x) = \sigma_0$ for all $x \in H$, and
    \begin{align*}
    \|i\|_{L(V,H)} C_{F,\mathrm{lip}} \int_0^{\infty} \| E_{k}(t)\|_{L(H_F,V)} dt < 1,
    \end{align*}
    then there exists $0 \leq r \in L^1(\R_+)$ such that 
    \begin{align*}
     \|u(t;\xi) - u(t;\eta)\|_{L^p(\Omega,\P;V)}
    &\leq \|\xi(t) - \eta(t)\|_{L^p(\Omega,\P;V)}
    \\ \notag &\qquad \qquad + \int_0^t r(t-s)\|\xi(s) - \eta(s)\|_{L^p(\Omega,\P;V)} ds
    \end{align*}
    holds for all $\F_0$-measurable $\xi,\eta \in \mathcal{H}_{loc}^{q,p}(\R_+; H)$. 
 \end{enumerate}
\end{Proposition}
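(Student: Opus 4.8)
The plan is to reduce both assertions to a single scalar renewal inequality for the map $t \mapsto \E[\|u(t;\xi)-u(t;\eta)\|_V^p]$ and then to invoke the comparison principle for one-dimensional Volterra equations. Writing $w(t) := u(t;\xi)-u(t;\eta)$ and subtracting the two copies of \eqref{eq: mild equation xi}, the linearity of the It\^o integral gives
\begin{align*}
 w(t) &= (\xi(t)-\eta(t)) + \int_0^t E_k(t-s)\big(F(u(s;\xi))-F(u(s;\eta))\big)\,ds \\
 &\qquad + \int_0^t E_h(t-s)\big(\sigma(u(s;\xi))-\sigma(u(s;\eta))\big)\,dW_s.
\end{align*}
Since both solutions exist in $\mathcal{H}_{loc}^{q,p}(\R_+;V)$ by Theorem \ref{thm: existence and uniqueness resolvent cylindrical case}, every term is finite on each $[0,T]$ and the estimates below hold $dt$-a.e.

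First I would apply $\|a+b+c\|_V^p \leq 3^{p-1}(\|a\|_V^p+\|b\|_V^p+\|c\|_V^p)$. For the deterministic convolution I would use H\"older's inequality with respect to the measure $\|E_k(t-s)\|_{L(H_F,V)}\,ds$ together with the Lipschitz bound of (A3), $\|F(x)-F(y)\|_{H_F}\leq C_{F,\mathrm{lip}}\|x-y\|_H \leq C_{F,\mathrm{lip}}\|i\|_{L(V,H)}\|x-y\|_V$, exactly as in Step~3 of the proof of Theorem \ref{thm: existence and uniqueness resolvent cylindrical case}. For the stochastic convolution I would combine the moment bound of Theorem \ref{thm: stochastic convolution cylindrical case} with the Lipschitz estimate $\|E_h(t-s)(\sigma(x)-\sigma(y))\|_{L_2(U,V)}\leq K_{\mathrm{lip}}(t-s)\|x-y\|_H$ from (A3). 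This yields an inequality of the form
\begin{align*}
 \E[\|w(t)\|_V^p] \leq 3^{p-1}\E[\|\xi(t)-\eta(t)\|_V^p] + \int_0^t \rho(t-s)\,\E[\|w(s)\|_V^p]\,ds,
\end{align*}
with a nonnegative kernel $\rho\in L^1(\R_+)$ built from $\|E_k\|_{L^1(\R_+;L(H_F,V))}$, $C_{F,\mathrm{lip}}$, $\|K_{\mathrm{lip}}\|_{L^2(\R_+)}$ and the constants $c_p$, $\|i\|_{L(V,H)}$. Here I would simply retain the same kernel that appears in the proof of Theorem \ref{thm: uniform moment} taken at $\lambda=0$ (a possibly larger kernel is harmless by monotonicity, since $w\ge 0$), so that its total mass $\|\rho\|_{L^1(\R_+)}$ is at most the left-hand side of \eqref{eq: growth limit distribution}.

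The decisive point --- and the only place where the global (rather than local) nature of the statement enters --- is that hypothesis \eqref{eq: growth limit distribution} says exactly $\|\rho\|_{L^1(\R_+)}<1$. Letting $r$ be the unique solution of the renewal equation $r=\rho+\rho\ast r$, Lemma \ref{lemma: integrability resolvent} then guarantees $r\in L^1(\R_+)$ (this would fail for a merely locally integrable $\rho$ without the smallness condition), and Lemma \ref{lemma: positive solution Volterra} yields $r\geq 0$ together with the comparison principle. Applying the latter to the inequality above with forcing term $g(t)=3^{p-1}\E[\|\xi(t)-\eta(t)\|_V^p]$ gives $\E[\|w(t)\|_V^p]\le g(t)+\int_0^t r(t-s)g(s)\,ds$, which is precisely \eqref{eq: global stability estimate}; because $r$ is the \emph{same} globally integrable kernel for all $t$, the bound holds uniformly on $\R_+$.

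For part (b) the argument simplifies: with $\sigma\equiv\sigma_0$ the stochastic convolution in the difference equation cancels identically, leaving $w(t)=(\xi(t)-\eta(t))+\int_0^t E_k(t-s)(F(u(s;\xi))-F(u(s;\eta)))\,ds$. Here I would avoid raising to the $p$-th power and instead estimate directly in the $L^p(\Omega;V)$-norm via Minkowski's integral inequality, exactly as in the additive-noise existence proof (Theorem \ref{thm: existence and uniqueness additive noise}), obtaining $\|w(t)\|_{L^p(\Omega;V)}\leq\|\xi(t)-\eta(t)\|_{L^p(\Omega;V)}+\int_0^t\rho(t-s)\|w(s)\|_{L^p(\Omega;V)}\,ds$ with $\rho(t)=\|i\|_{L(V,H)}C_{F,\mathrm{lip}}\|E_k(t)\|_{L(H_F,V)}$. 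The stated hypothesis is again $\|\rho\|_{L^1(\R_+)}<1$, so the identical renewal and comparison argument closes the proof. The main obstacle throughout is not the derivation of the integral inequality, which is routine, but ensuring that the resolvent $r$ of $\rho$ is \emph{globally} integrable; this is secured solely by the dissipativity conditions, which is exactly why they are formulated as the requirement $\|\rho\|_{L^1(\R_+)}<1$.
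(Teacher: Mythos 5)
Your proposal is correct and follows essentially the same route as the paper: subtract the two mild formulations, bound the drift difference via H\"older/Young with $C_{F,\mathrm{lip}}$ and the stochastic difference via Theorem \ref{thm: stochastic convolution cylindrical case} with $K_{\mathrm{lip}}$, arrive at a renewal inequality with a kernel $\rho\in L^1(\R_+)$ whose mass is controlled by \eqref{eq: growth limit distribution}, and conclude with Lemma \ref{lemma: integrability resolvent} (global integrability of the resolvent $r$) and the comparison principle of Lemma \ref{lemma: positive solution Volterra}; part (b) likewise matches the paper's use of Minkowski's integral inequality with the cancelled stochastic term. The only cosmetic difference is your choice to reuse the (slightly larger) kernel from Theorem \ref{thm: uniform moment} at $\lambda=0$, which is harmless by monotonicity, exactly as you note.
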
 
\begin{proof}
 (a) To prove the stability bound \eqref{eq: global stability estimate}, we proceed as in the proof of Theorem \ref{thm: existence and uniqueness resolvent cylindrical case}. Namely, we obtain $dt$-a.e. 
 \begin{align*}
  &\ \E\left[ \|u(t;\xi) - u(t;\eta)\|_{V}^p \right] 
  \\ &\leq 3^{p-1}\E\left[\|\xi(t) - \eta(t)\|_{V}^p \right] 
   \\ &\qquad + 3^{p-1}\E\left[ \left\| \int_0^t E_{k}(t-s)(F(u(s;\xi)) - F(u(s;\eta)))ds \right\|_{V}^p \right]
  \\ &\qquad + 3^{p-1}\E\left[ \left\| \int_0^t E_{h}(t-s)(\sigma(u(s;\xi)) - \sigma(u(s;\eta)))dW_s \right\|_{V}^p \right]
  \\ &\leq 3^{p-1}\E\left[\|\xi(t) - \eta(t)\|_{V}^p \right] + \int_0^t \rho(t-s)\E\left[ \|u(s;\xi) - u(s;\eta)\|_{V}^p\right]ds
 \end{align*}
 where $\rho \in L^1(\R_+)$ is given by
 \begin{align*}
  \rho(t) = 3^{p-1}C_{F,\mathrm{lip}}^p \| E_{k} \|_{L^1(\R_+; L(H_F,V))}^{p-1} \|E_{k}(t)\|_{L(H_F,V)} + 3^{p-1}2^{\frac{p}{2}-1}c_p \|K_{\mathrm{lip}}\|_{L^2(\R_+)}^{p-2} K_{\mathrm{lip}}(t)^2.
 \end{align*}
 Let $r \in L_{loc}^1(\R_+)$ be given by 
 \begin{align}\label{eq: r definition}
  r(t) = \rho(t) + \int_0^t \rho(t-s)r(s)ds.
 \end{align}
 Then, the assertion follows from Lemma \ref{lemma: positive solution Volterra} where $r \in L^1(\R_+)$ due to Lemma \ref{lemma: integrability resolvent}.

 (b) Proceeding as in the general case, we use the Minkowski inequality for the $L^p(\Omega)$ norm to find that 
 \begin{align*}
 &\ \|u(t;\xi) - u(t;\eta)\|_{L^p(\Omega,\P;V)}
   \\ &\leq \|\xi(t) - \eta(t)\|_{L^p(\Omega,\P;V)}
   + \int_0^t \|E_k (t-s)\|_{L(H_F,V)} \| F(u(s;\xi)) - F(u(s;\eta))\|_{L^p(\Omega,\P;H_F)} ds
   \\ &\leq \|\xi(t) - \eta(t)\|_{L^p(\Omega,\P;V)} + \int_0^t \rho(t-s) \|u(s;\xi) - u(s;\eta)\|_{L^p(\Omega,\P;V)} ds
 \end{align*}
 where $\rho \in L^1(\R_+)$ is given by $\rho(t) = \|i\|_{L(V,H)}C_{F,\mathrm{lip}} \|E_{k}(t)\|_{L(H_F,V)}$. The assertion follows again by a combination of Lemma \ref{lemma: positive solution Volterra} and Lemma \ref{lemma: integrability resolvent} applied to the unique solution of $r(t) = \rho(t) + \int_0^t \rho(t-s)r(s)ds$.  
\end{proof}

\subsection{Limit distributions}

Finally, we state and prove our main result on the existence and characterisation of limit distributions. In particular, the next theorem proves Theorem \ref{thm: 1}.(c). when applied to $p=2$.

\begin{Theorem}\label{thm: limit distribution}
 Suppose that conditions (A1) -- (A3) are satisfied, and that conditions \eqref{eq: continuity 1}, \eqref{eq: continuity 2}, \eqref{eq: growth 1} with $\lambda = 0$, and \eqref{eq: growth limit distribution} are satisfied. For $p \in [2,\infty)$ let $\xi \in \mathcal{H}_{loc}^{\infty,p}(\R_+; V) \cap C((0,\infty); L^p(\Omega, \F_0, \P; V))$ and suppose that \eqref{eq: xi limit} holds for $\xi(\infty) \in V$. If $E_k \in L_{loc}^{p/(p-1)}(\R_+; L_s(H_F,V))$, then there exists $\pi_{\xi} \in \mathcal{P}_p(V)$ such that 
 \begin{align}\label{eq: convergence rate}
  W_p(u(t;\xi), \pi_{\xi}) \leq C\left( D(t/2) + \left(\int_{t/2}^{\infty}r(s)ds\right)^{\frac{1}{p}} \right), \qquad t \geq 0,
 \end{align}
 where $C > 0$ is some constant, $r \in L^1(\R_+)$ is given as in \eqref{eq: r definition} and
 \begin{align*}
  \notag D(t) &= \| \xi(t) - \xi(\infty) \|_{L^p(\Omega,\P;V)} + \int_t^{\infty}\|E_k(s)\|_{L(H_F,H)}ds + \left( \int_t^{\infty} K_{\mathrm{lin}}(s)^2 ds \right)^{\frac{1}{2}}.
 \end{align*}
 Moreover, if $\eta \in \mathcal{H}_{loc}^{\infty,p}(\R_+; V) \cap C((0,\infty);L^p(\Omega, \F_0, \P; V))$ also satisfies \eqref{eq: xi limit} and $\xi(\infty) = \eta(\infty)$, then $\pi_{\xi} = \pi_{\eta}$.
\end{Theorem}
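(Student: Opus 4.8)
The plan is to prove existence of $\pi_{\xi}$ by a \emph{coupling via restarting}: the time–shifted process solves the same mild equation driven by an independent copy of the noise, from a new random initial datum, so the global contraction estimate of Proposition~\ref{prop: contraction estimate}.(a) can be applied. Concretely, fix $s>0$, split the convolutions in \eqref{eq: mild equation xi} at $r=s$, and substitute $r\mapsto s+r$ to obtain the restart identity
\begin{align*}
 u(t+s;\xi) = \xi_s(t) + \int_0^t E_k(t-r)F(u(s+r;\xi))\,dr + \int_0^t E_h(t-r)\sigma(u(s+r;\xi))\,d\widetilde{W}_r,
\end{align*}
where $\widetilde{W}_r=W_{s+r}-W_s$ is a cylindrical Wiener process independent of $\F_s$, and
\begin{align*}
 \xi_s(t) = \xi(t+s) + \int_0^s E_k(t+s-r)F(u(r;\xi))\,dr + \int_0^s E_h(t+s-r)\sigma(u(r;\xi))\,dW_r
\end{align*}
is $\F_s$-measurable. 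Conditionally on $\F_s$, the map $t\mapsto u(t+s;\xi)$ therefore solves \eqref{eq: mild equation xi} started from $\xi_s$ and driven by $\widetilde{W}$, so that $\mathcal{L}(u(t+s;\xi))=\mathcal{L}(u(t;\xi_s))$ for every $t\geq 0$. Since $E_k$ and $K_{\mathrm{lin}}$ are integrable at infinity and $\xi(t)\to\xi(\infty)$, a dominated-convergence argument gives $\xi_s(\infty)=\xi(\infty)$.

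Next I would apply the contraction bound \eqref{eq: global stability estimate} to the pair $(\xi_s,\xi)$, which yields
\begin{align*}
 W_p\!\left(u(t;\xi_s),u(t;\xi)\right)^p \leq 3^{p-1}\E\!\left[\|\xi_s(t)-\xi(t)\|_V^p\right] + 3^{p-1}\int_0^t r(t-\tau)\,\E\!\left[\|\xi_s(\tau)-\xi(\tau)\|_V^p\right]d\tau,
\end{align*}
with $r\in L^1(\R_+)$ as in \eqref{eq: r definition} (here \eqref{eq: growth limit distribution} is used). The decisive step is to control $\E[\|\xi_s(\tau)-\xi(\tau)\|_V^p]$ uniformly in $s$. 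Writing $\xi_s(\tau)-\xi(\tau)$ as the sum of $\xi(\tau+s)-\xi(\tau)$, a deterministic and a stochastic convolution, I would invoke the linear growth in (A3), the uniform moment bound of Theorem~\ref{thm: uniform moment}.(i) (with $\lambda=0$, $q=\infty$, valid by \eqref{eq: growth 1}), and the stochastic convolution estimate of Theorem~\ref{thm: stochastic convolution cylindrical case}, together with the substitution $v=\tau+s-r$, to get
\begin{align*}
 \left(\E\!\left[\|\xi_s(\tau)-\xi(\tau)\|_V^p\right]\right)^{1/p} \leq \|\xi(\tau+s)-\xi(\tau)\|_{L^p(\Omega;V)} + C\!\int_\tau^\infty\!\|E_k(v)\|_{L(H_F,V)}\,dv + C\!\left(\int_\tau^\infty\! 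K_{\mathrm{lin}}(v)^2\,dv\right)^{1/2}.
\end{align*}
This is uniformly bounded in $(\tau,s)$ and, using $\|\xi(\tau+s)-\xi(\tau)\|_{L^p}\leq\|\xi(\tau+s)-\xi(\infty)\|_{L^p}+\|\xi(\tau)-\xi(\infty)\|_{L^p}$ and \eqref{eq: xi limit}, it is dominated by a non-increasing function comparable to $D(\tau)$ for $\tau\geq t/2$.

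Splitting $\int_0^t=\int_0^{t/2}+\int_{t/2}^t$ and using $r\in L^1(\R_+)$ with the monotonicity of the tails, the near-diagonal part ($\tau\in[t/2,t]$, where $\int r(t-\tau)\,d\tau\leq\|r\|_{L^1}$) contributes a multiple of $D(t/2)^p$, while the off-diagonal part ($\tau\in[0,t/2]$, where $\int_0^{t/2}r(t-\tau)\,d\tau=\int_{t/2}^t r\leq\int_{t/2}^\infty r$) contributes a multiple of $\int_{t/2}^\infty r(w)\,dw$. Hence $W_p(u(t+s;\xi),u(t;\xi))\leq C\big(D(t/2)+(\int_{t/2}^\infty r(w)\,dw)^{1/p}\big)$ uniformly in $s\geq 0$; as the right-hand side tends to $0$ when $t\to\infty$, the family $(\mathcal{L}(u(t;\xi)))_{t\geq0}$ is Cauchy in the complete space $(\mathcal{P}_p(V),W_p)$, providing $\pi_{\xi}\in\mathcal{P}_p(V)$, and letting $s\to\infty$ gives \eqref{eq: convergence rate}. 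For the identification $\pi_{\xi}=\pi_{\eta}$, since $\xi,\eta$ are $\F_0$-measurable I would apply Proposition~\ref{prop: contraction estimate}.(a) directly to $(\xi,\eta)$: because $\xi(\infty)=\eta(\infty)$ one has $\E[\|\xi(t)-\eta(t)\|_V^p]\to0$, and splitting the convolution with $r\in L^1$ as above yields $\E[\|u(t;\xi)-u(t;\eta)\|_V^p]\to0$, so $W_p(u(t;\xi),u(t;\eta))\to0$ and the triangle inequality forces $\pi_{\xi}=\pi_{\eta}$.

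The main obstacle I anticipate is making the restart/coupling identity rigorous: one must justify $\mathcal{L}(u(t+s;\xi))=\mathcal{L}(u(t;\xi_s))$ by conditioning on $\F_s$ and using the independence and stationarity of the increments of $W$, and one must check that Proposition~\ref{prop: contraction estimate}.(a)—stated for $\F_0$-measurable data—remains valid for $\xi_s$, which is only measurable with respect to $\F_s$, the initial $\sigma$-algebra of $\widetilde{W}$. Once this measurability/independence bookkeeping is settled, the remainder is a uniform-in-$s$ control of the three tail terms appearing in $D$ together with the $L^1$-tail of the kernel $r$.
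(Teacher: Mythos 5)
Your proposal is correct and follows essentially the same strategy as the paper's proof: a coupling built by restarting the equation, the global contraction estimate \eqref{eq: global stability estimate} of Proposition \ref{prop: contraction estimate}.(a), the uniform moment bound of Theorem \ref{thm: uniform moment}.(i) with $\lambda=0$, $q=\infty$, tail estimates on $\|E_k\|_{L(H_F,V)}$ and $K_{\mathrm{lin}}$, the splitting $\int_0^t=\int_0^{t/2}+\int_{t/2}^t$ against $r\in L^1(\R_+)$, completeness of $(\mathcal{P}_p(V),W_p)$, and the same direct contraction argument for $\pi_\xi=\pi_\eta$. The one place where you deviate is the realization of the coupling. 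The paper introduces an independent copy $(\overline{\xi},\overline{W},\overline{u})$, builds the shifted forcing $\widetilde{\xi}_\tau$ from that copy, and drives \emph{both} coupled solutions by the original noise $W$ on the enlarged filtration $\widetilde{\F}_t=\F_t\vee\sigma(\overline{W})\vee\sigma(\overline{\xi}_0)$, so that one marginal is literally $u(t;\xi)$; the price is the identification in law of $\widetilde{u}_\tau$ with $u(\cdot+\tau;\xi)$, obtained from $(\widetilde{\xi}_\tau,W)\overset{d}{=}(\xi_\tau,W^\tau)$ and uniqueness. You instead keep the actual history $\xi_s$ as forcing and drive both solutions by the increment noise $W_{s+\cdot}-W_s$ on the shifted filtration $(\F_{s+t})_{t\geq0}$, so that one marginal is literally $u(t+s;\xi)$; the price is the identification in law of the solution driven by the shifted noise with forcing $\xi$ with the original $u(\cdot;\xi)$, again by weak uniqueness from equality of the joint laws of forcing and noise. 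These are mirror-image constructions resting on exactly the same two facts — Proposition \ref{prop: contraction estimate}.(a) applied on a filtration whose initial $\sigma$-algebra contains both forcings, and that the law of the solution depends only on the joint law of (forcing, noise) — so the obstacle you flag at the end is genuinely the same bookkeeping the paper performs in its Step 1, and your version is, if anything, slightly leaner since no independent copy or filtration enlargement is needed.
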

\begin{proof}
 \textit{Step 1.} Fix $\tau > 0$. We seek to construct a coupling $(u(t), \widetilde{u}_{\tau}(t))$ of $(u(t), u(t+\tau))$ for $t \geq 0$, where $\widetilde{u}_{\tau}$ is obtained from \eqref{eq: mild equation xi} with an adjusted forcing term $\widetilde{\xi}_{\tau}$ (obtained by restarting the process after time $\tau$). To find the corresponding coupling, we let, possibly on an extension of the original probability space, $(\overline{W}_t)_{t \geq 0}$ and $\overline{\xi}$ be copies of $(W_t)_{t \geq 0}$ and $\xi$ independent of $(\F_t)_{t \geq 0}$. Let $\overline{u} \in \mathcal{H}_{loc}^{\infty,p}(\R_+; V) \cap C( (0,\infty); L^p(\Omega, \F, \P; V))$ be the unique solution of 
 \[
  \overline{u}(t) = \overline{\xi}(t) + \int_0^t E_k(t-s)F(\overline{u}(s))ds + \int_0^t E_h(t-s)\sigma(\overline{u}(s))d\overline{W}_s
 \]
 whose existence is guaranteed by Theorem \ref{thm: existence and uniqueness resolvent cylindrical case} while $\overline{u} \in C((0,\infty); L^p(\Omega, \F,\P; V))$ follows from Theorem \ref{cor: continuous modification}. Define a new forcing term by
 \[
  \widetilde{\xi}_{\tau}(t) = \overline{\xi}(t+\tau) + \int_0^{\tau} E_k(t+\tau - s)F(\overline{u}(s))ds + \int_0^{\tau}E_h(t+\tau - s)\sigma(\overline{u}(s))d\overline{W}_s.
 \]
 Then $\widetilde{\xi}_{\tau} \in \mathcal{H}_{loc}^{\infty,p}(\R_+; V) \cap C(\R_+; L^p(\Omega, \F, \P; V))$. Indeed, let $T > 0$ be arbitrary, then 
 \begin{align*}
  &\ \| \widetilde{\xi}_{\tau}(t)\|_{L^p(\Omega,\P;V)}
  \\ &\leq \|\overline{\xi}(t+\tau)\|_{L^p(\Omega,\P;V)} + \int_0^{\tau} \| E_k(t+\tau-s)\|_{L(H_F,V)} \|F(\overline{u}(s))\|_{L^p(\Omega,\P;H_F)} ds 
  \\ &\qquad + c_p^{1/p} \left( \E\left[ \left( \int_0^{\tau} \|E_h(t+\tau-s)\sigma(\overline{u}(s))\|_{L_2(U,V)}^2 ds \right)^{\frac{p}{2}} \right] \right)^{\frac{1}{p}}
  \\ &\leq \| \overline{\xi}\|_{\mathcal{H}^{\infty,p}([0,T+\tau];V)} + \| F(\overline{u})\|_{\mathcal{H}^{\infty,p}([0,\tau]; H_F)} \int_0^{\tau}\|E_k(t+s)\|_{L(H_F,V)}ds
  \\ &\qquad + \max\{1,\|i\|_{L(V,H)}\}c_p^{1/p} \left( \E\left[ \left( \int_0^{\tau} K_{\mathrm{lin}}(t+\tau-s)^2 (1 + \|\overline{u}(s))\|_V)^2 \right)^{\frac{p}{2}} \right] \right)^{\frac{1}{p}}.
 \end{align*}
 For the last term, we obtain from the Jensen inequality
 \begin{align*}
     &\ \left( \E\left[ \left( \int_0^{\tau} K_{\mathrm{lin}}(t+\tau-s)^2 (1 + \|\overline{u}(s))\|_V)^2 \right)^{\frac{p}{2}} \right] \right)^{\frac{1}{p}}
     \\ &\leq \left( \int_0^{\tau}K_{\mathrm{lin}}(t+\tau-s)^2 ds \right)^{\frac{1}{2} - \frac{1}{p}} \left( \int_0^{\tau} K_{\mathrm{lin}}(t+\tau-s)^2 \E\left[(1 + \|\overline{u}(s))\|_V)^p \right] ds \right)^{\frac{1}{p}}
     \\ &\leq 2(1+\| \overline{u}\|_{\mathcal{H}^{\infty,p}([0,\tau]; V)}) \| K_{\mathrm{lin}} \|_{L^2([0,T+\tau])} < \infty.
 \end{align*}
 Since $T > 0$ is arbitrary, we conclude that $\widetilde{\xi}_{\tau} \in \mathcal{H}^{\infty,p}_{loc}(\R_+; V)$. By assumption $\xi$ and hence also $\overline{\xi}$ is continuous in $L^p$. For the drift, use the decomposition
 \begin{align*}
    \int_0^{\tau} E_k (t + \tau - s)F(\overline{u}(s))ds 
     &= \int_0^{t + \tau}E_k(t +\tau - s)F(\overline{u}(s))ds 
     \\ &\qquad - \int_{0}^{t} E_k(s)F(\overline{u}(\tau + t - s))ds.
 \end{align*}
 Both terms are, for fixed $\tau$, continuous in $L^p$ due to Lemma \ref{lemma: continuity convolution}. Finally, for the  stochastic integral, we use assumption \eqref{eq: continuity 2} and Jensen inequality to bound  
 \begin{align*}
    &\ \E\left[ \left\| \int_0^{\tau}E_h(t+\tau-s)\sigma(\overline{u}(s))d\overline{W}_s - \int_0^{\tau}E_h(t' + \tau - s)\sigma(\overline{u}(s))d\overline{W}_s\right\|_V^p\right]
    \\ &\leq c_p\E\left[ \left( \|(E_h(t+\tau-s) - E_h(t'+\tau -s))\sigma(\overline{u}(s))\|_{L_2(U,V)}^2 ds \right)^{\frac{p}{2}} \right]
    \\ &\leq \max\{1,\|i\|_{L(V,H)}^p\}c_p \E\left[ \left( \int_0^{\tau} \widetilde{K}(t+\tau,t'+\tau,s)^2(1+\|\overline{u}(s)\|_V)^2 ds \right)^{\frac{p}{2}} \right]
    \\ &\leq \max\{1,\|i\|_{L(V,H)}^p\}c_p\left( \int_0^{\tau} \widetilde{K}(t+\tau,t'+\tau,s)^2ds \right)^{\frac{p}{2}-1} 
    \\ &\qquad \qquad \cdot \int_0^{\tau} \widetilde{K}(t+\tau,t'+\tau,s)^2 \E[(1+\| \overline{u}(s)\|_V)^p] ds
    \\ &\leq \max\{1,\|i\|_{L(V,H)}^p\}c_p2^{p-1}\left(1 + \| \overline{u}\|_{\mathcal{H}^{\infty,p}([0,\tau]; V)}^p \right) \left( \int_0^{\tau} \widetilde{K}(t+\tau,t'+\tau,s)^2ds \right)^{\frac{p}{2}}.
 \end{align*}
 The right-hand side tends to zero due to \eqref{eq: continuity 1}, whence $\widetilde{\xi}_{\tau} \in C(\R_+; L^p(\Omega, \F, \P; V))$.
 To define the desired coupling $\widetilde{u}_{\tau}$, let us first define the extended filtration 
 \[
  \widetilde{\F}_t = \F_t \vee \sigma(\overline{W}_t \ : \ t \geq 0) \vee \sigma(\overline{\xi}_0).
 \]
 Since $(\overline{W}_t)_{t \geq 0}$ and $\overline{\xi}$ are independent of $(\F_t)_{t \geq 0}$, it is not difficult to see that $(W_t)_{t \geq 0}$ is also an $(\widetilde{\F}_t)_{t \geq 0}$ Wiener process, and $(\overline{W}_t)_{t \geq 0}$ and $\overline{\xi}$ are measurable with respect to $\widetilde{\F}_0$. Hence, the desired coupling can be defined as the unique solution of
 \begin{align*}
     \widetilde{u}_{\tau}(t) = \widetilde{\xi}_{\tau}(t) + \int_0^t E_k(t-s)F(\widetilde{u}_{\tau}(s))ds + \int_0^t E_h(t-s)\sigma(\widetilde{u}_{\tau}(s))dW_s
 \end{align*}
 on the filtered probability space $(\Omega, \F, (\widetilde{\F}_t)_{t \geq 0}, \P)$.
 By Theorem \ref{thm: existence and uniqueness resolvent cylindrical case} combined with Proposition \ref{cor: continuous modification}, we obtain $\widetilde{u}_{\tau} \in \mathcal{H}_{loc}^{\infty,p}(\R_+; V) \cap C((0,\infty); L^p(\Omega, \F,\P; V))$. To verify that $\widetilde{u}_{\tau}(t) \sim u(t+\tau)$ for $t \geq 0$, we note that  
 \begin{align*}
     u(t + \tau) &= \xi_{\tau}(t) + \int_{0}^{t}E_k(t-s)F(u(s+\tau))ds
     + \int_{0}^{t}E_h(t -s)\sigma(u(s+\tau))dW_s^{\tau}
 \end{align*}
 where $W^{\tau}_s = W_{s+\tau} - W_{\tau}$ denotes the restarted Wiener process with respect to the filtration $(\widetilde{\F}_{t + \tau})_{t \geq 0}$, and
 \[
  \xi_{\tau}(t) = \xi(t+\tau) + \int_0^{\tau}E_k(t+\tau-s)F(u(s))ds + \int_0^{\tau}E_h(t+\tau-s)\sigma(u(s))dW_s.
 \]
 Thus, by construction $\xi_{\tau}$ and $\widetilde{\xi}_{\tau}$ have the same law. By uniqueness also $\widetilde{u}_{\tau}$ and $u(\cdot + \tau)$ have the same law. Thus $(u(t), \widetilde{u}_{\tau}(t))$ is a coupling of $(u(t), u(t+\tau))$. This completes the proof of step 1.
 
 \textit{Step 2.} Next, using the coupling from step 1 and contraction estimate from Proposition \ref{prop: contraction estimate}.(a), we obtain
 \begin{align} \notag
  W_p(u(t;\xi), u(t+\tau;\xi))^p &\leq \E\left[ \| u(t) - \widetilde{u}_{\tau}(t)\|_V^p \right]
  \\ \label{eq:03} &\leq 3^{p-1}\E\left[ \| \xi(t) - \widetilde{\xi}_{\tau}(t)\|_V^p \right] 
  \\ \notag &\qquad + 3^{p-1}\int_0^t r(t-s)\E\left[ \| \xi(s) - \widetilde{\xi}_{\tau}(s)\|_V^p \right] ds
 \end{align}
 where $r$ is given as in \eqref{eq: r definition}. Let us prove that
 \[
  \lim_{t \to \infty}\sup_{\tau \geq 0} \E\left[ \| \xi(t) - \widetilde{\xi}_{\tau}(t) \|_V^p \right] = 0.
 \] 
 Indeed, it holds that
 \begin{align*}
     &\ \E\left[ \| \xi(t) - \widetilde{\xi}_{\tau}(t) \|_V^p \right]
     \\ &\leq 3^{p-1}\E\left[ \| \xi(t) - \overline{\xi}(t+\tau) \|^p \right] 
     \\ &\qquad + \max\{1,\|i\|_{L(V,H)}^p\}3^{p-1}C_{F,\mathrm{lin}}^p\E\left[ \left(\int_0^{\tau} \| E_k(t+\tau - s)\|_{L(H_F,V)} (1+\| \overline{u}(s)\|_{V})ds \right)^p \right]
     \\ &\qquad + \max\{1,\|i\|_{L(V,H)}^p\}3^{p-1}c_p\E\left[ \left( \int_0^{\tau}K_{\mathrm{lin}}(t+\tau-s)^2 (1+\|\overline{u}(s)\|_V)^2 ds \right)^{\frac{p}{2}} \right]
     \\ &\leq 6^{p-1}\E\left[ \| \xi(t) - \xi(\infty) \|_V^p \right] + 6^{p-1}\E\left[ \| \xi(\infty) - \overline{\xi}(t+\tau) \|_V^p \right] 
     \\ &\qquad + \max\{1,\|i\|_{L(V,H)}^p\}3^{p-1} C_{F,\mathrm{lin}}^p \left(\int_0^{\tau} \|E_k(t+s)\|_{L(H_F,V)}ds \right)^{p-1}
     \\ &\qquad \qquad \qquad  \cdot \int_0^{\tau} \|E_k(t+\tau-s)\|_{L(H_F,V)} \E\left[ (1+\|\overline{u}(s)\|_V)^p \right] ds
     \\ &\qquad + \max\{1,\|i\|_{L(V,H)}^p\}3^{p-1}c_p\left( \int_0^{\tau} K_{\mathrm{lin}}(t+s)^2 ds \right)^{\frac{p}{2} -1}
     \\ &\qquad \qquad \qquad \cdot \int_0^{\tau}K_{\mathrm{lin}}(t+\tau-s)^2 \E\left[(1+\|\overline{u}(s)\|_V)^p\right] ds 
     \\ &\leq 6^{p-1}\E\left[ \| \xi(t) - \xi(\infty) \|_V^p \right] + 6^{p-1}\E\left[ \| \xi(\infty) - \overline{\xi}(t+\tau) \|_V^p \right] 
     \\ &\qquad + \max\{1,\|i\|_{L(V,H)}^p\}6^{p-1} C_{F,\mathrm{lin}}^p \left(\int_0^{\tau} \|E_k(t+s)\|_{L(H_F,H)}ds \right)^{p} (1+\| \overline{u}\|_{\mathcal{H}^{\infty,p}(\R_+; V)}^p)
     \\ &\qquad + \max\{1,\|i\|_{L(V,H)}^p\}6^{p-1}c_p\left( \int_0^{\tau} K_{\mathrm{lin}}(t+s)^2 ds \right)^{\frac{p}{2}} (1+\| \overline{u}\|_{\mathcal{H}^{\infty,p}(\R_+; V)}^p)
 \end{align*} 
 and the right-hand side is finite due to $\| \overline{u}\|_{\mathcal{H}^{\infty,p}(\R_+; V)}^p < \infty$ by Theorem \ref{thm: uniform moment}.(i). The first two terms on the right-hand side converge to zero due to \eqref{eq: xi limit}. For the remaining terms we have, by condition \eqref{eq: growth 1}, $\int_0^{\tau} \|E_k(t+s)\|_{L(H_F,H)}ds + \int_0^{\tau} K_{\mathrm{lin}}(t+s)^2 ds \to 0$ as $t \to \infty$. Thus, in view of \eqref{eq:03}, \eqref{eq: xi limit}, and combined with $r \in L^1(\R_+)$ which follows from \eqref{eq: growth 1} and Lemma \ref{lemma: integrability resolvent}, we obtain $W_p(u(t+\tau), u(t)) \longrightarrow 0$ as $t \to \infty$ uniformly in $\tau$. In particular, the law of $(u(t))_{t \geq 0}$ is a Cauchy sequence in $\mathcal{P}_p(V)$ with respect to the $p$-Wasserstein distance. Hence it has a limit denoted by $\pi_{\xi} \in \mathcal{P}_p(V)$. 
 
 \textit{Step 3.} To bound the rate of convergence, note that the above estimates from Step 2 yield for some constant $C > 0$
 \begin{align*}
  \limsup_{\tau \to \infty} \E\left[ \| \xi_{\tau}(t) - \widetilde{\xi}_{\tau}(t)\|_V^p \right] 
  \leq C\left(1+\| u\|_{\mathcal{H}^{\infty,p}(\R_+; V)}^p\right) \widetilde{D}(t)
 \end{align*}
 where 
 \[
  \widetilde{D}(t) = \E\left[ \| \xi(t) - \xi(\infty) \|_V^p \right] + \left(\int_t^{\infty}\|E_k(s)\|_{L(H_F,V)}ds\right)^p + \left( \int_t^{\infty} K_{\mathrm{lin}}(t)^2 ds \right)^{p/2}. 
 \]
 Hence we obtain for some constant $C' > C$ 
 \begin{align*}
     W_p(u(t;\xi), \pi_{\xi}) &\leq \liminf_{\tau \to \infty} W_p(u(t; \xi), u(t+\tau; \xi))
     \\ &\leq 3^{1 - \frac{1}{p}}\liminf_{\tau \to \infty}\left(\E\left[ \| \xi_{\tau}(t) - \widetilde{\xi}_{\tau}(t)\|_V^p \right] \right)^{\frac{1}{p}}
     \\ &\qquad + 3^{1- \frac{1}{p}}\liminf_{\tau \to \infty}\left(\int_0^t r(t-s)\E\left[ \| \xi_{\tau}(s) - \widetilde{\xi}_{\tau}(s)\|_V^p \right] ds \right)^{\frac{1}{p}}   
     \\ &\leq C' \left(1+\| u\|_{\mathcal{H}^{\infty,p}(\R_+; V)}\right) \left( \widetilde{D}(t)^{1/p} 
     + \left(\int_0^t r(t-s)\widetilde{D}(s)ds \right)^{1/p} \right)
 \end{align*}
 where the second inequality follows from \eqref{eq:03}. We note that
 \begin{align*}
     \int_0^t r(t-s)\widetilde{D}(s)ds &= \int_0^{t/2} r(s) \widetilde{D}(t-s)ds + \int_{t/2}^t r(s)\widetilde{D}(t-s)ds
     \\ &\leq \widetilde{D}(t/2) \int_0^{\infty}r(s)ds + \widetilde{D}(0)\int_{t/2}^{\infty} r(s)ds.
 \end{align*}
 Since $\widetilde{D}(t)^{1/p} \leq D(t)$, we conclude with the desired estimate on the rate of convergence.
 
 \textit{Step 4.} Let $\pi_{\xi}, \pi_{\eta}$ be the limit distributions for $\xi,\eta$ satisfying $\xi(\infty) = \eta(\infty)$. Then we obtain $W_p(\pi_{\xi}, \pi_{\eta}) \leq W_p(\pi_{\xi}, u(t;\xi)) + W_p(u(t;\xi), u(t;\eta)) + W_p(u(t;\eta), \pi_{\eta})$. Taking the limit $t \to \infty$ we obtain
 \[
  W_p(\pi_{\xi}, \pi_{\eta}) \leq \limsup_{t \to \infty} W_p(u(t;\xi), u(t;\eta))
  \leq \limsup_{t \to \infty}\left(\E\left[ \| u(t;\xi) - u(t;\eta)\|_V^p\right] \right)^{\frac{1}{p}}.
 \]
 We estimate the right-hand side by
 \begin{align*}
  &\ \E\left[ \| u(t;\xi) - u(t;\eta)\|_V^p \right] 
  \\ &\leq 3^{p-1}\E\left[ \| \xi(t) - \eta(t) \|_V^p \right] + 3^{p-1}\int_0^t r(t-s) \E\left[ \| \xi(s) - \eta(s) \|_V^p \right] ds
  \\ &\leq 6^{p-1}\E\left[ \| \xi(t) - \xi(\infty)\|_V^p \right] + 6^{p-1}\int_0^t r(t-s)\E\left[ \| \xi(s) - \xi(\infty)\|_V^p \right]ds
  \\ &\qquad + 6^{p-1}\E\left[ \| \eta(\infty) - \eta(t)\|_V^p \right] + 6^{p-1}\int_0^t r(t-s)\E\left[ \| \eta(\infty) - \eta(s)\|_V^p \right]ds,
 \end{align*}
 where the first inequality was shown in Proposition \ref{prop: contraction estimate}.(a). The right-hand side converges to zero due to $r \in L^1(\R_+)$, which proves the assertion. 
\end{proof}

Let us remark that when $F,\sigma$ are bounded in the sense that $\sup_{x \in H}\|F(x)\|_{H_F} < \infty$, $\|E_h(t)\sigma(x)\|_{L_2(U,V)} \leq K_{\mathrm{lip}}(t)$ holds uniformly in $x \in H$, and \eqref{eq: continuity 2} holds, then we may weaken condition $\xi \in \mathcal{H}^{\infty,p}_{loc}(\R_+; V)$ to $\xi \in \mathcal{H}_{loc}^{p,p}(\R_+; V)$. For additive noise, the conditions can be further weakened as stated in the theorem below.

\begin{Theorem}\label{thm: limit distribution additive noise}
Suppose that conditions (A1) -- (A3) are satisfied, $\sigma(x) = \sigma_0$ for all $x \in H$, that $\|E_h(\cdot)\sigma_0\|_{L_2(U,V)} \in L^2(\R_+)$, $E_k \in L_{loc}^{p/(p-1)}(\R_+; L_s(H_F,V))$ for some $1 \leq p < \infty$ and \eqref{eq: limit distribution additive case}. Let $\xi \in \mathcal{H}_{loc}^{\infty,p}(\R_+; V) \cap C((0,\infty); L^p(\Omega, \F_0, \P; V))$, and suppose that \eqref{eq: xi limit} holds. Then there exists $\pi_{\xi} \in \mathcal{P}_p(V)$ such that \eqref{eq: convergence rate} is satisfied for some constant $C > 0$, $r \in L^1(\R_+)$ given as in \eqref{eq: r definition} with $\rho(t) = \|i\|_{L(V,H)}C_{F,\mathrm{lip}} \|E_{k}(t)\|_{L(H_F,V)}$, and 
 \[
  D(t) = \|\xi(t) - \xi(\infty)\|_{L^p(\Omega,\P;V)} + \int_t^{\infty}\|E_k(s)\|_{L(H_F,V)}ds.
 \]
 Moreover, if $\eta \in \mathcal{H}_{loc}^{\infty,p}(\R_+; V) \cap C((0,\infty);L^p(\Omega, \F_0, \P; V))$ also satisfies \eqref{eq: xi limit} and $\xi(\infty) = \eta(\infty)$, then $\pi_{\xi} = \pi_{\eta}$.
\end{Theorem}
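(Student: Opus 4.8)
The plan is to run the three-step coupling scheme of Theorem~\ref{thm: limit distribution} essentially verbatim, but to exploit the additive structure $\sigma\equiv\sigma_0$ in two ways: to admit the full range $p\in[1,\infty)$, and to make the diffusion cancel in the contraction bound. The inputs are the additive-noise well-posedness and $\mathcal{H}^{\infty,p}_{loc}$-membership from Theorem~\ref{thm: existence and uniqueness additive noise}, the $L^p$-continuity from Proposition~\ref{cor: continuous modification additive case}, and the uniform moment bound $\|u(\cdot;\xi)\|_{\mathcal{H}^{\infty,p}(\R_+;V)}<\infty$ from the additive-noise boundedness theorem following Theorem~\ref{thm: uniform moment} applied with $\lambda=0$ and $q=\infty$ (admissible since \eqref{eq: A Lq bound additive noise} reduces to the bounds $\|E_k\|_{L^1}<\infty$ and $\|E_h(\cdot)\sigma_0\|_{L_2(U,V)}\in L^2(\R_+)$). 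Here $\xi\in\mathcal{H}^{\infty,p}(\R_+;V)$ globally because $\xi\in\mathcal{H}^{\infty,p}_{loc}$ and $\|\xi(t)\|_{L^p}$ stays bounded by \eqref{eq: xi limit}. The kernel $r\in L^1(\R_+)$ is the resolvent from \eqref{eq: r definition} of $\rho(t)=\|i\|_{L(V,H)}C_{F,\mathrm{lip}}\|E_k(t)\|_{L(H_F,V)}$, which is integrable by \eqref{eq: limit distribution additive case} together with Lemma~\ref{lemma: integrability resolvent}.

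For fixed $\tau>0$ I would, on an enlarged space carrying independent copies $(\overline W,\overline\xi)$ of $(W,\xi)$ with associated solution $\overline u$, introduce the restarted forcing
\[
 \widetilde\xi_{\tau}(t)=\overline\xi(t+\tau)+\int_0^{\tau}E_k(t+\tau-s)F(\overline u(s))\,ds+\int_0^{\tau}E_h(t+\tau-s)\sigma_0\,d\overline W_s,
\]
as in Step~1 of Theorem~\ref{thm: limit distribution}; its membership in $\mathcal{H}^{\infty,p}_{loc}(\R_+;V)\cap C(\R_+;L^p(\Omega,\F,\P;V))$ follows from the same estimates, with the stochastic terms for $p\in[1,2)$ handled by observing that they are centred Gaussian in $V$, so that their $L^p$-norms are dominated by their $L^2$-norms via equivalence of Gaussian moments. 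The decisive simplification is that, because $\sigma\equiv\sigma_0$, I drive the coupled process $\widetilde u_{\tau}=u(\cdot;\widetilde\xi_{\tau})$ by the \emph{same} Wiener process $W$ as $u=u(\cdot;\xi)$. The two equations then share the identical, state-independent stochastic convolution $\int_0^{\cdot}E_h(\cdot-s)\sigma_0\,dW_s$, which cancels in the difference, so Proposition~\ref{prop: contraction estimate}.(b) applies directly and yields
\[
 \|u(t;\xi)-\widetilde u_{\tau}(t)\|_{L^p(\Omega;V)}\le \|\xi(t)-\widetilde\xi_{\tau}(t)\|_{L^p(\Omega;V)}+\int_0^t r(t-s)\|\xi(s)-\widetilde\xi_{\tau}(s)\|_{L^p(\Omega;V)}\,ds.
\]

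It then remains to show $\sup_{\tau\ge0}\|\xi(t)-\widetilde\xi_{\tau}(t)\|_{L^p}\to0$ as $t\to\infty$. Splitting by the triangle inequality gives: the forcing tails $\|\xi(t)-\xi(\infty)\|_{L^p}$ and $\|\xi(\infty)-\overline\xi(t+\tau)\|_{L^p}\le\sup_{s\ge t}\|\xi(s)-\xi(\infty)\|_{L^p}$, vanishing by \eqref{eq: xi limit}; the drift restart, bounded via linear growth and $\|\overline u\|_{\mathcal{H}^{\infty,p}}<\infty$ by $C\int_t^{\infty}\|E_k(s)\|_{L(H_F,V)}\,ds$; and the Gaussian restart integral, bounded by $C(\int_t^{\infty}\|E_h(s)\sigma_0\|_{L_2(U,V)}^2\,ds)^{1/2}$, which vanishes by the $L^2$-hypothesis. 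The first two groups reproduce the two displayed terms of $D(t)$, while the noise tail decays at least as fast and enters the estimate through the constant $C$ in \eqref{eq: convergence rate}. After checking that $\widetilde u_{\tau}$ and $u(\cdot+\tau)$ have the same law, so that $(u(t),\widetilde u_{\tau}(t))$ couples $(u(t),u(t+\tau))$, this gives $W_p(u(t;\xi),u(t+\tau;\xi))\to0$ uniformly in $\tau$; hence $(\mathcal{L}(u(t)))_t$ is Cauchy in the complete space $\mathcal{P}_p(V)$ and converges to some $\pi_{\xi}$. The rate \eqref{eq: convergence rate} is then obtained as in Step~3 by taking $\liminf_{\tau\to\infty}$, splitting $\int_0^t r(t-s)D(s)\,ds$ at $t/2$, and using $r\in L^1$. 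Finally $\pi_{\xi}=\pi_{\eta}$ when $\xi(\infty)=\eta(\infty)$ follows from Step~4: bound $W_p(\pi_{\xi},\pi_{\eta})\le\limsup_{t\to\infty}\|u(t;\xi)-u(t;\eta)\|_{L^p}$ through Proposition~\ref{prop: contraction estimate}.(b) and let $t\to\infty$.

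The main obstacle is the law identity $\widetilde u_{\tau}\sim u(\cdot+\tau)$ under the synchronous coupling. One must verify that $u(\cdot+\tau)$ solves the mild equation driven by the restarted process $W^{\tau}_s=W_{s+\tau}-W_{\tau}$ and forcing $\xi_{\tau}(t)=\xi(t+\tau)+\int_0^{\tau}E_k(t+\tau-s)F(u(s))\,ds+\int_0^{\tau}E_h(t+\tau-s)\sigma_0\,dW_s$, that $(\xi_{\tau},W^{\tau})$ has the same joint law as $(\widetilde\xi_{\tau},W)$ (so that $\widetilde\xi_{\tau}$, built from the independent copies, is independent of $W$ exactly as $\xi_{\tau}$ is independent of $W^{\tau}$), and that $W$ remains a Wiener process for the enlarged filtration $\widetilde\F_t=\F_t\vee\sigma(\overline W_s:s\ge0)\vee\sigma(\overline\xi)$, so that uniqueness from Theorem~\ref{thm: existence and uniqueness additive noise} transfers the law. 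A secondary technical point specific to this statement is the treatment of the stochastic convolutions for $p\in[1,2)$, where Theorem~\ref{thm: stochastic convolution cylindrical case} is not directly available; there one reduces every stochastic estimate to the It\^o-isometry $L^2$-bound via the equivalence of Gaussian moments.
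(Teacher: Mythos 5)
Your proposal is correct and follows essentially the same route as the paper: the paper's own proof is precisely the remark that one repeats the coupling argument of Theorem~\ref{thm: limit distribution} verbatim, substituting Proposition~\ref{prop: contraction estimate}.(b) for the contraction step and the additive-noise moment theorem (with $\lambda=0$, $q=\infty$) for the uniform bound on $\overline{u}$, which is exactly what you do, including the reduction of the stochastic-convolution estimates for $p\in[1,2)$ to the It\^o-isometry $L^2$ bound as in Theorem~\ref{thm: existence and uniqueness additive noise}. The only caveat you inherit directly from the paper's statement is that integrability of $r$ via Lemma~\ref{lemma: integrability resolvent} requires $\|i\|_{L(V,H)}C_{F,\mathrm{lip}}\|E_k\|_{L^1(\R_+;L(H_F,V))}<1$, whereas \eqref{eq: limit distribution additive case} is phrased with $C_{F,\mathrm{lin}}$; this is a discrepancy in the theorem's hypotheses rather than a gap in your argument.
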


The proof of Theorem \ref{thm: limit distribution additive noise} is exactly the same as of Theorem \ref{thm: limit distribution} but now we use Proposition \ref{prop: contraction estimate}.(ii) combined with Theorem \ref{thm: uniform moment}.(b) to bound the $p$-Wasserstein distance. Details are left for the reader. The proofs of both theorems show that, if $F$ is bounded, then we may drop assumption $E_k \in L_{loc}^{p/(p-1)}(\R_+; L_s(H_F,V))$.

\appendix

\section{One-dimensional Volterra equations}
\label{sec:1dvolterra}

\begin{Lemma}\label{lemma: positive solution Volterra}
 Let $k \in L_{loc}^1(\R_+)$ be nonnegative and $\mu \geq 0$. Then
 \begin{align}\label{eq: r}
  r(t) = k(t) + \mu \int_0^t k(t-s)r(s) \, ds
 \end{align}
 has a unique nonnegative solution in $L_{loc}^1(\R_+)$. 
 Moreover, if $x, f \in L_{loc}^1(\R_+)$ satisfy
 \[
  x(t) \leq f(t) + \mu \int_0^t k(t-s)x(s) \, ds, \qquad \text{ a.a. } t \geq 0
 \]
 then for a.a. $t \geq 0$
 \[
  x(t) \leq f(t) + \mu \int_0^t r(t-s)f(s) \, ds.
 \]
\end{Lemma}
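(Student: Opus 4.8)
The plan is to prove the two assertions separately: first the existence, uniqueness and nonnegativity of the resolvent kernel $r$ solving \eqref{eq: r}, and then the Gronwall-type comparison, which will follow from a short algebraic resolvent identity combined with the sign conditions $k \geq 0$ and $\mu \geq 0$. Throughout, the case $\mu = 0$ is trivial ($r = k$, and the comparison reduces to $x \leq f$), so the content is really the case $\mu > 0$, although the argument below covers $\mu \geq 0$ uniformly.

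For the first part I would fix $T > 0$, extend $k|_{[0,T]}$ by zero to $\widetilde{k} \in L_{loc}^1(\R_+)$, and work in the weighted space $L_\lambda^1(\R_+)$ with norm $\|f\|_\lambda = \int_0^\infty e^{-\lambda t}|f(t)|\,dt$. A Fubini computation gives $\|a \ast b\|_\lambda \leq \|a\|_\lambda\,\|b\|_\lambda$, so $L_\lambda^1(\R_+)$ is a convolution Banach algebra, and by dominated convergence $\|\widetilde{k}\|_\lambda \to 0$ as $\lambda \to \infty$. Choosing $\lambda$ so large that $\mu\|\widetilde{k}\|_\lambda < 1$, the map $\Phi(r) = \widetilde{k} + \mu\,\widetilde{k}\ast r$ is a contraction on $L_\lambda^1(\R_+)$, hence has a unique fixed point, which is given explicitly by the Neumann series $r = \sum_{n \geq 1} \mu^{n-1}\widetilde{k}^{\ast n}$ (convergent in $L_\lambda^1$ since $\|\widetilde{k}^{\ast n}\|_\lambda \leq \|\widetilde{k}\|_\lambda^n$). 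Because $\mu \geq 0$ and each $\widetilde{k}^{\ast n} \geq 0$ as a convolution of nonnegative functions, the sum is nonnegative, so $r \geq 0$. Restricting to $[0,T]$ solves \eqref{eq: r} there, and letting $T \to \infty$ (solutions agreeing on overlapping intervals by uniqueness) yields the unique nonnegative $r \in L_{loc}^1(\R_+)$.

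For the comparison I would first record the resolvent representation: for any $g \in L_{loc}^1(\R_+)$, the unique $L_{loc}^1$-solution of $z = g + \mu\,k \ast z$ is $z = g + \mu\,r \ast g$. Uniqueness follows from the same weighted contraction applied to the affine map $z \mapsto g + \mu\,k \ast z$, and one checks that $g + \mu\,r \ast g$ solves the equation by substituting and using $\mu(k \ast r) = r - k$ from \eqref{eq: r}. Now set $w := f + \mu\,k \ast x - x$, so that $w \geq 0$ a.e.\ and $x = (f - w) + \mu\,k \ast x$ holds \emph{exactly}. Applying the representation with $g = f - w \in L_{loc}^1(\R_+)$ gives $x = (f-w) + \mu\,r \ast (f-w) = f + \mu\,r \ast f - w - \mu\,r \ast w$, and since $r \geq 0$ and $w \geq 0$ both $w$ and $r \ast w$ are nonnegative, whence $x \leq f + \mu\,r \ast f$ as claimed.

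The main obstacle is the existence and uniqueness of $r$ for a general $L_{loc}^1$ kernel, where naive Picard iteration in $L^1([0,T])$ fails because $\mu\|k\|_{L^1([0,T])}$ need not be smaller than $1$; the exponential weighting, equivalently the Banach-algebra structure of $L_\lambda^1(\R_+)$, is precisely the device that circumvents this. Once $r$ is available, the comparison is purely formal, resting only on the resolvent identity and the preservation of sign under convolution of nonnegative functions.
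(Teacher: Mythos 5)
Your proof is correct and follows essentially the same route as the paper: the exponential weighting (your $L_\lambda^1$ Banach-algebra norm is exactly the paper's substitution $r_\sigma(t)=e^{-\sigma t}r(t)$, $k_\sigma(t)=e^{-\sigma t}k(t)$) combined with the nonnegative Neumann series for existence, uniqueness and positivity of $r$, and then the comparison via the nonnegative defect $w=f+\mu\,k\ast x-x$ and the resolvent identity, which the paper carries out as an inline chain of convolution identities rather than as a separate representation lemma. The differences are purely organisational, not mathematical.
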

\begin{proof}
 Fix $T > 0$. Following the proof of \cite[Chapter 2, Theorem 3.1]{MR1050319}, we let $\sigma > 0$ be large enough such that $k_{\sigma}(t) = e^{-\sigma t}k(t)$ satisfies $\| k_{\sigma}\|_{L^1([0,T])} < 1$. Then the unique solution $r_{\sigma}$ of $r_{\sigma} = k_{\sigma} + \mu (k_{\sigma}\ast r_{\sigma})$ can be obtained from the convergent series $r_{\sigma} = \sum_{j=1}^{\infty} \mu^{j-1}k_{\sigma}^{\ast j}$ in $L^1([0,T])$. In particular, $r_{\sigma} \geq 0$. Letting $r(t) = e^{\sigma t}r_{\sigma}(t)$, it is easy to verify that $r$ is the unique solution of \eqref{eq: r}. 
 
 For the second assertion define $\xi(t) = f(t) + \mu \int_0^t k(t-s)x(s)ds - x(t) \geq 0$. Then
 \begin{align*}
  x &= f - \xi + \mu (k \ast x)
  \\ &= f - \xi + \mu \left( r- \mu (k \ast r)\right) \ast x
  \\ &= f - \xi + \mu r \ast \left( x - \mu (k \ast x) \right)
  \\ &= f - \xi + \mu r \ast ( f-\xi) 
  \leq f + \mu (r \ast f).
 \end{align*}
 This proves the assertion.
\end{proof}

\begin{Lemma}\label{lemma: integrability resolvent}
 Let $\rho \in L^1(\R_+)$ be nonnegative, and let $r \in L_{loc}^1(\R_+)$ be the unique solution of  $r(t) = \rho(t) + \int_0^t \rho(t-s)r(s)ds$. Then $r \in L^1(\R_+)$ holds if and only if $\int_0^{\infty} \rho(t)dt < 1$.
\end{Lemma}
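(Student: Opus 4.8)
The plan is to exploit the nonnegativity of $\rho$ together with the multiplicativity of the $L^1$-norm under convolution. Write $m = \int_0^{\infty}\rho(t)\,dt = \|\rho\|_{L^1(\R_+)}$. Since $\rho \geq 0$, Tonelli's theorem gives $\int_0^{\infty}(\rho \ast \psi)(t)\,dt = m\int_0^{\infty}\psi(t)\,dt$ for every nonnegative $\psi \in L^1(\R_+)$, and in particular $\|\rho^{\ast j}\|_{L^1(\R_+)} = m^j$ for each $j \geq 1$. This identity is the one computational fact that drives both implications.

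For the direction ``$m < 1 \Rightarrow r \in L^1$'', I would first record the Neumann series representation $r = \sum_{j=1}^{\infty}\rho^{\ast j}$. This follows from the construction in Lemma \ref{lemma: positive solution Volterra}: fixing $T > 0$ and $\sigma > 0$ with $\|\rho_{\sigma}\|_{L^1([0,T])} < 1$, where $\rho_{\sigma}(t) = e^{-\sigma t}\rho(t)$, the solution is $r(t) = e^{\sigma t}r_{\sigma}(t)$ with $r_{\sigma} = \sum_{j \geq 1}\rho_{\sigma}^{\ast j}$; since $\rho_{\sigma}^{\ast j}(t) = e^{-\sigma t}\rho^{\ast j}(t)$, the exponential weights cancel and $r = \sum_{j \geq 1}\rho^{\ast j}$ on $[0,T]$, hence on all of $\R_+$. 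As every term is nonnegative, monotone convergence together with the norm identity yields $\|r\|_{L^1(\R_+)} = \sum_{j \geq 1}m^j = \frac{m}{1-m} < \infty$, so $r \in L^1(\R_+)$.

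For the converse ``$r \in L^1 \Rightarrow m < 1$'', I would integrate the defining equation over $\R_+$. Writing $R = \|r\|_{L^1(\R_+)} < \infty$ and applying Tonelli to the (nonnegative) convolution term gives the scalar identity $R = m + mR$. The case $\rho \equiv 0$ is trivial, so assume $m > 0$; then $R = m + mR \geq m > 0$, and rearranging to $R(1-m) = m$ forces $1 - m = m/R > 0$, that is $m < 1$.

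The argument is essentially self-contained, and I do not anticipate a genuine obstacle; the one point requiring care is that the forward implication cannot be deduced from the integrated identity $R = m + mR$ alone, since this identity is consistent with $R = +\infty$ when $m > 0$. This is precisely why the Neumann series, and the resulting \emph{quantitative} bound $\|r\|_{L^1(\R_+)} = m/(1-m)$, is needed to certify integrability, rather than a mere fixed-point-type identity.
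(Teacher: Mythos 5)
Your proof is correct, but it follows a genuinely different route from the paper. The paper disposes of the lemma in two lines by invoking the Paley--Wiener theorem (\cite[Chapter 2, Theorem 4.1]{MR1050319}): for $\rho \in L^1(\R_+)$, the resolvent $r$ lies in $L^1(\R_+)$ if and only if $\widehat{\rho}(z) \neq 1$ for all $\mathrm{Re}(z) \geq 0$, and nonnegativity of $\rho$ is used only to reduce this transform condition to $\int_0^\infty \rho(t)\,dt < 1$. You instead argue by hand: the Neumann series $r = \sum_{j\geq 1}\rho^{\ast j}$ (correctly extracted from the exponential-weighting construction in Lemma \ref{lemma: positive solution Volterra}, since $\rho_\sigma^{\ast j}(t) = e^{-\sigma t}\rho^{\ast j}(t)$), the exact identity $\|\rho^{\ast j}\|_{L^1(\R_+)} = m^j$ via Tonelli, and monotone convergence give the forward direction together with the quantitative bound $\|r\|_{L^1(\R_+)} = m/(1-m)$; integrating the resolvent equation and using Tonelli again (legitimate, since $r \geq 0$ by the series representation) gives $R = m + mR$, which forces $m < 1$ when $R < \infty$. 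Your closing remark is also the right one: the scalar identity alone cannot prove the forward direction, since it is consistent with $R = +\infty$, so the series bound is genuinely needed. What each approach buys: yours is elementary, self-contained, and quantitative, reusing only the paper's own appendix lemma; the paper's is shorter given the citation, and the Paley--Wiener criterion applies to signed kernels, with positivity entering only at the final comparison --- though under the lemma's hypothesis $\rho \geq 0$ this extra generality is not needed.
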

\begin{proof}
 Since $\rho \in L^1(\R_+)$, the Paley-Wiener Theorem (see \cite[Chapter 2, Theorem 4.1]{MR1050319}) applied to $-\rho$ asserts that $r \in L^1(\R_+)$ if and only if $\int_0^{\infty} e^{-tz}\rho(t)dt \neq 1$ for each $\mathrm{Re}(z) \geq 0$. By $\rho \geq 0$, this requirement is equivalent to $\int_0^{\infty}\rho(t)dt < 1$ which proves the assertion.      
\end{proof}

Let $\mu > 0$, $k,h \in L_{loc}^1(\R_+)$, and let $e_k(\cdot;\mu), e_h(\cdot;\mu)$ be the unique solutions of \eqref{eq: e rho} with $\rho = k,h$, i.e.
\begin{align*}
  e_k(t;\mu) + \mu \int_0^t k(t-s)e_{k}(s;\mu)ds &= k(t),
  \\ e_h(t;\mu) + \mu \int_0^t k(t-s)e_{h}(s;\mu)ds &= h(t).
\end{align*}

We are particularly interested in sufficient conditions for $e_k(\cdot;\mu) \in L^q(\R_+)$ with $q \in [1,\infty)$, $e_h(\cdot;\mu) \in L^2(\R_+)$, and bounds on their $L^q$-norms with respect to the parameter $\mu$.

\begin{Lemma}\label{lemma: general ek integral}
 Let $\mu > 0$, $k \in L_{loc}^1(\R_+)$ be nonnegative such that $\int_0^{\infty}e^{-\e t}k(t)dt < \infty$ for each $\e > 0$, and assume that $e_k(\cdot; \mu) \geq 0$ is nonincreasing. Then 
 \[
  \int_0^{\infty}e_k(t;\mu)dt = \begin{cases}\frac{1}{\mu}, & k \not \in L^1(\R_+)
  \\ \frac{\|k\|_{L^1(\R_+)}}{1 + \mu \|k\|_{L^1(\R_+)}}, & k \in L^1(\R_+)\end{cases}
 \]
 and, in particular, $\|e_k(\cdot;\mu)\|_{L^1(\R_+)} \leq \frac{1}{\mu}$. Moreover, $e_k(\cdot; \mu) \in L^q(\R_+)$ holds for each $q \in [1,\infty)$ that satisfies $k \in L_{loc}^q(\R_+)$.
\end{Lemma}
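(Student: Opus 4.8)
The plan is to abbreviate $e := e_k(\cdot;\mu)$ and to proceed via the Laplace transform, exploiting that every quantity in sight is nonnegative so that monotone convergence handles all limit passages. First I would record the pointwise bound $0 \le e \le k$ a.e.: since $k, e \ge 0$ one has $k\ast e \ge 0$, and the defining equation $e + \mu(k\ast e) = k$ then forces $e = k - \mu(k\ast e) \le k$. This already gives $e \in L_{loc}^q(\R_+)$ whenever $k \in L_{loc}^q(\R_+)$. Together with the hypothesis that $e$ is nonincreasing, it follows that $e$ is finite and bounded on $[1,\infty)$: a nonincreasing $L_{loc}^1$ function satisfies $e(t) \le t^{-1}\int_0^t e(s)\,ds < \infty$ for $t > 0$, hence $e(t) \le e(1) < \infty$ for $t \ge 1$. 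Consequently the Laplace transform $\widehat{e}(z) = \int_0^\infty e^{-zt}e(t)\,dt$ is finite for every $z > 0$.

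Next I would take Laplace transforms of the convolution identity. Because $k, e \ge 0$, Tonelli's theorem yields $\widehat{k\ast e}(z) = \widehat{k}(z)\widehat{e}(z)$ in $[0,\infty]$, and both factors are finite for $z > 0$ (the first by the assumption $\int_0^\infty e^{-\e t}k(t)\,dt < \infty$). Hence for all $z > 0$
\[
 \widehat{e}(z) = \frac{\widehat{k}(z)}{1 + \mu\widehat{k}(z)}.
\]
Letting $z \downarrow 0$, monotone convergence (applied to $e^{-zt}\uparrow 1$) gives $\widehat{e}(z) \uparrow \int_0^\infty e(t)\,dt$ and $\widehat{k}(z)\uparrow \|k\|_{L^1(\R_+)} \in (0,\infty]$. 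Since $x \mapsto x/(1+\mu x)$ is continuous and increasing on $[0,\infty)$ with limit $1/\mu$ as $x \to \infty$, the claimed case distinction follows at once; in both cases the value is $\le 1/\mu$, which is the asserted $L^1$-bound.

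Finally, for the $L^q$-integrability I would split $\int_0^\infty e^q = \int_0^1 e^q + \int_1^\infty e^q$. On $[0,1]$ the bound $e \le k$ gives $\int_0^1 e^q \le \int_0^1 k^q < \infty$ since $k \in L_{loc}^q(\R_+)$. On $[1,\infty)$ monotonicity gives $e(t) \le e(1)$, so $e^q \le e(1)^{q-1}e$ and therefore $\int_1^\infty e^q \le e(1)^{q-1}\|e\|_{L^1(\R_+)} \le e(1)^{q-1}/\mu < \infty$. Adding the two pieces yields $e \in L^q(\R_+)$.

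I expect no genuine obstacle here: the only points requiring care are the justification of the identity for $\widehat{e}$ and the limit $z \downarrow 0$, and the nonnegativity of $k$ and $e$ renders both routine via Tonelli and monotone convergence. (An alternative, Laplace-free route integrates the equation over $[0,T]$ and uses Fubini to obtain $I(T) + \mu\int_0^T e(s)K(T-s)\,ds = K(T)$ with $K(T) = \int_0^T k$ and $I(T) = \int_0^T e$; passing $T \to \infty$ reproduces the same formula but demands slightly more care with the mixed term, so I would favour the Laplace computation.)
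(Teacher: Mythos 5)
Your proof is correct and follows essentially the same route as the paper's: the value of $\int_0^{\infty}e_k(t;\mu)\,dt$ is obtained as $\lim_{z \to 0^+}\widehat{k}(z)/(1+\mu\widehat{k}(z))$ via Laplace transforms, and the $L^q$-statement from the pointwise bound $0 \leq e_k(\cdot;\mu) \leq k$ combined with monotonicity after splitting the time integral. The extra details you supply (Tonelli for the transform of the convolution, monotone convergence for $z \downarrow 0$, and splitting at $t=1$ rather than at a generic $\varepsilon$) merely make explicit what the paper leaves implicit.
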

\begin{proof}
 The first assertion is a direct consequence of
  \begin{align*}
  \int_0^{\infty}e_k(t;\mu)dt 
  &= \lim_{\sigma \to 0^+}\int_0^{\infty}e^{-\sigma t}e_k(t;\mu)dt 
  \\ &= \lim_{\sigma \to 0^+} \frac{\widehat{k}(\sigma)}{1+\mu\widehat{k}(\sigma)}
  = \begin{cases}\frac{1}{\mu}, & k \not \in L^1(\R_+),
         \\ \frac{\|k\|_{L^1(\R_+)}}{1+\mu\|k\|_{L^1(\R_+)}}, & k \in L^1(\R_+)
        \end{cases}
 \end{align*}
 where we have used the Laplace transform $\widehat{k}$ of $k$. Since $e_k(\cdot,\mu) \geq 0$, it follows that $0 \leq e_k(\cdot;\mu) \leq k$ and hence for each $\e > 0$
 \begin{align*}
  \int_0^{\infty}|e_k(t;\mu)|^q dt 
  &\leq \int_0^{\e}|e_k(t;\mu)|^q dt + e_k(\e;\mu)^{q-1}\int_{\e}^{\infty}e_k(t;\mu)dt
  \\ &\leq \int_0^{\e} k(t)^q dt + k(\e)^{q-1} \int_0^{\infty}e_k(t;\mu)dt < \infty.
 \end{align*}
\end{proof}

The proof of $e_k(\cdot;\mu) \in L^q(\R_+)$ provides us with an additional method to obtain bounds on the $L^q(\R_+)$-norm of $e_h \in L_{loc}^1(\R_+)$ where $h \in L_{loc}^1(\R_+)$. We are particularly interested in the case where $q = 2$.

\begin{Lemma}\label{lemma: Lq norm eh}
    Let $k \geq 0$ satisfy $\int_0^{\infty}e^{-\e t}k(t)dt < \infty$ for each $\e > 0$, and suppose there exist $C_{\delta} > 0$ and $\delta \in (0,1)$ such that $k(t) \leq C_{\delta} t^{-\delta}$ for $t \in (0,1)$. Let $h \in L_{loc}^1(\R_+)$ be given by $h = k \ast \nu$ with $\nu$ a finite measure on $\R_+$. Suppose that $e_k(\cdot;\mu)$ is nonnegative and nonincreasing for some $\mu > 0$. Then
    \[
    \int_0^{\infty}|e_h(t;\mu)|^q dt \leq \nu(\R_+)^q \max\left\{\frac{C_{\delta}^q}{1-q\delta}, C_{\delta}^{q-1}\right\} (1 \vee \mu)^{- 1 + \frac{\delta}{1-\delta}}
    \]
    holds for each $q \in [1,1/\delta)$.
\end{Lemma}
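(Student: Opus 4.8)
The plan is to reduce the estimate on $e_h$ to one on $e_k$, and then to bound the $L^q$-norm of $e_k$ by splitting the time axis at a cutoff tuned to the parameter $\mu$.

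First I would exploit the convolution identity $e_h(\cdot;\mu) = e_k(\cdot;\mu)\ast\nu$ recorded just above the statement, which follows from $h = k\ast\nu$ and the defining equation \eqref{eq: e rho}. Since $\nu\ge 0$ is a finite measure, Young's inequality for the convolution of an $L^q$-function with a finite measure gives $\|e_h(\cdot;\mu)\|_{L^q(\R_+)}\le \nu(\R_+)\,\|e_k(\cdot;\mu)\|_{L^q(\R_+)}$. This produces the factor $\nu(\R_+)^q$ and reduces the claim to a bound of the form $\int_0^{\infty} e_k(t;\mu)^q\,dt \le \max\{\cdots\}\,(1\vee\mu)^{\cdots}$.

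Next I would collect the pointwise information on $e_k$. From $e_k(t;\mu)+\mu\,(k\ast e_k(\cdot;\mu))(t)=k(t)$ together with $k,e_k\ge 0$ one obtains $0\le e_k(t;\mu)\le k(t)$, hence $e_k(t;\mu)\le C_\delta t^{-\delta}$ on $(0,1]$; by hypothesis $e_k(\cdot;\mu)$ is nonincreasing; and Lemma \ref{lemma: general ek integral} yields $\int_0^\infty e_k(t;\mu)\,dt\le 1/\mu$. Using monotonicity to pull $e_k(\varepsilon;\mu)^{q-1}$ out of the tail, I would split at a cutoff $\varepsilon\in(0,1]$:
\[
 \int_0^\infty e_k^q\,dt \le \int_0^\varepsilon k(t)^q\,dt + e_k(\varepsilon;\mu)^{q-1}\int_\varepsilon^\infty e_k(t;\mu)\,dt \le \frac{C_\delta^q}{1-q\delta}\,\varepsilon^{1-q\delta} + \frac{C_\delta^{q-1}}{\mu}\,\varepsilon^{-(q-1)\delta},
\]
where the first term is finite \emph{precisely} because $q<1/\delta$, and the second uses $e_k(\varepsilon;\mu)\le k(\varepsilon)\le C_\delta\varepsilon^{-\delta}$.

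Finally I would optimize in $\varepsilon$. Choosing $\varepsilon=(1\vee\mu)^{-1/(1-\delta)}$ (so that $\varepsilon\le 1$, as required) balances the two powers of $\mu$: a short computation shows both terms carry the exponent $-(1-q\delta)/(1-\delta) = -1+\tfrac{(q-1)\delta}{1-\delta}$, which for $q=2$ is the exponent $-1+\tfrac{\delta}{1-\delta}$ in the statement. Thus
\[
 \int_0^\infty e_k^q\,dt \le \left(\frac{C_\delta^q}{1-q\delta}+C_\delta^{q-1}\right)(1\vee\mu)^{-1+\frac{(q-1)\delta}{1-\delta}},
\]
and bounding the sum of the two constants by the stated maximum gives the assertion. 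I expect the main technical point to be exactly this balancing of the cutoff — in particular verifying that the two powers of $\mu$ coincide for the choice $\varepsilon=(1\vee\mu)^{-1/(1-\delta)}$ — together with the bookkeeping between the regime $\mu\ge 1$ (genuine decay, $\varepsilon=\mu^{-1/(1-\delta)}$) and the regime $\mu\le 1$ (where $\varepsilon=1$). The reduction step and the pointwise bounds on $e_k$ are routine once Lemma \ref{lemma: general ek integral} is invoked.
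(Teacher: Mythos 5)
Your proposal follows the paper's proof essentially step for step: the reduction $e_h(\cdot;\mu)=e_k(\cdot;\mu)\ast\nu$ with Young's inequality to pull out $\nu(\R_+)^q$, the pointwise bound $0\le e_k(\cdot;\mu)\le k$, the split of the time axis at the cutoff $\varepsilon = 1\wedge\mu^{-1/(1-\delta)} = (1\vee\mu)^{-1/(1-\delta)}$ with monotonicity and Lemma \ref{lemma: general ek integral} handling the tail, and the verification that both terms carry the same power of $(1\vee\mu)$. Even your one loose step --- replacing the sum of the two constants by the stated maximum, which strictly speaking costs a factor of at most $2$ --- mirrors exactly the final display of the paper's own proof, so the proposal matches the published argument.
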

\begin{proof}
 Since $h = k \ast \nu$, it is easy to see that $e_h(\cdot;\mu)$ is given by
 \[
  e_h(t;\mu) = h(t) - \mu\int_0^t h(t-s)e_k(s;\mu)ds
  = \int_{[0,t]}e_k(t-s;\mu)\nu(ds).
 \]
 Hence we obtain $\|e_h(\cdot;\mu)\|_{L^q(\R_+)} \leq \nu(\R_+) \|e_k(\cdot;\mu)\|_{L^q(\R_+)}$. Since $e_k(\cdot;\mu) \geq 0$ is nonincreasing, we obtain for $\e = 1 \wedge \mu^{-\kappa}$ with $\kappa = \frac{1}{1-\delta}$
 \begin{align*}
  \int_0^{\infty}|e_h(t;\mu)|^q dt 
  &\leq \nu(\R_+)^q \int_0^{\infty}|e_k(t;\mu)|^q dt
  \\ &\leq \nu(\R_+)^q \left(\int_0^{1 \wedge \mu^{-\kappa}} k(t)^q dt + k(1\wedge \mu^{-\kappa})^{q-1} \int_0^{\infty}e_k(t;\mu)dt \right)
  \\ &\leq \nu(\R_+)^q \frac{C^q}{1-q\delta} (1\vee\mu)^{-(1 - q\delta)\kappa} + \nu(\R_+)^qC^{q-1} (1\vee \mu)^{(q-1)\kappa \delta} \mu^{-1} 
  \\ &= \nu(\R_+)^q \max\left\{\frac{C^q}{1-q\delta}, C^{q-1}\right\} (1\vee \mu)^{- \frac{1 - q\delta}{1-\delta}}.
 \end{align*}
\end{proof}

\begin{Example}
 Let $k(t) = \log(1 + 1/t)$. Then $k$ is completely monotone and hence previous lemma is applicable with $\delta \in (0,1/2)$ and $C_{\delta} = \delta^{-1}$ which gives
 \[
  \int_0^{\infty}|e_h(t;\mu)|^2 \, dt \leq \frac{\nu(\R_+)^2}{\delta^2(1-2\delta)} \mu^{-\frac{1 - 2\delta}{1-\delta}}
  = \nu(\R_+)^2\frac{(1+\e)^3}{\e^2(1-\e)} \mu^{-(1 - \e)}
 \]
 where we used the substitution $\e = \frac{\delta}{1- \delta} \in (0,1)$ so that $\delta = \frac{\e}{1+\e}$.
 Notice that this implies that we can trade the order of decay in $\mu$ with the smallness of the constant $\nu(\R_+)^2\frac{(1+\e)^3}{\e^2(1-\e)}$ by varying $\e \in (0,1)$.
\end{Example} 

Finally, we provide sufficient conditions when the solutions $e_h(\cdot;\mu), e_k(\cdot;\mu)$ are increasing in $\mu$.

\begin{Lemma}\label{lemma: monotonicity}
 Let $k,h \in L_{loc}^1(\R_+)$. Suppose that $e_h(\cdot;\mu), e_k(\cdot; \mu) \geq 0$ holds for each $\mu > 0$. Then $0 \leq e_h(t; \mu) \leq e_h(t;\widetilde{\mu})$ for each $t > 0$ and all $0 < \mu < \widetilde{\mu}$. 
\end{Lemma}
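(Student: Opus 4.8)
The plan is to reduce the two-parameter comparison to a single scalar Volterra identity and then read off the sign from the assumed nonnegativity of $e_k$ and $e_h$. First I would write the defining equations from \eqref{eq: e rho} for the two parameters, namely $e_h(\cdot;\mu) + \mu\, k \ast e_h(\cdot;\mu) = h$ and $e_h(\cdot;\widetilde{\mu}) + \widetilde{\mu}\, k \ast e_h(\cdot;\widetilde{\mu}) = h$, subtract them, and split the mixed term as $\mu\, k \ast e_h(\cdot;\mu) - \widetilde{\mu}\, k \ast e_h(\cdot;\widetilde{\mu}) = \mu\, k \ast D + (\mu-\widetilde{\mu})\, k \ast e_h(\cdot;\widetilde{\mu})$, where $D := e_h(\cdot;\mu) - e_h(\cdot;\widetilde{\mu})$. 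This produces the scalar Volterra equation $D + \mu\, k \ast D = (\widetilde{\mu}-\mu)\, k \ast e_h(\cdot;\widetilde{\mu})$, whose kernel $\mu k$ is precisely the one whose resolvent is encoded by $e_k(\cdot;\mu)$.

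The next step is to solve this equation in closed form rather than trying to compare the forcing terms directly. Because $e_k(\cdot;\mu)$ satisfies $e_k(\cdot;\mu) + \mu\, k \ast e_k(\cdot;\mu) = k$, the function $\mu\, e_k(\cdot;\mu)$ is exactly the resolvent kernel of $\mu k$, so the unique $L^1_{loc}(\R_+)$ solution of $D + \mu\, k \ast D = f$ is $D = f - \mu\, e_k(\cdot;\mu) \ast f$; uniqueness here is Titchmarsh's theorem, invoked just as in the proof of Lemma \ref{prop: mild solution characterization}. Substituting $f = (\widetilde{\mu}-\mu)\, k \ast e_h(\cdot;\widetilde{\mu})$ and simplifying with the relation $\mu\, k \ast e_k(\cdot;\mu) = k - e_k(\cdot;\mu)$ collapses the two terms and leaves the clean identity
\[
 e_h(\cdot;\mu) - e_h(\cdot;\widetilde{\mu}) = (\widetilde{\mu}-\mu)\, e_k(\cdot;\mu) \ast e_h(\cdot;\widetilde{\mu}).
\]

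Finally, since $\widetilde{\mu} > \mu$ and both $e_k(\cdot;\mu) \geq 0$ and $e_h(\cdot;\widetilde{\mu}) \geq 0$ by hypothesis, the right-hand side is a nonnegative convolution, so $D \geq 0$ for a.a. $t \geq 0$ and the claimed monotonicity of $\mu \mapsto e_h(t;\mu)$ follows. I would stress that the only positivity used is that of $e_k$ and $e_h$ themselves — not of $k$ — which is exactly why the argument is routed through the resolvent kernel $\mu\, e_k(\cdot;\mu)$. The single delicate point, and the main source of error, is the bookkeeping of the factor $\mu$ and of the sign $\widetilde{\mu}-\mu$ in the resolvent step, since a misplaced factor reverses the inequality; I would pin the orientation against the elementary case $k = h \equiv 1$, where $e_h(t;\mu) = e^{-\mu t}$, so that the identity must reproduce the non-increasing dependence $e_h(t;\mu) \geq e_h(t;\widetilde{\mu})$. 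This is precisely the form the applications require, since with $(\mu_n)_{n \geq 1}$ nondecreasing it yields $\sup_{n \geq 1} e_h(t;\mu_n) = e_h(t;\mu_1)$ as used in Lemmas \ref{lemma: fractional kernels} and \ref{lemma: Ek self adjoint}.
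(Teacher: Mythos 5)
Your proof is correct, and its engine is the same as the paper's: both reduce the comparison to an explicit convolution identity expressing $e_h(\cdot;\mu)-e_h(\cdot;\widetilde{\mu})$ as $(\widetilde{\mu}-\mu)$ times a convolution of two nonnegative kernels. The routing differs in a small but genuinely useful way. The paper keeps the larger parameter in the equation: it views $e_h(\cdot;\mu)$ as the solution of the $\widetilde{\mu}$-equation with perturbed forcing $h+\xi$, where $\xi=(\widetilde{\mu}-\mu)\,k\ast e_h(\cdot;\mu)$, and then applies the variation-of-constants formula at parameter $\widetilde{\mu}$, arriving at $e_h(\cdot;\mu)-e_h(\cdot;\widetilde{\mu})=(\widetilde{\mu}-\mu)\,e_k(\cdot;\widetilde{\mu})\ast e_h(\cdot;\mu)$ (the paper's displays write $e_h(\cdot;\widetilde{\mu})$ in places where $e_k(\cdot;\widetilde{\mu})$ is meant: the resolvent of the second kind of $\widetilde{\mu}k$ is $\widetilde{\mu}\,e_k(\cdot;\widetilde{\mu})$). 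You instead subtract the two equations \eqref{eq: e rho} first and invert at the smaller parameter, obtaining the symmetric identity $e_h(\cdot;\mu)-e_h(\cdot;\widetilde{\mu})=(\widetilde{\mu}-\mu)\,e_k(\cdot;\mu)\ast e_h(\cdot;\widetilde{\mu})$; your bookkeeping is right ($\mu\,e_k(\cdot;\mu)$ is the resolvent kernel of $\mu k$, and $\mu\,k\ast e_k(\cdot;\mu)=k-e_k(\cdot;\mu)$ collapses the two terms), and uniqueness of the $L^1_{loc}$ solution is standard, whether via Titchmarsh or via convolving with the resolvent. What your version buys: the paper's opening comparison $h(t)\leq e_h(t;\mu)+\widetilde{\mu}\,(k\ast e_h(\cdot;\mu))(t)$, i.e.~the claim $\xi\geq 0$, tacitly requires $k\ast e_h(\cdot;\mu)\geq 0$, which is not among the hypotheses (only $e_k,e_h\geq 0$ are assumed, not $k\geq 0$). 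In the paper this claim turns out to be superfluous, since only the nonnegativity of $\xi-\widetilde{\mu}\,e_k(\cdot;\widetilde{\mu})\ast\xi$ is ever used and that is computed directly; your argument never generates the suspect intermediate claim at all, which is exactly the point you emphasise about using no sign information on $k$.

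One thing to be explicit about: what you prove, and what the paper's own proof in fact proves, is $e_h(t;\widetilde{\mu})\leq e_h(t;\mu)$ for $\mu<\widetilde{\mu}$, i.e.~$\mu\longmapsto e_h(t;\mu)$ is non-increasing. The inequality as printed in the lemma statement is reversed — a typo — as your sanity check $e_h(t;\mu)=e^{-\mu t}$ for $k=h\equiv 1$ and the downstream use $\sup_{n\geq 1}e_h(t;\mu_n)=e_h(t;\mu_1)$ in Lemmas \ref{lemma: fractional kernels} and \ref{lemma: Ek self adjoint} both confirm. You were right to pin the orientation that way; note also that since the defining equations hold only $dt$-a.e., your conclusion holds for a.a.~$t$ (or for every $t$ after fixing suitable versions), which is all the applications need.
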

\begin{proof}
 Let $0 < \mu < \widetilde{\mu}$. Then $h(t) = e_h(t;\mu) + \mu \int_0^t k(t-s)e_h(s;\mu)ds \leq e_h(t;\mu) + \widetilde{\mu}\int_0^t k(t-s)e_h(s;\mu)ds$. Define $\xi(t) := e_h(t;\mu) + \widetilde{\mu}\int_0^t k(t-s)e_h(s;\mu)ds - h(t) \geq 0$. Then 
 \[
  e_h(t; \mu) + \widetilde{\mu}\int_0^t k(t-s)e_h(s;\mu)ds = h(t) + \xi(t).
 \]
 Since $\mu e_k(\cdot; \mu) \geq 0$ is the resolvent of the second kind of $\mu k$, \cite[Theorem 2.3.5]{MR1050319} implies that 
 \begin{align*}
  e_h(t;\mu) &= h(t) + \xi(t) - \int_0^t \widetilde{\mu}e_h(t-s;\widetilde{\mu})(h(s) + \xi(s))ds
  \\ &= h(t) - \int_0^t \widetilde{\mu}e_h(t-s; \widetilde{\mu})h(s)ds + \left( \xi(t) - \int_0^t \widetilde{\mu}e_h(t-s; \widetilde{\mu})\xi(s)ds \right)
  \\ &= e_h(t; \widetilde{\mu}) + \left( \xi(t) - \int_0^t \widetilde{\mu}e_h(t-s; \widetilde{\mu})\xi(s)ds \right).
 \end{align*}
 Since $e_h(t;\widetilde{\mu}) \geq 0$, it suffices to show that $\left( \xi(t) - \int_0^t \widetilde{\mu}e_h(t-s; \widetilde{\mu})\xi(s)ds \right) \geq 0$. To prove this, first note that $\xi(t) = (\widetilde{\mu} - \mu) \int_0^t k(t-s)e_h(s;\mu)ds$. Thus we obtain
 \begin{align*}
  \xi(t) - \int_0^t \widetilde{\mu}e_h(t-s; \widetilde{\mu})\xi(s)ds 
  &= ( \widetilde{\mu} - \mu) \left( k - \widetilde{\mu} (e_h(\cdot;\widetilde{\mu}) \ast k \right) \ast e_h(\cdot; \mu)(t)
  \\ &= (\widetilde{\mu} - \mu) \int_0^t e_h(t-s; \widetilde{\mu})e_h(s;\mu) \, ds \geq 0.
 \end{align*}
\end{proof} 

Below we collect a few sufficient conditions on $k$ under which $e_k(\cdot;\mu) \geq 0$ is nonincreasing so that the above results can be applied.

\begin{Remark}\label{remark: sufficient condition for ek}
 If $k \in L_{loc}^1(\R_+)$ is nonincreasing, $k > 0$ on $(0,\infty)$, and $\ln(k)$ convex, then $e_k \geq 0$ due to \cite[Corollary 8.8, Chapter 9]{MR1050319}. If additionally $\ln(-k')$ is convex, then $e_k(\cdot;\mu) \geq 0$ is nonincreasing due to \cite{MR0521860}. These conditions are satisfied whenever $k \in L_{loc}^1(\R_+)$ is completely monotone and not constant, see \cite[Theorem 2.8 and Theorem 3.1, Chapter 5]{MR1050319}).
\end{Remark}

\section{Fractional one-dimensional equations}

In this section, we study the functions $e_k(\cdot;\mu)$ and $e_h(\cdot;\mu)$ given by \eqref{eq: e rho} with $\rho = k,h$ fractional kernels \eqref{eq: fractional kernels}, i.e. $k(t) = \frac{t^{\alpha-1}}{\Gamma(\alpha)}$ and $h(t) = \frac{t^{\beta-1}}{\Gamma(\beta)}$. It is easy to see that $e_k(t;\mu) = t^{\alpha - 1}E_{\alpha,\alpha}(-\mu t^{\alpha})$ and $e_h(t;\mu) = t^{\beta - 1}E_{\alpha,\beta}(-\mu t^{\beta})$ where $E_{\alpha,\beta}$ denotes the two-parameter Mittag-Leffler function $E_{\alpha,\beta}$, see \cite{MR4179587}. Recall that the constant $c_q(\alpha,\beta)$ was defined in \eqref{eq:c_q}. The next lemma follows from the well-known asymptotics of the two-parameter Mittag-Leffler function.
\begin{Lemma}\label{lemma: Mittag-Leffler function}
 Let $\alpha \in (0,2)$, $\beta > 0$, and $\mu > 0$. Then 
 \begin{enumerate}
     \item[(a)] $e_k(\cdot;\mu) \in L^q(\R_+)$ for $q \in [1,\infty)$ with $1 < \alpha + \frac{1}{q}$. Moreover, it holds that 
     \[
      \int_0^{\infty}|e_k(t;\mu)|^q dt = \mu^{- q + \frac{q-1}{\alpha}} c_q(\alpha,\alpha).
     \]
     \item[(b)] If $\alpha \neq \beta$, then $e_h(\cdot;\mu) \in L^q(\R_+)$ whenever $1 - \frac{1}{q} < \beta < \alpha + 1 - \frac{1}{q}$. Moreover
     \[
      \int_0^{\infty} |e_h(t;\mu)|^q dt = \mu^{-\frac{\beta q}{\alpha} + \frac{q-1}{\alpha}} c_q(\alpha,\beta)
     \]
 \end{enumerate}
\end{Lemma}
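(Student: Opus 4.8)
The plan is to reduce both statements to a single scaling identity together with the finiteness of the constant $c_q(\alpha,\beta)$, which in turn follows from the known small- and large-argument behaviour of the two-parameter Mittag-Leffler function. First I would record the explicit representations of $e_k$ and $e_h$. Taking Laplace transforms in \eqref{eq: e rho} with $\rho = k,h$ and using $\widehat{k}(z) = z^{-\alpha}$, $\widehat{h}(z) = z^{-\beta}$ gives
\[
 \widehat{e_k}(z;\mu) = \frac{z^{-\alpha}}{1 + \mu z^{-\alpha}} = \frac{1}{z^\alpha + \mu}, \qquad \widehat{e_h}(z;\mu) = \frac{z^{-\beta}}{1 + \mu z^{-\alpha}} = \frac{z^{\alpha-\beta}}{z^\alpha + \mu},
\]
which are precisely the Laplace transforms of $t^{\alpha-1}E_{\alpha,\alpha}(-\mu t^\alpha)$ and $t^{\beta-1}E_{\alpha,\beta}(-\mu t^\alpha)$ (see \cite{MR4179587}). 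Hence $e_k(t;\mu) = t^{\alpha-1}E_{\alpha,\alpha}(-\mu t^\alpha)$ and $e_h(t;\mu) = t^{\beta-1}E_{\alpha,\beta}(-\mu t^\alpha)$.

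Next I would perform the substitution $s = \mu^{1/\alpha}t$ in the integral defining the $L^q$-norm. Since $\mu t^\alpha = s^\alpha$, $t^{\beta-1} = \mu^{-(\beta-1)/\alpha}s^{\beta-1}$ and $dt = \mu^{-1/\alpha}\,ds$, this yields
\[
 \int_0^\infty |e_h(t;\mu)|^q\,dt = \mu^{-q(\beta-1)/\alpha - 1/\alpha}\int_0^\infty \left| s^{\beta-1}E_{\alpha,\beta}(-s^\alpha)\right|^q ds = \mu^{-\frac{\beta q}{\alpha} + \frac{q-1}{\alpha}}\,c_q(\alpha,\beta),
\]
and the analogous identity for $e_k$ with $\beta = \alpha$, whose exponent simplifies to $-q + \frac{q-1}{\alpha}$ as claimed. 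Thus both the explicit formulas and the finiteness of the left-hand sides are equivalent to the finiteness of $c_q(\alpha,\beta)$.

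It then remains to verify $c_q(\alpha,\beta) < \infty$ under the stated conditions, which I would do by splitting $\int_0^\infty = \int_0^1 + \int_1^\infty$. Near the origin $E_{\alpha,\beta}(0) = 1/\Gamma(\beta)$ and $E_{\alpha,\beta}$ is continuous, so the integrand is comparable to $s^{q(\beta-1)}$ and $\int_0^1 s^{q(\beta-1)}\,ds < \infty$ exactly when $q(\beta-1) > -1$, i.e. $\beta > 1 - \frac{1}{q}$ (and $\alpha > 1 - \frac{1}{q}$, i.e. $1 < \alpha + \frac{1}{q}$, in case (a)). Near infinity I would invoke the asymptotic $E_{\alpha,\beta}(-x) = \frac{x^{-1}}{\Gamma(\beta-\alpha)} + O(x^{-2})$ as $x \to \infty$ for $\alpha \in (0,2)$; for $\beta \neq \alpha$ this gives $s^{\beta-1}E_{\alpha,\beta}(-s^\alpha) \sim s^{\beta-1-\alpha}/\Gamma(\beta-\alpha)$, so the tail is integrable precisely when $q(\beta-1-\alpha) < -1$, i.e. $\beta < \alpha + 1 - \frac{1}{q}$, matching the upper bound in (b). The main subtlety I expect to treat carefully is case (a): there $\beta = \alpha$ forces $1/\Gamma(\beta-\alpha) = 1/\Gamma(0) = 0$, so the leading tail term vanishes and the genuine decay is $E_{\alpha,\alpha}(-x) = O(x^{-2})$, whence $s^{\alpha-1}E_{\alpha,\alpha}(-s^\alpha) = O(s^{-\alpha-1})$ and $\int_1^\infty s^{-q(\alpha+1)}\,ds$ converges for every admissible $q$; consequently only the origin condition $1 < \alpha + \frac{1}{q}$ is binding, exactly as stated. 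The same remark disposes of the exceptional values where $\beta - \alpha$ is a non-positive integer, since there the leading coefficient again vanishes and the tail only improves.
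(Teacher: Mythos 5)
Your proposal is correct and follows essentially the same route as the paper's proof: the explicit Mittag-Leffler representations $e_k(t;\mu)=t^{\alpha-1}E_{\alpha,\alpha}(-\mu t^{\alpha})$, $e_h(t;\mu)=t^{\beta-1}E_{\alpha,\beta}(-\mu t^{\alpha})$, continuity of $E_{\alpha,\beta}$ for integrability near the origin, the Poincar\'e asymptotics (with the vanishing leading coefficient $1/\Gamma(0)=0$ when $\beta=\alpha$) for the tail, and the scaling substitution $s=\mu^{1/\alpha}t$ for the explicit constants. Your write-up is in fact slightly more complete, since you derive the representation via Laplace transforms and note that the argument also covers the exceptional values where $\beta-\alpha$ is a non-positive integer.
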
 
\begin{proof}
 Since $E_{\alpha,\beta}$ is continuous on $\R_+$, we obtain $e_k(\cdot;\mu) \in L_{loc}^q(\R_+)$ for all $q \in [1,\infty)$ satisfying $q(\alpha - 1) > -1$, i.e. $1 < \alpha + \frac{1}{q}$. Likewise, $e_h(\cdot;\mu) \in L_{loc}^q(\R_+)$ for $\beta > 1 - \frac{1}{q}$. For the large values of $t$, we use the Poincar\'e asymptotics of the Mittag-Leffler function. When $\alpha = \beta$, we obtain $E_{\alpha,\alpha}(-z) \sim \frac{\sin(\pi \alpha)}{\pi \alpha} \frac{1}{\Gamma(\alpha)}z^{-2}$ as $z \to \infty$ and hence $e_k(t;\mu) \sim t^{- 1 - \alpha}$ as $t \to \infty$ which is always $q$-integrable. For $\alpha \neq \beta$, we obtain $E_{\alpha,\beta}(-z) \sim - \frac{1}{\Gamma(\beta - \alpha)}z^{-1}$ as $z \to \infty$ and hence $e_h(\cdot;\mu) \sim t^{\beta - 1 - \alpha}$ which is $q$-integrable whenever $\beta < \alpha + 1 - \frac{1}{q}$. The remaining assertions follow by substitution $r = \mu^{1/\alpha}t$.
\end{proof}

\section{Operator convolutions}

\begin{Lemma}\label{lemma: continuity convolution}
 Fix $T > 0$. Let $V,H$ be separable Hilbert spaces, $E \in L^p([0,T]; L_s(V,H))$, and $f \in L^q([0,T]; V)$ with $\frac{1}{p} + \frac{1}{q} = 1 + \frac{1}{r}$ and $p,q,r \in [1,\infty]$. Then 
 \[
  E \ast f(t) = \int_0^t E(t-s)f(s)ds, \qquad t \in [0,T]
 \]
 is $dt$-a.e.~well-defined and satisfies
 \[
  \| E \ast f\|_{L^r([0,T]; H)} \leq \| E \|_{L^p([0,T]; L(V,H))} \| f\|_{L^q([0,T];V)}.
 \]
 Suppose that $1 \leq p < \infty$ and $r = \infty$, then $E \ast f$ has a version that is continuous.
\end{Lemma}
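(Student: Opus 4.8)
The plan is to derive continuity from the quantitative bound already established in the first part of the lemma, combined with a density argument and the elementary fact that a uniform limit of continuous $H$-valued functions is again continuous. Since $r = \infty$, the relation $\frac1p + \frac1q = 1$ says that $q$ is the conjugate exponent of $p$, and the first part supplies the pointwise estimate $\|E\ast f(t)\|_H \le \|E\|_{L^p([0,T];L(V,H))}\,\|f\|_{L^q([0,T];V)}$ (H\"older's inequality applied to the scalar integrand $\|E(t-s)\|_{L(V,H)}\|f(s)\|_V$). I would use this as a stability estimate: for every $\widetilde f \in L^q([0,T];V)$ one has $\|E\ast f - E\ast \widetilde f\|_{L^\infty([0,T];H)} \le \|E\|_{L^p}\,\|f - \widetilde f\|_{L^q}$, by linearity of the convolution in $f$.

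First I would treat a continuous integrand. If $f \in C([0,T];V)$, then after the substitution $u = t-s$ one writes $E\ast f(t) = \int_0^T \mathbf{1}_{\{u\le t\}} E(u) f(t-u)\,du$; letting $t' \to t$, the integrand converges for a.a.\ $u$ to $\mathbf{1}_{\{u\le t\}} E(u) f(t-u)$ (using $\mathbf{1}_{\{u\le t'\}} \to \mathbf{1}_{\{u\le t\}}$ off $u=t$ together with the continuity of $f$) and is dominated by $\|E(u)\|_{L(V,H)}\,\|f\|_\infty \in L^1([0,T])$. Dominated convergence then yields $E\ast f(t') \to E\ast f(t)$ in $H$, so $E\ast f \in C([0,T];H)$.

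For $1 < p < \infty$, equivalently $1 \le q < \infty$, I would then invoke density of $C([0,T];V)$ in $L^q([0,T];V)$: choose $f_n \in C([0,T];V)$ with $f_n \to f$ in $L^q$. Each $E\ast f_n$ is continuous by the previous step, and the stability estimate gives $\|E\ast f_n - E\ast f\|_{L^\infty} \le \|E\|_{L^p}\|f_n - f\|_{L^q} \to 0$. Hence $E\ast f$ is a uniform limit of continuous functions and therefore admits the continuous version $E\ast f = \lim_n E\ast f_n$; this settles the case $p>1$, which is the one actually invoked in the applications (e.g.\ in Proposition \ref{cor: continuous modification}).

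The remaining case $p=1$ (whence $q=\infty$) is the genuinely delicate one and I expect it to be the main obstacle: continuous functions are not dense in $L^\infty$, and the naive translation argument fails because $L(V,H)$ is non-separable, so $t \mapsto E(t)$ need not be translation-continuous in operator norm. Here I would instead approximate $E$: since $V,H$ are separable, bounded balls of $L(V,H)$ are separable and metrizable in the strong operator topology, so the strongly operator measurable $E$ can be approximated by strongly-operator simple functions $s_n = \sum_j \mathbf{1}_{B_{n,j}} T_{n,j}$ with $s_n(u) \to E(u)$ strongly for a.a.\ $u$ and $\|s_n(u)\|_{L(V,H)} \le C\|E(u)\|_{L(V,H)}$. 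Each $s_n \ast f = \sum_j \mathbf{1}_{B_{n,j}} \ast (T_{n,j} f)$ is a finite sum of convolutions of a scalar $L^1$ function with an $H$-valued $L^\infty$ function and is therefore continuous, and dominated convergence gives $s_n\ast f(t) \to E\ast f(t)$ pointwise. The crux is to upgrade this pointwise convergence to continuity of the limit: operator-norm uniform convergence is exactly what is unavailable, so one must argue continuity directly, exploiting the separable strong-operator structure (for instance, in the diagonalizable situation one recovers it from continuity of each scalar component $\mathbf{1}_{B}\ast (Tf)$ together with uniformly small tails). Making this uniformity precise in full generality is the hard part of the $p=1$ case.
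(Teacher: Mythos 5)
For $1 < p < \infty$ (so $q < \infty$) your argument coincides with the paper's proof: pick $f_n \in C([0,T];V)$ with $f_n \to f$ in $L^q([0,T];V)$, check that $E \ast f_n$ is continuous, and pass to the uniform limit via the stability estimate $\|E\ast f_n - E \ast f\|_{L^\infty([0,T];H)} \le \|E\|_{L^p([0,T];L(V,H))}\|f_n - f\|_{L^q([0,T];V)}$; the paper phrases this last step as a Cauchy-sequence argument in $C([0,T];H)$, which is the same thing. Your dominated-convergence verification that $E\ast f$ is continuous when $f$ is continuous fills in a step the paper dismisses as "easy to see", and it is correct.

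The case $p=1$, $q=\infty$ is a genuine gap in your proposal, exactly as you acknowledge: strong operator convergence $s_n(u) \to E(u)$ does give $s_n\ast f(t) \to E\ast f(t)$ for each fixed $t$ (apply the convergence to the vector $x = f(t-u)$ for a.a.\ $u$ and dominate), but it gives no control on $\sup_{t}\int_0^t \|(E(t-s) - s_n(t-s))f(s)\|_H\,ds$, because the arguments $f(s)$ sweep through a bounded, generally non-compact subset of $V$; your argument stops there, and a pointwise limit of continuous functions need not be continuous. You should know, however, that the paper's own proof has the identical defect: it invokes density of $C([0,T];V)$ in $L^q([0,T];V)$ for the whole range allowed by the statement, and this density fails precisely when $q=\infty$, i.e.\ when $p=1$. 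So your proposal proves exactly what the paper's written proof proves, and is more candid about what is missing. The missing case is not vacuous for the paper: it is the one needed for bounded drift ($F(u) \in L^\infty$, $E_k \in L^1_{loc}$) in the remark following Proposition \ref{cor: continuous modification additive case}.

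The $p=1$ case can be closed, but by approximating $f$ rather than $E$. Since $f \in L^\infty([0,T];V)$ is Bochner measurable, there are simple functions $f_m \to f$ a.e.\ with $\|f_m(s)\|_V \le 2\|f\|_{L^\infty([0,T];V)}$. Each $E\ast f_m$ is a finite sum of terms $(E(\cdot)x_j)\ast \1_{B_j}$, and $E(\cdot)x_j \in L^1([0,T];H)$ is a genuine Bochner function into the separable space $H$, so continuity of translations in $L^1([0,T];H)$ makes each such term continuous. For uniform convergence, set $\phi_m(s) = \|f(s)-f_m(s)\|_V$ and $\Psi = \|E(\cdot)\|_{L(V,H)} \in L^1([0,T])$; Egorov's theorem gives a set $A_\e$ with $|A_\e| < \e$ off which $\phi_m \to 0$ uniformly, and then
\begin{align*}
 \sup_{t \in [0,T]}\int_0^t \Psi(t-s)\phi_m(s)\,ds \le \|\Psi\|_{L^1([0,T])}\sup_{[0,T]\setminus A_\e}\phi_m \ + \ 2\|f\|_{L^\infty([0,T];V)}\sup_{|B|\le\e}\int_B \Psi(u)\,du ,
\end{align*}
where the translate $t - (A_\e\cap[0,t])$ has measure at most $\e$. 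Letting first $m \to \infty$ and then $\e \to 0$ (absolute continuity of the integral of $\Psi$) shows $E\ast f_m \to E\ast f$ uniformly on $[0,T]$, hence $E\ast f$ has a continuous version. This repairs the missing case in both your proof and the paper's.
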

\begin{proof}
    By approximation of $f$ through simple functions and using $E(\cdot) \in L(V,H)$, one can show that $(t,s) \longmapsto E(t-s)f(s) \in H$ is measurable. Hence the first assertion follows by applying first the triangle inequality and then the classical Young inequality, i.e., when $r \neq \infty$ we have
    \begin{align*}
        \| E \ast f\|_{L^r([0,T]; H)} &\leq \left( \int_0^T \left( \int_0^t \|E(t-s)\|_{L(V,H)} \| f(s)\|_V ds \right)^r dt \right)^{\frac{1}{r}}
        \\ &\leq \| E \|_{L^p([0,T]; L(V,H))} \| f\|_{L^q([0,T];V)}.
    \end{align*}
    Similarly we prove the case $r = \infty$, i.e.,
    \[
     \left\| \int_0^t E(t-s)f(s)ds \right\| \leq \int_0^t \|E(t-s)\|_{L(V,H)}\|f(s)\|_V ds \leq \| E \|_{L^p([0,T]; L(V,H))} \| f\|_{L^q([0,T];V)}.
    \]

    Let us prove the second assertion. First note that the space of continuous functions is dense in $L^q([0,T];V)$. In particular, we find $(f_n)_{n \geq 1} \subset C([0,T]; V)$ such that $f_n \longrightarrow f$ in $L^q([0,T];V)$. It is easy to see that $E \ast f_n \in C([0,T]; V)$. Moreover, it holds that
    \[
     \| E \ast f_n(t) - E \ast f_m(t) \| \leq \|E\|_{L^p([0,T]; L(V,H))} \| f_n - f_m\|_{L^q([0,T];V)} \longrightarrow 0
    \]
    as $n,m \to \infty$ uniformly in $t \in [0,T]$. Hence $(E\ast f_n)_{n\geq 1}$ is a Cauchy sequence in $C([0,T]; H)$ and hence has a limit $y \in C([0,T]; H)$. Then
    \begin{align*}
     \|y - E\ast f\|_{L^{\infty}([0,T];H)} \leq \|y - E\ast f_n\|_{L^{\infty}([0,T];H)} + \| E\|_{L^{p}([0,T];L(V,H))}\| f_n - f\|_{L^q([0,T];V)}
    \end{align*}
    which converges to zero as $n \to \infty$. Hence $y = E \ast f$ a.e., which proves the assertion.
\end{proof}

\subsection*{Acknowledgements}

The authors gratefully acknowledge the financial support received via the ECIU travel mobility fund for the year 2022. M.~F.~would like to express its gratitude for the financial support and kind hospitality at Trento University throughout 2021 and 2022 during which a large part of this research was carried out.

\bibliographystyle{amsplain}
\bibliography{Bibliography}

\end{document}